\newtheorem{thm}{Theorem}[section]
\newtheorem{lem}[thm]{Lemma}
\newtheorem{cor}[thm]{Corollary}
\newtheorem{prop}[thm]{Proposition}
\newtheorem{rmk}[thm]{Remark}
\newtheorem*{mainthm}{Main Theorem}
\theoremstyle{definition}
\newtheorem*{defi}{Definition}
\newcommand{\D}{\mathbb{D}}
\newcommand{\T}{\mathbb{T}}
\newcommand{\N}{\mathbb{N}}
\newcommand{\C}{\mathbb{C}}
\newcommand{\EC}{\widehat{\mathbb{C}}}
\newcommand{\Crit}{\textup{Crit}}
\newcommand{\diam}{\textup{diam}}
\newcommand{\ii}{\textup{i}}
\makeatletter\@addtoreset{equation}{section}\makeatother
\begin{document}
\author[Y. Fu]{Yuming Fu}
\address{School of Mathematical Sciences, Shenzhen University, Shenzhen 518060, P. R. China}
\email{yumingfuxy@szu.edu.cn}
\author[J. Hu]{Jun Hu}
\address{Department of Mathematics,
Brooklyn College of CUNY,
Brooklyn, NY 11210, USA and
Ph.D. Program in Mathematics,
Graduate Center of CUNY,
365 Fifth Avenue, New York, NY 10016, USA}
\email{junhu@brooklyn.cuny.edu or JHu1@gc.cuny.edu}
\author[O. Muzician]{Oleg Muzician}
\address{Department of Mathematics,
BMCC of CUNY, 199 Chambers Street, New York, NY 10007}
\email{OMuzician@bmcc.cuny.edu}

%---------------------------------------------------------------------------------------------------------------
\title[Cubic polynomials with a $2$-cycle of Siegel disks]{Cubic polynomials with a $2$-cycle of Siegel disks}

\begin{abstract}
Under conjugation by affine transformations, the dynamical moduli
space of cubic polynomials $f$ with a $2$-cycle of Siegel disks is parameterized by a three-punctured complex plane as a degree-$2$ cover. Assuming
the rotation number of $f^2$ on the Siegel disk is of bounded
type, we show that on the three-punctured complex plane, the locus of the cubic polynomials with both finite critical
points on the boundaries of the Siegel disks on the $2$-cycle is comprised of two arcs, corresponding to the
cases with two critical points on the boundary of the same Siegel
disk, and a Jordan curve, corresponding to the cases with two
critical points on the boundaries of different Siegel disks.
\end{abstract}

% AMS subject classifications (used in AMS journals)
\subjclass[2010]{Primary: 37F45; Secondary: 37F10, 37F25}

% AMS keywords (used in AMS journals)
\keywords{Periodic Siegel disks; Julia sets; Mandelbrot set in a parameter plane, Thurston equivalent classes of rational maps}

% today's date, or fill in whatever date you prefer
\date{\today}

% acknowledge support, etc
% \thanks{This research was partially supported by NSF grant DOA-123456789.}
% \thanks{We would like to thank our colleagues for their helpful criticism.}

% dedication
% \dedicatory{Dedicated to Professor Donald Knuth on the occasion of his $100$th birthday}

\maketitle

%----------------------------------------------------------------------------------------------------------------
%\vskip1.0cm
%{\setcounter{tocdepth}{1}
%\tableofcontents
%}
\tableofcontents
%----------------------------------------------------------------------------------------------------------------

\section{Introduction}\label{introduction}

\medskip
Julia sets of quadratic rational maps are either connected or
Cantor sets. Although cubic rational maps are just one degree
higher than quadratic ones, their dynamics are much richer. They
may have Herman rings in their Fatou sets. Even under no presence
of Herman rings, the Julia set of such a cubic rational map may be
neither connected nor a Cantor set. It seems quite challenging to
classify the Julia sets of all cubic rational maps at the
same time. So one may first explore the dynamics of cubic rational
maps under some constrains. For example, the dynamics of cubic
rational maps with all critical points escaping to an attracting
fixed point are explored in \cite{HuEtkin20} and the Julia set
dichotomy (either connected or a Cantor set) is obtained for such
a cubic rational map in \cite{HuEtkin22}. Lately, the Julia sets
of cubic rational maps with all critical points escaping to a
parabolic fixed point have been studied and classified in
\cite{HuEtkin23a}, and the Julia sets of cubic rational maps with
two attracting fixed points are investigated and classified in
\cite{HuEtkin23b}.

Under conjugacy by affine maps, the dynamical moduli space of the cubic polynomials with Siegel disks of period $1$ and a constant (irrational) rotation number is represented by a one-complex-parameter family of cubic polynomials. Zakeri proved in \cite{Zak99} that on this parameter space the locus of the maps with both critical points on the boundaries of the corresponding Siegel disks is a Jordan curve if the rotation number is of bounded type. Recently, it is proved in \cite{FYZ20} that in the dynamical moduli space of the quadratic rational maps with a Siegel disk of period $2$ and of bounded type,  the locus of the maps with two critical points on the boundaries of the Siegel disks on the $2$-cycle is also a Jordan curve. Motivated by these works, we study in this paper the dynamical moduli space of the cubic polynomials with a cycle of Siegel disks of period $2$ and of bounded type. Through conjugation by a M\"obius transformation, such cubic polynomials are expressed by a one-parameter family of cubic polynomials with the centers of the Siegel disks arranged at $0$ and $1$. More precisely, it is of the following form (see Proposition \ref{the desired family}):
\begin{equation}\label{f-alpha-family}
f_{\alpha}(z)=(\frac{\lambda}{\alpha}+\alpha+2)z^3-(\frac{\lambda}{\alpha}+2\alpha+3)z^2+\alpha
z+1,
\end{equation} where $\lambda=e^{2\pi\ii\theta}$, $\theta $
is an irrational number of bounded type, $\alpha\in \C$, $\alpha\neq 0$, and
$\frac{\lambda}{\alpha}+\alpha+2\neq 0$. An irrational number $\theta $ is said to
be of {\em bounded type} if its continued fraction expansion
$\theta=[a_0;a_1,a_2,\cdots,a_n,\cdots]$ satisfies $\sup_n
\{a_n\}<+\infty$. We call
\begin{equation}\label{equ:family}
\Sigma_\theta=\left\{\alpha\in\C: \alpha\neq 0 \text{ and
}\frac{\lambda}{\alpha}+\alpha+2\neq 0, \text{ where
}\lambda=e^{2\pi\ii\theta}\right\}
\end{equation}
the parameter space for $f_{\alpha }$.

Let $\alpha\in\Sigma_\theta$. Denote the two Siegel disks on the
$2$-cycle under $f_{\alpha}$ by
$\Delta_{\alpha}^0$ and $\Delta_{\alpha}^1$, containing $0$ and $1$
respectively. According to \cite{Zha11}, $\partial\Delta_{\alpha}^0$ and
$\partial\Delta_{\alpha}^1$ are quasi circles, and
$\partial\Delta_{\alpha}^0\cup\partial\Delta_{\alpha}^1$ contains
at least one critical point. The Julia set $J(f_\alpha)$ of $f_{\alpha}$ is connected if and only if the other (finite) critical point doesn't escape to $\infty $. We call
\begin{equation}\label{connectedness locus}
    \mathcal{M}_\theta=\{\alpha\in\Sigma_\theta: \text{ the orbits of the finite critical points are bounded}\}
\end{equation}
the connectedness locus of $f_\alpha$. Then $J(f_\alpha)$ is disconnected if and only if $\alpha \in \Sigma_\theta\setminus \mathcal{M}_\theta$. 

On Figure \ref{Fig_parameter}, four white
regions represent the components of $\Sigma_\theta\setminus \mathcal{M}_\theta$, which are punctured at $0$, $\infty $ and the roots of
$\frac{\lambda}{\alpha}+\alpha+2=0$ respectively. We view these four punctures as the centers of the four components. See Figure \ref{Fig_Jul-(-2-0p2i)} as an example of $J(f_\alpha)$ when $\alpha \in \Sigma_\theta\setminus \mathcal{M}_\theta$.

When $\alpha $ belongs to the interior of
$\mathcal{M}_{\theta}$, there is a (finite) critical point not on $\partial\Delta_{\alpha}^0\cup\partial\Delta_{\alpha}^1$, whose orbit may have three possible patterns: converging to an
attracting cycle, landing on the interior of a Siegel disk after finitely many iterates, or being neither of these two situations, which are respectively called a {\em hyperbolic-like, capture, or queer} case in \cite{Zak99}. Referring to Figure \ref{Fig_parameter}, the interiors of the black regions represent hyperbolic-like components of $\Sigma_{\theta}$, which show us infinitely many copies (homeomorphic images) of the hyperbolic components in the quadratic family $Q_c(z)=z^2+c$; the interiors of the yellow regions represent capture cases, which show us infinitely many Jordan domains. It is conjectured in \cite{Zak99} that queer components do not exist; on the other hand,
if such a queer component exists, then the Julia set of any cubic polynomial on this
component has positive Lebesgue measure and admits an invariant line field.
See Figure \ref{hyperbolic-like case} for an example of $J(f_\alpha)$
in a hyperbolic-like case and Figure \ref{Fig_Jul-(1p6066+p41583i)} in a capture case. 

\begin{figure}[!htpb]
 \setlength{\unitlength}{1mm}
  \centering
  \includegraphics[width=0.72\textwidth]{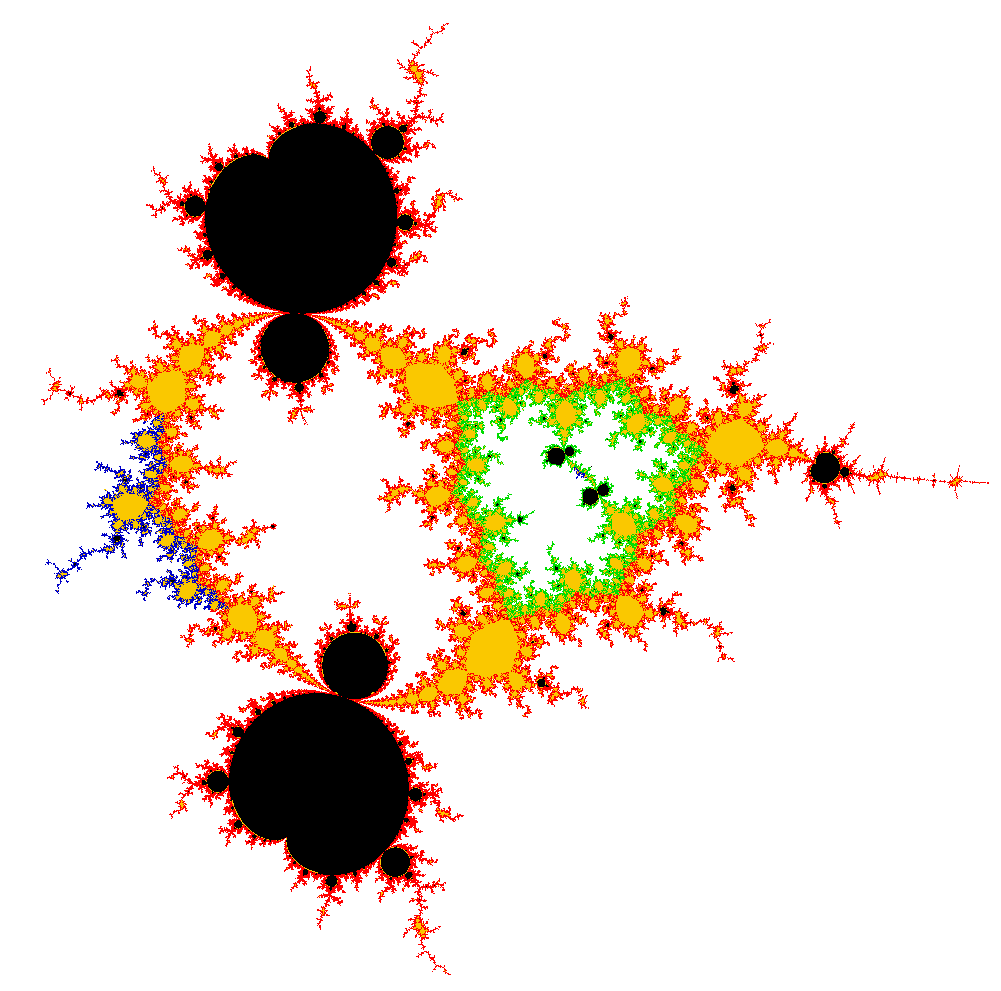}
  \caption{The parameter plane of $\Sigma_\theta$ with $\theta=(\sqrt{5}-1)/2$.}
  \label{Fig_parameter}
\end{figure}

In this paper, we study the following three
subsets of the connectedness locus $\mathcal{M}_{\theta}$.
\begin{defi}\label{def of Gamma curves} Let $\theta $ be an
irrational number of bounded type. We define
$$\Gamma_\theta=\left\{\alpha \in \Sigma_\theta: \text{ each
of }\partial\Delta_{\alpha}^0 \text{  and
}\partial\Delta_{\alpha}^1 \text{ contains a critical point
of } f_\alpha \right\},$$
$$\Gamma_\theta^0=\left\{\alpha \in \Sigma_\theta:
\partial\Delta_{\alpha}^0 \text{ contains two critical
points counted by multiplicity}\right\}, \text{ and}$$
$$\Gamma_\theta^1=\left\{\alpha \in \Sigma_\theta: \partial\Delta_{\alpha}^1
\text{ contains two critical points counted by multiplicity}\right\}.$$
\end{defi}

By a {\em Jordan arc} on a plane we mean an image of a closed and
bounded interval $[a, b]$ under an injective continuous map into
the plane.

\begin{mainthm}\label{maintheorem} Let $\theta $ be an irrational
number of bounded type. Then
\begin{enumerate}
\item $\partial\Delta_{\alpha}^0$ and $\partial\Delta_{\alpha}^1$
depend continuously on $\alpha\in\Sigma_\theta$; \item
$\Gamma_\theta$ is a Jordan curve on $\Sigma_\theta$ separating
$0$ from $\infty $; \item $\Gamma_\theta^0$ is a Jordan arc in the
component of $\Sigma_\theta\setminus\Gamma_\theta$ containing $0$;
\item $\Gamma_\theta^1$ is a Jordan arc in the component of
$\Sigma_\theta\setminus\Gamma_\theta$ containing $1$.
\end{enumerate}
\end{mainthm}
\begin{rmk} Note that if $\Gamma_\theta$, $\Gamma_\theta^0$ or $\Gamma_\theta^1$ has an interior component, then such a component is a queer component. Therefore, 
our Main Theorem implies that there are no queer components in  $\Gamma_\theta$, $\Gamma_\theta^0$ or $\Gamma_\theta^1$, which provides a piece of information to support the conjecture on the nonexistence of queer components in the interior of the connectedness locus $\mathcal{M}_\theta$.
\end{rmk}

\section{The parameter space $\Sigma_\theta$ and Dynamics of $f_{\alpha}$ when $\alpha\in\Sigma_{\theta}$}
In this section, we do a preliminary investigation of the parameter space $\Sigma_\theta$, and show how $\partial\Delta_\alpha^0\cup\partial\Delta_\alpha^1$ changes as $\alpha$ varies on $\Sigma_{\theta}$.
\subsection{The form of $f_\alpha$}
In this first subsection, we show how to derive the expression of $f_{\alpha }$. Then we prove a symmetry existing in the parameter space $\Sigma_\theta$. 

\begin{prop}\label{the desired family}
Let $f$ be a cubic polynomial having a $2$-cycle of Siegel disks with rotation number $\theta$. Then $f$ is conformally conjugate to $f_\alpha$ in the form of (\ref{f-alpha-family}) for some $\alpha\in\C\setminus\{0,-1\pm \sqrt{1-\lambda}\}$. In particular, $f_\alpha$ has a $2$-cycle of Siegel disks, centered at $0$ and $1$ respectively, with multiplier equal to $\lambda =e^{2\pi\ii\theta}$, 
which we denote by $\Delta_\alpha^0$ and $\Delta_\alpha^1$.
\end{prop}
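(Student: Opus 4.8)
The plan is to put a cubic polynomial $f$ with a $2$-cycle of Siegel disks into a normal form by a carefully chosen affine conjugacy, and then read off the coefficients. Write $f(z)=a_3z^3+a_2z^2+a_1z+a_0$. Let $\{p_0,p_1\}$ be the $2$-cycle of Siegel centers, so $f(p_0)=p_1$, $f(p_1)=p_0$, and the multiplier of the cycle is $(f^2)'(p_0)=(f^2)'(p_1)=f'(p_0)f'(p_1)=\lambda=e^{2\pi\ii\theta}$. First I would apply an affine conjugacy $z\mapsto \beta z+\gamma$ to move the cycle to a standard position: send $p_0$ to $0$ and $p_1$ to $1$. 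This uses up both degrees of freedom in the affine group (two complex parameters), so the conjugated map $g$ is now a cubic with $g(0)=1$ and $g(1)=0$; that is, $a_0=1$ and $a_3+a_2+a_1+1=0$, leaving two free coefficients, say $a_3$ and $a_1$ (with $a_2=-1-a_1-a_3$). Set $\alpha:=g'(0)=a_1$; this is the natural remaining parameter.

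Next I would use the multiplier condition to solve for $a_3$ in terms of $\alpha$ and $\lambda$. We have $g'(z)=3a_3z^2+2a_2z+\alpha$, so $g'(0)=\alpha$ and $g'(1)=3a_3+2a_2+\alpha=3a_3+2(-1-\alpha-a_3)+\alpha=a_3-\alpha-2$. The eigenvalue equation $g'(0)g'(1)=\lambda$ becomes $\alpha(a_3-\alpha-2)=\lambda$, hence $a_3=\frac{\lambda}{\alpha}+\alpha+2$ (this forces $\alpha\neq 0$). Then $a_2=-1-\alpha-a_3=-(\frac{\lambda}{\alpha}+2\alpha+3)$, which reproduces exactly the formula \eqref{f-alpha-family}. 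The non-degeneracy requirement that $g$ genuinely be cubic is $a_3\neq 0$, i.e. $\frac{\lambda}{\alpha}+\alpha+2\neq 0$; multiplying by $\alpha$, this is the quadratic $\alpha^2+2\alpha+\lambda=0$, whose roots are $\alpha=-1\pm\sqrt{1-\lambda}$. So the admissible parameter set is precisely $\C\setminus\{0,-1\pm\sqrt{1-\lambda}\}=\Sigma_\theta$, matching the statement.

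It remains to check two points: that $0$ and $1$ really are centers of Siegel disks of $f_\alpha$ with multiplier $\lambda$ for every admissible $\alpha$ (not merely that the linearized data is consistent), and that the normalization was forced, i.e. that the construction recovers every $f$ up to conjugacy. For the first, since $\theta$ is irrational (indeed Diophantine, being of bounded type), the classical linearization theorem of Siegel applies to the return map $f_\alpha^2$ at each fixed point $0$ and $1$; because $(f_\alpha^2)'(0)=(f_\alpha^2)'(1)=g'(0)g'(1)=\lambda$ with $|\lambda|=1$ and $\theta$ Diophantine, each of $0,1$ is the center of a Siegel disk for $f_\alpha^2$, and $f_\alpha$ permutes these two disks (as it maps $0\leftrightarrow 1$), giving a genuine $2$-cycle of Siegel disks $\Delta_\alpha^0,\Delta_\alpha^1$. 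For the second, the only choice made was which center to call $p_0$; swapping $p_0\leftrightarrow p_1$ produces a possibly different parameter value, which is exactly the source of the degree-$2$ symmetry of $\Sigma_\theta$ mentioned in the abstract and developed in the next subsection. The main obstacle is bookkeeping: one must verify that the affine normalization is unique up to that swap (no residual affine symmetry fixes $\{0,1\}$ pointwise except the identity, which is clear, or the transposition $z\mapsto 1-z$, which gives the involution on $\Sigma_\theta$), and then carefully match the resulting coefficients to \eqref{f-alpha-family}; the dynamical input (Siegel linearization) is standard given the bounded-type hypothesis.
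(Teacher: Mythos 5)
Your proof is correct and takes essentially the same approach as the paper: normalize the cycle to $\{0,1\}$ by an affine conjugacy, write out the generic cubic with $f(0)=1$, $f(1)=0$, and use the multiplier relation $f'(0)f'(1)=\lambda$ to solve for the coefficients in terms of $\alpha=f'(0)$. The paper's proof is a terser version of exactly this calculation. One small remark: the observation that $0$ and $1$ are Siegel centers of the resulting $f_\alpha$ needs no appeal to Siegel's linearization theorem here, since it is inherited directly from the conjugacy with the original $f$, which was \emph{assumed} to have the $2$-cycle of Siegel disks (the bounded-type hypothesis matters later, for quasicircle boundaries, not for this proposition); and the uniqueness-up-to-swap remark, while accurate and relevant to Lemma~\ref{lem:sym}, is not needed for the statement being proved.
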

\begin{proof}
Through conjugation by a M\"obius transformation, we may assume that the centers of the Siegel disks on 
a $2$-cycle are arranged at $0$ and $1$. Then $f$ can be written as
\begin{equation*}\label{polynomial expression}
f(z)=az^3+bz^2+cz+1
\end{equation*}
with
\begin{equation*}\label{cycle condition}
a+b+c+1=0
\end{equation*}
and
\begin{equation}\label{multiplier condition}
f'(0)f'(1)=c(3a+2b+c)=\lambda =e^{2\pi\ii\theta}.
\end{equation}
From \eqref{multiplier condition}, we know $c\neq 0$. So we let
$c=\alpha\in\C\setminus\{0\}$. Then
$a=\frac{\lambda}{\alpha}+\alpha+2\not=0$ and
$b=\frac{\lambda}{\alpha}+2\alpha+3$. Thus, $f$ is of the form of $f_\alpha$ given by
(\ref{f-alpha-family}) with
$\alpha\in\Sigma_{\theta}=\C\setminus\{0,-1\pm\sqrt{1-\lambda}\}$.
It follows that $f_\alpha$ has a $2$-cycle of Siegel disks centered at $0$ and $1$ respectively and with multiplier equal to $\lambda =e^{2\pi\ii\theta}$.
\end{proof}
From direct calculations, we can show that $f_\alpha$ has the
following properties.
\begin{lem}\label{lem:sym}
Let $\tau(z)=-z+1$. Then
\begin{enumerate}
\item $\tau\circ f_\alpha\circ \tau^{-1}=f_{\tilde\alpha}$, where
$\alpha\in\Sigma_\theta$ and $\tilde\alpha=\lambda/\alpha$; \item
$\tau$ maps any critical point of $f_\alpha $ on $\partial\Delta_{\alpha}^0$ (resp. $\partial\Delta_{\alpha}^1$), if exist, to a critical point of $f_{\tilde{\alpha}}$ on $\partial\Delta_{\tilde\alpha}^1$ (resp. $\partial\Delta_{\tilde\alpha}^0$).
\end{enumerate}
\end{lem}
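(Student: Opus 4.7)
The plan is to handle the two parts separately; both reduce to short observations once we note that $\tau$ is an involution.

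For part (1), since $\tau(\tau(z))=-(-z+1)+1=z$, we have $\tau^{-1}=\tau$, and $g:=\tau\circ f_\alpha\circ\tau^{-1}$ is again a cubic polynomial (affine conjugation preserves degree). Rather than expand $g(z)=-f_\alpha(-z+1)+1$ and match all four coefficients against (\ref{f-alpha-family}) directly — which works but is tedious — I would argue structurally. Because $\tau$ swaps $0$ and $1$, and $f_\alpha$ swaps $\{0,1\}$ as a $2$-cycle, so does $g$. The multiplier of this $2$-cycle is conjugation-invariant, hence still $\lambda$, and is realized at the centers $0$ and $1$ of the two Siegel disks of $g$. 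Proposition \ref{the desired family} then forces $g=f_\beta$ for some $\beta\in\Sigma_\theta$, and by the defining normalization $\beta=f_\beta'(0)=g'(0)$. A chain-rule computation yields
\begin{equation*}
g'(0)=\tau'\bigl(f_\alpha(\tau^{-1}(0))\bigr)\,f_\alpha'(\tau^{-1}(0))\,(\tau^{-1})'(0)=(-1)\cdot f_\alpha'(1)\cdot(-1)=f_\alpha'(1),
\end{equation*}
and since $f_\alpha'(0)f_\alpha'(1)=\lambda$ with $f_\alpha'(0)=\alpha$, we conclude $\beta=\lambda/\alpha=\tilde\alpha$. A direct coefficient computation of $-f_\alpha(-z+1)+1$ can serve as an independent verification.

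Part (2) is then immediate from the general principle that affine conjugation transports dynamical data faithfully: $\Crit(g)=\tau(\Crit(f_\alpha))$, Fatou components map to Fatou components, and boundaries to boundaries. Since $\tau(0)=1$, the Siegel disk $\Delta_\alpha^0$ of $f_\alpha$ centered at $0$ is carried by $\tau$ onto a Siegel disk of $g=f_{\tilde\alpha}$ centered at $\tau(0)=1$, which is $\Delta_{\tilde\alpha}^1$; in particular $\tau(\partial\Delta_\alpha^0)=\partial\Delta_{\tilde\alpha}^1$. Hence any critical point of $f_\alpha$ on $\partial\Delta_\alpha^0$ is carried by $\tau$ to a critical point of $f_{\tilde\alpha}$ on $\partial\Delta_{\tilde\alpha}^1$, and the other case is obtained by swapping the roles of $0$ and $1$.

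There is essentially no obstacle here. The only step that touches computation is identifying $g$ as a member of the family $\{f_\beta\}$, and that identification is a direct consequence of Proposition \ref{the desired family}. The lemma is in effect a bookkeeping statement, recording the $0\leftrightarrow 1$ symmetry built into the parametrization together with the convention that $\alpha$ is the multiplier $f_\alpha'(0)$.
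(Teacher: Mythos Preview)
Your proof is correct. The paper does not give a proof of this lemma beyond the remark ``From direct calculations, we can show that $f_\alpha$ has the following properties,'' so the intended argument is simply to expand $-f_\alpha(-z+1)+1$ and match coefficients with (\ref{f-alpha-family}) at $\tilde\alpha=\lambda/\alpha$.

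Your route is a bit more conceptual: rather than expanding, you observe that $g=\tau\circ f_\alpha\circ\tau$ is again a cubic polynomial with the $2$-cycle $\{0,1\}$ and multiplier $\lambda$, invoke (the proof of) Proposition~\ref{the desired family} to place $g$ in the family, and then read off the parameter as $\beta=g'(0)=f_\alpha'(1)=\lambda/\alpha$. This avoids the coefficient bookkeeping at the cost of appealing to the normalization built into Proposition~\ref{the desired family}; note that strictly speaking you are using the content of that proposition's proof (that once the $2$-cycle sits at $0$ and $1$ the polynomial \emph{equals} some $f_\beta$, with $\beta=f'(0)$) rather than its statement (which only asserts conformal conjugacy). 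Both approaches are entirely routine; yours makes the symmetry $\alpha\leftrightarrow\lambda/\alpha$ transparent as the swap of the two local multipliers $f_\alpha'(0)$ and $f_\alpha'(1)$.
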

\begin{prop}\label{Symmetry of Gamma} The connectedness locus
$\mathcal{M}_\theta$ and its subsets $\Gamma_\theta $, $\Gamma_\theta^0$ and $\Gamma_\theta^1$ present a symmetric property in the following sense:

(1) $\alpha\in\mathcal{M}_\theta $ if and only if
$\lambda/\alpha\in\mathcal{M}_\theta$.

(2) $\alpha\in\Gamma_\theta $ if and only if
$\lambda/\alpha\in\Gamma_\theta$.

(3) $\alpha\in\Gamma_\theta^0$ if and only if
$\lambda/\alpha\in\Gamma_\theta^1$.
\end{prop}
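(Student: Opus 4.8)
The plan is to obtain all three equivalences as formal consequences of the symmetry recorded in Lemma~\ref{lem:sym}. First I would record two preliminary observations. If $\alpha\in\Sigma_\theta$, then $\tilde\alpha=\lambda/\alpha$ again lies in $\Sigma_\theta$, since $\tilde\alpha\neq 0$ (as $\lambda\neq 0$) and $\frac{\lambda}{\tilde\alpha}+\tilde\alpha+2=\alpha+\frac{\lambda}{\alpha}+2\neq 0$. Moreover $\tau(z)=-z+1$ is an involution and $\widetilde{\tilde\alpha}=\lambda/(\lambda/\alpha)=\alpha$, so in each of (1)--(3) it suffices to establish one implication, the reverse one following by applying the same reasoning with $\tilde\alpha$ in place of $\alpha$ (and, in (3), with the indices $0$ and $1$ interchanged).

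For (1), I would use that $\tau$ is affine, hence a conformal conjugacy from $f_\alpha$ to $f_{\tilde\alpha}$ on $\EC$ fixing $\infty$. Therefore $\tau$ maps the two finite critical points of $f_\alpha$ onto the two finite critical points of $f_{\tilde\alpha}$ and sends bounded (resp. unbounded) orbits to bounded (resp. unbounded) orbits. Hence the orbits of the finite critical points of $f_\alpha$ are all bounded if and only if the same holds for $f_{\tilde\alpha}$, i.e. $\alpha\in\mathcal{M}_\theta$ if and only if $\tilde\alpha\in\mathcal{M}_\theta$.

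For (2) and (3), the key point is that $\tau$ carries $\partial\Delta_\alpha^0$ onto $\partial\Delta_{\tilde\alpha}^1$ and $\partial\Delta_\alpha^1$ onto $\partial\Delta_{\tilde\alpha}^0$: since $\tau$ conjugates $f_\alpha$ to $f_{\tilde\alpha}$ with $\tau(0)=1$ and $\tau(1)=0$, it sends the Siegel disk of $f_\alpha$ containing $0$ to the Siegel disk of $f_{\tilde\alpha}$ containing $1$, and vice versa, matching their boundaries accordingly; this is the statement of Lemma~\ref{lem:sym}(2) at the level of critical points. If $\alpha\in\Gamma_\theta$, transporting a critical point on $\partial\Delta_\alpha^0$ and one on $\partial\Delta_\alpha^1$ through $\tau$ produces a critical point of $f_{\tilde\alpha}$ on $\partial\Delta_{\tilde\alpha}^1$ and one on $\partial\Delta_{\tilde\alpha}^0$, so $\tilde\alpha\in\Gamma_\theta$. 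Since $\tau$ is affine it preserves critical points together with their multiplicities; hence if both critical points of $f_\alpha$ counted with multiplicity lie on $\partial\Delta_\alpha^0$, then both critical points of $f_{\tilde\alpha}$ counted with multiplicity lie on $\tau(\partial\Delta_\alpha^0)=\partial\Delta_{\tilde\alpha}^1$, giving $\alpha\in\Gamma_\theta^0\Rightarrow\tilde\alpha\in\Gamma_\theta^1$. I do not expect a genuine obstacle here; the only point deserving care is the identification $\tau(\Delta_\alpha^0)=\Delta_{\tilde\alpha}^1$ rather than some other Fatou component, which rests on the fact that a point lies in at most one Fatou component and that a conformal conjugacy preserves the type (Siegel disk) and the cyclic structure of periodic Fatou components.
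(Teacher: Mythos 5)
Your proof is correct and takes essentially the same approach the paper intends: the proposition is meant to follow directly from Lemma~\ref{lem:sym}, and you have simply spelled out the deduction — the affine conjugacy $\tau$ carries Siegel disks to Siegel disks (matching $\Delta_\alpha^0$ with $\Delta_{\tilde\alpha}^1$ and vice versa), preserves criticality with multiplicity and orbit boundedness, and is involutive, so one direction of each equivalence suffices. The paper omits these details; you filled them in correctly, including the preliminary check that $\lambda/\alpha$ again lies in $\Sigma_\theta$.
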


\subsection{Critical points of $f_\alpha$}\label{critical points}
The two finite critical points of $f_\alpha $ are locally expressed by
\begin{equation}\label{critical pts}
\frac{(\frac{\lambda}{\alpha}+2\alpha+3)\pm \sqrt{(\frac{\lambda}{\alpha}+\alpha+3)^2-\lambda}}{3(\frac{\lambda}{\alpha}+\alpha+2)}.
\end{equation}
Let us denote the roots of $\frac{\lambda}{\alpha}+\alpha+2=0$ by $\alpha_0$ and $\tilde{\alpha}_0$, where $\tilde{\alpha}_0=\lambda/\alpha_0$. Furthermore, we denote the roots of $(\frac{\lambda}{\alpha}+\alpha+3)^2=\lambda$ by $\alpha_4$, $\alpha_5$, $\tilde{\alpha}_4$ and $\tilde{\alpha}_5$, where $|\alpha_4|>1$, $|\alpha_5|>1$,
$\tilde{\alpha}_4=\lambda/\alpha_4$ and $\tilde{\alpha}_5=\lambda/\alpha_5$. These four roots are essential singularities of the expressions given by (\ref{critical pts}). When we view each finite critical point $c(\alpha)$ of $f_\alpha$ as a complex analytic function of $\alpha$, its maximal domain of analyticity is obtained from $\EC$ by removing $0$, $\infty $, $\alpha_0$, $\tilde{\alpha}_0$, one arc connecting $\alpha_4$ to $\alpha_5$ and one arc connecting $\tilde{\alpha}_4$ to $\tilde{\alpha}_5$. Let us point out here that the maximal domain of analyticity of $c(\alpha)$ is not unique. We may use the maximal domain obtained from $\EC$ by removing $\alpha_0$, $\tilde{\alpha}_0$, two disjoint arcs respectively connecting $\alpha_4$ and $\alpha_5$ to $\infty $, and two disjoint arcs repectively connecting $\tilde{\alpha}_4$ and $\tilde{\alpha}_5$ to $0$. In fact, we will use these two maximal domains in the proof of the main theorem. 

Clearly, when $\alpha$ is equal to one of  $\alpha_4$, $\alpha_5$, $\tilde{\alpha}_4$ and $\tilde{\alpha}_5$, $f_\alpha$ has a double (finite) critical point. Thus, all four parameters belong to $\Gamma_\theta^0\cup \Gamma_\theta^1$. By the symmetry between $\Gamma_\theta^0$ and $\Gamma_\theta^1$ given in Proposition \ref{Symmetry of Gamma}, we know two of them belong to $\Gamma_\theta^0$ and two of them belong to  $\Gamma_\theta^1$. In fact, $\alpha_4, \alpha_5\in \Gamma_\theta^1$ and $\tilde{\alpha}_4, \tilde{\alpha}_5\in \Gamma_\theta^0$ (see Figure \ref{arc_0^1}).

Let $\alpha\in \Gamma_\theta$ and let $c_0$ and $c_1$ be the critical points on $\partial\Delta_\alpha^0$ and $\partial\Delta_\alpha^1$ respectively. 
We first consider how many parameters there are on $\Gamma_\theta$ such that $f_\alpha(c_1)=c_0$. Although there is an ambiguity on which expression in (\ref{critical pts}) is used for $c_0$ or $c_1$, we may obtain an equation on $\alpha$ without ambiguity by squaring both sides of an equivalent equation with one side equal to the involved square root only. This resulting equation can be changed to a relative simple expression by the substitutions
\begin{equation}\label{xy}
x=\frac{\lambda}{\alpha}+\alpha+2 \quad \text{ and } \quad y=\alpha+1,
\end{equation}
which is of the form
\begin{equation*}\label{x}
    4x^4+(3y^2-14y-12)x^3+(-6y^3+6y^2+39y+3)x^2+(3y^4+16y^3-39y^2)x-8y^4=0.
\end{equation*}
After plugging back the expressions of $x$ and $y$ in (\ref{xy}), we obtain
\begin{equation*}\label{alpha}
4\alpha^8+9\alpha^7+a_5\alpha^5+a_4\alpha^4+a_3\alpha^3+8\lambda^3\alpha^2+9\lambda^3\alpha+4\lambda^4=0,
\end{equation*}
where $a_5=3\lambda^2-12\lambda+522$, $a_4=6\lambda^2+18\lambda$
and $a_3=3\lambda^3+12\lambda^2$.

This degree-$8$ polynomial has eight roots. Computer verification indicates that six of them lie on the curve $\Gamma_\theta$.
By the symmetry of $\Gamma_\theta $ given in Proposition \ref{Symmetry of Gamma}, three of them correspond to the case when $f(c_0)=c_1$ and the other three correspond to the other case when $f(c_1)=c_0$. We denote them by $\alpha_k$ and $\tilde{\alpha}_k$, where $\tilde{\alpha}_k=\lambda /\alpha_k$, and $k=1, 2, 3$. In the last section, we prove that these six parameters are the only parameters $\alpha$ on $\Gamma_\theta$ such that one critical point is mapped to another by $f_\alpha$, and furthermore $\Gamma_\theta$ is a Jordan curve. It follows that the other two roots of the above degree-$8$ polynomial lie on the boundaries of two capture domains respectively by using the symmetry of $\mathcal{M}_\theta$ given in Proposition \ref{Symmetry of Gamma}.

\subsection{Julia set $J(f_\alpha)$ when $\alpha \notin \partial \mathcal{M}$} 
In this subsection, we show three different types of Julia sets when $\alpha \notin \partial \mathcal{M}$.

By definition, $\alpha\in\Sigma_\theta\setminus\mathcal{M}_\theta$ if and
only if one of the finite critical points escapes to $\infty$ under iteration of $f_{\alpha}$. Applying the Riemann-Hurwitz formula to the pullbacks of a sufficiently small neighborhood
of $\infty$ (in the immediate attracting basin $B(\infty)$ of $\infty$), one can show that $B(\infty)$ is infinitely connected. Therefore, the Julia set of $f_{\alpha}$ is disconnected. Such an example is shown in Figure \ref{Fig_Jul-(-2-0p2i)}. 
\begin{figure}[!htpb]
 \setlength{\unitlength}{1mm}
  \centering
  \includegraphics[width=0.62\textwidth]{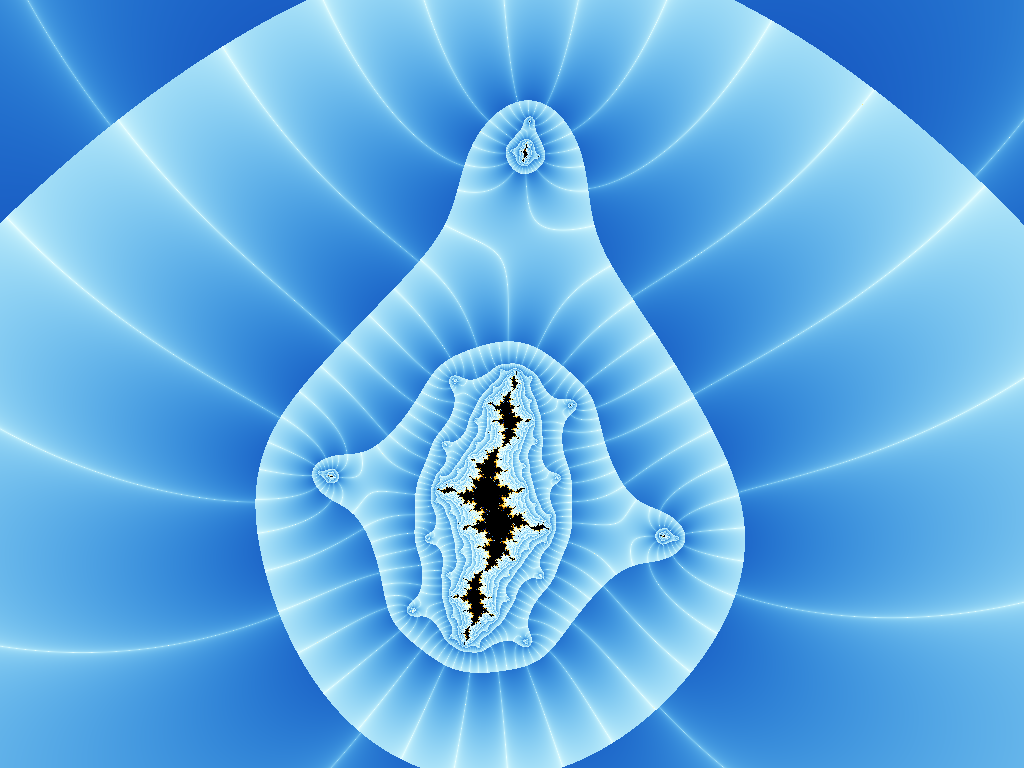}
  \caption{The Julia set of $f_\alpha$ for $\alpha=-2-0.2i$, a disconnected case.}
  \label{Fig_Jul-(-2-0p2i)}
\end{figure}

Figure \ref{hyperbolic-like case} shows the
Julia set of $f_\alpha$ in a hyperbolic-like case for which there is an attracting fixed point, and Figure \ref{Fig_Jul-(1p6066+p41583i)} shows the Julia set for a capture case in which one finite critical point lies on the boundary of $\Delta_\alpha^1$ while the other finite critical point is mapped into $\Delta_\alpha^1$.
\begin{figure}[!htpb]
 \setlength{\unitlength}{1mm}
  \centering
  \includegraphics[width=0.62\textwidth]{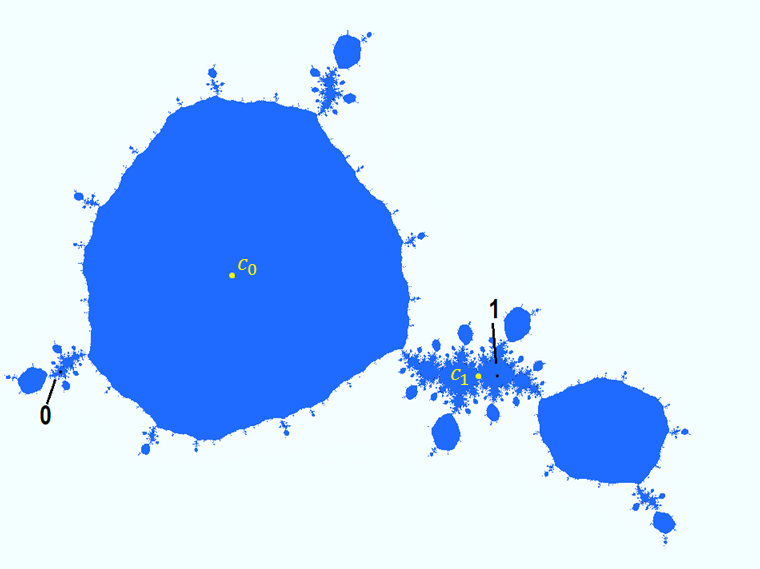}
  \caption{The Julia set of $f_\alpha$ for $\alpha=-2.5+2.5i$, a hyperbolic-like case.}
  \label{hyperbolic-like case}
\end{figure}

\begin{figure}[!htpb]
 \setlength{\unitlength}{1mm}
  \centering
  \includegraphics[width=0.72\textwidth]{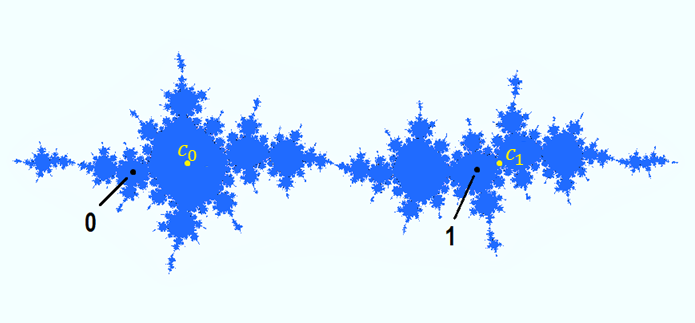}
  \caption{The Julia set of $f_\alpha$ for $\alpha=1.6066+.41583i$, a capture case.}
  \label{Fig_Jul-(1p6066+p41583i)}
\end{figure}

\subsection{Boundary of the connectedness locus $\mathcal{M}_\theta$}
Given $\alpha\in\Sigma_\theta$, $f_{\alpha}$ is said to be {\em J-stable} if there exists a neighborhood $O$ of $\alpha$ in $\Sigma_\theta$ such that for any $\beta\in O$, $f_{\beta}$ restricted on a neighborhood of  $J(f_{\beta})$ is topologically
conjugate to $f_{\alpha}$ restricted on a neighborhood of $J(f_{\alpha})$. We continue to denote the two (finite) critical points of $f_\alpha$ by $c_0(\alpha)$ and $c_1(\alpha)$. Keep in mind that in this subsection one of $c_0(\alpha)$ and $c_1(\alpha)$ may not lie on $\partial\Delta_{\alpha}^0\cup\partial\Delta_{\alpha}^1$.
According to \cite[Theorem 4.2]{McM94b}, $f_{\alpha}$ is J-stable if and only if both sequences $\{f^{\circ k}_{\alpha}(c_0(\alpha))\}_{k\ge 0}$ and $\{f^{\circ k}_{\alpha}(c_1(\alpha))\}_{k\ge 0}$ are normal on a neighborhood of $\alpha$. We prove the following characterization of the boundary of $\mathcal{M}_\theta$. 

\begin{prop}\label{unstable}
The map $f_{\alpha}$ is not J-stable if and only if 
$\alpha\in\partial\mathcal{M}_\theta$. 
\end{prop}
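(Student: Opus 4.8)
The plan is to establish both implications by relating $J$-stability to two phenomena: escape of a free critical point to $\infty$, and boundary-of-$\mathcal{M}_\theta$ behavior. Recall that by definition $\mathcal{M}_\theta$ is the set of $\alpha$ for which the orbits of both finite critical points are bounded, and that by the criterion from \cite[Theorem 4.2]{McM94b} quoted just above, $f_\alpha$ is $J$-stable if and only if both sequences $\{f_\alpha^{\circ k}(c_0(\alpha))\}_{k\ge 0}$ and $\{f_\alpha^{\circ k}(c_1(\alpha))\}_{k\ge 0}$ are normal on a neighborhood of $\alpha$ in $\Sigma_\theta$. A key structural fact to exploit is that for \emph{every} $\alpha\in\Sigma_\theta$ at least one critical point lies on $\partial\Delta_\alpha^0\cup\partial\Delta_\alpha^1$ (by \cite{Zha11}), so the corresponding orbit stays on a quasicircle and its iterate sequence is automatically a normal family near that $\alpha$ once we know (Main Theorem (1), which we may assume) that the Siegel disk boundaries move continuously. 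Hence the only possible source of instability is the \emph{free} critical point, the one that need not lie on the Siegel boundaries.

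For the implication ``not $J$-stable $\Rightarrow$ $\alpha\in\partial\mathcal{M}_\theta$'': if $\alpha$ is in the interior of $\mathcal{M}_\theta$, then the free critical orbit is bounded, and moreover in a neighborhood it stays in a fixed compact subset of $\C$ avoiding $\infty$ (using that $\mathcal{M}_\theta$ is closed and the local boundedness is uniform on compacta), so $\{f_\beta^{\circ k}(c_i(\beta))\}$ omits a neighborhood of $\infty$ for $\beta$ near $\alpha$ and is therefore normal; combined with the remark above on the captured critical point, $f_\alpha$ is $J$-stable, a contradiction. If $\alpha\in\Sigma_\theta\setminus\mathcal{M}_\theta$, then the free critical point escapes to $\infty$, which is an open condition, so nearby $f_\beta$ also has an escaping critical orbit and the same topological model (an infinitely connected basin of $\infty$, as discussed in the previous subsection); one checks $J$-stability holds there too, so again $\alpha\notin\partial\mathcal{M}_\theta$ forces $J$-stability. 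Taking contrapositives gives the forward direction.

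For the converse ``$\alpha\in\partial\mathcal{M}_\theta\Rightarrow$ not $J$-stable'': suppose $\alpha\in\partial\mathcal{M}_\theta$ but $f_\alpha$ were $J$-stable. Then there is a neighborhood $O$ on which all $f_\beta$ are quasiconformally conjugate near their Julia sets, hence in particular the combinatorics of the free critical orbit — whether it escapes, and if not, which pattern (hyperbolic-like, capture, queer) it follows — is locally constant. But $\alpha$ is a boundary point of $\mathcal{M}_\theta$, so $O$ meets both $\mathcal{M}_\theta$ and its complement: on one side the free critical orbit is bounded, on the other it escapes to $\infty$. These cannot be topologically conjugate dynamics near the Julia set (connected versus disconnected Julia set, finitely versus infinitely connected Fatou components), the required contradiction.

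The main obstacle is the first implication in the case $\alpha\in\partial\mathcal{M}_\theta$ handled from the interior side: one must verify that near a point of $\operatorname{int}\mathcal{M}_\theta$ the free critical orbit is \emph{uniformly} bounded away from $\infty$, i.e.\ that the escape radius is locally bounded, so that normality (rather than merely boundedness at the single parameter) is genuinely available; this uses that $\Sigma_\theta\setminus\mathcal{M}_\theta$ is open and that the escape time is locally bounded on it, plus a compactness argument. The rest is largely a matter of carefully invoking \cite{Zha11} for the captured critical point, Main Theorem (1) for continuity of $\partial\Delta_\alpha^0\cup\partial\Delta_\alpha^1$, and the McMullen criterion.
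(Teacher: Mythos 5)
Your proposal is essentially correct and follows the same overall strategy as the paper (reduce $J$-stability to normality of the two critical orbit families via McMullen's criterion, then use the escape-radius argument around $\infty$). A few points of divergence are worth noting.

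For the implication ``$\alpha\in\partial\mathcal{M}_\theta\Rightarrow$ not $J$-stable'' you argue via the definition of $J$-stability directly: nearby parameters have topologically distinct dynamics near $J$ (connected vs.\ disconnected Julia set, since $\mathcal{M}_\theta$ is precisely the connectedness locus). This is a valid and arguably cleaner route. The paper instead stays with McMullen's normality criterion, asserting that the presence of both bounded and escaping critical orbits in every neighborhood of $\alpha_m$ destroys normality of the family $\alpha\mapsto f_\alpha^{\circ k}(c_j(\alpha))$; both arguments work.

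For the implication ``$\alpha\notin\partial\mathcal{M}_\theta\Rightarrow$ $J$-stable'' you emphasize the ``free'' vs.\ ``captured'' critical point dichotomy (one critical orbit is always on the continuously moving Siegel boundary, hence automatically controlled), invoking \cite{Zha11} and Main Theorem~(1). The paper treats both critical orbits symmetrically and derives uniform bounds purely from the escape-radius argument: a bounded orbit must avoid $D_\delta(\infty)$, and this avoidance persists on a neighborhood when $\alpha_m$ is in the interior of $\mathcal{M}_\theta$. Your version is not wrong, but it leans on two results (continuity of Siegel boundaries, i.e.\ Proposition~\ref{continuity}, and Corollary~\ref{atleast1}) that are proved later in the paper than Proposition~\ref{unstable}. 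There is no logical circularity (those later results do not depend on Proposition~\ref{unstable}), but it creates an ordering dependence that the paper's argument avoids. In fact, the escape-radius argument alone suffices for the interior case, and for the exterior case one only needs that the bounded critical orbit stays bounded for nearby $\alpha$, which follows from the fact that a bounded orbit necessarily avoids $D_\delta(\infty)$; no explicit appeal to Siegel-boundary continuity is required. You correctly flag the uniform-boundedness step as the nontrivial point.
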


\begin{proof}
Clearly, $\mathcal{M}_\theta$ is a closed subset since $\Sigma_\theta\setminus \mathcal{M}_\theta$ is open. 

We first show that if $\alpha\in\partial\mathcal{M}_\theta$, then $f_{\alpha}$ is not J-stable.
When $\alpha_m\in\partial\mathcal{M}_\theta$, by the definition of $\mathcal{M}_\theta$, we know that any
neighborhood of $\alpha_m$ contains a parameter $\alpha$ such that the iterates of either $c_0$ or $c_1$ under $f_{\alpha}$ escape to $\infty$. But the iterates of both $c_0$ and $c_1$ under $f_{\alpha_m}$ are bounded. Therefore at least one of the sequences $\{f^{\circ k}_{\alpha}(c_0(\alpha))\}_{k\ge 0}$ and $\{f^{\circ k}_{\alpha}(c_1(\alpha))\}_{k\ge 0}$ fails to be normal in any neighborhood of $\alpha_m$. By \cite[Theorem 4.2]{McM94b}, $f_{\alpha_m}$ is not J-stable for
any $\alpha_m\in\partial\mathcal{M}_\theta$. 

It remains to show that if 
$\alpha_m$ is not on $\partial\mathcal{M}_\theta$, then $f_{\alpha_m}$ is J-stable. There are two cases to consider: $\alpha_m \in \Sigma_\theta \setminus \mathcal{M}_\theta$ or $\alpha_m \in int(\mathcal{M}_\theta)$.
Note that for any $\alpha_m\in \Sigma_\theta$, there is a neighborhood $U$ of $\alpha_m$ in $\Sigma_\theta$ and $\delta>0$ such that for any $\alpha\in U$, $f_\alpha(D_\delta(\infty))\subset D_\delta(\infty)$, where $D_\delta(\infty)$ is a disk, centered at $\infty $ and of radius $\delta$ in the spherical metric, in the attracting basin $B(f_\alpha)$ of $f_\alpha$ for all $\alpha\in U$. Using this fact,
one can see that if a sequence $\{f^{\circ k}_{\alpha}(c_j(\alpha_m))\}_{k\ge 0}$, $j=0$ or $1$, is unbounded, then there exists a neighborhood $O\subset U$ such that 
the sequence $\{f^{\circ k}_{\alpha}(c_j(\alpha))\}_{k\ge 0}$ converges to $\infty$ uniformly on $O$; if a sequence $\{f^{\circ k}_{\alpha}(c_j(\alpha_m))\}_{k\ge 0}$, $j=0$ or $1$, is bounded, then there exists a neighborhood $O\subset U$ such that 
the sequence $\{f^{\circ k}_{\alpha}(c_j(\alpha))\}_{k\ge 0}$ is uniformly bounded on $O$. This means that both sequences $\{f^{\circ k}_{\alpha}(c_0(\alpha))\}_{k\ge 0}$ and $\{f^{\circ k}_{\alpha}(c_1(\alpha))\}_{k\ge 0}$ are normal on $O$ in either case. Thus, $f_\alpha$ is $J$-stable in each of the two cases.
\end{proof}

\subsection{Boundaries of Siegel disks}
In this subsection, we prove that
$\partial\Delta_\alpha^0\cup\partial\Delta_\alpha^1$ changes
continuously as $\alpha$ varies in the parameter space
$\Sigma_\theta$. Let us first state
a result by Zhang in \cite{Zha11}.
\begin{thm}[\cite{Zha11}]\label{zhang11}
Let $d\ge 2$ be an integer and $0<\theta<1$ be an irrational
number of bounded type. Then there exists a constant
$1<K(d,\theta)<\infty$ depending only on $d$ and $\theta$ such
that for any rational map $f$ of degree $d$, if $f$ has a fixed
Siegel disk with rotation number $\theta$, then the boundary of
the Siegel disk is a $K(d,\theta)$-quasicircle which passes
through at least one critical point of $f$.
\end{thm}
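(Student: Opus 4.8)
The statement is a theorem about the local geometry of Siegel disks, and I would prove it by the route that has become standard since Petersen's treatment of the quadratic case: push the problem onto the boundary circle, recognize the boundary dynamics as an analytic \emph{critical circle map} of bounded-type rotation number, invoke the real and complex \emph{a priori} bounds available in that setting, and then transport the resulting geometric control back to the curve $\partial\Delta$ by a quasiconformal surgery. Because $\theta$ is of bounded type, honest quasiconformal maps (rather than David maps) suffice at every stage, which is exactly why the conclusion is a genuine $K$-quasicircle and not merely a John domain; and I would keep track of constants throughout so that every bound depends only on $\theta$ and on the multiplicities of the critical points of $f$ on $\partial\Delta$ -- data controlled by $d$ -- yielding the uniform constant $K(d,\theta)$.

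First I would normalize. Replacing $f$ by an iterate if the Siegel disk lies on a cycle, and conjugating by a M\"obius map, I may assume $\Delta$ is a fixed Siegel disk centered at $0$ with $f'(0)=e^{2\pi\ii\theta}$; let $\phi\colon\D\to\Delta$ be the linearizing conformal map, normalized so that $\phi^{-1}\circ f\circ\phi=R_\theta$ on $\D$, where $R_\theta(z)=e^{2\pi\ii\theta}z$. That $\partial\Delta$ is a $K$-quasicircle is equivalent to $\phi$ extending to a $K'$-quasiconformal homeomorphism of $\EC$, equivalently to the boundary correspondence $\phi|_{\partial\D}$ being $K''$-quasisymmetric (with respect to the Euclidean metric on $\partial\Delta$) while conjugating $R_\theta$ to $f|_{\partial\Delta}$. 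The companion assertion, that $\partial\Delta$ carries a critical point, I would obtain from a maximality argument together with the bounded-type hypothesis: a fixed Siegel disk whose boundary carried no critical point, and near which the post-critical set were tame, could be enlarged by extending the linearization along a univalent inverse branch of $f$, contradicting maximality; bounded type is precisely what guarantees the needed tameness of the post-critical set near $\partial\Delta$, and it cannot be dropped, as Herman's smooth-boundary Siegel disks for suitable non-bounded-type $\theta$ show.

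The heart of the matter is the circle-map model and its \emph{a priori} bounds. Near $\partial\Delta$ one assembles a holomorphic commuting pair: on the inside, the rigid rotation $R_\theta$ transplanted by $\phi$ onto an annulus in $\Delta$ abutting $\partial\Delta$; on the outside, $f$ together with the relevant univalent inverse branches of $f$ acting on an annulus in $\EC\setminus\overline{\Delta}$ having $\partial\Delta$ as a boundary component. By the previous paragraph $\partial\Delta$ is a critical circle for this pair, and the pair's combinatorics is read off from the continued fraction of $\theta$. For commuting pairs of bounded type the Herman--Swiatek real \emph{a priori} bounds give that the successive closest-return partitions of $\partial\Delta$ have bounded geometry, and complex \emph{a priori} bounds -- de Faria and de Melo for critical circle maps, McMullen's renormalization picture in the Siegel setting -- upgrade this to bounded geometry of the complex renormalizations, with all constants depending only on $\theta$ and on the critical multiplicities. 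Equivalently, and this is the form I would aim for, the linearizing conjugacy $\phi|_{\partial\D}$ is $K(d,\theta)$-quasisymmetric; the degree $d$ enters only through the finite list of multiplicities of the critical points of $f$ lying on $\partial\Delta$, and the uniformity of the \emph{a priori} bounds over that finite list is what produces the uniform $K(d,\theta)$.

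To finish, a quasisymmetric boundary correspondence with bounded geometry on both sides -- the inside being round, the outside being controlled by the bounded geometry of the holomorphic pair -- extends, via the Beurling--Ahlfors construction and a standard interpolation across a collar, to a quasiconformal homeomorphism of $\EC$; by Ahlfors' characterization $\partial\Delta$ is then a $K(d,\theta)$-quasicircle. Alternatively one reads the bounded-turning ($3$-point) condition for $\partial\Delta$ directly off the bounded Euclidean geometry of the closest-return partition surrounding it. The hard part will be the \emph{a priori} bounds in this general rational setting: unlike the quadratic case there is no explicit Blaschke-product model, so one must genuinely control the univalent inverse branches of $f$ on a definite neighborhood of $\overline{\Delta}$, rule out interference from other critical points of $f$ that might accumulate on $\partial\Delta$, and prove that the renormalization operator on the associated holomorphic commuting pairs contracts uniformly with constants that depend only on $(d,\theta)$. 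By comparison, the normalization, the Beurling--Ahlfors gluing, and the implication ``bounded geometry of the partition $\Rightarrow$ quasicircle'' are routine.
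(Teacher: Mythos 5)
This theorem is not proved in the present paper; it is quoted from Zhang's paper [Zha11] and used as a black box, so there is no in-paper argument to compare your sketch against. Judged on its own merits and against Zhang's actual argument, your outline captures the right constellation of ideas (critical circle map on the boundary, Herman--Swiatek real bounds, bounded type giving genuine quasisymmetry rather than a David map), but it contains a logical inversion that would be fatal if one tried to fill in the details.

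You propose settling the ``critical point on $\partial\Delta$'' claim first, by a maximality argument, and only afterward extracting quasisymmetry from \emph{a priori} bounds. But the maximality argument (enlarge $\Delta$ by continuing the linearization along a univalent inverse branch) presupposes -- before it can even be formulated -- that $\partial\Delta$ is a Jordan curve on which $f$ acts as a homeomorphism and that there is a definite critical-point-free collar on which the inverse branch is single-valued; none of this is available a priori. The only unconditional input (Ma\~n\'e's theorem) is that $\partial\Delta$ lies in the closure of the postcritical set, which says nothing about a critical point actually sitting on $\partial\Delta$, and the assertion is in fact false without bounded type. In Zhang's proof, as in the polynomial precursors of Petersen, Zakeri and Graczyk--Swiatek, the order is reversed: one establishes the quasicircle (hence Jordan-curve, hence local-connectivity) property first, via a quasiconformal surgery that produces a Blaschke-type model whose boundary map is a genuine critical circle homeomorphism of $\partial\D$, and only then deduces that a critical point of $f$ lies on $\partial\Delta$. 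Your remark that ``unlike the quadratic case there is no explicit Blaschke-product model'' therefore misses the structure of the existing proof: the surgery model is exactly what makes the critical-circle-map machinery applicable, and the direct renormalization route you sketch instead -- \emph{a priori} bounds derived straight from a holomorphic commuting pair extracted from $f$ -- has not been carried out in this generality and would require a fresh uniformity argument in $(d,\theta)$. You label that step ``the hard part,'' but offloading the entire difficulty onto an unestablished estimate leaves the proposal as a plan rather than a proof.
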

\begin{cor}\label{atleast1}
For any $\alpha\in\Sigma_\theta$, $\partial{\Delta_{\alpha}^0}
\cup \partial{\Delta_{\alpha}^{1}}$ contains at least one of two
finite critical points.
\end{cor}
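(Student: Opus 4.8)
The plan is to reduce the statement to Zhang's theorem (Theorem~\ref{zhang11}) applied to the second iterate $g:=f_\alpha^{\circ 2}$, which is a polynomial, hence a rational map of $\EC$ of degree $9\ge 2$. First I would observe that both $\Delta_\alpha^0$ and $\Delta_\alpha^1$ are \emph{fixed} Siegel disks of $g$: by Proposition~\ref{the desired family} we have $f_\alpha(\Delta_\alpha^0)=\Delta_\alpha^1$ and $f_\alpha(\Delta_\alpha^1)=\Delta_\alpha^0$, and on each of them $g$ is conformally conjugate to a rotation. The relevant rotation number is $\theta$ itself: by the chain rule $g'(z)=f_\alpha'(f_\alpha(z))\,f_\alpha'(z)$, so $g'(0)=f_\alpha'(0)f_\alpha'(1)=\lambda=e^{2\pi\ii\theta}$ by the multiplier condition~\eqref{multiplier condition}, and likewise $g'(1)=\lambda$. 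Since $\theta\in(0,1)$ is irrational of bounded type, Theorem~\ref{zhang11} applies to $g$ at $\Delta_\alpha^0$: its boundary $\partial\Delta_\alpha^0$ is a $K(9,\theta)$-quasicircle, and it contains a critical point $w$ of $g$.

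Next I would unwind what it means for $w$ to be a critical point of $g=f_\alpha\circ f_\alpha$. From $g'(z)=f_\alpha'(f_\alpha(z))\,f_\alpha'(z)$ we get that either $f_\alpha'(w)=0$ or $f_\alpha'(f_\alpha(w))=0$. Since $\partial\Delta_\alpha^0$ is a bounded quasicircle we have $w\ne\infty$, and since $f_\alpha$ is a polynomial, $f_\alpha(w)\ne\infty$ as well; hence the critical point of $f_\alpha$ at $\infty$ plays no role. Therefore either $w$ is a finite critical point of $f_\alpha$ lying on $\partial\Delta_\alpha^0$, or $f_\alpha(w)$ is a finite critical point of $f_\alpha$, and in the latter case $f_\alpha(w)\in f_\alpha(\partial\Delta_\alpha^0)=\partial\Delta_\alpha^1$ because $f_\alpha$ maps the $2$-cycle of Siegel disks, and hence their boundaries, to one another. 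In either case $\partial\Delta_\alpha^0\cup\partial\Delta_\alpha^1$ contains one of the two finite critical points of $f_\alpha$, which is exactly the assertion.

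There is no serious obstacle here; the argument is essentially bookkeeping on top of Zhang's theorem. The only two points requiring care are (i) checking that the rotation number of $g$ on each Siegel disk is the bounded-type number $\theta$ rather than, say, $2\theta$, which is immediate from $(f_\alpha^{\circ 2})'=(f_\alpha'\circ f_\alpha)\cdot f_\alpha'$ together with $f_\alpha'(0)f_\alpha'(1)=\lambda$; and (ii) ruling out $\infty$ as the critical point supplied by Theorem~\ref{zhang11}, which is automatic since the Siegel disks (and their quasicircle boundaries) are bounded subsets of $\C$.
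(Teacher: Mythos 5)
Your proof is correct and takes essentially the same approach as the paper: apply Theorem~\ref{zhang11} to $f_\alpha^{\circ 2}$, note the rotation number on each disk is $\theta$, and unwind what a critical point of $f_\alpha^{\circ 2}$ on $\partial\Delta_\alpha^0$ means in terms of $c_0,c_1$. Your explicit verification of the rotation number and exclusion of $\infty$ are small points of care that the paper leaves implicit, but the argument is the same.
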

\begin{proof}
Note that $\Delta_\alpha^0$ and $\Delta_\alpha^1$ are fixed Siegel
disks of $f_\alpha^{\circ 2}$
and the rotation numbers of $f_\alpha^{\circ 2}$ on both of them are equal to 
$\theta$, which is of bounded type. Denote the critical points of $f_\alpha$ by $c_0$ and $c_1$. The set of critical points of
$f_\alpha^{\circ 2}$ is
\begin{equation*}\label{equ:crit-f2}
\Crit(f_\alpha^{\circ 2})=\{c_0,c_1\}\cup f_\alpha^{-1}(\{c_0\})\cup f_\alpha^{-1}(\{c_1\}).
\end{equation*}
According to \cite{Zha11}, each of $\partial\Delta_{\alpha}^0$ and
$\partial{\Delta_{\alpha}^{1}}$
must contain a point in
$\Crit(f_\alpha^{\circ 2})$, which implies that
$(\partial\Delta_{\alpha}^0\cup\partial\Delta_{\alpha}^1) \cap
\{c_0,c_1\}\neq\emptyset$.
\end{proof}
\begin{cor}\label{disjoint}
There exists $K=K(\theta)>1$ such that for any
$\alpha\in\Sigma_\theta$, $\partial\Delta_{\alpha}^0$ and
$\partial\Delta_{\alpha}^1$ are $K$-quasicircles, and
$\overline{\Delta_\alpha^{0}}\cap
\overline{\Delta_\alpha^{1}}=\emptyset$.
\end{cor}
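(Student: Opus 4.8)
The plan is to obtain the uniform quasicircle bound directly from Zhang's theorem applied to $f_\alpha^{\circ 2}$, and to rule out any touching of the closures by combining the minimality of the rotation on a bounded-type Siegel-disk boundary with the Jordan curve theorem.

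For the first assertion, recall (as in the proof of Corollary \ref{atleast1}) that for every $\alpha\in\Sigma_\theta$ the sets $\Delta_\alpha^0$ and $\Delta_\alpha^1$ are fixed Siegel disks of $f_\alpha^{\circ 2}$, each with rotation number $\theta$; moreover $f_\alpha^{\circ 2}$ is a polynomial, hence a rational map, of degree $9$. Theorem \ref{zhang11} then produces a constant $1<K(9,\theta)<\infty$ depending only on $\theta$ such that $\partial\Delta_\alpha^0$ and $\partial\Delta_\alpha^1$ are both $K(9,\theta)$-quasicircles for every $\alpha\in\Sigma_\theta$. Setting $K=K(\theta):=K(9,\theta)$ settles the quasicircle part, uniformly in $\alpha$.

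For disjointness of the closures, suppose for contradiction that $p\in\partial\Delta_\alpha^0\cap\partial\Delta_\alpha^1$ for some $\alpha$. Since $0\mapsto 1\mapsto 0$ under $f_\alpha$, the two disks are genuinely swapped, so $f_\alpha(\partial\Delta_\alpha^0)=\partial\Delta_\alpha^1$ and $f_\alpha(\partial\Delta_\alpha^1)=\partial\Delta_\alpha^0$; hence the closed set $I:=\partial\Delta_\alpha^0\cap\partial\Delta_\alpha^1$ satisfies $f_\alpha(I)\subseteq I$, and in particular $I$ contains the entire forward orbit of $p$ under $f_\alpha^{\circ 2}$. Because $\theta$ is of bounded type, the linearizing coordinate of the Siegel disk $\Delta_\alpha^0$ of $f_\alpha^{\circ 2}$ extends to a homeomorphism of the closure that conjugates $f_\alpha^{\circ 2}|_{\partial\Delta_\alpha^0}$ to the rigid irrational rotation $R_\theta$, which is minimal; thus the $f_\alpha^{\circ 2}$-orbit of $p$ is dense in $\partial\Delta_\alpha^0$. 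As $I$ is closed this gives $\partial\Delta_\alpha^0\subseteq I\subseteq\partial\Delta_\alpha^1$, and since a Jordan curve cannot be a proper subset of another Jordan curve, $\partial\Delta_\alpha^0=\partial\Delta_\alpha^1=:J$.

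Finally, $J$ is a Jordan curve, so it is the common boundary of exactly two domains, exactly one of which is bounded; since Siegel disks of a polynomial are bounded Jordan domains, both $\Delta_\alpha^0$ and $\Delta_\alpha^1$ must coincide with that bounded domain, i.e. $\Delta_\alpha^0=\Delta_\alpha^1$. This is impossible: the resulting disk would be a fixed Siegel disk of $f_\alpha^{\circ 2}$ containing the two distinct $f_\alpha^{\circ 2}$-fixed points $0$ and $1$, whereas $f_\alpha^{\circ 2}$ restricted to a Siegel disk is conformally conjugate to $z\mapsto\lambda z$ on $\mathbb{D}$ and hence has a unique fixed point there. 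The contradiction proves $\overline{\Delta_\alpha^0}\cap\overline{\Delta_\alpha^1}=\emptyset$. The only non-elementary ingredient is the boundary conjugacy to a minimal rotation used in the third paragraph — the main point to justify and cite, as it lies in the same circle of ideas as Theorem \ref{zhang11} (the standard bounded-type Siegel-disk package); everything else is the degree count for $f_\alpha^{\circ 2}$ and soft plane topology, so I expect no serious difficulty there.
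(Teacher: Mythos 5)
Your proof follows essentially the same route as the paper's: apply Theorem \ref{zhang11} to $f_\alpha^{\circ 2}$ (a fixed degree, $9$) to obtain the uniform quasicircle constant, and for disjointness invoke the minimality of the boundary rotation (justified because the boundary is a bounded-type quasicircle, so the linearizer extends to a homeomorphism of the closure conjugating $f_\alpha^{\circ 2}|_{\partial\Delta_\alpha^0}$ to $R_\theta$) to upgrade a single common boundary point to equality $\partial\Delta_\alpha^0=\partial\Delta_\alpha^1$, then derive a contradiction. The one place you diverge is the last step: the paper asserts that equal boundaries would make the Julia set of $f_\alpha^{\circ 2}$ empty (a terse claim), whereas you argue via the Jordan curve theorem that both Siegel disks would have to equal the same bounded complementary component and then contradict uniqueness of the fixed point inside a Siegel disk. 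Your version of the final contradiction is spelled out more carefully and is, if anything, cleaner; in substance though it is the same argument and there is no gap.
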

\begin{proof}
By Theorem \ref{zhang11}, $\partial\Delta_{\alpha}^0$ and
$\partial\Delta_{\alpha}^1$ are $K$-quasicircles (where $K>1$ is
independent of $\alpha$) since they are the boundaries of two
fixed bounded type Siegel disks of $f_\alpha^{\circ 2}$. The map
$f^{\circ 2}_{\alpha}$ is topologically conjugate to the
irrational rotation $R_\theta(\zeta)=e^{2\pi\ii\theta}\zeta$ on
$\overline{\Delta_\alpha^{0}}$ (resp. $\overline{\Delta_\alpha^{1}}$).
Assume that $\partial\Delta_\alpha^{0}\cap
\partial\Delta_\alpha^{1}$ contains a point $z_0$. Then the
closure of the orbit $\{f^{\circ 2n}_{\alpha}(z_0):n\in\N\}$ is
the common boundary of $\overline{\Delta_\alpha^{0}}$ and
$\overline{\Delta_\alpha^{1}}$. This implies that every point of $\EC$ belongs to the Fatou set. Thus, the Julia set of
$f_\alpha^{\circ 2}$ is empty, which is impossible.
\end{proof}

Now we show the continuous dependence of
$\partial\Delta_{\alpha}^0\cup\partial\Delta_{\alpha}^1$ on the
parameter $\alpha$.
\begin{prop}\label{continuity}
Let $\alpha_0\in\Sigma_\theta$ and $\{\alpha_n\}_{n\geq 1}$ be a
sequence of parameters on $\Sigma_\theta$ converging to $\alpha_0$ as
$n\to\infty$. Then $\partial{\Delta_{\alpha_{n}}^0} \to
\partial{\Delta_{\alpha_0}^0}$, $
\overline{\Delta_{\alpha_{n}}^{0}} \to
\overline{\Delta_{\alpha_0}^{0}}$, 
$\partial{\Delta_{\alpha_{n}}^{1}} \to
\partial{\Delta_{\alpha_0}^{1}}$ and $
\overline{\Delta_{\alpha_n}^{1}} \to
\overline{\Delta_{\alpha_0}^{1}}$ as $n\to\infty$, with respect to the
Hausdorff metric.
\end{prop}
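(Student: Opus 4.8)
## Proof proposal for Proposition 2.12 (continuity of Siegel boundaries)

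The plan is to combine the uniform quasicircle bound from Corollary \ref{disjoint} with a normal-families argument, using the fact that a quasiconformal map is determined up to bounded distortion by its boundary values. First I would fix $\alpha_0\in\Sigma_\theta$ and a convergent sequence $\alpha_n\to\alpha_0$. By Corollary \ref{disjoint}, there is a uniform $K=K(\theta)>1$ so that $\partial\Delta_{\alpha_n}^0$ and $\partial\Delta_{\alpha_n}^1$ are all $K$-quasicircles; moreover $0\in\Delta_{\alpha_n}^0$ and $1\in\Delta_{\alpha_n}^1$ for every $n$, and $\overline{\Delta_{\alpha_n}^0}\cap\overline{\Delta_{\alpha_n}^1}=\emptyset$. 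Since $\Delta_{\alpha}^0$ and $\Delta_\alpha^1$ are Siegel disks of the cubic polynomial $f_\alpha$, their closures avoid $\infty$ and stay in a compact part of $\C$; a standard argument bounding the diameters (e.g. using that $\partial\Delta_\alpha^0$ contains a point of $\Crit(f_\alpha^{\circ 2})$, which varies continuously, together with the Koebe distortion bound coming from the linearizing coordinate and the uniform quasicircle constant) shows that all the sets $\overline{\Delta_{\alpha_n}^0}$, $\overline{\Delta_{\alpha_n}^1}$ lie in a fixed compact subset of $\C$, so by passing to a subsequence we may assume $\overline{\Delta_{\alpha_n}^0}\to X^0$ and $\overline{\Delta_{\alpha_n}^1}\to X^1$ in the Hausdorff metric, with $X^0,X^1$ nonempty compact sets containing $0$, $1$ respectively.

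Next I would upgrade this Hausdorff convergence to a statement about the Siegel disks themselves. Normalize the conformal (or $K$-quasiconformal) uniformizations: let $h_n\colon\D\to\Delta_{\alpha_n}^0$ be the Riemann map with $h_n(0)=0$ and linearizing $f_{\alpha_n}^{\circ 2}$, i.e. $h_n\circ R_\theta = f_{\alpha_n}^{\circ 2}\circ h_n$ on $\D$ with $h_n'(0)>0$. Because the $\partial\Delta_{\alpha_n}^0$ are uniform quasicircles lying in a fixed compact set and bounded away from the single other postcritical region, the maps $h_n$ extend to $K'$-quasiconformal homeomorphisms of $\EC$, normalized at three points, hence form a normal family; extract a locally uniformly convergent subsequence $h_n\to h$, where $h$ is either a $K'$-quasiconformal homeomorphism or constant. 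Passing to the limit in the functional equation and using $f_{\alpha_n}\to f_{\alpha_0}$ locally uniformly, $h$ conjugates $R_\theta$ to $f_{\alpha_0}^{\circ 2}$ on $\D$; since $R_\theta$ has no fixed point other than $0$ in $\D$ and $\theta$ is irrational, $h$ cannot be constant, so $h$ is a genuine $K'$-quasiconformal map and $h(\D)$ is a Siegel disk of $f_{\alpha_0}^{\circ 2}$ about $0$ with rotation number $\theta$. By uniqueness of the Siegel disk, $h(\D)=\Delta_{\alpha_0}^0$, and $h(\partial\D)=\partial\Delta_{\alpha_0}^0$. Hausdorff convergence $h_n(\partial\D)\to h(\partial\D)$ then gives $\partial\Delta_{\alpha_n}^0\to\partial\Delta_{\alpha_0}^0$ and $\overline{\Delta_{\alpha_n}^0}\to\overline{\Delta_{\alpha_0}^0}$; the same argument handles the $\Delta^1$-family. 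Since every subsequence has a further subsequence converging to the same limit, the full sequence converges.

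The main obstacle is ruling out \emph{degeneration of the limit}: a priori the quasidisks $\Delta_{\alpha_n}^0$ could shrink to a point, or the normalized uniformizations could degenerate, as $\alpha_n\to\alpha_0$. This is exactly where the uniform quasicircle constant $K(\theta)$ from Theorem \ref{zhang11} / Corollary \ref{disjoint} is essential: it prevents the quasidisks from becoming arbitrarily thin, and combined with the fact that $\partial\Delta_{\alpha_n}^0$ always carries a critical point of $f_{\alpha_n}^{\circ 2}$ at definite distance from $0$ (the critical points depend continuously on $\alpha$ and cannot all collide with $0$ in the limit, since $f_{\alpha_0}$ still has a Siegel disk at $0$), it forces a definite lower bound on $\operatorname{diam}\Delta_{\alpha_n}^0$ and on $|h_n'(0)|$. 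Making this lower bound quantitative — e.g. via the Koebe one-quarter theorem applied in the quasiconformally corrected coordinate, or via compactness of the space of normalized $K$-quasiconformal maps together with the conjugacy equation — is the technical heart of the argument; once non-degeneration is secured, everything else is a routine normal-families passage to the limit. I would also record the small point that $\Delta_{\alpha_0}^0$ and $\Delta_{\alpha_0}^1$ are the \emph{only} Siegel disks of $f_{\alpha_0}^{\circ 2}$ around $0$ and $1$ (indeed a polynomial of degree $9$ has finitely many periodic Fatou components and the cycle structure is preserved), which is what lets us identify the limit uniquely rather than merely as some Siegel disk.
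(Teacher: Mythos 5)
Your proposal follows essentially the same route as the paper: uniform quasicircle bound from Theorem~\ref{zhang11}, non-degeneration of the disks via the critical point (or its image) on $\partial\Delta_{\alpha_n}^0$ staying a definite distance from $0$ combined with the bounded distortion of normalized $K$-quasicircles, quasiconformal extension of the linearizing Riemann maps and normal families via three-point normalization, passage to the limit in the conjugacy equation, and uniqueness of the normalized linearizer to identify the limit. One small imprecision: the argument that ``$h$ cannot be constant since $R_\theta$ has no fixed point other than $0$ in $\D$'' does not by itself rule out $h\equiv 0$ (which trivially satisfies the functional equation, $0$ being fixed by $f_{\alpha_0}^{\circ 2}$); what actually excludes degeneration is precisely the non-degeneration bound on the disks — giving a lower bound on $|h_n'(0)|$, or equivalently the three-point normalization of the quasiconformal extensions — which you do correctly identify and supply in your final paragraph.
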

\begin{proof} Without loss of generality, we prove the results for the Siegel disk $\Delta_{\alpha}^0$. Let $\D$ be the unit disk. For any $\alpha\in\Sigma_\theta$, there exist a unique conformal map
\begin{equation*}\label{equ:h-0-infty}
h_{\alpha}^0: \Delta_{\alpha}^0\to\D 
\end{equation*}
such that 
  $h_{\alpha}^0(0)=0$, $(h_{\alpha}^0)'(0)>0$, and 
\begin{equation*}\label{equ:h-condition}
  h_{\alpha}^0 \circ f^{\circ 2}_{\alpha} \circ (h_{\alpha}^0)^{-1}(\zeta) =
  R_\theta(\zeta), ~\forall\, \zeta\in \D,
\end{equation*}
where $R_\theta(\zeta)=e^{2\pi\ii\theta}\zeta$. Let $\eta_\alpha:\D\to \Delta_{\alpha}^0$ be the inverse map of $h_\alpha^0$.

According to \cite[Theorem 2.4, p.\,17]{Leh87}, there exists a constant $c(K_0)>1$ such that for all $\alpha\in\Sigma_\theta$,
\begin{equation}\label{equ:Leh}
\frac{\max_{z\in\partial\D}|\eta_\alpha(z)|}{\min_{z\in\partial\D}|\eta_\alpha(z)|}=\frac{\max\{|z|:z\in\partial\Delta_\alpha^0\}}{\min\{|z|:z\in\partial\Delta_\alpha^0\}}\leq c(K_0).
\end{equation}

Let $\{\alpha_n\}_{n\in \mathbb{N}}$ be a sequence of parameters in $\Sigma_\theta$ converging to $\alpha_0\in \Sigma_\theta$. 
The size of $\Delta_{\alpha_n}^0$ cannot be arbitrarily large in the sense that there exists a constant $M=M(K_0)>1$ such that
\begin{equation}\label{equ:notbig}
    \overline{\Delta_{\alpha_n}^0}\subset\{z:|z|<M\}.
\end{equation}
since the immediately attracting basin of $\infty$ contains a disk of $\infty $ of a definite size (under the spherical metric on $\EC$) for all $\alpha $ in a neighborhood of $\alpha _0$ in $\Sigma_\theta$. Meanwhile, $\partial\Delta_{\alpha_n}^0$ contains either $c_0(\alpha_n)$ or $f_{\alpha_n}(c_0(\alpha_n))$ and $c_0(\alpha_n)\to c_0(\alpha_0)$ and $f_{\alpha_n}(c_0(\alpha_n))\to f_{\alpha_0}(c_0(\alpha_0))$ as $\alpha_n\to \alpha_0$. Then $\max\{|c_0(\alpha_n)|,|f_{\alpha_n}(c_0(\alpha_n))|\}\ge \delta$ for some constant $\delta$ and large $n$. By \eqref{equ:Leh}, 
\begin{equation}\label{equ:notsmall}
    \min\{|z|:z\in\partial\Delta_{\alpha_n}^0\}\ge \frac{\delta}{c(K_0)}.
\end{equation}

By Theorem \ref{zhang11}, there exists
$\widetilde{K}_0>1$ such that each $\eta_{\alpha_n}:\D\to \Delta_{\alpha}^0$ has a $\widetilde{K}_0$-quasiconformal extension to $\EC$ with $\tilde{\eta}_{\alpha_n}(0)=0$ (since $\eta_{\alpha_n}(0)=0$) and $\tilde{\eta}_{\alpha_n}(\infty)=\infty$. 
Using (\ref{equ:notbig}) and (\ref{equ:notsmall}) and 
applying \cite[Theorem 2.1, p.\,14]{Leh87} (by selecting three points at $0$, $1$ and $\infty $), we know that $\{\tilde{\eta}_{\alpha_n}:\EC\to\EC\}_{n\in\mathbb{N}}$ is a normal family. Then by passing to a subsequence, we may assume that $\tilde{\eta}_{\alpha_n}$ converges uniformly to a quasiconformal mapping $\tilde{\eta}:\EC\to\EC$ as $n\to\infty$.
Note that each
$\tilde{\eta}_{\alpha_n}$ is conformal on $\D$ and
$(\tilde{\eta}_{\alpha_n})'(0)>0$, and $\tilde{\eta}|_{\D}$ is also  conformal. Then $\tilde{\eta}'(0)>0$. Taking the limits from the both sides of the
equation $$f_{\alpha_n}^{\circ 2}\circ
\tilde{\eta}_{\alpha_n}(\zeta)=\tilde{\eta}_{\alpha_n}\circ
R_\theta(\zeta),$$ we obtain $$f_{\alpha_0}^{\circ 2}\circ
\tilde{\eta}(\zeta)=\tilde{\eta}\circ R_\theta(\zeta) \text{ for each }\zeta\in\D.$$ 
Recall that
$(\tilde{\eta}_{\alpha_0})^{-1}:\Delta_{\alpha_0}^0\to\D$ is the
unique conformal map that conjugates $f_{\alpha_0}^{\circ 2}$ to
$R_\theta$ with $\tilde{\eta}_{\alpha_0}(0)=0$ and
$(\tilde{\eta}_{\alpha_0})'(0)>0$. It implies that
$\tilde{\eta}|_{\D}=\tilde{\eta}_{\alpha_0}|_{\D}.$ Furthermore $\tilde{\eta}|_{\overline{\D}}=\tilde{\eta}_{\alpha_0}|_{\overline{\D}}$
 since $\tilde{\eta}$ and $\tilde{\eta}_{\alpha_0}$ are continuous.
Therefore, $\tilde{\eta}_{\alpha_n}(\partial{\D}) \to
\tilde{\eta}_{\alpha_0}(\partial{\D})$ and
$\tilde{\eta}_{\alpha_n}(\overline{\D}) \to
\tilde{\eta}_{\alpha_0}(\overline{\D})$ with respect to the
Hausdorff metric as $n\to\infty$.
\end{proof}

\section{Combinatorics of the Fatou components of $f_\alpha$ when $\alpha\in\Gamma_\theta\cup\Gamma_\theta^0\cup\Gamma_\theta^1$}\label{Combinatorial pattern of eventually Siegel disk comp}

In this section, we describe the combinatorial patterns of the Fatou components of $f_\alpha$ when $\alpha$ belongs to $\Gamma_\theta$, $\Gamma_\theta^0$ and $\Gamma_\theta^1$ respectively. 
\subsection{External rays landing a separating repelling fixed point}\label{rays landing a separating repelling fixed point}
Let us first prove two lemmas. Denote by $c_0$ and $c_1$ the two finite critical points of $f_\alpha$. Using the definitions of $\Gamma_\theta$, $\Gamma_\theta^0$ and $\Gamma_\theta^1$ given in the introduction, we know that (i) if $\alpha\in \Gamma_\theta$, then $c_0\neq c_1$ because $\Delta_\alpha^0$ and $\Delta_\alpha^1$ don't share any boundary point; (ii) for any parameter $\alpha$ such that $c_0=c_1$, $\alpha$ belongs to $\Gamma_\theta^0$ or $\Gamma_\theta^1$.

\begin{lem}
For any $\alpha\in\Gamma_\theta\cup\Gamma_\theta^0\cup\Gamma_\theta^1$, the following properties hold. 
\begin{enumerate}
    \item The Fatou set $F(f_\alpha)=\cup_{n=0}^{\infty}(f^{-n}_\alpha(\Delta_{\alpha}^0\cup\Delta_{\alpha}^1))\cup B(\infty)$, where $B(\infty)$ is the immediate attracting basin of $\infty$.
    \item For any two different nonnegative integers $m$ and $n$, $$f^{-2n}_\alpha(\overline{\Delta_{\alpha}^0})\cap f^{-2m}_\alpha(\overline{\Delta_{\alpha}^1})=\emptyset.$$
\end{enumerate}
\end{lem}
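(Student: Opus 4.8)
\emph{Plan.} The plan is to combine the classification of periodic Fatou components with a computation of the postcritical set of $f_\alpha$; throughout, fix $\alpha\in\Gamma_\theta\cup\Gamma_\theta^0\cup\Gamma_\theta^1$ and write $c_0,c_1$ for its two finite critical points. By Definition \ref{def of Gamma curves}, in each of the three cases $c_0,c_1$, counted with multiplicity, lie on $\partial\Delta_\alpha^0\cup\partial\Delta_\alpha^1$, hence on the quasicircles of Corollary \ref{disjoint} and thus in $J(f_\alpha)$. Since $f_\alpha$ is continuous and $\overline{\Delta_\alpha^j}$ is compact, $f_\alpha(\overline{\Delta_\alpha^j})=\overline{f_\alpha(\Delta_\alpha^j)}=\overline{\Delta_\alpha^{1-j}}$ for $j=0,1$, so $f_\alpha^{\circ 2}(\overline{\Delta_\alpha^j})=\overline{\Delta_\alpha^{j}}$; combined with forward invariance of $J(f_\alpha)$ this gives $f_\alpha(\partial\Delta_\alpha^j)\subseteq\partial\Delta_\alpha^{1-j}$, so the orbits of $c_0$ and $c_1$ remain in the compact set $\partial\Delta_\alpha^0\cup\partial\Delta_\alpha^1$. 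Hence $\alpha\in\mathcal{M}_\theta$, $J(f_\alpha)$ is connected, the basin of $\infty$ is connected and completely invariant so that it coincides with $B(\infty)$ and $f_\alpha^{-n}(B(\infty))=B(\infty)$, every Fatou component other than $B(\infty)$ is bounded, and the postcritical set obeys $P(f_\alpha)\subseteq\{\infty\}\cup\partial\Delta_\alpha^0\cup\partial\Delta_\alpha^1$.

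\emph{Part (1).} The first step is to identify all cyclic Fatou components. A polynomial has no Herman rings, so each periodic Fatou component is $B(\infty)$, an attracting basin, a parabolic basin, or a Siegel disk. A finite attracting or parabolic cycle would contain a finite critical point in its immediate basin, impossible since $c_0,c_1\in J(f_\alpha)$. If $\Delta'$ is a cyclic Siegel disk, then $\partial\Delta'\subseteq\overline{P(f_\alpha)}=\{\infty\}\cup\partial\Delta_\alpha^0\cup\partial\Delta_\alpha^1$; as $\Delta'\ne B(\infty)$ is bounded, the Jordan curve $\partial\Delta'$ is a connected subset of $\partial\Delta_\alpha^0\cup\partial\Delta_\alpha^1$, hence lies in exactly one of the Jordan curves $\partial\Delta_\alpha^0$, $\partial\Delta_\alpha^1$, and therefore (a topological circle cannot be a proper subset of another) equals it, so $\Delta'\in\{\Delta_\alpha^0,\Delta_\alpha^1\}$. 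Thus the cyclic Fatou components are precisely $B(\infty)$, $\Delta_\alpha^0$ and $\Delta_\alpha^1$. By Sullivan's no-wandering-domains theorem, each Fatou component $U$ satisfies $f_\alpha^{\circ k}(U)\in\{B(\infty),\Delta_\alpha^0,\Delta_\alpha^1\}$ for some $k\ge 0$; if $f_\alpha^{\circ k}(U)=B(\infty)$ then $U\subseteq f_\alpha^{-k}(B(\infty))=B(\infty)$, and otherwise $U\subseteq f_\alpha^{-k}(\Delta_\alpha^0\cup\Delta_\alpha^1)$. This gives $F(f_\alpha)\subseteq B(\infty)\cup\bigcup_{n\ge 0}f_\alpha^{-n}(\Delta_\alpha^0\cup\Delta_\alpha^1)$, and the reverse inclusion is clear since $B(\infty)$ and all $f_\alpha^{-n}(\Delta_\alpha^0\cup\Delta_\alpha^1)$ are open subsets of $F(f_\alpha)$.

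\emph{Part (2) and main obstacle.} Suppose toward a contradiction that $m\ne n$, say $m<n$, and that $z\in f_\alpha^{-2n}(\overline{\Delta_\alpha^0})\cap f_\alpha^{-2m}(\overline{\Delta_\alpha^1})$. Put $w=f_\alpha^{\circ 2m}(z)\in\overline{\Delta_\alpha^1}$. Applying $f_\alpha^{\circ 2}$ a further $n-m\ge 1$ times and using $f_\alpha^{\circ 2}(\overline{\Delta_\alpha^1})=\overline{\Delta_\alpha^1}$ from the preliminaries, we get $f_\alpha^{\circ 2n}(z)=f_\alpha^{\circ 2(n-m)}(w)\in\overline{\Delta_\alpha^1}$; but $z\in f_\alpha^{-2n}(\overline{\Delta_\alpha^0})$ forces $f_\alpha^{\circ 2n}(z)\in\overline{\Delta_\alpha^0}$, contradicting $\overline{\Delta_\alpha^0}\cap\overline{\Delta_\alpha^1}=\emptyset$ (Corollary \ref{disjoint}); the case $m>n$ is symmetric. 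I expect the only real obstacle to lie in the inventory of cyclic Fatou components in Part (1): it rests on confining $P(f_\alpha)$ to $\{\infty\}\cup\partial\Delta_\alpha^0\cup\partial\Delta_\alpha^1$ and on the elementary fact that a Jordan curve is never a proper subset of another; once these are in place, Sullivan's theorem and the classification of periodic Fatou components close Part (1), while Part (2) is a short invariance argument.
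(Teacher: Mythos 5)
Your argument follows the same route as the paper's: confine the postcritical set to $\{\infty\}\cup\partial\Delta_\alpha^0\cup\partial\Delta_\alpha^1$ using that $c_0,c_1$ lie on the cycle boundaries and $f_\alpha(\partial\Delta_\alpha^j)\subseteq\partial\Delta_\alpha^{1-j}$, invoke the classification of periodic Fatou components together with Sullivan's theorem for part (1), and apply $f_\alpha^{\circ 2}$-invariance of the closures plus Corollary~\ref{disjoint} for part (2). The paper simply asserts the inventory of periodic components, so your contribution is spelling it out.

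One step as written is unjustified: when ruling out an extra cyclic Siegel disk $\Delta'$, you call $\partial\Delta'$ a Jordan curve and use ``a topological circle cannot be a proper subset of another.'' For an arbitrary Siegel disk of a polynomial the boundary is not a priori a Jordan curve (Theorem~\ref{zhang11} applies to the known cycle $\{\Delta_\alpha^0,\Delta_\alpha^1\}$ because its rotation number is bounded type, not to a hypothetical extra Siegel cycle whose rotation number is unknown). The step is unnecessary, though: having established $\partial\Delta'\subseteq\partial\Delta_\alpha^0$ (say), note that $\Delta'$ is open, connected, disjoint from $\partial\Delta_\alpha^0$, and $\overline{\Delta'}\cap(\mathbb{C}\setminus\partial\Delta_\alpha^0)=\Delta'$; hence $\Delta'$ is open and closed in a component of $\mathbb{C}\setminus\partial\Delta_\alpha^0$ and therefore equals that whole component, and boundedness forces $\Delta'=\Delta_\alpha^0$. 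With that small repair the proof is sound and matches the paper's approach.
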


\begin{proof}
\medskip
(a) Since both critical points are contained in the boundaries of the
cycle $\{\Delta_\alpha^0,\Delta_\alpha^1\}$, it follows that all
Fatou components of $f_\alpha$ are eventually mapped onto this
cycle of Siegel disks except $B(\infty)$ since $B(\infty)$ is a completely
invariant Fatou component. The statement follows from
$f_\alpha^{\circ 2}(\Delta_\alpha^0)=\Delta_\alpha^0$ and
$f_\alpha^{\circ 2}(\Delta_\alpha^1)=\Delta_\alpha^1$.

(b) If $f_\alpha^{-2m}(\overline{\Delta_\alpha^0})\cap
f_\alpha^{-2n}(\overline{\Delta_\alpha^1})\neq\emptyset$ for some
$m$, $n\geq 0$, then $\overline{\Delta_\alpha^0}\cap
\overline{\Delta_\alpha^1}\neq\emptyset$. This contradicts Lemma
\ref{disjoint}.
\end{proof}

\medskip
Let $\Psi_\alpha : B(\infty) \rightarrow \mathbb{D}$ be a conformal map that conjugates $f_{\alpha}(z): B(\infty) \rightarrow B(\infty)$ to the map $z\mapsto z^3$ on $\mathbb{D}$. Note that $\Psi_\alpha$ is unique up to multiplication by $-1$. Throughout this paper, keep in mind that we have a consent that $\Psi_\alpha$ is chosen to be continuously depending on $\alpha$ as $\alpha$ varies. 
The image of the radius with angle $2\pi t$ radians, $0\le t\le 1$, under $\Psi_\alpha^{-1}$ is called the external ray of (normalized) angle $t$ or the $t$-ray, which we denote by $R_t$. Following this notation, $R_{0}$ and $R_{\frac{1}{2}}$ are the two fixed external rays under $f_\alpha$; $R_{\frac{1}{4}}$ and $R_{\frac{3}{4}}$, $R_{\frac{1}{8}}$ and $R_{\frac{3}{8}}$, and $R_{\frac{5}{8}}$ and $R_{\frac{7}{8}}$ form three $2$-cycles. 
\begin{lem}\label{existence of separating repelling fixed point}
    For each $\alpha\in\Gamma_\theta\cup\Gamma_\theta^0\cup\Gamma_\theta^1$, all three fixed points of $f_\alpha$ are repelling, and only one of them separates the Julia set $J(f_\alpha)$, which we denote by $P_3$.
\end{lem}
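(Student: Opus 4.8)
The plan is to exploit the fact that $f_\alpha$ is a cubic polynomial with connected Julia set, all of whose Fatou components are iterated preimages of the $2$-cycle of Siegel disks together with the basin $B(\infty)$, and to run a counting/combinatorial argument using the two fixed external rays. First I would argue that all three fixed points are repelling: a fixed point that is not repelling would either be attracting or parabolic (ruled out because then one of the finite critical points would have to be attracted to it, but both critical points lie on $\partial\Delta_\alpha^0\cup\partial\Delta_\alpha^1$ and hence orbit within the closures of the Siegel disks under $f_\alpha^{\circ 2}$), or it would be an irrationally indifferent fixed point of $f_\alpha$. The only indifferent periodic points in play are the centers $0$ and $1$, which have period $2$, not $1$; a fixed Siegel or Cremer point of $f_\alpha$ would demand an additional recurrent critical point, which does not exist. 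Hence all three fixed points are repelling.

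Next I would locate these fixed points relative to the Fatou set. The Siegel disks $\Delta_\alpha^0,\Delta_\alpha^1$ form a genuine $2$-cycle, so their centers are not fixed; a repelling fixed point cannot lie in any rotation domain or its preimages, nor in $B(\infty)$, so all three fixed points lie in $J(f_\alpha)$. Now consider the two fixed external rays $R_0$ and $R_{1/2}$. Since $J(f_\alpha)$ is connected and locally connected here (the boundaries of the Siegel disks are quasicircles and the Fatou components are nicely nested, so one gets local connectivity, or alternatively one argues directly with prime-end impressions), each fixed ray lands at a fixed point of $f_\alpha$. A cubic polynomial has at most two fixed external rays, and they cannot land at the same fixed point (otherwise $R_0\cup R_{1/2}\cup\{\text{landing point}\}$ would be a fixed Jordan curve through $\infty$, forcing that landing point to be non-repelling or forcing a degenerate configuration); so $R_0$ lands at one fixed point and $R_{1/2}$ at a (possibly different) fixed point. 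The curve $R_0\cup R_{1/2}$ together with its landing point(s) separates the plane, and this is the mechanism producing a separating fixed point.

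I would then show exactly one fixed point separates $J(f_\alpha)$. A fixed point $p$ separates $J(f_\alpha)$ iff $J(f_\alpha)\setminus\{p\}$ is disconnected; by a standard argument a repelling fixed point of a polynomial separates the Julia set iff at least two fixed external rays land at it, or more generally iff at least two of the rays landing at it are permuted nontrivially in a way that makes $p$ a cut point. Here I would use: a repelling fixed point of a cubic polynomial is a cut point of $J$ iff at least two external rays land at it. The only fixed rays are $R_0$ and $R_{1/2}$; if they land at the same point $P_3$, that point is the unique separating fixed point and the two remaining fixed points each receive a single cycle of non-fixed rays and are non-separating. If $R_0$ and $R_{1/2}$ landed at different fixed points, then (counting the $3$ fixed points, the location of the critical values, and the combinatorial rotation numbers at each fixed point via the Goldberg–Milnor fixed-point-portrait formalism) exactly one of the three fixed points must still have rotation number/ray count forcing it to be the cut point, while the others are non-separating. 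Either way one extracts a single separating fixed point, which we name $P_3$.

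The main obstacle I expect is the clean combinatorial bookkeeping of which external rays land where: establishing local connectivity (or a sufficient substitute) of $J(f_\alpha)$ so that landing of the fixed rays is justified, and then invoking the correct statement — that for a cubic polynomial the repelling fixed point that is a cut point of the Julia set is unique and characterized by having $\geq 2$ rays landing. I would handle the local connectivity via the quasicircle boundaries of the periodic Siegel disks plus the fact that all Fatou components are eventual preimages of them, with shrinking diameters (a puzzle/Yoccoz-type or a direct Hausdorff-convergence argument using Proposition \ref{continuity}-type control), and handle the ray-counting by citing the fixed-point portrait theory for polynomials. The remainder — that fixed points in rotation-domain preimages or in $B(\infty)$ cannot occur, and that attracting/parabolic cases are excluded by the critical-orbit constraint — is routine given the earlier results in the paper.
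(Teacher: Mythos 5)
Your overall approach runs parallel to the paper's: rule out non-repelling fixed points via the constraint that all finite critical orbits live on $\partial\Delta_\alpha^0\cup\partial\Delta_\alpha^1$, then use external rays to identify the cut point. However there are a few genuine gaps.

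First, your exclusion of an irrationally indifferent \emph{fixed} point of $f_\alpha$ is imprecise. You say such a point ``would demand an additional recurrent critical point, which does not exist,'' but the critical points on $\partial\Delta_\alpha^0\cup\partial\Delta_\alpha^1$ \emph{are} recurrent. The correct argument, which is what the paper uses, is that the postcritical set lies in $\overline{\Delta_\alpha^0}\cup\overline{\Delta_\alpha^1}$, which is disjoint from the fixed-point set (since $f_\alpha$ swaps the two disjoint quasicircle boundaries), and then invoke Milnor's Theorem~11.17 that a Cremer point lies in the closure of the postcritical set. A parallel argument handles a hypothetical fixed Siegel disk. The paper packages this by introducing $\mathcal{A}_\theta$ (parameters with a finite attracting fixed point) and showing $\overline{\mathcal{A}_\theta}$ is disjoint from $\Gamma_\theta\cup\Gamma_\theta^0\cup\Gamma_\theta^1$; this cleanly rules out attracting, parabolic, Siegel and Cremer fixed points in one stroke.

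Second, your argument that $R_0$ and $R_{1/2}$ cannot land at the same fixed point is not sound: a repelling fixed point with several fixed rays landing is perfectly possible in general, and a fixed Jordan curve through $\infty$ does not force the landing point to be non-repelling or degenerate. Moreover, your fallback case analysis is internally inconsistent: you assert that if both fixed rays land at the same point, the other two fixed points ``each receive a single cycle of non-fixed rays and are non-separating.'' But a cycle of rays of period bigger than one landing at a fixed point means at least two rays land there, so that fixed point \emph{would} be a cut point. So in that hypothetical configuration you would have three separating fixed points, contradicting the conclusion. You need to actually rule out the possibility that both fixed rays land at the same point (which is what the paper asserts as part of its description of the portrait for $\alpha\notin\overline{\mathcal{A}_\theta}$), rather than try to argue around it.

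Finally, you lean on local connectivity of $J(f_\alpha)$ to get landing of the fixed rays. In the paper that fact is established later (Corollary~\ref{local connectivity}, citing \cite{WYZZ22}); the paper's proof of the present lemma avoids invoking it, relying instead on the structure of the parameter plane away from $\overline{\mathcal{A}_\theta}$. Your sketch of how to obtain local connectivity is reasonable in spirit but is an extra dependency you would have to supply at this point of the argument.
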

\begin{proof}
For any $\alpha \in \Sigma_\theta$, $f_{\alpha}$ has at most one attracting fixed point. Denote by $\mathcal{A}_\theta$ the collection of $\alpha\in \Sigma_\theta$ such that $f_\alpha $ has an attracting finite fixed point, which we call the locus with a finite attracting fixed point. On Figure \ref{Fig_parameter}, $\mathcal{A}_\theta$ is the union of the interiors of the main bodies (heart-shaped black regions) of the two biggest Mandelbrot-like sets and their images under the map $\alpha \rightarrow \frac{\lambda}{\alpha}$,  When $\alpha\notin \overline{\mathcal{A}_\theta}$, we obtain the followings: 
(i) all three fixed points of $f_\alpha$ are repelling; (ii) two of them are the landing points of the two fixed rays in $B(\infty)$ and they don't separate the Julia set $J(f_\alpha )$, which we denote by $P_1$ and $P_2$; 
(iii) the other fixed point is the landing point of the external rays on a cycle with period bigger than $1$ and it separates $J(f_\alpha )$, which we denote by $P_3$ and call a separating repelling fixed point.

Since every Cremer fixed point or Cremer periodic point of a rational map of degree $\ge 2$ is contained in the closure of its postcritical set (Theorem 11.17 in \cite{M1}), it follows that the closure of $\mathcal{A}_\theta$ is disjoint from $\Gamma_\theta\cup\Gamma_\theta^0\cup\Gamma_\theta^1$. Thus, for each $\alpha\in\Gamma_\theta\cup\Gamma_\theta^0\cup\Gamma_\theta^1$, $f_\alpha $ has three distinct fixed points, all of them are repelling, and only one of them separates the Julia set $J(f_\alpha)$.
\end{proof}

\begin{lem}\label{pattens for landing rays}
    For each $\alpha\in\Gamma_\theta\cup\Gamma_\theta^0\cup\Gamma_\theta^1$, there are exactly two external rays $R^{(1)}$ and $R^{(2)}$ on a $2$-cycle landing at the separating repelling fixed point $P_3$ and  $R^{(1)}\cup \{P_3\}\cup R^{(2)}$ separates $\Delta_\alpha^0$ from $\Delta_\theta^1$. Furthermore, under a proper normalization on the attracting basin at $\infty$, the (normalized) angles of $R^{(1)}$ and $R^{(2)}$ are equal to $\frac{1}{4}$ and $\frac{3}{4}$ when $\alpha\in \Gamma_\theta$; the angles of $R^{(1)}$ and $R^{(2)}$ are equal to $\frac{1}{8}$ and $\frac{3}{8}$ when $\alpha\in \Gamma_\theta^1$; the angles of $R^{(1)}$ and $R^{(2)}$ are equal to $\frac{5}{8}$ and $\frac{7}{8}$ when $\alpha\in \Gamma_\theta^0$.
\end{lem}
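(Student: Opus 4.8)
The plan is to combine the structure of the dynamics on $B(\infty)$ with the combinatorial rigidity forced by the $2$-cycle of Siegel disks. First I would recall from Lemma \ref{existence of separating repelling fixed point} that $P_3$ is the landing point of a cycle of external rays of period strictly greater than $1$; since $f_\alpha$ has degree $3$, the multiplication-by-$3$ dynamics on angles acts on the set of rays landing at $P_3$, and a standard argument (see Milnor) shows this set is a single cycle whose length divides the number of accesses to $P_3$. Because $P_3$ separates $J(f_\alpha)$, there are at least two rays landing there; and because $B(\infty)$ is the only unbounded Fatou component while $\Delta_\alpha^0$ and $\Delta_\alpha^1$ must lie in different complementary components of $R^{(1)}\cup\{P_3\}\cup R^{(2)}$ (otherwise removing these two rays plus $P_3$ would not separate the two Siegel disks, yet they are on a genuine $2$-cycle and hence cannot both be in the same ``side'' — this is where the dynamics $f_\alpha(\Delta_\alpha^0)=\Delta_\alpha^1$ is used), the cycle has exactly two rays. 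I would argue the period is exactly $2$: the two rays are swapped by $f_\alpha$ because $f_\alpha$ interchanges the two Siegel disks and hence interchanges the two sides of the separating configuration, so $f_\alpha(R^{(1)})=R^{(2)}$ and $f_\alpha(R^{(2)})=R^{(1)}$.

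Next I would pin down the angles. Under the normalization $\Psi_\alpha$ conjugating $f_\alpha|_{B(\infty)}$ to $z\mapsto z^3$, the period-$2$ cycles of the angle-tripling map on $\R/\Z$ are exactly $\{\tfrac14,\tfrac34\}$, $\{\tfrac18,\tfrac38\}$ and $\{\tfrac58,\tfrac78\}$ (the solutions of $9t\equiv t \pmod 1$ that are not fixed). So $\{R^{(1)},R^{(2)}\}$ is one of these three $2$-cycles of rays listed right before the lemma. It remains to decide which cycle occurs in each of the three cases $\alpha\in\Gamma_\theta$, $\Gamma_\theta^1$, $\Gamma_\theta^0$. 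Here I would use the critical-point combinatorics: the three rays $R_0, R_{1/4}, R_{1/2}, R_{3/4}$ subdivide $B(\infty)$, and the location of the critical points of $f_\alpha$ — equivalently, which Siegel-disk boundary carries how many critical points — determines how $B(\infty)$ wraps around the Siegel disks. When both critical points lie on $\partial\Delta_\alpha^0\cup\partial\Delta_\alpha^1$ with one on each (case $\Gamma_\theta$), the pullback $f_\alpha^{-1}(B(\infty))=B(\infty)$ has the two fixed rays $R_0,R_{1/2}$ accessing $P_1,P_2$ and the primitive $2$-cycle closest to the fixed rays, namely $\{\tfrac14,\tfrac34\}$, landing at $P_3$. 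When both critical points lie on the boundary of the same Siegel disk, the ray $R_{1/4}$ (or $R_{3/4}$) is no longer available as a landing ray at a separating point at that ``depth'', and the relevant separating cycle is one level deeper in the $z\mapsto z^3$ tree — one of $\{\tfrac18,\tfrac38\}$ or $\{\tfrac58,\tfrac78\}$ — with the choice between these two determined by which Siegel disk, $\Delta_\alpha^1$ (giving $\{\tfrac18,\tfrac38\}$, case $\Gamma_\theta^1$) or $\Delta_\alpha^0$ (giving $\{\tfrac58,\tfrac78\}$, case $\Gamma_\theta^0$), carries the double critical point, consistently with the normalization fixed once and for all for $\Psi_\alpha$ and with the symmetry $\tau$ of Lemma \ref{lem:sym} exchanging $\Gamma_\theta^0$ and $\Gamma_\theta^1$ (which conjugates the two $2$-cycles $\{\tfrac18,\tfrac38\}\leftrightarrow\{\tfrac58,\tfrac78\}$).

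I expect the main obstacle to be the last step: rigorously identifying \emph{which} of the three $2$-cycles of rays lands at $P_3$ in each case, rather than merely that \emph{some} period-$2$ cycle does. This requires a careful bookkeeping of the cyclic order of the external rays, the two non-separating fixed points $P_1,P_2$, and the two Siegel disks around $\partial B(\infty)$, together with a consistent choice of the $\pm1$ normalization of $\Psi_\alpha$ that is stable under deformation of $\alpha$ within each of $\Gamma_\theta$, $\Gamma_\theta^0$, $\Gamma_\theta^1$. The cleanest way to do this is probably to exhibit one explicit parameter in each locus (e.g.\ one of $\alpha_4,\alpha_5,\tilde\alpha_4,\tilde\alpha_5$ for $\Gamma_\theta^0,\Gamma_\theta^1$, or one of the computed $\alpha_k$ for $\Gamma_\theta$), verify the landing-ray angles there, fix the normalization of $\Psi_\alpha$ accordingly, and then argue by the continuity of $\partial\Delta_\alpha^0\cup\partial\Delta_\alpha^1$ (Proposition \ref{continuity}) and of $\Psi_\alpha$, plus stability of the landing pattern of rays at a repelling periodic point, that the angles are locally constant and hence constant on each locus — with the symmetry $\tau$ supplying the $\Gamma_\theta^0\leftrightarrow\Gamma_\theta^1$ half of the statement for free.
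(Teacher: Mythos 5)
Your overall strategy is reasonable but has two substantial gaps where the paper does careful work that you elide.

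First, the claim that \emph{exactly} two rays land at $P_3$ is asserted rather than proved. You argue that the Siegel disks must be on different sides of $R^{(1)}\cup\{P_3\}\cup R^{(2)}$ and therefore the cycle has two rays, but this is essentially a restatement of the conclusion: nothing in your argument rules out four or six rays landing at $P_3$, or two pairs of rays each landing at a distinct period-$2$ point. The paper devotes most of its proof to exactly this issue: it enumerates every possible landing pattern of the six period-$2$ rays (nine ``bad'' cases plus the three good ones, illustrated in Figure~7) and eliminates each bad case by a degree-counting argument on the domains cut out by the rays, showing that each bad case forces a critical value off $\partial\Delta_\alpha^0\cup\partial\Delta_\alpha^1$ and therefore contradicts $\alpha\in\Gamma_\theta\cup\Gamma_\theta^0\cup\Gamma_\theta^1$. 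Your assertion that ``a $2$-cycle cannot both be on the same side'' also needs such a degree count; it is not automatic from the dynamics alone.

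Second, your proposed method for identifying \emph{which} $2$-cycle of angles occurs in each of the three loci — verify at one explicit parameter, then propagate by continuity and local stability of ray landing — is logically circular in this paper. It implicitly assumes each of $\Gamma_\theta$, $\Gamma_\theta^0$, $\Gamma_\theta^1$ is connected, but connectedness (indeed, being a Jordan curve or arc) is exactly the content of the Main Theorem, and Lemma~\ref{pattens for landing rays} is a prerequisite of that proof, not a consequence of it. In addition, parameters in $\Gamma_\theta\cup\Gamma_\theta^0\cup\Gamma_\theta^1$ lie on $\partial\mathcal{M}_\theta$ and are not $J$-stable (Proposition~\ref{unstable}), so ``stability of the landing pattern'' would need a separate argument. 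The paper sidesteps all of this by a purely pointwise combinatorial argument: in each of Cases~(1),~(2),~(3), it computes how the domains $\Omega_i$ cut out by the two landing rays cover one another under $f_\alpha$, uses Riemann--Hurwitz bookkeeping to locate the critical points and critical values in those domains, and reads off whether one or both critical points sit on the boundary of a single Siegel disk. This gives, for every $\alpha$ individually, the correspondence between the three landing cases and the three loci, with no appeal to continuity or to an explicit base point. You should replace your continuity step with this case-by-case degree count.
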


\begin{proof} Clearly, $f_\alpha $ has six periodic external rays of period $2$ in $B(\infty)$. They form three $2$-cycles, which we denote by 
 $\{R_{\frac{1}{4}}$, $R_{\frac{3}{4}}\}$, $\{R_{\frac{1}{8}}$, $R_{\frac{3}{8}}\}$, and $\{R_{\frac{5}{8}}$, $R_{\frac{7}{8}}\}$. Suppose these six external rays land at six distinct points. Then these six landing points form three distinct $2$-cycles of periodic points, which are on the boundary of $B(\infty)$. On the other hand, we know $0$ and $1$ are on an existing $2$-cycle in the interiors of the two Siegel disks $\Delta_{\alpha}^0$ and
$\Delta_{\alpha}^1$. Thus, $f_\alpha$ has at least four distinct $2$-cycles. This is a contradiction. Therefore, some of 
 $R_{\frac{1}{8}}$, $R_{\frac{1}{4}}$, $R_{\frac{3}{8}}$, $R_{\frac{5}{8}}$, $R_{\frac{3}{4}}$ and $R_{\frac{7}{8}}$ have to land at the same point. 
 We know that $f_{\alpha}(z)$ has three distinct fixed points. Two of them are the landing points of the two fixed rays $R_{0}$ and $R_{\frac{1}{2}}$, which are denoted by $P_1$ and $P_2$, and the other is a landing point of a cycle of periodic rays, which is denoted by $P_3$. 
 If two external rays on a $2$-cycle land at the same point, then the landing point has to be $P_3$. All possible landing patterns of the six rays $R_{\frac{1}{8}}$, $R_{\frac{1}{4}}$, $R_{\frac{3}{8}}$, $R_{\frac{5}{8}}$, $R_{\frac{3}{4}}$ and $R_{\frac{7}{8}}$ are presented in Figure \ref{all possible landing patterns}. 
\begin{figure}[!htpb]
 \setlength{\unitlength}{1mm}
  \centering
  \includegraphics[width=0.8\textwidth]{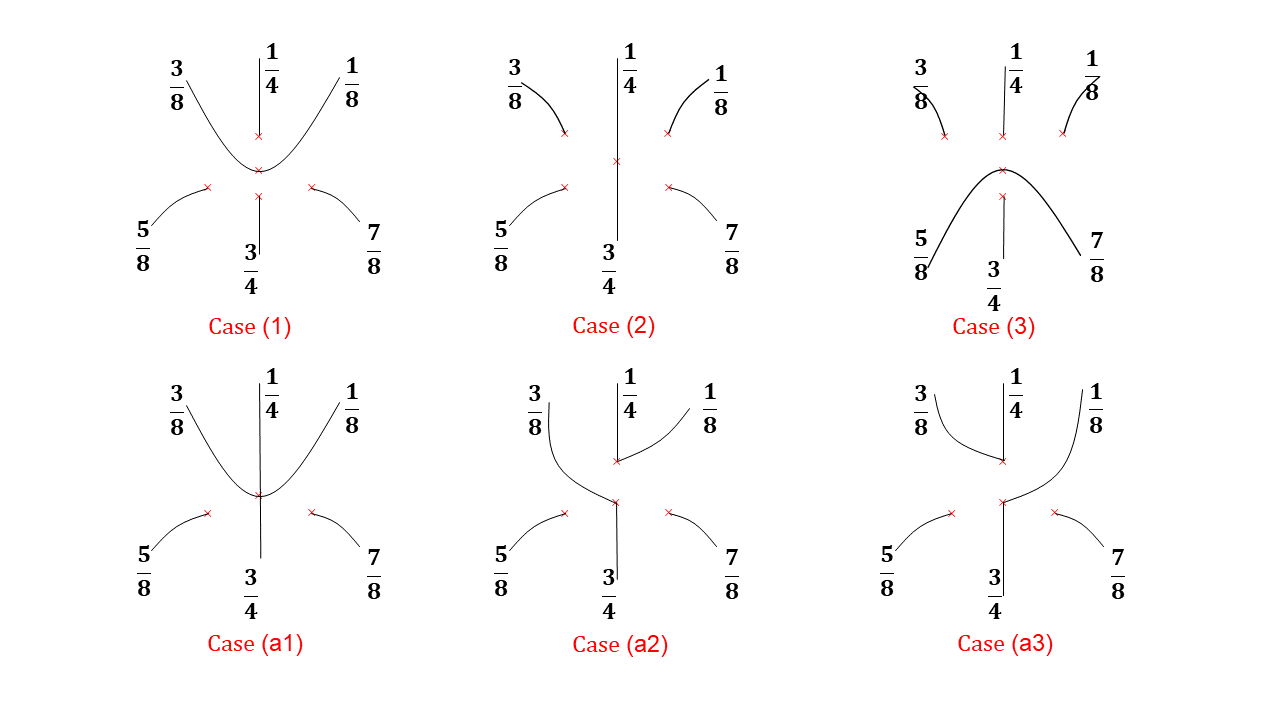}
  \includegraphics[width=0.8\textwidth]{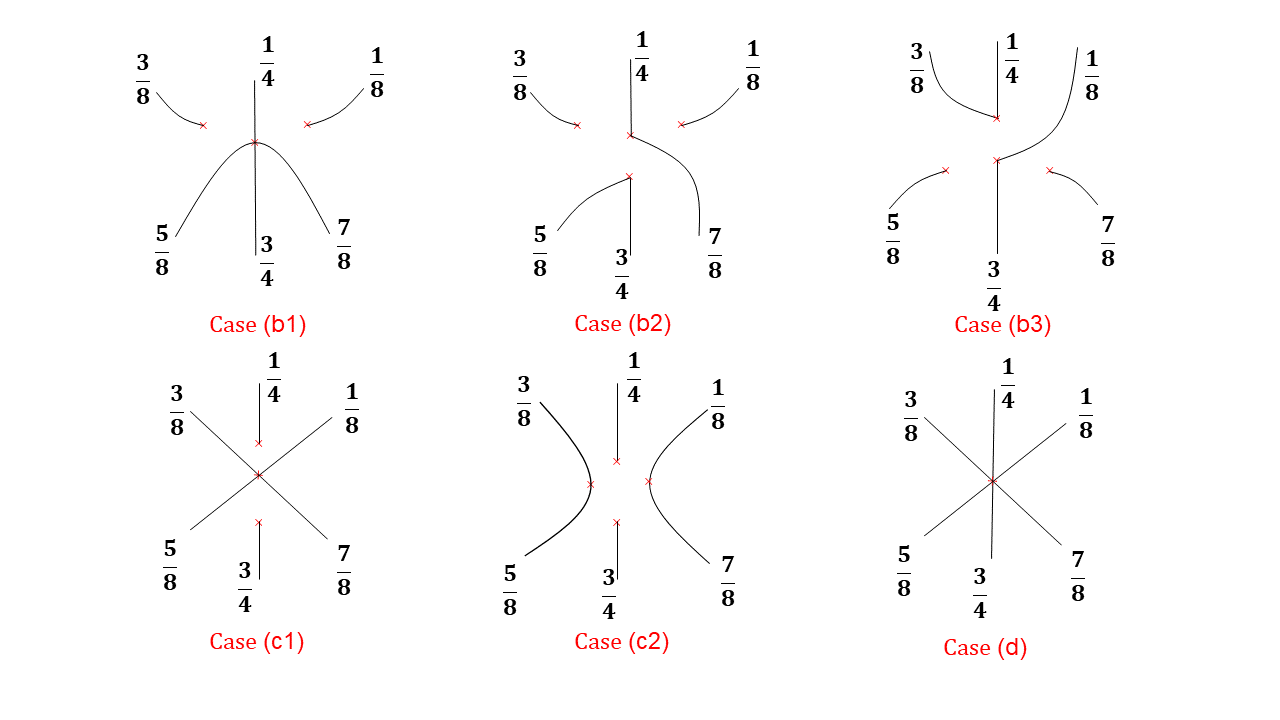}
  \caption{All possible landing patterns for periodic external rays of period $2$.}
  \label{all possible landing patterns}
\end{figure}

Under the assumption that $\alpha\in\Gamma_\theta\cup\Gamma_\theta^0\cup\Gamma_\theta^1$, we now show that except Case (1), Case (2) and Case (3), any other case can not happen. 

Suppose that Case (a1) happens. Then the four external rays $R_{\frac{1}{8}}$, $R_{\frac{1}{4}}$, $R_{\frac{3}{8}}$ and $R_{\frac{3}{4}}$ divide the complex plane into four domains, which we denote by $\Omega_0$, $\Omega_1$, $\Omega_2$ and $\Omega_3$ as shown in Case (a1) in Figure \ref{impossible landing patterns}. 

Under $f_\alpha$, $\Omega_0$ covers $\Omega_2$; $\Omega_1$ covers $\Omega_3$; $\Omega_2$ covers $\Omega_1$, $\Omega_2$ and $\Omega_3$ and covers $\Omega_0$ twice; $\Omega_3$ covers $\Omega_0$, $\Omega_2$ and $\Omega_3$ and covers $\Omega_1$ twice.
Then $\Omega_2$ is covered by three disjoint domains $\Omega_0$, $\Omega_2$ and $\Omega_3$. Therefore, $\Omega_2$ contains no finite critical value. Similarly, we obtain $\Omega_3$ contains no finite critical value. Thus, the two finite critical values, denoted by $v_0$ and $v_1$, are contained in $\Omega_0\cup\Omega_1$. We claim that neither $\Omega_0$ nor $\Omega_1$ can contain both $v_0$ and $v_1$. Otherwise, we may assume that both $v_0$ and $v_1$ are contained in $\Omega_0$. Then a path $\beta$ between $v_0$ and $v_1$ in $\Omega_0$ has three preimages in $\Omega_2$. On the other hand, $\Omega_3$ covers $\Omega_0$ under $f_\alpha$. Then $\beta$ has one more preimage in $\Omega_3$. This is a contradiction since $f_\alpha$ is a degree-3 rational map.
Thus, each of $\Omega_0$ and $\Omega_1$ contains a finite critical value. Let us assume that $v_0=f_\alpha(c_0)\in \Omega_0$ and $v_1=f_\alpha(c_1)\in \Omega_1$.
Since $\Omega_3$ covers $\Omega_0$ and $\Omega_2$ covers $\Omega_0$ twice, it follows that $c_0\in \Omega_2$. Similarly, we know that $c_1\in \Omega_3$.

Both $\overline{\Delta_\alpha^0}$ and $\overline{\Delta_\alpha^1}$ are contained in the interiors of one or two $\Omega$ domains. Since $\alpha \in\Gamma_\theta\cup\Gamma_\theta^0\cup\Gamma_\theta^1$, both critical points stay on $\partial\Delta_\alpha^0\cup \partial\Delta_\alpha^1$. We first see that $\alpha\notin \Gamma_\theta^0\cup\Gamma_\theta^1$. Secondly, if $\alpha\in \Gamma_\theta$, then $f_\alpha(c_0)\in \partial\Delta_\alpha^1\subset \Omega_3$, which is a contradiction to $f_\alpha(c_0)\in \Omega_0$. Thus, $\alpha\notin \Gamma_\theta$.
Therefore, $\alpha\notin\Gamma_\theta\cup\Gamma_\theta^0\cup\Gamma_\theta^1$, which contradicts the assumption. So we conclude that Case (a1) can not happen. 

\begin{figure}[!htpb]
 \setlength{\unitlength}{1mm}
  \centering
  \includegraphics[width=0.8\textwidth]{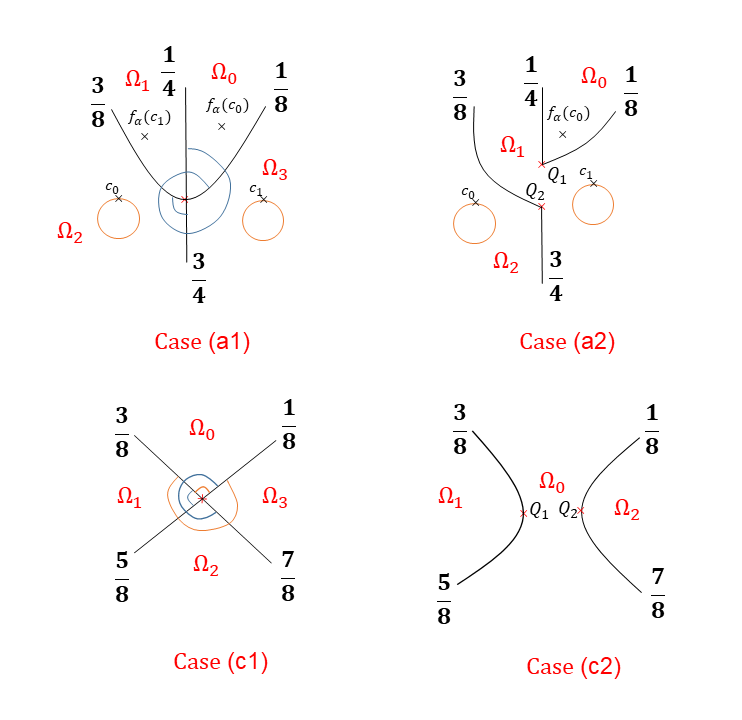}
  \caption{Illustrations of the domains used to in the process to dismiss certain landing patterns of periodic external rays of period $2$.}
  \label{impossible landing patterns}
\end{figure}

Suppose that Case (a2) happens. Denote by $Q_1$ the landing point of $R_{\frac{1}{8}}$ and $R_{\frac{1}{4}}$ and by $Q_2$ the landing point of $R_{\frac{3}{8}}$ and $R_{\frac{3}{4}}$. These four rays divide the complex plane into three regions, which we denote by $\Omega_0$, $\Omega_1$ and $\Omega_2$ as shown for Case (a2) in Figure \ref{impossible landing patterns}.
Under the map $f_\alpha$, $\Omega_0$ is covered by $\Omega_2$ twice and by $\Omega_1$ once; $\Omega_1$ is covered by $\Omega_1$ twice and by $\Omega_2$ once; $\Omega_2$ is covered by each of $\Omega_0$, $\Omega_2$ and $\Omega_1$ once. By applying arguments similar to the ones used to dismiss Case (a1), we first see that there is one critical point in $\Omega_2$, denoted by $c_0$, with its critical value $f_\alpha(c_0)\in \Omega_0$ and there is one critical point in $\Omega_1$, denoted by $c_1$, with its critical value $f_\alpha(c_1)\in \Omega_1$; then we obtain $\alpha\notin\Gamma_\theta^0\cup\Gamma_\theta^1$ and $\alpha\notin\Gamma_\theta$. Thus, we derive a contradiction to the assumption $\alpha\in\Gamma_\theta\cup\Gamma_\theta^0\cup\Gamma_\theta^1$ as in Case (a1). Therefore, Case (a2) can not appear neither. 

The method to dismiss Case (a3) is very similar to the one to dismiss Case (a2). 

Note that the landing patterns given in Cases (b1), (b2) and (b3) are the respective images of Cases (a1), (a2) and (a3) under adding $\frac{1}{2}$ modulus $1$. Therefore, the proofs for the nonexistence of Cases (a1), (a2) and (a3) can be modified to prove the nonexistence of Cases (b1), (b2) and (b3) respectively. 

Now we show the nonexistence of Case (c1). Suppose it happens. Then the four external rays $R_{\frac{1}{8}}$, $R_{\frac{3}{8}}$, $R_{\frac{5}{8}}$ and $R_{\frac{7}{8}}$ divide the complex plane into four domains $\Omega_0$, $\Omega_1$, $\Omega_2$ and $\Omega_3$ as shown in Case (c1) in Figure \ref{impossible landing patterns}. Then $f_\alpha(\Omega_0)=\Omega_1\cup \Omega_2\cup \Omega_3$,
$f_\alpha(\Omega_1)=\Omega_0\cup \Omega_1\cup \Omega_2$, $f_\alpha(\Omega_2)=\Omega_3\cup \Omega_0\cup \Omega_1$, and $f_\alpha(\Omega_3)=\Omega_2\cup \Omega_3\cup \Omega_0$. It follows that each of $\Omega_0$, $\Omega_1$, $\Omega_2$ and $\Omega_3$ is covered by three of them under $f_\alpha $. It implies that for each point $w\in J(f_\alpha )\setminus \{P_3\}$, it has three distinct preimages under $f_\alpha $. Thus, $f_\alpha $ has no critical point on $\mathbb{C}\setminus \{P_3\}$. This is impossible. Thus, Case (c1) can not appear. 

Next we prove the nonexistence of Case (c2). Denote by $Q_1$ the landing point of $R_{\frac{3}{8}}$ and $R_{\frac{5}{8}}$ and by $Q_2$ the landing point of $R_{\frac{1}{8}}$ and $R_{\frac{7}{8}}$. These four rays divide the complex plane into three regions, which we denote by $\Omega_0$, $\Omega_1$ and $\Omega_2$ as shown for Case (c2) in Figure \ref{impossible landing patterns}.
Under the map $f_\alpha$, $\Omega_1$ covers $\Omega_0$ and $\Omega_1$ and it doesn't cover $\Omega_2$; $\Omega_2$ covers $\Omega_0$ and $\Omega_2$ and it doesn't cover $\Omega_1$; $\Omega_0$ covers $\Omega_0$ and it covers each of  $\Omega_1$ and $\Omega_2$ twice. 
Since each of $\Omega_0$, $\Omega_1$ and $\Omega_2$ covers $\Omega_0$, there is no critical value in $\Omega_0$. Because $\Omega_1$ and $\Omega_2$ don't cover each other, either both of the two Siegel disks on the $2$-cycle are contained in $\Omega_1$ or $\Omega_2$ or one of them is contained in $\Omega_0$. 
If one of the two Seigel disks is contained in $\Omega_0$, then there is no critical values on its boundary. This implies that there is at least one Sigel disk contained in $\Omega_1$ or $\Omega_2$. Without loss of generality, we assume $\Delta_\alpha^0\subset \Omega_1$.
If there is only one Siegel disk on the $2$-cycle contained in $\Omega_1$, then 
there are two different critical values on the boundary of $\Delta_\alpha^0$. It follows that $\Delta_\alpha^0$ had three distinct preimages in $\Omega_0$, but it also has a fourth preimage in $\Omega_1$. That is a contradiction. Thus, both of the Siegel disks are contained in $\Omega_1$. Since there are critical values on the boundaries of the Siegel disks, one of them, namely $\Delta_\alpha^0$, has at least two preimages in $\Omega_1$. But because $\Omega_0$ covers $\Omega_1$ twice under $f_\alpha $, $\Delta_\alpha^0$ has two other preimages in $\Omega_0$. This is a contradiction. Thus, the possibility for the two Siegel disks in $\Omega_1$ can not happen neither. In summary, we conclude that Case (c2) can not appear. 

Note that the method to dismiss Case (a1), (b1), (c1) or (c2)  can be applied to show the nonexistence of Case (d).

By now, we have shown that there are two external rays $R^{(1)}$ and $R^{(2)}$ landing at the separating repelling fixed point $P_3$. In the remaining part, under the assumption that $\alpha\in \Gamma_\theta\cup \Gamma_\theta^1\cup \Gamma_\theta^0$ we show that Case (1) happens if and only if $\alpha \in \Gamma_\theta^1$; Case (2) happens if and only if $\alpha \in \Gamma_\theta$; Case (3) happens if and only if $\alpha \in \Gamma_\theta^0$.

Let us first consider Case (2); that is, the two rays landing at $P_3$ are $R_{\frac{1}{4}}$ and $R_{\frac{3}{4}}$. Their union divides the complex plane into two regions, which we denote by $\Omega_1$ and $\Omega_2$, containing the $R_{\frac{1}{8}}$ and $R_{\frac{3}{8}}$ respectively. Under $f_\alpha$, $\Omega_1$ covers $\Omega_2$ twice and vice versa.
Thus, each of $\Omega_1$ and $\Omega_2$ contains a critical value. This implies that the two Siegel disks $\Delta_\alpha^0$ and $\Delta_\alpha^1$ lie on the different sides of the union of the rays landing at $P_3$. Therefore, $\alpha\in \Gamma_\theta$. See an example in Figure \ref{Julia set with a parameter on the Gamma curve}.

When Case (1) happens, $R_{\frac{1}{8}}\cup \{P_3\}\cup R_{\frac{3}{8}}$ divides the complex plane into two regions, which we denote by $\Omega_1$ and $\Omega_2$, containing $R_{\frac{1}{4}}$ and $R_{\frac{3}{4}}$ respectively. Under $f_\alpha$, $\Omega_1$ covers $\Omega_2$ once, and $\Omega_2$ covers itself twice and $\Omega_1$ three times. Thus,
$\Omega_2$ contains two critical points (counted by multiplicity) and the critical values are contained in $\Omega_1$. It follows that one of $\Delta_\alpha^0$ and $\Delta_\alpha^1$ is contained in $\Omega_1$ and the other is contained in $\Omega_2$.
Thus, $\alpha \in \Gamma_\theta^0\cup \Gamma_\theta^1$. Note also that $\Omega_1$ is the domain containing the Siegel disk without critical points on its boundary and the difference of the angles of the two rays bounding $\Omega_1$ is $\frac{1}{4}$.
Similarly, we can see that when Case (3) happens, $\alpha \in \Gamma_\theta^0\cup \Gamma_\theta^1$. Therefore, if $\alpha\in \Gamma_\theta$, then Case (2) happens.

Now assume $\alpha\in \Gamma_\theta^1$ (resp. $\Gamma_\theta^0$) and let $R^{(1)}$ and $R^{(2)}$ be the two rays on a $2$-cycle landing at $P_3$. From the arguments in the previous paragraph, we know that $R^{(1)}\cup \{P_3\}\cup R^{(2)}$ separates $\Delta_\alpha^0$ from $\Delta_\alpha^1$ and the domain on the left (right) side of $R^{(1)}\cup \{P_3\}\cup R^{(2)}$ is spanned by an external angle of measure $\frac{1}{4}$. 
By choosing a proper normalization on the attracting basin at $\infty $, we may have 
$R^{(1)}$ and $R^{(2)}$ equal to $R_{\frac{1}{8}}$ and $R_{\frac{1}{8}}$ respectively when $\alpha \in \Gamma_\theta^1$ and have $R^{(1)}$ and $R^{(2)}$ equal to $R_{\frac{5}{8}}$ and $R_{\frac{7}{8}}$ respectively when $\alpha \in \Gamma_\theta^0$.
\end{proof}
\subsection{Labels of eventually Siegel-disk components when $\alpha\in \Gamma_\theta$}\label{Labels of eventually Siegel comps - curve}
The Fatou components on the grand orbit of $\Delta_\alpha^0$ under $f_\alpha$ are called {\em the eventually Siegel-disk Fatou components of $f_\alpha$}.  
From the previous two lemmas, we know that for each $\alpha\in\Gamma_\theta\cup\Gamma_\theta^0\cup\Gamma_\theta^1$, all three fixed points of $f_\alpha$ are repelling and there are two external rays $\beta_1$ and $\beta_2$ in $B_\infty$ on a $2$-cycle under $f_\alpha$ landing on the separating repelling fixed point $P_3$. We first use the grand orbit of these two external rays under $f_\alpha$ to divide the eventually Siegel-disk Fatou components into different groups. We call the union of the components in the same group {\em a cluster of the eventually Siegel-disk Fatou components}. Furthermore, a cluster is called a {\em crucial} cluster if it contains at least one of $\Delta_\alpha^0$ and $\Delta_\alpha^1$. In fact, when $\alpha\in\Gamma_\theta$, there are two crucial clusters containing $\Delta_\alpha^0$ and $\Delta_\alpha^1$ respectively, which we denote by $\mathcal{C}_0$ and $\mathcal{C}_1$; when $\alpha\in\Gamma_\theta^0\cup\Gamma_\theta^1$, there is one crucial cluster containing both $\Delta_\alpha^0$ and $\Delta_\alpha^1$. 

Let us first introduce the main ideas on how to describe the combinatorial pattern of the eventually Siegel-disk components of $f_\alpha$ when $\alpha\in\Gamma_\theta$.
The relative positions of all clusters are determined by the partition of $\C$ by the grand orbit of $\beta_1$ (or $\beta_2$) under $f_\alpha$. So the next step is to distinguish the relatively homotopic class of $\mathcal{C}_0$ (resp. $\mathcal{C}_1$) with respect to $\Delta_\alpha^0$ and $c_0$ (resp. $\Delta_\alpha^1$ and $c_1$), which is uniquely determined by the relative positions of the Fatou components in $\mathcal{C}_0$ (resp. $\mathcal{C}_1$). When $\alpha$ varies locally,
the conformal angle between $c_0$ and $f_{\alpha}(c_1)$ on $\Delta_\alpha^0$ determines the relative positions for those Fatou components attached to the boundary of $\Delta_\alpha^0$ in the course of pulling back $\Delta_\alpha^0$ and $\Delta_\alpha^1$, which in turn determine 
the relative positions of all Fatou components on each of two crucial clusters. Note that this idea is first introduced in \cite[\S 0]{Pet96} and used in \cite{FYZ20} to describe the relative positions of all Fatou components of a quadratic rational map with two critical points respectively on the boundaries of two Siegel disks on a 2-cycle (since all Fatou components are eventually Siegel-disk Fatou components in this case). In fact, in the one-parameter family of quadratic rational maps considered in \cite{FYZ20}, there is a unique parameter such that the critical point on the boundary of one Siegel disk is mapped to the critical point on the boundary of the other Siegel disk. But for the locus 
$\Gamma_\theta$ studied in this paper, there are different parameter values such that $c_0$ is mapped to $c_1$. This means that in order to distinguish the relative homotopic class of $\mathcal{C}_0$, besides using the conformal angle between $c_0$ and $f_\alpha(c_0)$, we need to apply extra information of the combinatorial pattern of the eventually Siegel-disk components on $\mathcal{C}_0\cup\mathcal{C}_1$, for which we 
will construct special sequences of components on $\mathcal{C}_0\cup\mathcal{C}_1$ converging to the separating fixed point $P_3$ and show why they determine different relative homotopic classes of $\mathcal{C}_0\cup \mathcal{C}_1$ with respect to $\Delta_\alpha^0$, $\Delta_\alpha^1$ and $P_3$.
\begin{figure}[!htpb]
 \setlength{\unitlength}{1mm}
  \centering
  \includegraphics[width=\textwidth]{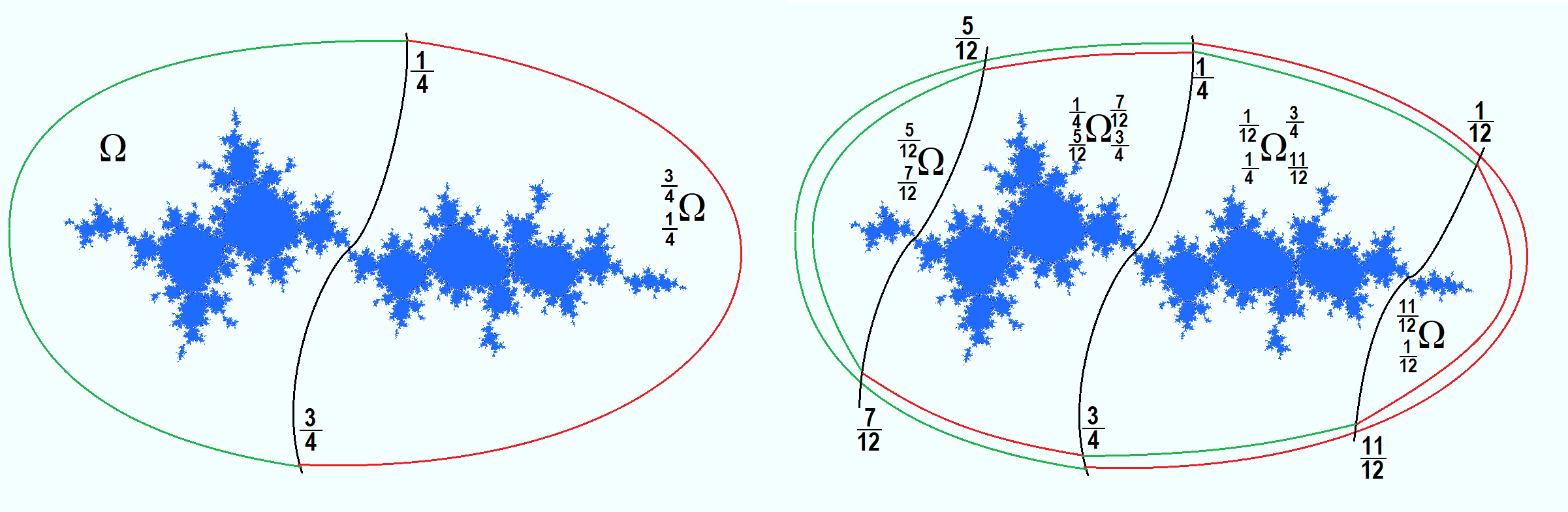}
  \caption{Beginning partitions of the filled-in Julia set of $f_{\alpha}$ by $R_{\frac{1}{4}}\cup \{P_3\}\cup R_{\frac{3}{4}}$ and its first pullback, where $\alpha\approx 0.786776772+0.69912474i$ and the two critical points $c_0$ and $c_1$ lie on the boundaries of $\triangle_{\alpha}^0$ and $\triangle_{\alpha}^1$ respectively. The union of the green and red curves on the left drawing represents an equipotential curve in the attractive basin of $f_\alpha$ at $\infty$, and the extra green and red curves on the right drawing are their preimages.}
  \label{Beginning partition of a Julia set with a parameter on the Gamma curve}
\end{figure}

Now we precisely describe the combinatorial pattern of the components on $\mathcal{C}_0$ by introducing an inductive process to assign labels to all components on this cluster. 
From Lemma \ref{pattens for landing rays}, we know that the two external rays $R_{\frac{1}{4}}$ and $R_{\frac{3}{4}}$ land at the separating fixed point $P_3$ and their union separates $\Delta_\alpha^0$ from $\Delta_\alpha^1$. Denote by $\Omega$ (resp. $_{\frac{1}{4}}^{\frac{3}{4}}\Omega$) the component of $\mathbb{C}\setminus (R_{\frac{1}{4}}\cup \{P_3\}\cup R_{\frac{3}{4}})$ containing $\Delta_{\alpha}^0$ (resp. $\Delta_{\alpha}^1$) (see the left drawing on Figure \ref{Beginning partition of a Julia set with a parameter on the Gamma curve}), and denote by $c_0$ (resp. $c_1$) the critical point on the boundary of $\Delta_{\alpha}^0$ (resp. $\Delta_{\alpha}^1$). 
Clearly, $\Omega$ has two preimage components under $f_\alpha$, which we denote by $_{\frac{7}{12}}^{\frac{5}{12}}\Omega$ and $_{\frac{1}{4}}^{\frac{1}{12}}\Omega_{\frac{11}{12}}^{\frac{3}{4}}$, where $_{\frac{7}{12}}^{\frac{5}{12}}\Omega$ is the region from $R_{\frac{5}{12}}$ to $R_{\frac{7}{12}}$ in the counterclockwise direction and $_{\frac{1}{4}}^{\frac{1}{12}}\Omega_{\frac{11}{12}}^{\frac{3}{4}}$ is the region bounded by the four external rays $R_{\frac{1}{12}}$, $R_{\frac{1}{4}}$, $R_{\frac{3}{4}}$ and $R_{\frac{11}{12}}$. Meanwhile, $_{\frac{1}{4}}^{\frac{3}{4}}\Omega$ has two preimage components under $f_\alpha$ which we denote by $_{\frac{1}{12}}^{\frac{11}{12}}\Omega$ and 
$_{\frac{5}{12}}^{\frac{1}{4}}\Omega_{\frac{3}{4}}^{\frac{7}{12}}$, where $_{\frac{1} {12}}^{\frac{11}{12}}\Omega$ is the region from $R_{\frac{11}{12}}$ and $R_{\frac{1}{12}}$ in the counterclockwise direction and $_{\frac{5}{12}}^{\frac{1}{4}}\Omega_{\frac{3}{4}}^{\frac{7}{12}}$ is bounded by the four external rays $R_{\frac{1}{4}}$, $R_{\frac{5}{12}}$, $R_{\frac{7}{12}}$ and $R_{\frac{3}{4}}$. These four domains are illustrated on the right drawing on Figure \ref{Beginning partition of a Julia set with a parameter on the Gamma curve}. Inductively, further pullbacks of these rays divide the complex plane into more and more pieces of regions. Figure \ref{Julia set with a parameter on the Gamma curve} shows the first and second pullbacks of $R_{\frac{1}{4}}$ and $R_{\frac{3}{4}}$ and the regions separated by all the rays up to the second pullbacks of $R_{\frac{1}{4}}$ and $R_{\frac{3}{4}}$. 
\begin{figure}[!htpb]
 \setlength{\unitlength}{1mm}
  \centering
  \includegraphics[width=0.8\textwidth]{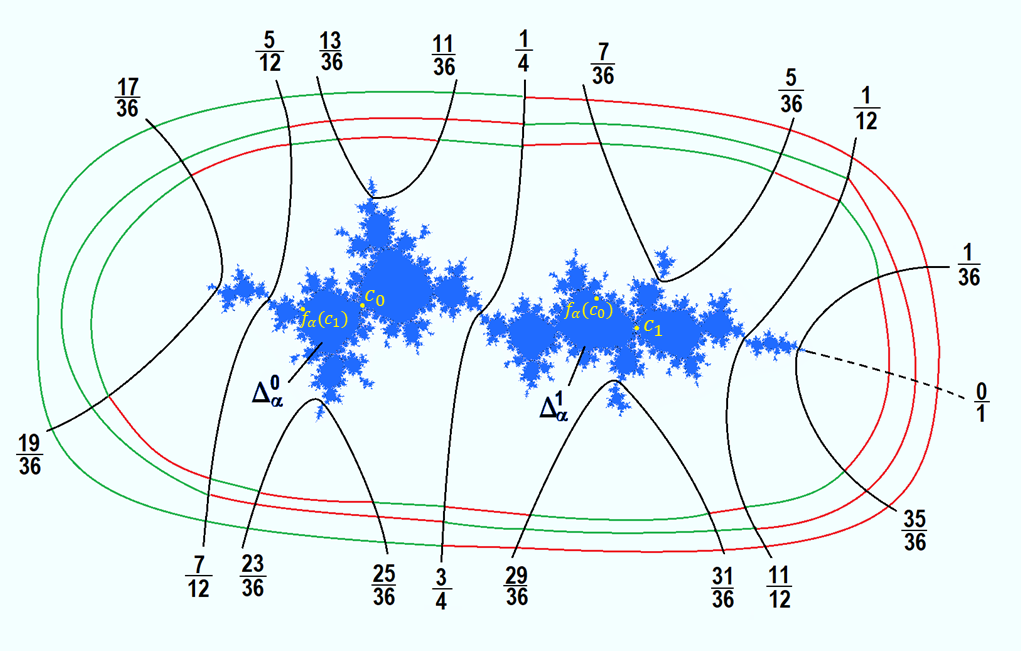}
  \caption{Partition of the filled-in Julia set of $f_{\alpha}$ with $\alpha\approx 0.786776772+0.69912474i$ by the rays up to the second pullbacks of $R_{\frac{1}{4}}\cup \{P_3\}\cup R_{\frac{3}{4}}$.}
  \label{Julia set with a parameter on the Gamma curve}
\end{figure}

Figure \ref{fatouset3} shows how the first and second pullbacks of $R_{\frac{1}{4}}$ and $R_{\frac{3}{4}}$ divide the first and second pullbacks of $\Delta_{\alpha}^0$ and $\Delta_{\alpha}^1$. More precisely, $U=\Delta_{\alpha}^0$ and ${^\frac{3}{4}_\frac{1}{4}}U=\Delta_{\alpha}^1$, and the four components of $f_\alpha^{-1}(U\cup {^\frac{3}{4}_\frac{1}{4}}U)$ other than $U$ and $^\frac{3}{4}_\frac{1}{4}U$ are denoted by $U^0_0$, ${^\frac{3}{4}_\frac{1}{4}}U_0^0$, ${^\frac{11}{12}_\frac{1}{12}}U$ and ${^\frac{5}{12}_\frac{7}{12}}U$. Among those components of
$f_\alpha^{-2}(U\cup {^\frac{3}{4}_\frac{1}{4}}U)$ other than already being labelled, 
if they are obtained by applying the compositions among the two univalent pullbacks of $f_\alpha$ defined on $\Omega$ or $_{\frac{1}{4}}^{\frac{3}{4}}\Omega$ to $U$ or $^\frac{3}{4}_\frac{1}{4}U$, then they are distinguished by the upper and lower indices on the left of $U$ which are the labels of the boundaries of the images of $\Omega$ or $_{\frac{1}{4}}^{\frac{3}{4}}\Omega$ under the compositions; the image of $U^0_0$ under the univalent pullback defined on $\Omega$ is labelled by adding the labels of the boundary of the image of $\Omega$ under the pullback as the upper and lower indices on the left of $U^0_0$; the image of ${^\frac{3}{4}_\frac{1}{4}}U_0^0$ under the univalent pullback defined on $_{\frac{1}{4}}^{\frac{3}{4}}\Omega$ is labelled by replacing the upper and lower indices on the left of ${^\frac{3}{4}_\frac{1}{4}}U_0^0$ by the labels of the boundary of the image of $_{\frac{1}{4}}^{\frac{3}{4}}\Omega$ under the pullback; the remaining components are labelled by $U_0^1$ and $U_0^2$ if they are on $\Omega$ and by  $^\frac{3}{4}_\frac{1}{4}U_0^1$ and $^\frac{3}{4}_\frac{1}{4}U_0^2$ accordingly if they are on $_{\frac{1}{4}}^{\frac{3}{4}}\Omega$. From these discussions, one can see that the eventually Siegel-disk Fatou components of $f_{\alpha}$ are divided by the grand orbit of $R_{\frac{1}{4}}$ (or $R_{\frac{3}{4}}$) into infinitely many groups. 
All eventually Siegel-disk Fatou components in the same group or their union is called a cluster of eventually Siegel-disk Fatou components. 
The two clusters containing $\Delta_\alpha^0$ and $\Delta_\alpha^1$ respectively are crucial, which we may call two {\em crucial} clusters and denote by $\mathcal{C}_0$ and $\mathcal{C}_1$ respectively. All other clusters are univalent pullbacks of the two crucial clusters under $f_\alpha$. In order to obtain a complete description of the combinatorial pattern of all eventually Siegel-disk Fatou components, it suffices to give a clear description of the relative positions of the Fatou components on one of the two crucial clusters.

\begin{figure}[!htpb]
 \setlength{\unitlength}{1mm}
  \centering
  \includegraphics[width=.9\textwidth]{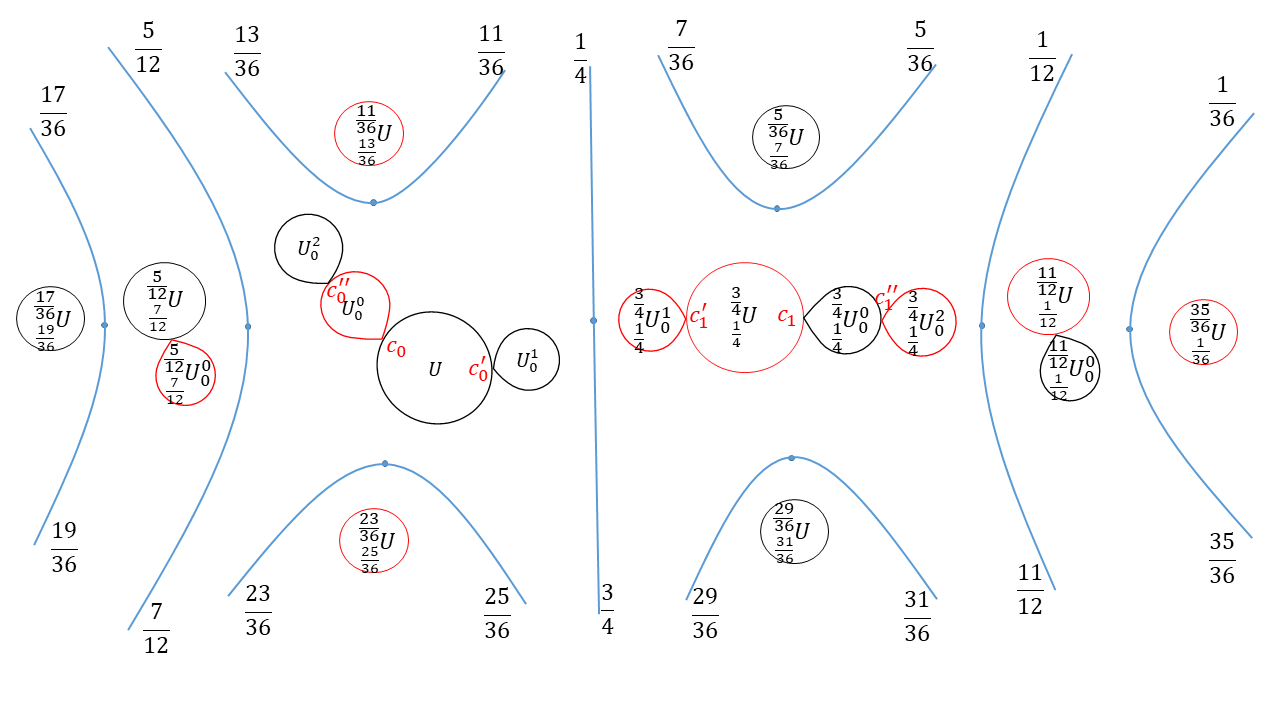}
  \caption{The model for the Fatou components on $\cup_{k=0}^{3}f^{-k}_{\alpha}(\triangle_{\alpha}^0\cup \triangle_{\alpha}^1)$ and the angles of the external rays on the first and second pullbacks of $R_{\frac{1}{4}}$ and $R_{\frac{3}{4}}$, where $c_0\in \partial \triangle_{\alpha}^0 $ and $c_1\in \partial \triangle_{\alpha}^1$.}
  \label{fatouset3}
\end{figure}

\medskip
Without loss of generality, we consider the Fatou components on $\mathcal{C}_0$. Note first that all Fatou components on $\mathcal{C}_0$ are eventually mapped to $\Delta_\alpha^0$ under $f_\alpha^2$. There are two combinatorial patterns for the components of $\mathcal{C}_0$ on the first pullbacks of $\Delta_\alpha^0$ under $f_\alpha^2$ according to $f_\alpha (c_0)\neq c_1$ or $f_\alpha (c_0)=c_1$, which are shown respectively on Figure \ref{Patterns of the first pullbacks} and Figure \ref{Patterns of the first pullbacks when c-1 is mapped to c_2}. Under further pullbacks, we need to consider three cases: (i) $f_\alpha^{\circ (2m+1)}(c_0)\neq c_1$ for any $m\geq 0$, which is the general case; (ii) $f_\alpha (c_0)=c_1$; (iii) $f_\alpha^{\circ (2m+1)}(c_0)=c_1$ for some $m\geq 1$. In fact, these three cases are exactly the ones appearing in the study of the combinatorial patterns of the Siegel disks on a $2$-cycle for a quadratic rational map with the critical points respectively on the boundaries of the two Siegel disks, which have been described in \cite{FYZ20}. 

In the following, we first recapitulate the details of a process in \cite{FYZ20} to label the Fatou components on $\mathcal{C}_0$ in Case (i). 

\begin{figure}[htbp]
  \setlength{\unitlength}{1mm}
  \centering
  \includegraphics[width=0.6\textwidth]{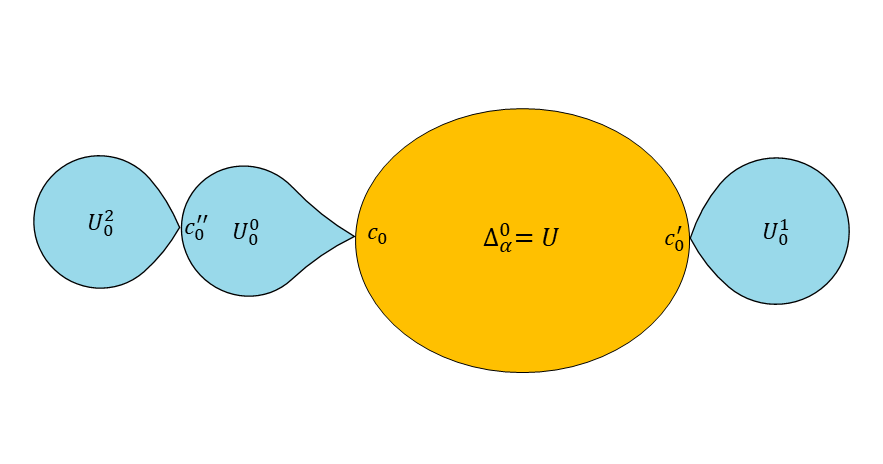}
  \caption{Patterns of the four components of $f_\alpha^{-2}(\Delta_\alpha^0)$ on $\mathcal{C}_0$ when $f_\alpha(c_0)\not=c_1$.}
  \label{Patterns of the first pullbacks}
\end{figure}

\begin{figure}[!htpb]
 \setlength{\unitlength}{1mm}
  \centering
  \includegraphics[width=.8\textwidth]{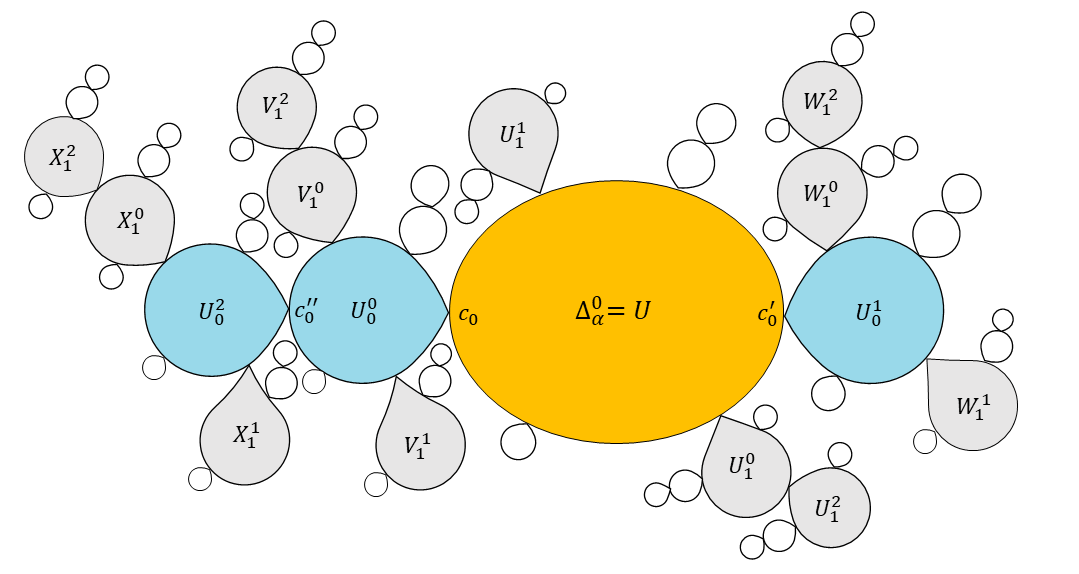}
  \caption{Labels of the components of $f_\alpha^{-4}(\Delta_\alpha^0)$ on $\mathcal{C}_0$ in Case (i).}
  \label{labels of the components up to the second pullbacks}
\end{figure}

Let $U=\Delta_\alpha^0$, $U_0^0$, $U_0^1$ and $U_0^2$ be the labels of the components of $f^{-2}_\alpha(\Delta_\alpha^0)$ on $\mathcal{C}_0$ as shown on Figure \ref{Patterns of the first pullbacks}. These four components are the univalent preimages of $U$ under $f^2_\alpha$ on $\mathcal{C}_0$ and they are connected at three critical points $c_0$, $c_0'$ and $c_0''$ of 
$f^2_\alpha$, where $f_\alpha(c_0')=f_\alpha(c_0'')=c_1$. We will see that the combinatorial pattern of these four domains, the conformal angle between $c_0'$ and $c_0$ on $U$ with respect to the conjugacy of $f^2_\alpha$ on $U$ and the rotation number of $f^2_\alpha$ on $U$ determine uniquely the combitorial pattern of all Fatou components on $\mathcal{C}_0$, which can be described by dividing them into four groups as follows. 

Given integers $s\geq 1$ and $t\in\{0,1,2\}$, let $U_s^t$, $V_s^t$, $W_s^t$ and $X_s^t$ stand for 
the components of $f_\alpha^{-2s}(U_0^t)$ such that
\begin{itemize}
\item $\{U_s^0,U_s^1\}$, $\{V_s^0,V_s^1\}$, $\{W_s^0,W_s^1\}$, $\{X_s^0,X_s^1\}$ are pairs of the two components attached to $U$, $U_0^0$, $U_0^1$ and $U_0^2$ respectively; and
\item $U_s^2$, $V_s^2$, $W_s^2$, $X_s^2$ are attached to $U_s^0$, $V_s^0$, $W_s^0$, $X_s^0$ respectively.
\end{itemize}
For any $m\geq 2$ and any sequences $(s_1,\cdots, s_m)$ and $(t_1,\cdots,t_m)$, where $s_i\geq 1$ and $t_i\in\{0,1,2\}$ with $1\leq i\leq m$, let $U_{s_1,\cdots,s_m}^{t_1,\cdots,t_m}$, $V_{s_1,\cdots,s_m}^{t_1,\cdots,t_m}$, $W_{s_1,\cdots,s_m}^{t_1,\cdots,t_m}$ and $X_{s_1,\cdots,s_m}^{t_1,\cdots,t_m}$ be the unique components of $f_\alpha^{-2(s_1+\cdots +s_m+1)}(\Delta_\alpha^0)$ satisfying:
\begin{itemize}
\item $\{U_{s_1,\cdots,s_{m-1},s_m}^{t_1,\cdots,t_{m-1},0},U_{s_1,\cdots,s_{m-1},s_m}^{t_1,\cdots,t_{m-1},1}\}$ is the pair of the two components attached to $U_{s_1,\cdots,s_{m-1}}^{t_1,\cdots,t_{m-1}}$.
\item $U_{s_1,\cdots,s_{m-1},s_m}^{t_1,\cdots,t_{m-1},2}$ is attached to $U_{s_1,\cdots,s_{m-1},s_m}^{t_1,\cdots,t_{m-1},0}$.
\item The above relationships hold similarly for $V_{s_1,\cdots,s_m}^{t_1,\cdots,t_m}$, $W_{s_1,\cdots,s_m}^{t_1,\cdots,t_m}$ and $X_{s_1,\cdots,s_m}^{t_1,\cdots,t_m}$.
\item If we let $Y$ stand for each of $U$, $V$, $W$ and $X$, then
\begin{equation*}
f_\alpha^{2}(Y_{s_1,s_2,\cdots,s_m}^{t_1,t_2,\cdots,t_m})=
\left\{
\begin{array}{ll}
U_{s_1-1,s_2\cdots,s_m}^{t_1,t_2,\cdots,t_m},  &~~~~~~~\text{when}~s_1\geq 2, \\
V_{s_2,\cdots,s_m}^{t_2,\cdots,t_m},  &~~~~~~~\text{when}~s_1=1 \text{ and } t_1=0, \\
W_{s_2,\cdots,s_m}^{t_2,\cdots,t_m},  &~~~~~~~\text{when}~s_1=1 \text{ and } t_1=1, \\
X_{s_2,\cdots,s_m}^{t_2,\cdots,t_m},  &~~~~~~~\text{when}~s_1=1 \text{ and } t_1=2.
\end{array}
\right.
\end{equation*}
\end{itemize}
Following this procedure, the labels of the Fatou components on $\mathcal{C}_0$ up to the second pullbacks of $U$ under $f^2_\alpha$ are shown on Figure \ref{labels of the components up to the second pullbacks}; the labels of the Fatou components of $f^{-6}_\alpha(\Delta^0_\alpha)$ on $\mathcal{C}_0$ are shown on Figure \ref{Label-of-f^-6}.

\begin{figure}
    \centering
    \includegraphics[width=0.9\linewidth]{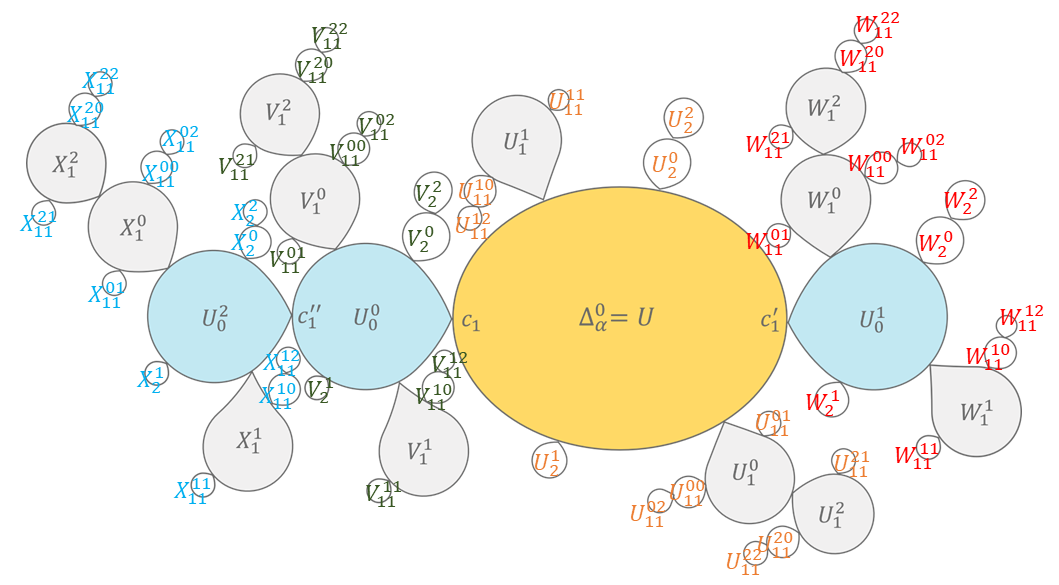}
    \caption{Labels of the components of $f_\alpha^{-6}(\Delta_\alpha^0)$ on $\mathcal{C}_0$ in Case (i).}
    \label{Label-of-f^-6}
\end{figure}

Now we introduce labels to the Fatou components on $\mathcal{C}_0$ in Case (ii). 
\begin{figure}[htbp]
  \setlength{\unitlength}{1mm}
  \centering
  \includegraphics[width=0.8\textwidth]{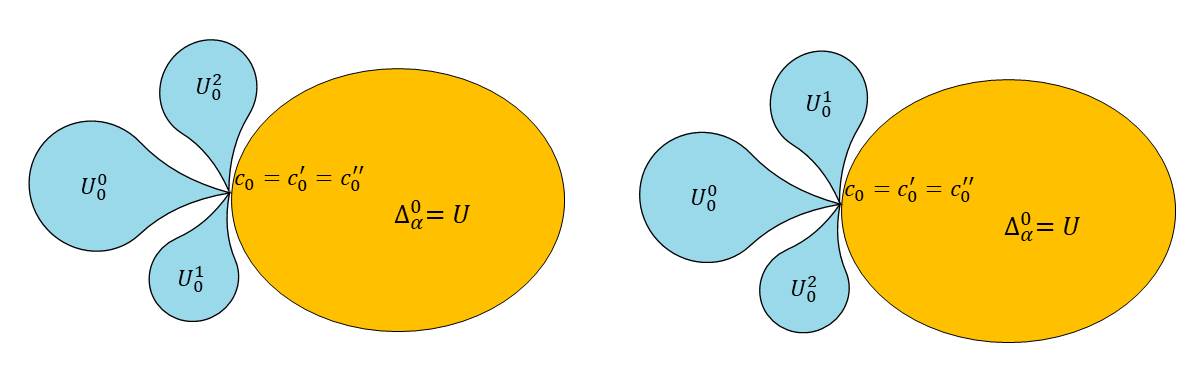}
  \caption{Patterns of the four components of $f_\alpha^{-2}(\Delta_\alpha^0)$ on $\mathcal{C}_0$ when $f_\alpha(c_0)=c_1$.}
  \label{Patterns of the first pullbacks when c-1 is mapped to c_2}
\end{figure}
When $f_\alpha (c_0)=c_1$, there are three Fatou components attached to $\Delta_\alpha^0=U$ at $c_0$. One of them is a preimage of $\Delta_{\alpha}^1$ under $f_\alpha$, which is labelled by $U_0^0$ in Figure \ref{Patterns of the first pullbacks when c-1 is mapped to c_2}, and the other two are the second pullbacks of $\Delta_\alpha^0$ under $f_\alpha$ attached to $\Delta_\alpha^0$ at $c_0'=c_0''=c_0$, which are labelled by $U_0^1$ and $U_0^2$ in two different situations in Figure \ref{Patterns of the first pullbacks when c-1 is mapped to c_2}.
One can see that the labels of the four components of $f_\alpha^{-2}(\Delta_\alpha^0)$ on $\mathcal{C}_0$ shown on the left one in Figure \ref{Patterns of the first pullbacks when c-1 is mapped to c_2} are viewed as the limiting pattern of Figure \ref{Patterns of the first pullbacks} when $c_0''$ (resp. $C_0'$) moves towards $c_0$ along the boundary of $U_0^0$ (resp. $U$) clockwise, and the labels shown on the right one in Figure \ref{Patterns of the first pullbacks when c-1 is mapped to c_2} are viewed as the limiting pattern of Figure \ref{Patterns of the first pullbacks} when $c_0''$ (resp. $C_0'$) moves towards $c_0$ along the boundary of $U_0^0$ (resp. $U$) counter-clockwise. See Figure \ref{labels of the components up to the second pullbacks in Case (ii)} for the labeling of the Fatou components on $\mathcal{C}_0$ up to the second pullbacks of $U$ under $f^2_\alpha$.

\begin{figure}[!htpb]
 \setlength{\unitlength}{1mm}
\includegraphics[width=0.9\textwidth]{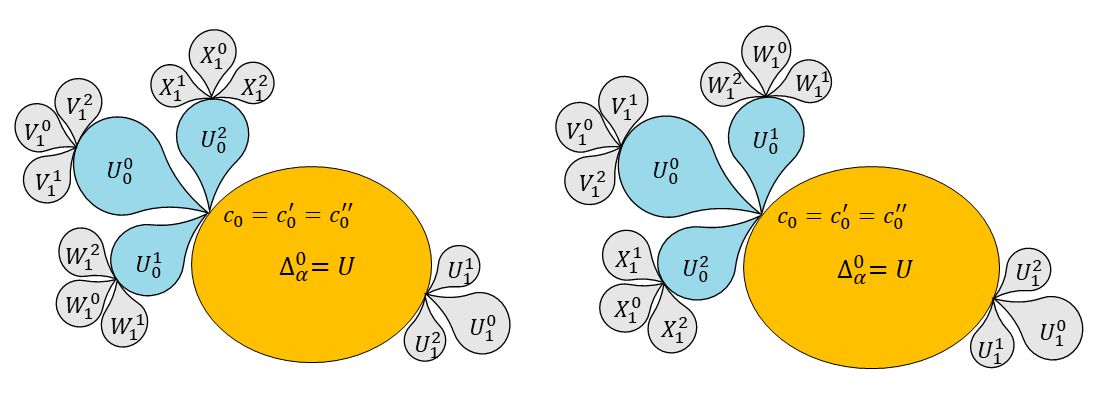}
  \caption{Labels of the components of $f_\alpha^{-4}(\Delta_\alpha^0)$ on $\mathcal{C}_0$ in Case (ii).}
  \label{labels of the components up to the second pullbacks in Case (ii)}
\end{figure}
\begin{rmk}
Note that if $f_\alpha(c_0)=c_1$, then $f^{2j+1}(c_0)\neq c_1$ for any positive integer $j$. Thus, for each pattern of the relative positions among $U$, $U_0^0$, $U_0^1$ and $U_0^2$ given in Figure \ref{Patterns of the first pullbacks when c-1 is mapped to c_2}, the labels of other Fatou components on $\mathcal{C}_0$ are worked out by the same procedure introduced for Case (i). We will also identify the two resulting patterns of the labels of the Fatou components on $\mathcal{C}_0$ from the following viewpoint. Let $\alpha_0$ be a parameter on $\Gamma_{\theta}$ such that $f_{\alpha_0}(c_0)=c_1$ and let $\alpha$ be a parameter on $\Gamma_{\theta}$ such that $f_\alpha^{2j+1}(c_0)\neq c_1$ for any $j\ge 0$. Then when $\alpha$ approaches $\alpha_0$ along $\Gamma_{\theta}$ from different sides of $\alpha_0$, the pattern of the labels of the Fatou components of $f_\alpha$ on $\mathcal{C}_0$ approaches different limiting patterns. Thus, if we hope that the pattern of the labels of the Fatou components of $f_\alpha$ on $\mathcal{C}_0$ changes continuously as the parameter $\alpha$ varies, we need to identify the above two different resulting patterns or we may identify them by simply making a switch between the labels $U_0^1$ and $U_0^2$ and corresponding switches of the labels in further pullbacks.
\end{rmk}

Now we consider Case (iii). If $f_\alpha^{\circ (2m+1)}(c_0)=c_1$ for some $m\geq 1$, then $m$ is the unique moment when $c_0$ lands on $c_1$ under the iteration of $f_\alpha$. Following the inductive process to introduce labels in Case (i), we first see that the first $m$ pullbacks of $U$, under $f^2_\alpha $, on $\mathcal{C}_0$ have the same labels as those components in Case (i). Then there are two more components among the $(m+1)^{th}$ pullbacks of $U$ under $f^2_\alpha$ attached to $\Delta_\alpha^0$ at $c_0$. Similar to Case (ii), there are two different ways to label them as $U_m^1$ and $V_m^1$, which can be viewed as limiting patterns of Case (i) when $f_\alpha^{\circ (2m+1)}(c_0)$ approaches $c_1$ from different sides. After their labels are given, the labeling for the further pullbacks of $U$ (under $f^2_\alpha$) on $\mathcal{C}_0$ continues to follow the same procedure as in Case (i). Furthermore, similar to Case (ii), we identify these two resulting patterns of the labels of the Fatou components on $\mathcal{C}_0$. See Figure \ref{Labels of the components up to the second pullbacks in Case (iii) when m=1} for the labeling of the Fatou components on $\mathcal{C}_0$ up to the second pullbacks of $U$ under $f^2_\alpha $ in Case (iii) when $m=1$.
\begin{figure}[!htpb]
 \setlength{\unitlength}{1mm}
    \includegraphics[width=0.9\textwidth]{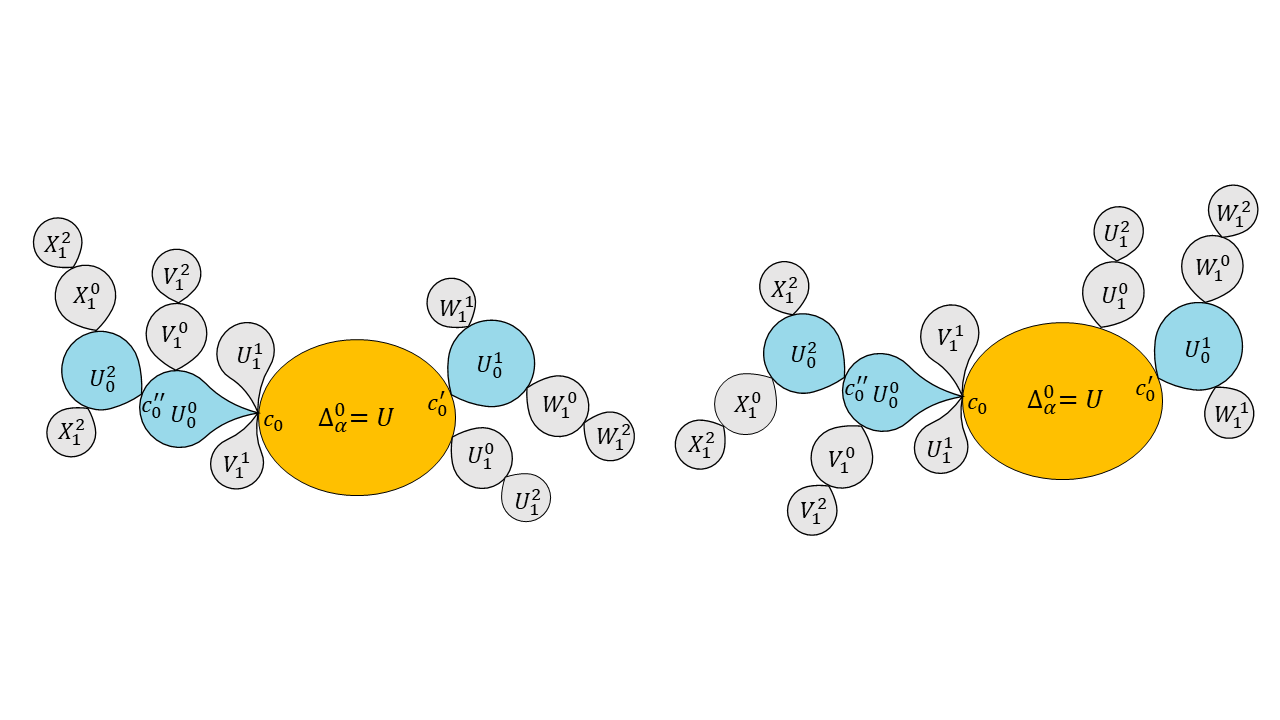}
  \caption{Labels of the components of $f_\alpha^{-4}(\Delta_\alpha^0)$ on $\mathcal{C}_0$ in Case (iii) when $m=1$.}
  \label{Labels of the components up to the second pullbacks in Case (iii) when m=1}
\end{figure}

We have denoted $\Delta_\alpha^0$ by $U$. In the following, we denote $\Delta_\alpha^1$ by $\tilde{U}$. Let us pay attentions to three sequences of Fatou components on $\mathcal{C}_0$ and three corresponding sequences of Fatou components on $\mathcal{C}_1$. By a \emph{Fatou chain} $\Pi$ (resp. $\tilde{\Pi}$) on $\mathcal{C}_0$ (resp. $\mathcal{C}_1$) we mean a sequence of Fatou components on $\mathcal{C}_0$ (resp. $\mathcal{C}_1$) such that (i) it starts with $U$ (resp. $\tilde{U}$) in $\mathcal{C}_0$ (resp. $\mathcal{C}_1$), (ii) the union of their closure is connected but has no loop, and (iii) this union is forward invariant under $f_\alpha^2$. We show in the following lemma that for each $\alpha \in \Gamma_\theta$, there are three Fatou chains on $\mathcal{C}_0$ and three such chains on $\mathcal{C}_1$ converging to repelling fixed points $f_\alpha^2$, and each convergent Fatou chain on $\mathcal{C}_0$ is mapped into a convergent Fatou chain on $\mathcal{C}_1$ by $f_\alpha$.

\begin{lem}\label{convergent Fatou chain} Let $\alpha\in \Gamma_\theta$.
    There are three different Fatou chains on the crucial cluster $\mathcal{C}_0$, which are defined by (\ref{three chains of C_0}) and denoted by $\Pi_r$, $\Pi_g$ and $\Pi_p$ respectively, \footnote{The subscripts r, g and p represent red, green and purple respectively, corresponding to the colors of the convergent Fatou chains shown on Figures \ref{three_chains}, \ref{three_paths}, \ref{degenerate_Fatou_chain} and \ref{three_special_parameters_in_jordan_curve_case}.} and three on the crucial cluster $\mathcal{C}_1$, which are defined by replacing $U$ by $\tilde{U}$ in (\ref{three chains of C_0}) and denoted by $\tilde{\Pi}_r$, $\tilde{\Pi}_g$ and $\tilde{\Pi}_p$ respectively, converging to repelling fixed points of $f_\alpha^2$. Furthermore, exactly one of $\Pi_r$, $\Pi_g$ and $\Pi_p$, depending on $\alpha$, converges to the separate repelling fixed point $P_3$ and exactly one of $\tilde{\Pi}_r$, $\tilde{\Pi}_g$ and $\tilde{\Pi}_p$ converges to $P_3$, where one chain is mapped into the other by $f_\alpha$. Overall, there are exactly three different combinatorial patterns of the two chains converging to $P_3$, which are given as follows:
 \begin{enumerate}
        \item[(i)] $\Pi_r$ and $\tilde{\Pi}_r$ converge to $P_3$, which we call a symmetric connecting.
        \item[(ii)] $\Pi_g$ and $\tilde{\Pi}_p$ converge to $P_3$, which we call an asymmetric connecting.  
        \item[(iii)] $\Pi_p$ and $\tilde{\Pi}_g$ converge to $P_3$, which we call the conjugate of the asymmetric connecting. 
    \end{enumerate}
\end{lem}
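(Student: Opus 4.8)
\emph{Outline of the argument.} The plan is to analyze the three canonical chains on $\mathcal{C}_0$ and then transfer the conclusions to $\mathcal{C}_1$ via the dynamics. First I would record the basic equivariance. Since $f_\alpha(\Delta_\alpha^0)=\Delta_\alpha^1$, $f_\alpha$ carries eventually Siegel-disk Fatou components to eventually Siegel-disk Fatou components; since $f_\alpha(R_{1/4})=R_{3/4}$, the grand orbit of $R_{1/4}$ is $f_\alpha$-invariant, so $f_\alpha$ permutes the clusters and in particular maps $\mathcal{C}_0$ onto $\mathcal{C}_1$; and since $f_\alpha$ commutes with $f_\alpha^2$, it sends Fatou chains on $\mathcal{C}_0$ to Fatou chains on $\mathcal{C}_1$. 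I would then define $\Pi_r,\Pi_g,\Pi_p$ on $\mathcal{C}_0$ from the labelling of the previous subsection (equation \eqref{three chains of C_0}): $\Pi_r$ is the chain obtained by always taking the ``depth-one, index-$0$'' component attached to $U$ and to each successive term, while $\Pi_g$ and $\Pi_p$ start with $U_0^1$, respectively $U_0^2$, and then continue by the same rule. That each of these starts at $U$, has connected, loop-free closure, and is forward invariant under $f_\alpha^2$ is immediate from the attachment/dynamics relations already listed, so they are Fatou chains; and by tracking the definitions one gets $f_\alpha(\Pi_r)=\tilde\Pi_r$, $f_\alpha(\Pi_g)=\tilde\Pi_p$, $f_\alpha(\Pi_p)=\tilde\Pi_g$, the interchange of the $g$- and $p$-chains being exactly what produces the conjugate cases (ii) and (iii).

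Next I would show that every such chain converges to a repelling fixed point of $f_\alpha^2$. Write the chain as $U=P_0,P_1,P_2,\dots$ with $P_k$ attached to $P_{k-1}$ and $f_\alpha^2(P_k)=P_{k-1}$. The $P_k$ are pairwise disjoint Fatou components contained in the filled Julia set $\EC\setminus B(\infty)$, a compact subset of $\C$; hence $\sum_k\area(P_k)<\infty$, so $\area(P_k)\to 0$. As each $P_k$ is a univalent pullback of $U$, the inverse branch $g_k\colon U\to P_k$ of $f_\alpha^{2k}$ is a univalent map with uniformly bounded image, so $\{g_k\}$ is normal; no subsequential limit can be univalent, since if $g_{k_j}\to g$ with $g$ univalent then $\area(P_{k_j})\ge\frac12\area(g(D))>0$ for a fixed disk $D\Subset U$ and large $j$, contradicting $\area(P_k)\to 0$. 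Thus $\diam(P_k)\to 0$, and the accumulation set $E=\bigcap_N\overline{\bigcup_{k\ge N}P_k}$ is a compact connected subset of $J(f_\alpha)$ with $f_\alpha^2(E)=E$. Because the orbits of $c_0$ and $c_1$ stay on $\partial\Delta_\alpha^0\cup\partial\Delta_\alpha^1$, the closure $\overline{\mathcal{P}(f_\alpha)}$ of the postcritical set lies in $\partial\Delta_\alpha^0\cup\partial\Delta_\alpha^1\cup\{\infty\}$, and $f_\alpha^2$ acts minimally on each Siegel-disk boundary, so $E\cap\overline{\mathcal{P}(f_\alpha)}=\emptyset$. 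Hence $f_\alpha^2$ is uniformly expanding on a neighbourhood of $E$, which forces the connected invariant continuum $E$ to be a single point $p$. This $p$ lies in $J(f_\alpha)$ and is fixed by $f_\alpha^2$; it is not a Siegel centre (it is in $J(f_\alpha)$), not a Cremer point (it is outside $\overline{\mathcal{P}(f_\alpha)}$), and not parabolic (the Fatou set contains no parabolic basin), hence it is repelling. The same holds on $\mathcal{C}_1$, and by the equivariance the limit of $f_\alpha(\Pi)$ is $f_\alpha$ of the limit of $\Pi$.

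It remains to show that for each $\alpha\in\Gamma_\theta$ exactly one of $\Pi_r,\Pi_g,\Pi_p$ converges to $P_3$ and that the configuration on $\mathcal{C}_0\cup\mathcal{C}_1$ is one of (i)--(iii). By Lemma~\ref{pattens for landing rays}, $R_{1/4}\cup\{P_3\}\cup R_{3/4}$ separates $\Delta_\alpha^0$ from $\Delta_\alpha^1$, so $\mathcal{C}_0$ lies in the component $\Omega$ of the complement containing $\Delta_\alpha^0$ and $P_3\in\partial\Omega$; moreover the combinatorial rotation number of $f_\alpha$ at $P_3$ is $1/2$, so $f_\alpha^2$ has a contracting inverse branch fixing $P_3$ that maps the $\Omega$-side into itself. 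Iterating this branch on a component of $\mathcal{C}_0$ lying on the $\Omega$-side of $P_3$ inside its linearizing neighbourhood produces a forward-$f_\alpha^2$-invariant sequence of $\mathcal{C}_0$-components shrinking to $P_3$; using the labelling I would identify this sequence, after deleting finitely many initial terms, with one of $\Pi_r,\Pi_g,\Pi_p$, so at least one of the three converges to $P_3$. The three limits $p_r,p_g,p_p$ are pairwise distinct, since the tails of the three chains eventually lie in three pairwise disjoint pieces of the partition of $\C$ by the grand orbit of $R_{1/4}$ and hence cannot accumulate at a common point; so exactly one of them equals $P_3$. Finally, combining this with $f_\alpha(\Pi_r)=\tilde\Pi_r$, $f_\alpha(\Pi_g)=\tilde\Pi_p$, $f_\alpha(\Pi_p)=\tilde\Pi_g$ and $f_\alpha(P_3)=P_3$: if $\Pi_r$ reaches $P_3$ then so does $\tilde\Pi_r$, which is case (i); if $\Pi_g$ reaches $P_3$ then $\tilde\Pi_p$ reaches $f_\alpha(P_3)=P_3$, which is case (ii); and the remaining possibility, $\Pi_p$ and $\tilde\Pi_g$, is case (iii), the image of (ii) under interchanging the two clusters. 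No ``mixed'' pattern is compatible with the equivariance, and the statements on $\mathcal{C}_1$ follow.

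The soft parts above --- checking the Fatou-chain axioms, the shrinking of components via normality and finite total area, and the repulsiveness of the limit via expansion off the postcritical set --- are routine. I expect the main obstacle to be the combinatorial identification in the last paragraph: showing precisely which of the three canonical chains ``aims at'' $P_3$ in terms of the relative positions of $U,U_0^0,U_0^1,U_0^2$ and the conformal angle between $c_0$ and $f_\alpha(c_0)$ on $\Delta_\alpha^0$, and excluding the non-conjugate/mixed configurations. This is where the inductive labelling and Figures~\ref{Patterns of the first pullbacks}--\ref{Labels of the components up to the second pullbacks in Case (iii) when m=1} must be used carefully.
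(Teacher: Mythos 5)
Your argument diverges from the paper's at the crucial convergence step, and that divergence opens a real gap. The paper establishes that each chain converges to a fixed point of $f_\alpha^{\circ 2}$ by invoking local connectivity of $J(f_\alpha)$ (Corollary~\ref{local connectivity}, which rests on the deep theorem of \cite{WYZZ22}); local connectivity guarantees that the diameters of the successive Fatou components go to zero, hence each chain converges to a point, which must then be $f_\alpha^{\circ 2}$-fixed by forward invariance. You instead try to manufacture $\diam(P_k)\to 0$ from finite total area plus normality of the inverse branches $g_k\colon U\to P_k$. But the normality/Hurwitz argument only shows $\diam(g_k(D))\to 0$ for \emph{compact} $D\Subset U$; it gives no control on $g_k(U)=P_k$ up to the boundary of $U$. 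A univalent map can have very small image on a compact core of $U$ and still send the full quasidisk $U$ to a set of diameter bounded below (think of long thin slivers), and Zhang's quasicircle bound applies to $\Delta_\alpha^0$ itself, not to its iterated pullbacks, so no uniform quasidisk geometry of the $P_k$ rescues this. Your subsequent claims --- that the accumulation set $E$ is a single point, that one chain aims at $P_3$, that the three limits are distinct --- all presuppose this shrinking, so the argument as written does not close. Unless you are prepared to re-prove something essentially as strong as local connectivity near $P_3$ and the other accumulation points, you need to cite the \cite{WYZZ22}-based corollary, as the paper does.

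Beyond that gap, your route for identifying which chain reaches $P_3$ (linearizing coordinate, contracting inverse branch, then ``identify the resulting sequence with one of $\Pi_r,\Pi_g,\Pi_p$'') is plausible but substantially harder to pin down than the paper's count: $f_\alpha$ has at most three $2$-cycles, one is $\{0,1\}$, and the three chains reach three distinct $f_\alpha^{\circ 2}$-fixed points, so one must be $P_3$. Your distinctness argument (``tails eventually lie in three disjoint pieces'') also needs care, since all three chains live on the same cluster $\mathcal{C}_0$; the paper's loop argument is cleaner. Finally, the paper explicitly verifies $f_\alpha(\Pi_r)\subset\tilde\Pi_r$, $f_\alpha(\Pi_g)\subset\tilde\Pi_p$, $f_\alpha(\Pi_p)\subset\tilde\Pi_g$ by tracking labels, and then separately treats the degenerate cases $f_\alpha^{\circ(2m+1)}(c_0)=c_1$ with the degenerate chains $\Pi^{d_m}$, which your proposal does not address at all; those cases matter for the later uniqueness arguments (Properties 1--7) and cannot be skipped.
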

\begin{figure}[htbp]
  \setlength{\unitlength}{1mm}
  \includegraphics[width=\textwidth]{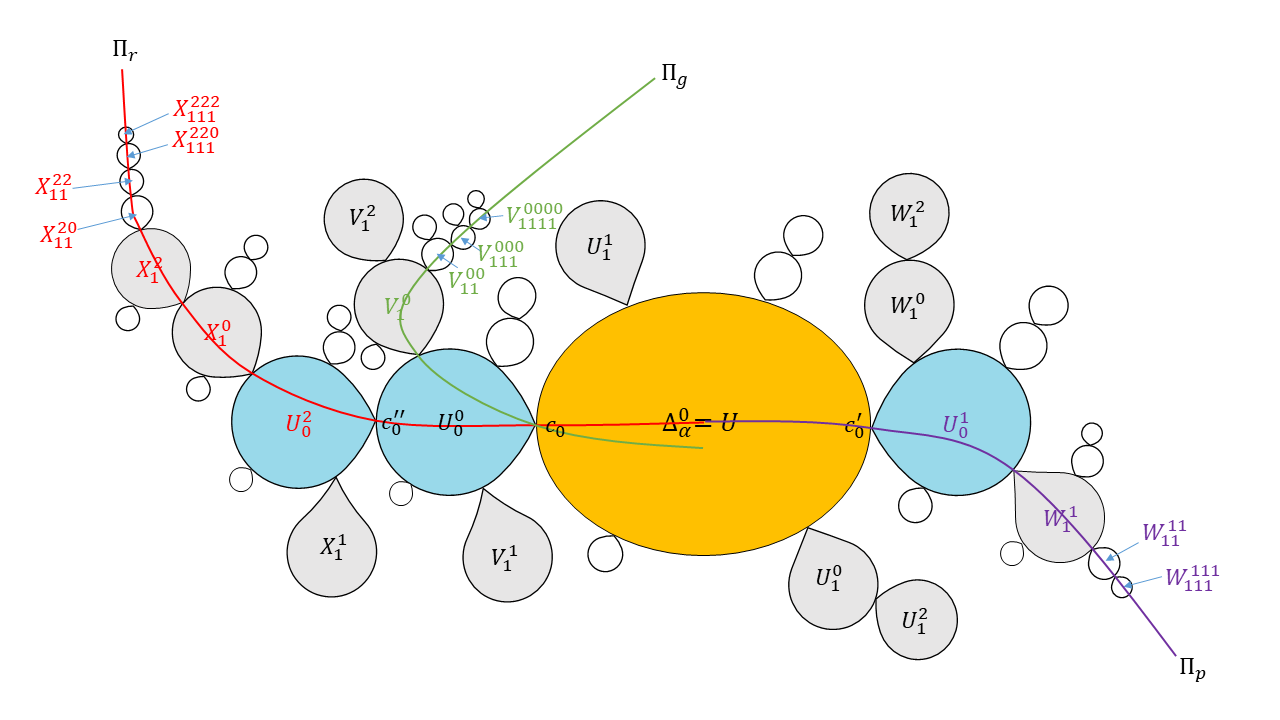}
  \caption{Three convergent Fatou chains on $\mathcal{C}_0$}
  \label{three_chains}
\end{figure}
Let us first state a result on the Julia set $J(f_\alpha)$ before we prove this lemma. 
\begin{thm}\label{thm for local-connectivity}\cite{WYZZ22}
Suppose that $f$ is a rational map with Siegel disks of bounded types and the Julia set $J(f)$ is connected. Assume that the forward orbit of every critical point of $f$ satisfies one of the following:
\begin{enumerate}
\item It is finite; 
\item It lies in an attracting cycle; 
\item It intersects the closure of a bounded type Siegel disk.
\end{enumerate}
Then $J(f)$ is locally connected.
\end{thm}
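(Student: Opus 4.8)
The plan is to reduce the statement to a metric--topological criterion and then to verify its two halves using, respectively, the quasicircle structure of bounded--type Siegel boundaries and a shrinking--of--components argument. Since $J(f)$ is connected and $f$ has a Siegel disk, $J(f)\neq\EC$, so $J(f)$ is a continuum with empty interior whose complementary components are the Fatou components $\{U_i\}$. By a classical criterion of plane topology (Whyburn), $J(f)$ is locally connected if and only if (a) every $\partial U_i$ is locally connected, and (b) for every $\varepsilon>0$ only finitely many $U_i$ have diameter larger than $\varepsilon$. By Sullivan's no--wandering--domains theorem and the hypotheses on the critical orbits, $F(f)$ consists exactly of the grand orbits of the attracting cycles together with a finite number of cycles of bounded--type Siegel disks; in particular $f$ has no Herman rings and no parabolic or Cremer cycles, and the part of the postcritical set $P(f)$ meeting $J(f)$ is contained, up to a finite (eventually periodic, hence Misiurewicz--type) exceptional set, in the union $\Lambda$ of the boundaries of the periodic Siegel disks.

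For (a) I would treat the three types of Fatou components separately. Each periodic Siegel disk $\Delta$ is a fixed Siegel disk of an iterate $f^{\circ p}$, so by Theorem \ref{zhang11} its boundary is a $K$--quasicircle through a critical point of $f^{\circ p}$; in particular $\partial\Delta$ is a Jordan curve and hence locally connected. The immediate basin of each attracting cycle has locally connected boundary by the classical geometrically--finite argument: $P(f)$ accumulates on this boundary only along the finite exceptional set above, so $f$ expands the hyperbolic metric of $\EC\setminus P(f)$ near the boundary and the Koebe distortion theorem yields shrinking connected neighbourhoods of each boundary point. Finally, any remaining Fatou component $V$ satisfies $f^{\circ k}(V)=\Delta$ for some periodic Siegel disk $\Delta$, or $f^{\circ k}(V)$ is an immediate--basin component, for some $k\ge 0$; then $\partial V\subseteq f^{-k}\bigl(\partial(f^{\circ k}(V))\bigr)$, and the preimage of a locally connected compactum under the proper holomorphic map $f^{\circ k}$ is a finite union of locally connected continua, hence locally connected. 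This establishes (a).

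The heart of the argument, and the step I expect to be the main obstacle, is (b). Suppose it fails: there are $\varepsilon_0>0$ and infinitely many, hence infinitely many pairwise disjoint, Fatou components $W_n$ with $\diam W_n>\varepsilon_0$. Only finitely many preimages of an immediate basin can have diameter exceeding $\varepsilon_0$, by the same expansion--plus--Koebe estimate used in (a); discarding these and passing to a subsequence, we may assume each $W_n$ is a component of $f^{-m_n}(\Delta_{j_n})$ for periodic Siegel disks $\Delta_{j_n}$ from the finite list, with $m_n\to\infty$ (for each fixed $m$ there are only finitely many components of $f^{-m}(\Delta_j)$), and that $\overline{W_n}$ converges in the Hausdorff metric to a continuum $E$ of diameter at least $\varepsilon_0$ with $E\subseteq J(f)$ (a point of $E$ lying in a Fatou component $U$ would force $W_n=U$ for infinitely many $n$). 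It then remains to derive a contradiction by tracking the backward itinerary $\Delta_{j_n}=f^{\circ m_n}(W_n)\to\cdots\to f(W_n)\to W_n$, each step of which is a bounded--degree pullback and whose composition shrinks a definite--size Siegel disk onto a component of diameter larger than $\varepsilon_0$. The itinerary splits into two regimes. If it spends a definite proportion of its time in a region bounded away from $P(f)$, then $f$ is uniformly expanding there in the hyperbolic metric of $\EC\setminus P(f)$, so that part of the pullback contracts geometrically and $\diam W_n\to 0$ --- a contradiction. If instead the itinerary concentrates near $\Lambda$, one invokes the complex a priori bounds for bounded--type Siegel disks (Herman, Petersen, Yampolsky): near each periodic $\partial\Delta$, a quasicircle on which the first--return map is quasisymmetrically conjugate to the bounded--type rotation, there is a nested sequence of annular neighbourhoods of definite modulus together with contraction estimates for the pullback branches supported near $\partial\Delta$, so a continuum of definite diameter cannot have an arbitrarily long backward orbit of definite--diameter continua trapped near $\Lambda$; again $\diam W_n\to 0$, a contradiction. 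Hence (b) holds, and $J(f)$ is locally connected. The delicate point throughout is exactly the geometric control near the bounded--type Siegel boundaries --- this is where the bounded--type hypothesis is indispensable; one may alternatively organize this shrinking by means of a Yoccoz--type puzzle whose depth--zero graph incorporates the Siegel quasicircles, or via a decomposition of $f$ along the Siegel disks into hyperbolic--like pieces and Siegel model maps, but in every approach the technical core is the same a priori control near $\Lambda$.
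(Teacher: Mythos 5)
This statement is not proved in the paper at all: it is imported verbatim from \cite{WYZZ22} as a black box (the paper only uses its corollary that $J(f_\alpha)$ is locally connected for $\alpha\in\Gamma_\theta\cup\Gamma_\theta^0\cup\Gamma_\theta^1$). So there is no in-paper argument to compare yours against; what follows is an assessment of your proposal on its own terms.

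Your reduction is the standard and correct one: Whyburn's criterion splits the problem into (a) local connectivity of each Fatou component boundary and (b) diameters of Fatou components tending to zero, and part (a) is essentially sound given Theorem \ref{zhang11} and the lifting of local connectivity through proper holomorphic maps. (One caveat even in (a): for the boundary of an attracting basin you invoke the ``classical geometrically-finite argument'' on the grounds that $P(f)$ meets $J(f)$ only in a finite set --- but when a critical orbit lands on a Siegel boundary, $\overline{P(f)}\cap J(f)$ contains that entire quasicircle, which may accumulate on the basin boundary; so the attracting-basin case is not insulated from the Siegel difficulty.) The genuine gap is in (b). The entire technical content of the theorem is the claim you state in one sentence: ``a continuum of definite diameter cannot have an arbitrarily long backward orbit of definite-diameter continua trapped near $\Lambda$.'' You justify this by appeal to ``complex a priori bounds (Herman, Petersen, Yampolsky)'' and to unspecified ``nested annular neighbourhoods of definite modulus together with contraction estimates for the pullback branches supported near $\partial\Delta$,'' but no such annuli or estimates are constructed, and they do not follow formally from the quasicircle property of $\partial\Delta$ alone: the quasicircle bound controls the geometry of the invariant curve itself, not the distortion of long compositions of inverse branches whose itineraries shadow it. Making this step rigorous is precisely what requires the surgery/model-map machinery (Douady--Ghys--Herman--\'Swi\k{a}tek quasi-Blaschke models, or a Petersen-type puzzle adapted to the Siegel quasicircles) and occupies most of \cite{WYZZ22}. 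As written, your argument is a correct strategic outline whose decisive step is asserted rather than proved.
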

\begin{cor}\label{local connectivity}
For any $\alpha\in \Gamma_\theta\cup\Gamma_\theta^0\cup\Gamma_\theta^1$, $J(f_\alpha)$ is connected and locally connected. 
\end{cor}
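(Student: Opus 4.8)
The plan is to obtain Corollary \ref{local connectivity} as a direct application of Theorem \ref{thm for local-connectivity} to $f=f_\alpha$, so the real work is just to verify its hypotheses: that $f_\alpha$ carries a cycle of bounded-type Siegel disks, that $J(f_\alpha)$ is connected, and that the forward orbit of each critical point of $f_\alpha$ satisfies one of conditions (1)--(3) of that theorem.

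First I would record that $\Gamma_\theta\cup\Gamma_\theta^0\cup\Gamma_\theta^1\subset\mathcal{M}_\theta$. Indeed, for $\alpha$ in this set, Definition \ref{def of Gamma curves} says both finite critical points of $f_\alpha$ lie on $\partial\Delta_\alpha^0\cup\partial\Delta_\alpha^1$; since $f_\alpha(\partial\Delta_\alpha^0)=\partial\Delta_\alpha^1$ and $f_\alpha(\partial\Delta_\alpha^1)=\partial\Delta_\alpha^0$, the forward orbit of each finite critical point stays inside $\partial\Delta_\alpha^0\cup\partial\Delta_\alpha^1$, which is a bounded subset of $\C$ because these boundaries are quasicircles (Corollary \ref{disjoint}). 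Hence both finite critical orbits are bounded, so $\alpha\in\mathcal{M}_\theta$, and by the dichotomy recalled in the Introduction ($J(f_\alpha)$ is disconnected exactly when $\alpha\in\Sigma_\theta\setminus\mathcal{M}_\theta$) the Julia set $J(f_\alpha)$ is connected.

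Next I would check the critical-orbit hypothesis. The critical points of $f_\alpha$ are $\infty$ together with the two finite critical points. The point $\infty$ is a superattracting fixed point, so its forward orbit $\{\infty\}$ is finite and condition (1) holds for it; by the previous step, the two finite critical points have forward orbits contained in $\partial\Delta_\alpha^0\cup\partial\Delta_\alpha^1\subset\overline{\Delta_\alpha^0}\cup\overline{\Delta_\alpha^1}$, that is, each meets the closure of a disk in the Siegel-disk cycle $\{\Delta_\alpha^0,\Delta_\alpha^1\}$ of $f_\alpha$, whose rotation number (of $f_\alpha^{\circ 2}$ on each disk) is $\theta$, of bounded type. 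Thus condition (3) holds for the two finite critical points. With all hypotheses of Theorem \ref{thm for local-connectivity} in place, that theorem yields that $J(f_\alpha)$ is locally connected, completing the argument.

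I do not anticipate a genuine obstacle here: the corollary is essentially a repackaging of facts already established (both finite critical points lie on the Siegel-disk boundaries, these boundaries are quasicircles, and $\infty$ is superattracting) into the hypotheses of the cited local-connectivity theorem. The only point meriting a word of care is that Theorem \ref{thm for local-connectivity} is phrased without reference to the period of the Siegel disks; in the present setting the relevant Siegel disks form a $2$-cycle, and condition (3) is to be read as the forward orbit intersecting the closure of one of the disks of that cycle, which is precisely what we verify.
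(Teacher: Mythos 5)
Your argument is correct and supplies exactly the reasoning the paper leaves implicit: the corollary is stated without proof immediately after Theorem~\ref{thm for local-connectivity}, and your verification of its hypotheses — connectedness of $J(f_\alpha)$ via boundedness of the two finite critical orbits (hence $\alpha\in\mathcal{M}_\theta$), condition (1)/(2) for $\infty$, and condition (3) for both finite critical points since they lie on $\partial\Delta_\alpha^0\cup\partial\Delta_\alpha^1$ — is precisely the intended application.
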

\begin{proof}[Proof of Lemma \ref{convergent Fatou chain}]
Let $\alpha\in \Gamma_\theta$. 

We first prove this result for the general case; that is, $f_\alpha^{\circ (2m+1)}(c_0)\neq c_1$ for any $m\geq 0$. By considering how $f_\alpha^2$ maps the Fatou components on $\mathcal{C}_0$, we can see that 
\begin{equation}\label{three chains of C_0}
\begin{split}
        &\Pi_r=\{U,U_0^0,U_0^2,X_1^0,X_1^2,X_{11}^{20},X_{11}^{22},X_{111}^{220},X_{111}^{222},X_{1111}^{2220},X_{1111}^{2222},\cdots\},\\
        &\Pi_g=\{U,U_0^0,V_1^0,V_{11}^{00},V_{111}^{000},V_{1111}^{0000},\cdots\}, \text{ and}\\
        &\Pi_p=\{U,U_0^1,W_1^1,W_{11}^{11},W_{111}^{111},W_{1111}^{1111},\cdots\}  
        \end{split}.
\end{equation}
are three Fatou chains on $\mathcal{C}_0$. See Figure \ref{three_chains}. By Corollary \ref{local connectivity}, the Julia set of $f_\alpha$ is locally connected. Thus, $\Pi_r, \Pi_g$ and $\Pi_p$ converge to some fixed points of $f_\alpha^2$. We also know they converge to three distinct fixed points of $f_\alpha^2$; otherwise, their union contain loops of Fatou components in the filled-in Julia set of $f_\alpha$. Suppose none of $\Pi_r, \Pi_g$ and $\Pi_p$ converges to $P_3$. Then including $\{0,1\}$, there will be four $2$-cycles of periodic points under $f_\alpha$, which contradicts that $f_\alpha$ has at most three $2$-cycles of periodic points. Thus, one of them converges to $P_3$. 

Note that the combinatorial pattern among $U, U_0^0, U_0^1$ and $U_0^2$ and the elements of $\{U, U_0^0, U_0^1, U_0^2\}$ lying on each of the Fatou chains $\Pi_r, \Pi_g$ and $\Pi_p$ determine each chain and distinguish it from the other two. See Figure \ref{three_chains}.

\begin{figure}[htbp]
  \setlength{\unitlength}{1mm}
  \subfigure[{ $\Pi_r\to P_3$ and $\tilde{\Pi}_r\to P_3$.}]
  {\includegraphics[width=.9\textwidth]{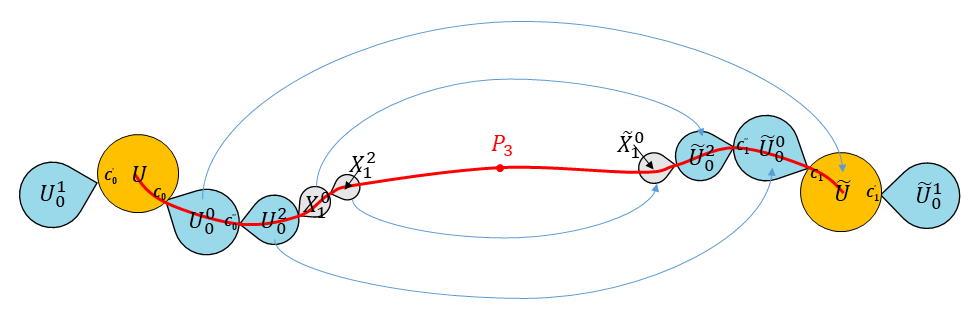}}
  \subfigure[{$\Pi_g\to P_3$ and $\tilde{\Pi}_p\to P_3$.}]
  {\includegraphics[width=.9\textwidth]{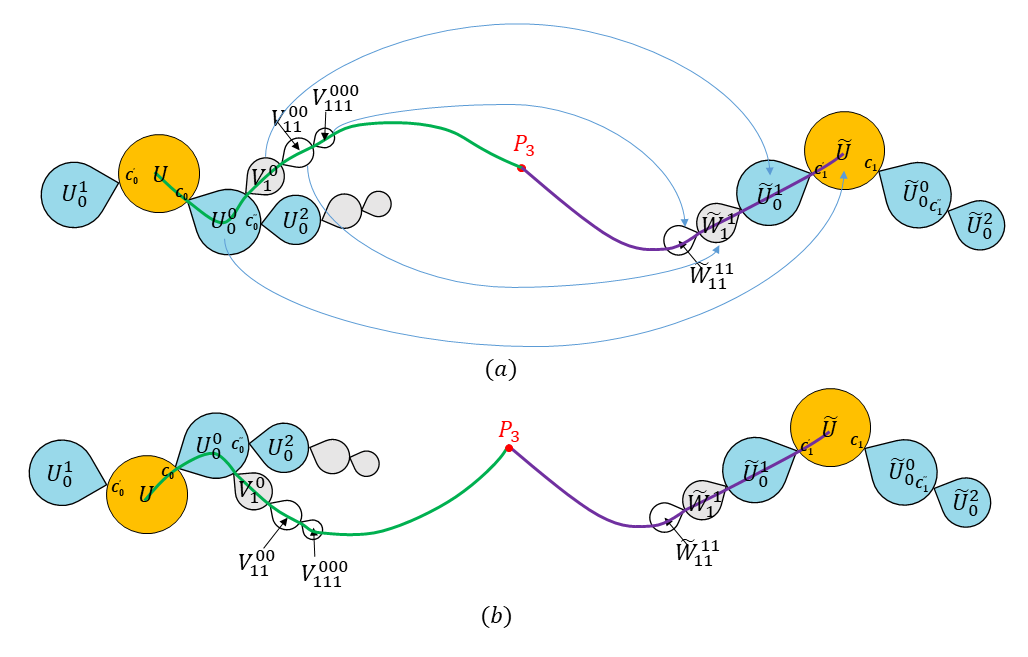}}
  \subfigure[{$\Pi_p\to P_3$ and $\tilde{\Pi}_g\to P_3$.}]
  {\includegraphics[width=.9\textwidth]{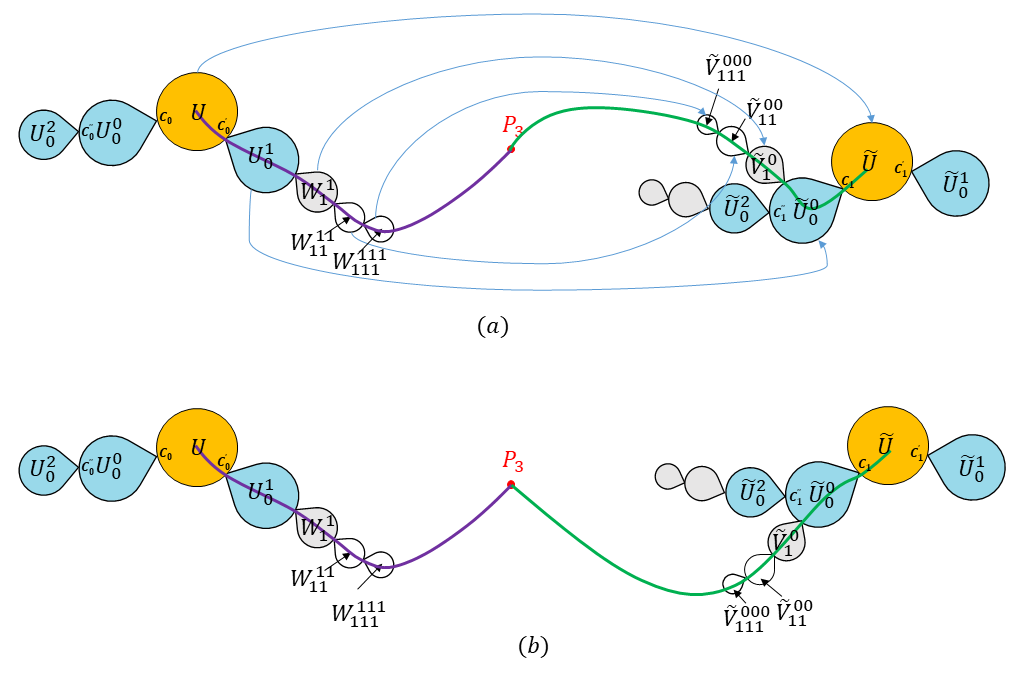}}
  \caption{Three different patterns for two Fatou chains converging to $P_3$.}
  \label{three_paths}
\end{figure}
By applying $f_\alpha$ to a Fatou chain on $\mathcal{C}_0$ converging to $P_3$, we obtain a Fatou chain on $\mathcal{C}_1$ converging to $P_3$ as well. Furthermore, we
can see that there are three combinatorial patterns for the two Fatou chains converging to $P_3$.

Case (i): $\Pi_r\to P_3$. 

Since $f_\alpha :$ $X_1^2\to \tilde{X}_1^0$,  $X_1^0\to \tilde{U}_0^2$, $U_0^2\to \tilde{U}_U^0$, and $U_0^0\to \tilde{U}$, it follows that $\tilde{\Pi}_r\to P_3$. See (i) on Figure \ref{three_paths}.

Case (ii): $\Pi_g\to P_3$. 

Since $f_\alpha :$ $V_{111}^{000}\to \tilde{W}_{11}^{11}$,  $V_{11}^{00}\to \tilde{W}_{1}^{1}$, $V_{1}^{0}\to \tilde{U}_{0}^{1}$, and $U_0^0\to \tilde{U}$, it follows that $\tilde{\Pi}_p\to P_3$. See (ii) on Figure \ref{three_paths}.

Case (iii): $\Pi_p\to P_3$. 

Since $f_\alpha :$ $W_{111}^{000}\to \tilde{V}_{111}^{000}$,  $W_{11}^{11}\to \tilde{V}_{11}^{00}$, $W_{1}^{1}\to \tilde{V}_{1}^{0}$, $U_0^1\to \tilde{U_0^0}$, and $U\to \tilde{U}$, it follows that $\tilde{\Pi}_g\to P_3$. See (iii) on Figure \ref{three_paths}.

Now we show the existence of three different cases of the Fatou chains on $\mathcal{C}_0$ converging to $P_3$ and the corresponding three cases of the Fatou chains on $\mathcal{C}_1$ converging to $P_3$ for the case when $f_\alpha^{\circ (2m+1)}(c_0)=c_1$ or $f_\alpha^{\circ (2m+1)}(c_1)=c_0$ for some $m\geq 0$.   
Let us focus on showing the existence for the case when $f_\alpha^{\circ (2m+1)}(c_0)=c_1$ for some $m\geq 0$. The existence for the other case comes out of a symmetric argument. 
\begin{figure}[htbp]
  \setlength{\unitlength}{1mm}
  \subfigure[$\Pi_r^{d_0}$ is obtained by moving $c_0'$ and $c_0''$ to $c_0$ clockwise and $\Pi_p^{d_0}$ is obtained by moving $c_0'$ and $c_0''$ to $c_0$ counterclockwise.]{\includegraphics[width=.8\textwidth]{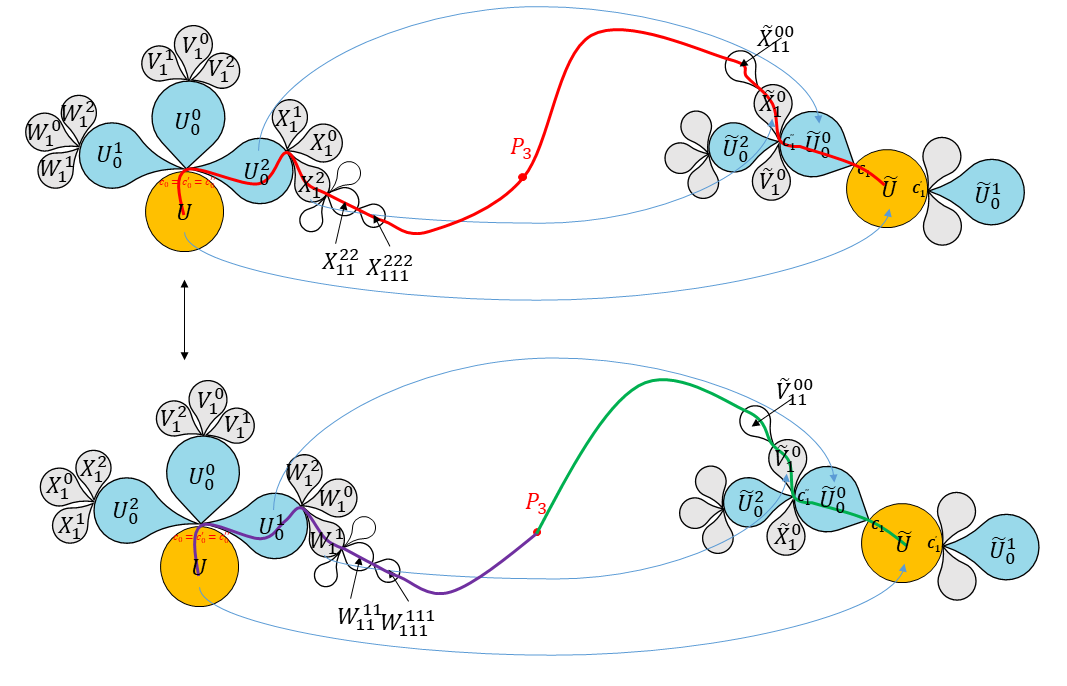}}
  \subfigure[$\Pi_r^{d_0}$ is obtained by moving $c_0'$ and $c_0''$ to $c_0$ counterclockwise and $\Pi_p^{d_0}$ is obtained by moving $c_0'$ and $c_0''$ to $c_0$ clockwise.]{\includegraphics[width=.8\textwidth]{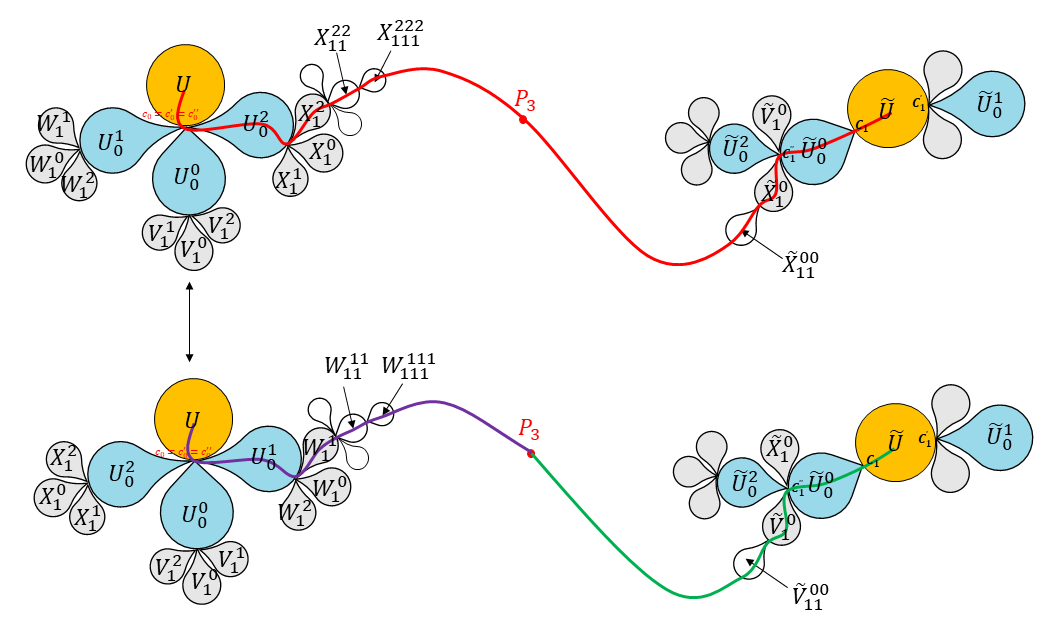}}
   \subfigure[$\Pi_g^{d_0}$ is obtained by moving $c_0'$ and $c_0''$ to $c_0$ clockwise(resp. counterclockwise).]{\includegraphics[width=.8\textwidth]{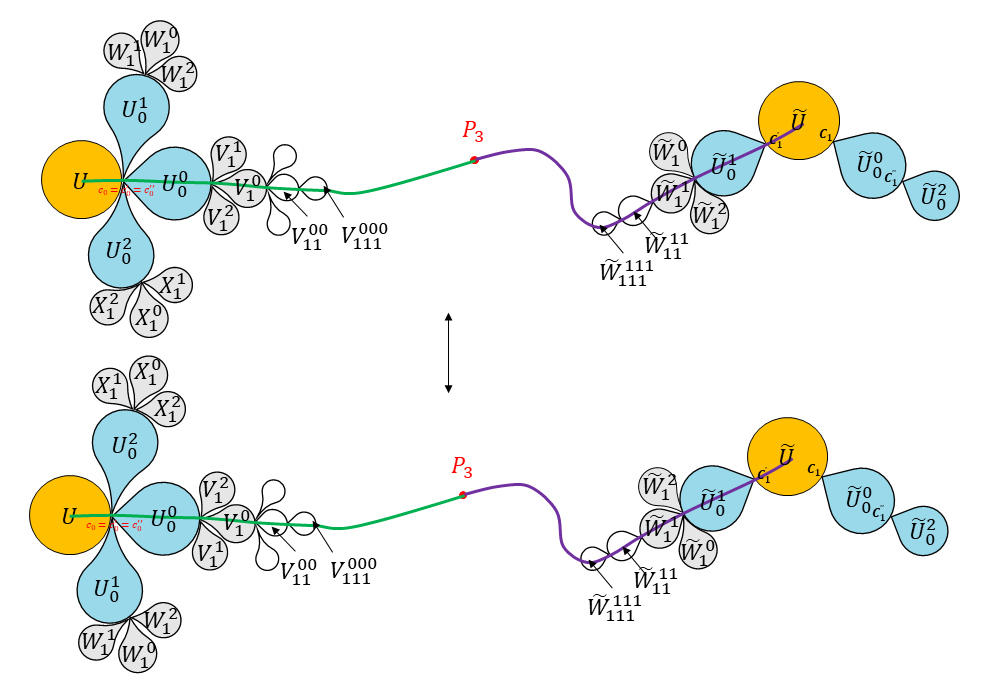}}
  \caption{Three different combinatorial patterns of the Fatou chains connected at  $P_3$ when $f_{\alpha}(c_0)=c_1$.}
  \label{degenerate_Fatou_chain}
\end{figure}

Let us first consider $\Pi_r$ for $f_\alpha^{\circ (2m+1)}(c_0)=c_1$ as a limit of $\Pi_r$ given by (\ref{three chains of C_0}) when  $f_\alpha^{\circ (2m+1)}(c_0)$ approaches $c_1$ along $\partial\Delta_\alpha^1$, which we call a degenerate Fatou chain and denote by $\Pi_r^{d_m}$. 
More precisely, when $m=0$, $c_0''$ moves toward $c_0$ along $\partial U_0^0$ as $f(c_0)$ approaches $c_1$ along $\partial\Delta_\alpha^1$. So we may view the red arc on $U^0_0$, which connects $c_0$ with $c_0''$,  shrinking to a point as $c_0''$ moves toward $c_0$ along $\partial U_0^0$. Thus, $U^0_0$ ends up not on $\Pi_r^{d_0}$.
This means that after removing $U^0_0$ and each of its pullback components on (\ref{three chains of C_0}) under $f_\alpha^2$, we obtain the components with their labels on $\Pi_r^{d_0}$. Note that under the condition $f_\alpha^{\circ (2m+1)}(c_0)=c_1$, the labels of the components of $\Pi_g$ and $\Pi_p$ are the same as the ones given in (\ref{three chains of C_0}), but for sake of using a uniform notation for all possible Fatou chains converging to $P_3$ under this condition as the one used for $\Pi_r^{d_m}$, 
we will denote them by $\Pi_g^{d_m}$ and $\Pi_p^{d_m}$. In particular, we obtain 
\begin{equation*}\label{three chains of C_0 when m=0}
\begin{split}
        &\Pi_r^{d_0}=\{U,U_0^2,X_1^2,X_{11}^{22},X_{111}^{222},X_{1111}^{2222},\cdots\},\\
        &\Pi_g^{d_0}=\{U,U_0^0,V_1^0,V_{11}^{00},V_{111}^{000},V_{1111}^{0000},\cdots\}, \text{ and}\\
        &\Pi_p^{d_0}=\{U,U_0^1,W_1^1,W_{11}^{11},W_{111}^{111},W_{1111}^{1111},\cdots\}.  
        \end{split}
\end{equation*}
If $m>0$, then the sequence of the labels of the components on $\Pi_r^{d_m}$ can be obtained from $\Pi_r$ by removing $X_{1,1,\cdots,1} ^{2,2,\cdots,0}$, where all $m$ lower indices equal to $1$, and removing each of its pullback components on $\Pi_r$ under $f_\alpha^2$. 

Note that when going through the above process to find the components (with their labels) on $\Pi_r^{d_m}$, there are two different ways to have the involved red arc shrinking to a point since $f_\alpha^{\circ (2m+1)}(c_0)$ may approach $c_1$ (along the boundary of $\Delta_\alpha^1$) clockwise or counterclockwise. Although we obtain the same labels of the components on $\Pi_r^{d_m}$, the resulting $\Pi_r^{d_m}$ has different relative patterns among $\mathcal{C}_0$. For example, considering the chains converging to $P_3$ in three different cases shown on Figure \ref{three_paths}, the resulting $\Pi_r^{d_0}$ has two different relative patterns among $\mathcal{C}_0$ shown as the Fatou components on the left side of $P_3$ and connected by the red curve in (i) and (ii) on Figure \ref{degenerate_Fatou_chain}, where they are different in the sense that the two marked combinatorial patterns of $\overline{\Pi_r^{d_0}\cup U\cup U^0_0\cup U^1_0\cup U_0^2}$ are not homotopic to each other by a continuous family of orientation-preserving homeomorphisms rel $U$ and $P_3$. 
On Figure \ref{degenerate_Fatou_chain}, we also present two relative patterns of $\Pi_p^{d_0}$ and two relative patterns of $\Pi_g^{d_0}$ among $\mathcal{C}_0$, and the corresponding relative patterns of $f_\alpha(\Pi_r^{d_0})$, $f_\alpha(\Pi_p^{d_0})$ and $f_\alpha(\Pi_g^{d_0})$ among $\mathcal{C}_1$.
Note also that one relative pattern of $\Pi_g^{d_0}$ among $\mathcal{C}_0$ is obtained by moving $c_0'$ and $c_0''$ in (a) of (ii) on Figure \ref{three_paths} to $c_0$ clockwise and the other is obtained by moving $c_0'$ and $c_0''$ in (b) of (ii) on Figure \ref{three_paths} to $c_0$ counterclockwise.

The Fatou chains and their images under $f_\alpha $ on Figure \ref{degenerate_Fatou_chain} show several important properties, which we will use in the last section to prove that $\Gamma_\theta$ is a Jordan curve. 

{\bf Property 1.} One relative pattern of $\Pi_r^{d_0}$ among $\mathcal{C}_0$ is equivalent to one relative pattern of $\Pi_p^{d_0}$ among $\mathcal{C}_0$ in the sense that their relative patterns are the same by switching the labels of  $U_0^1$ with $U_0^2$ and correspondingly switching the labels of the pullback components through the labeling algorithm. Under this sense, the relative patterns of $\Pi_r^{d_0}$ and $\Pi_p^{d_0}$ in (i) (resp. (ii)) among $\mathcal{C}_0$ on Figure \ref{degenerate_Fatou_chain} are equivalent. Similarly, two relative patterns of $\Pi_g^{d_0}$ among $\mathcal{C}_0$ presented in (iii) on Figure \ref{degenerate_Fatou_chain} are also equivalent. 

{\bf Property 2.} Two relative patterns of 
$\Pi_r^{d_0}$ among $\mathcal{C}_0$ are different, which are determined by the relative patterns between $\Pi_r^{d_0}$ and the four components $U$, $U_0^0$, $U_0^1$ and $U_0^2$, in the sense that $\overline{\Pi_r^{d_0}\cup U\cup U^0_0\cup U_0^1\cup U_0^2}$ are not homotopic by a continuous family of orientation-preserving homeomorphisms rel $U$ and $P_3$. Therefore, the filled-in Julia sets realizing these two relative patterns of $\Pi_r^{d_0}$ among $\mathcal{C}_0$ are not homotopic to each other by a family of orientation-preserving homeomorphisms rel $U$ and $P_3$.

{\bf Property 3.} For each $\Pi_r^{d_0}$, 
the pattern of the labels of the components on $f_\alpha(\Pi_r^{d_0})$ follows the pattern of the labels of the components on $\tilde{\Pi}_g$ by identifying $\tilde{X}$ with $\tilde{V}$.

{\bf Property 4.} Two relative patterns of $\Pi_r^{d_0}\cup f_\alpha(\Pi_r^{d_0})$ 
among $\mathcal{C}_0\cup \mathcal{C}_1$ are respectively equivalent to two relative patterns of $\Pi_r^{d_0}\cup \tilde{\Pi}_g$ among $\mathcal{C}_0\cup \mathcal{C}_1$ by identifying $\tilde{X}$ with $\tilde{V}$, which are respectively presented in (i) and (ii) on Figure \ref{degenerate_Fatou_chain}.

{\bf Property 5.} The two relative patterns of $\Pi_r^{d_0}$ among $\mathcal{C}_0$ (i.e., the two relative patterns of $\Pi_p^{d_0}$ among $\mathcal{C}_0$) and one relative pattern of $\Pi_g^{d_0}$ among $\mathcal{C}_0$ imply that the filled-in Julia sets in these three cases are not homotopic to each other by a continuous family of orientation-preserving homeomorphisms rel $U$ and $P_3$. 

{\bf Property 6.} Assume that $f_\alpha^{2m+1}(c_0)=c_1$ for some $m>0$. Then the color of the chain converging to $P_3$ is preserved in a small neighborhood of $\alpha$. Furthermore, if there are two such parameters $\alpha$ and $\alpha'$ that the Fatou chain $\Pi(\alpha)$ of $f_\alpha$ converging to $P_3(\alpha)$ has color different from the Fatou chain $\Pi(\alpha')$ of $f_{\alpha'}$ converging to $P_3(\alpha')$, then the relative pattern between $\Pi(\alpha)$ and $U(\alpha)$, $U_0^0(\alpha)$, $U_0^1(\alpha)$ and $U_0^2(\alpha)$ is different from the one between $\Pi(\alpha')$ and 
$U(\alpha')$, $U_0^0(\alpha')$, $U_0^1(\alpha')$ and $U_0^2(\alpha')$. Thus, the filled-in Julia sets of $f_\alpha$ and $f_{\alpha'}$ are not homotopic to each other by a continuous family of orientation-preserving homeomorphisms rel $U$ and $P_3$. 

{\bf Property 7.} When $f_\alpha^{2m+1}(c_1)=c_0$ ($m\ge 0$), there is a similar process, symmetric to the one for the case $f_{\alpha}^{2m+1}(c_0)=c_1$ ($m\ge 0$), to understand the labels of the components on the degenerate Fatou chains of $\tilde{\Pi}_r$, $\tilde{\Pi}_p$ and $\tilde{\Pi}_g$ among $\mathcal{C}_1$, which we denote by $\tilde{\Pi}_r^{d_m}$, $\tilde{\Pi}_p^{d_m}$ and $\tilde{\Pi}_g^{d_m}$ respectively.
The above Properties 1-5 have corresponding versions for $\tilde{\Pi}_r^{d_0}$, $\tilde{\Pi}_p^{d_0}$ and $\tilde{\Pi}_g^{d_0}$, and the above Property 6 has a corresponding version for $\tilde{\Pi}_r^{d_m}$, $\tilde{\Pi}_p^{d_m}$ and $\tilde{\Pi}_g^{d_m}$ when $m>1$. 
\end{proof}

\begin{figure}[!htpb]
 \setlength{\unitlength}{1mm}
  \centering
  \includegraphics[width=1\textwidth]{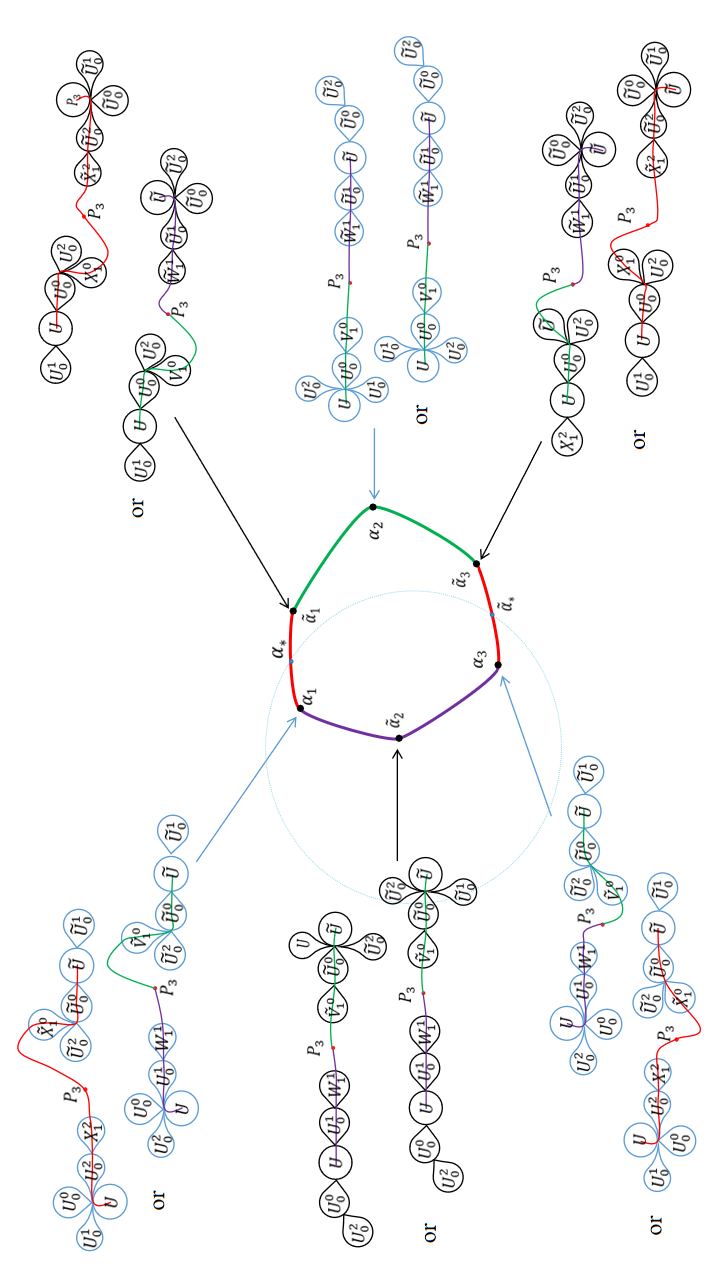}
  \caption{How do the patterns of $\Pi$ and $f_\alpha(\Pi)$ vary when $\alpha$ moves along a loop component $\Gamma_\theta'$ of $\Gamma_\theta$ (if exists)?}
  \label{three_special_parameters_in_jordan_curve_case}
\end{figure}
\begin{rmk}\label{rmk:global loop}
(a) By the symmetry of $\Gamma_\theta$ (Lemma \ref{lem:sym}), we know that the two fixed points of the map $\alpha\mapsto \frac{\lambda}{\alpha}$ belong to $\Gamma_\theta$, which we denote by $\alpha_*$ and $\tilde{\alpha}_*$. Note also that the Fatou chains of $f_{\alpha_*}$ (resp. $f_{\tilde{\alpha}_*}$) connected at $P_3$ are both red chains $\Pi_r$ and $\tilde{\Pi}_r$. 

(b) The above Property 5 indicates that there are possibly three different relative patterns of the Fatou chain landing at $P_3$ among $\mathcal{C}_0$ (and hence among $\mathcal{C}_0\cup \mathcal{C}_1$) such that $f_\alpha(c_0)=c_1$. Correspondingly, there are possibly three different relative patterns of the Fatou chain landing at $P_3$ among $\mathcal{C}_1$ (and hence among $\mathcal{C}_1\cup \mathcal{C}_0$) such that $f_\alpha(c_1)=c_0$. In Section \ref{jordancurve}, we will first prove that if there is a parameter $\alpha$ on $\Gamma_\theta$ such that the Fatou chain of $f_\alpha$ converging to $P_3$ presents one of these six relative patterns among $\mathcal{C}_0\cup\mathcal{C}_1$, then such a parameter is unique; we will prove secondly that $\Gamma_\theta$ contains a loop component separating $0$ from $\infty$, which we denote by $\Gamma_\theta'$. 

(c) Once we know that $\Gamma_\theta$ contains a loop component $\Gamma_\theta'$, it is easy to see that the loop component $\Gamma_\theta'$ must contain at least one parameter $\alpha $ such that $f_\alpha(c_0)=c_1$ or $f_\alpha(c_1)=c_0$. Then the above seven properties of the Fatou chains converging to $P_3$, including the properties on their relative patterns among $\mathcal{C}_0\cup\mathcal{C}_1$, imply that each of the six possible patterns of the Fatou chains landing at $P_3$ when $f_\alpha(c_0)=c_1$ or $f_\alpha(c_1)=c_0$ can be realized by a unique parameter on $\Gamma_\theta'$. Let us denote by $\alpha_1$, $\alpha_2$ and $\alpha_3$ the three parameters $\alpha$ such that $f_\alpha(c_0)=c_1$ and by $\tilde{\alpha}_1$, $\tilde{\alpha}_2$ and $\tilde{\alpha}_3$ the three parameters $\alpha$ such that $f_\alpha(c_1)=c_0$. By Lemma \ref{lem:sym}, we may arrange $\tilde{\alpha}_k=\frac{\lambda}{\alpha_k}$ for $k=1, 2, 3$. Assume that for these six parameters, the corresponding Fatou chains landing at $P_3$ are respectively given by the ones presented on Figure \ref{three_special_parameters_in_jordan_curve_case}. Then the above seven properties of the Fatou chains landing at $P_3$ also imply that the six parameters $\alpha_1$, $\alpha_2$, $\alpha_3$, $\tilde{\alpha}_1$, $\tilde{\alpha}_2$ and $\tilde{\alpha}_3$ must lie on $\Gamma_\theta'$ in the order given on Figure \ref{three_special_parameters_in_jordan_curve_case},  
and furthermore,  one of $\alpha_*$ and $\tilde{\alpha}_*$ lies on $\Gamma_\theta'$ between $\alpha_1$  and $\tilde{\alpha}_1$ and the other lies on $\Gamma_\theta'$ between $\tilde{\alpha}_3$ and $\alpha_3$ in the clockwise order, which we denote by $\alpha_*$ and $\tilde{\alpha}_*$ respectively. 
Moreover, when the parameter $\alpha$ moves along $\Gamma_\theta'$, the colors of the Fatou chain $\Pi$ of $f_\alpha$ converging to $P_3$ and its image $f_\alpha(\Pi)$ must follow the alternating pattern specified on Figure \ref{three_special_parameters_in_jordan_curve_case}, which includes the following properties.

(1) If $\alpha$ lies on the red segment, then the colors of $\Pi$ and $f_\alpha(\Pi)$ are both red; if $\alpha$ lies on the purple segment, then $\Pi$ is purple and $f_\alpha(\Pi)$ is green; if $\alpha$ lies on the green segment, then  $\Pi$ is green and $f_\alpha(\Pi)$ is purple.

(2) The red segment between $\alpha_1$ and $\tilde{\alpha}_1$ (resp. between $\alpha_3$ and $\tilde{\alpha}_3$) is symmetric with respect to $\alpha_*$ (resp. 
$\tilde{\alpha}_*$) under the map $\alpha\mapsto \frac{\lambda}{\alpha}$.

(3) The green segment contains $\alpha_2$ in its interior and its endpoints are at $\tilde{\alpha}_1$ and $\tilde{\alpha}_3$.

(4) The purple segment contains $\tilde{\alpha}_2$ in its interior and its endpoints are at $\alpha_3$ and $\alpha_1$.

(5) The eight points $\alpha_1$, $\alpha_*$, $\tilde{\alpha}_1$, $\alpha_2$, $\tilde{\alpha}_3$, $\tilde{\alpha}_*$, $\alpha_3$ and  $\tilde{\alpha}_2$ are arranged on the Jordan curve in the clockwise order. 

(6) The equivalent patterns of the Fatou chain $\Pi$ of $f_\alpha$ and $f_\alpha(\Pi)$
for $\alpha$ equal to each of $\alpha_1$, $\alpha_2$, $\alpha_3$, $\tilde{\alpha}_1$, $\tilde{\alpha}_2$ and $\tilde{\alpha}_3$ are presented on this figure as well. 
\end{rmk}

\subsection{Labels of eventually Siegel-disk components when $\alpha\in \Gamma_{\theta}^0\cup \Gamma_{\theta}^1$}\label{Labels of E Siegel comps - arc}
In this subsection, we describe how to introduce labels to the
eventually Siegel-disk Fatou components of $f_\alpha$ when $\alpha\in \Gamma_{\theta}^0\cup \Gamma_{\theta}^1$.
\begin{figure}[!htpb]
 \setlength{\unitlength}{1mm}
  \centering
  \includegraphics[width=0.8\textwidth]{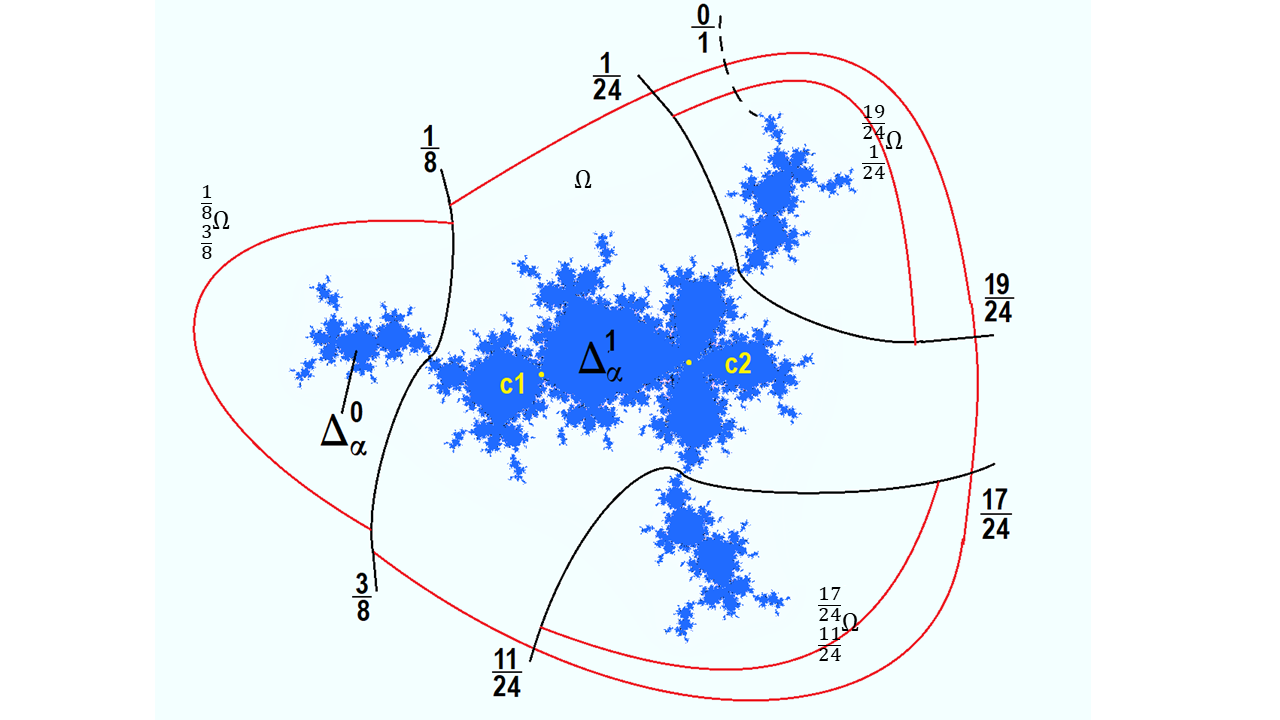}
  \caption{The filled-in Julia set of $f_{\alpha}$ for $\alpha\approx -3.2195-0.49565i$, with two critical points $c_0\approx0.76-0.1i$ and $c_1\approx1.377-0.05i$ on the boundary of $\triangle_{\alpha}^1$. Black curves represent external rays and red curves represent equi-potential curves.}
  \label{fatouset1}
\end{figure}
We focus on considering the case when $\alpha\in \Gamma_{\theta}^1$; that is, both critical points $c_0,c_1\in\partial\Delta_{\alpha}^1$. 
We know that the two external rays $R_{\frac{1}{8}}$ and $R_{\frac{3}{8}}$ on a $2$-cycle land at a repelling fixed point $P_3$ separating the Julia set $J(f_\alpha )$. Clearly, $R_{\frac{1}{8}}$ has two more preimages $R_{\frac{1}{24}}$ and $R_{\frac{17}{24}}$, and $R_{\frac{3}{8}}$ has two more preimages $R_{\frac{11}{24}}$ and $R_{\frac{19}{24}}$. We also know that $R_{\frac{1}{24}}$ and $R_{\frac{19}{24}}$ land at the same point (a preimage of $P_3$), and $R_{\frac{11}{24}}$ and $R_{\frac{17}{24}}$ land at the same point (another pre-image of $P_3$).
The complex plane $\C$ is divided by $R_{\frac{1}{8}}$ and $R_{\frac{3}{8}}$ into two regions, which we denote by $^{\frac{1}{8}}_{\frac{3}{8}}\Omega$ the one containing $0$ and by $\Omega$ the other containing $1$. Since both 
$c_0$ and $c_1$ lie on the boundary of $\Delta_\alpha^1$, $c_0$ and $c_1$ are contained in $\Omega$. We denote the three pre-images of $\Omega$ under $f_\alpha$ by $^{\frac{1}{8}}_{\frac{3}{8}}\Omega$, $^{\frac{11}{24}}_{\frac{17}{24}}\Omega$ and $^{\frac{19}{24}}_{\frac{1}{24}}\Omega$, where $^{r_1}_{r_2}\Omega$ stands for the domain bounded by $R_{r_1}$ and $R_{r_2}$ and on the right side of their union when one stands on $R_{r_1}$ and faces $R_{r_2}$. See Figure \ref{fatouset1}. Furthermore, the grand orbit of $R_{\frac{1}{8}}$ (or $R_{\frac{3}{8}}$) under $f_\alpha$ divides the eventually Siegel-disk Fatou components into infinitely many clusters 
(see Figure \ref{fatouset2}). One can see that there is only one crucial cluster in this case, that is the one containing $\Delta_\alpha^1$. We continue to denote this cluster by $\mathcal{C}_1$.
All other clusters are univalent pullbacks of $\mathcal{C}_1$ under $f_\alpha$. The combinatorial patterns of the clusters are determined by the grand orbit of $R_{\frac{1}{8}}$ under $f_\alpha $, which corresponds to the same combinatorics of the grand orbit of $R_{\frac{1}{8}}$ under the map $z\mapsto z^3$. It remains to give a complete description of the combinatorial pattern of the Fatou components on $\mathcal{C}_1$.

\begin{figure}[!htpb]
 \setlength{\unitlength}{1mm}
  \centering
  \includegraphics[width=0.9\textwidth]{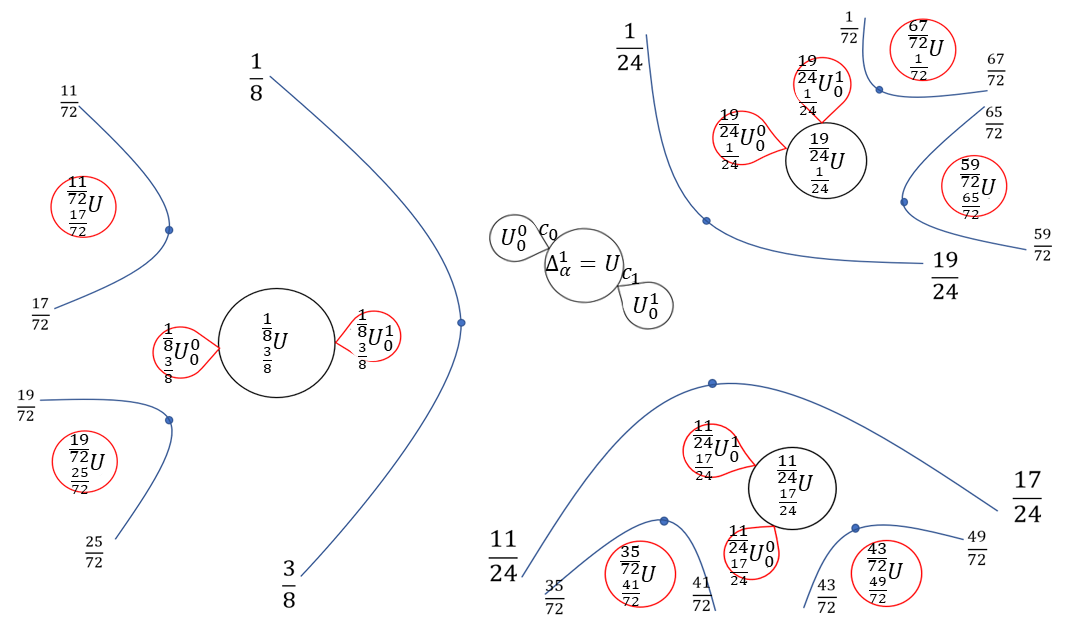}
  \caption{Illustration of the external rays separating clusters when $c_0$ and $c_1$ on the boundary of $\triangle_{\alpha}^1$, and the labels for the Fatou components of $\cup_{k=0}^{2}f^{-k}_{\alpha}(\triangle_{\alpha}^0\cup \triangle_{\alpha}^1)$.}
  \label{fatouset2}
\end{figure}
Now we introduce labels to the Fatou components on $\mathcal{C}_1$. There are two cases for the first pullbacks of $U=\Delta_\alpha^1$ under $f_\alpha^2$ corresponding to $c_0\neq c_1$ or $c_0=c_1$, which are shown as the yellow and blue regions in Case (i) and Case (ii) respectively on Figure \ref{Patterns of the first and second pullbacks for arc}. For further pullbacks, we need to consider three cases: (i) $f_\alpha^{\circ (2m)}(c_0)\neq c_1$ for any $m\geq 0$, which is the general case; (ii) $c_0=c_1$; (iii) $f_\alpha^{\circ (2m)}(c_0)=c_1$ for some $m\geq 1$. 

Let us first introduce a process to label the Fatou components on $\mathcal{C}_1$ in Case (i).  Let $U=\Delta_\alpha^1$ and denote by $U_0^0$ and $U_0^1$ the two preimages of $U$ under $f_\alpha^2$ on $\mathcal{C}_1$ attached at $c_0$ and $c_1$ respectively. Given integers $s\geq 1$ and $t\in\{0,1\}$, let $U_s^t$, $V_s^t$, $W_s^t$ stand for 
the components of $f_\alpha^{-2s}(U_0^t)$ such that
$\{U_s^0,U_s^1\}$, $\{V_s^0,V_s^1\}$, $\{W_s^0,W_s^1\}$ are pairs of the two components attached to $U$, $U_0^0$ and $U_0^1$ respectively.

Inductively, for any $m\geq 2$ and sequences $(s_1,\cdots, s_m)$ and $(t_1,\cdots,t_m)$, where $s_i\geq 1$ and $t_i\in\{0,1\}$ with $1\leq i\leq m$, let $U_{s_1,\cdots,s_m}^{t_1,\cdots,t_m}$, $V_{s_1,\cdots,s_m}^{t_1,\cdots,t_m}$, and $W_{s_1,\cdots,s_m}^{t_1,\cdots,t_m}$ be the unique components of $f_\alpha^{-2(s_1+\cdots +s_m+1)}(\Delta_\alpha^1)$ satisfying:
\begin{itemize}
\item $\{U_{s_1,\cdots,s_{m-1},s_m}^{t_1,\cdots,t_{m-1},0},U_{s_1,\cdots,s_{m-1},s_m}^{t_1,\cdots,t_{m-1},1}\}$ is the pair of the two components attached to $U_{s_1,\cdots,s_{m-1}}^{t_1,\cdots,t_{m-1}}$.
\item The above relationship holds similarly for $V_{s_1,\cdots,s_m}^{t_1,\cdots,t_m}$ and $W_{s_1,\cdots,s_m}^{t_1,\cdots,t_m}$.
\item If $Y$ stands for any of $U$, $V$ and $W$, then
\begin{equation*}
f_\alpha^{2}(Y_{s_1,s_2,\cdots,s_m}^{t_1,t_2,\cdots,t_m})=
\left\{
\begin{array}{ll}
U_{s_1-1,s_2\cdots,s_m}^{t_1,t_2,\cdots,t_m},  &~~~~~~~\text{if}~s_1\geq 2, \\
V_{s_2,\cdots,s_m}^{t_2,\cdots,t_m},  &~~~~~~~\text{if}~s_1=1 \text{ and } t_1=0, \\
W_{s_2,\cdots,s_m}^{t_2,\cdots,t_m},  &~~~~~~~\text{if}~s_1=1 \text{ and } t_1=1.
\end{array}
\right.
\end{equation*}
\end{itemize}
Following this procedure, the labels of the Fatou components on $\mathcal{C}_1$ up to the third pullbacks of $U$ under $f^2_\alpha$ in Case (i) are shown as Case (i) on Figure \ref{Patterns of the first and second pullbacks for arc}.
\begin{figure}[htbp]
  \setlength{\unitlength}{1mm}
  \subfigure[{$f^{2m}_\alpha (c_0)\not=c_1$ for any $m\ge 0$}]{\includegraphics[width=0.8\textwidth]{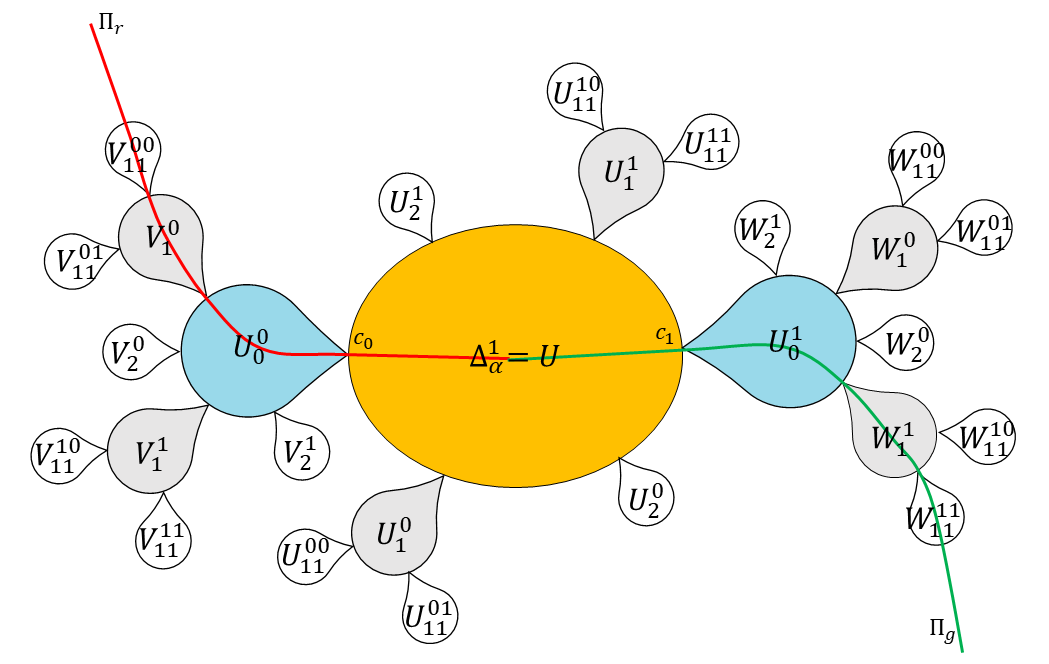}}
  \subfigure[{$c_0=c_1$}]{\includegraphics[width=0.9\textwidth]{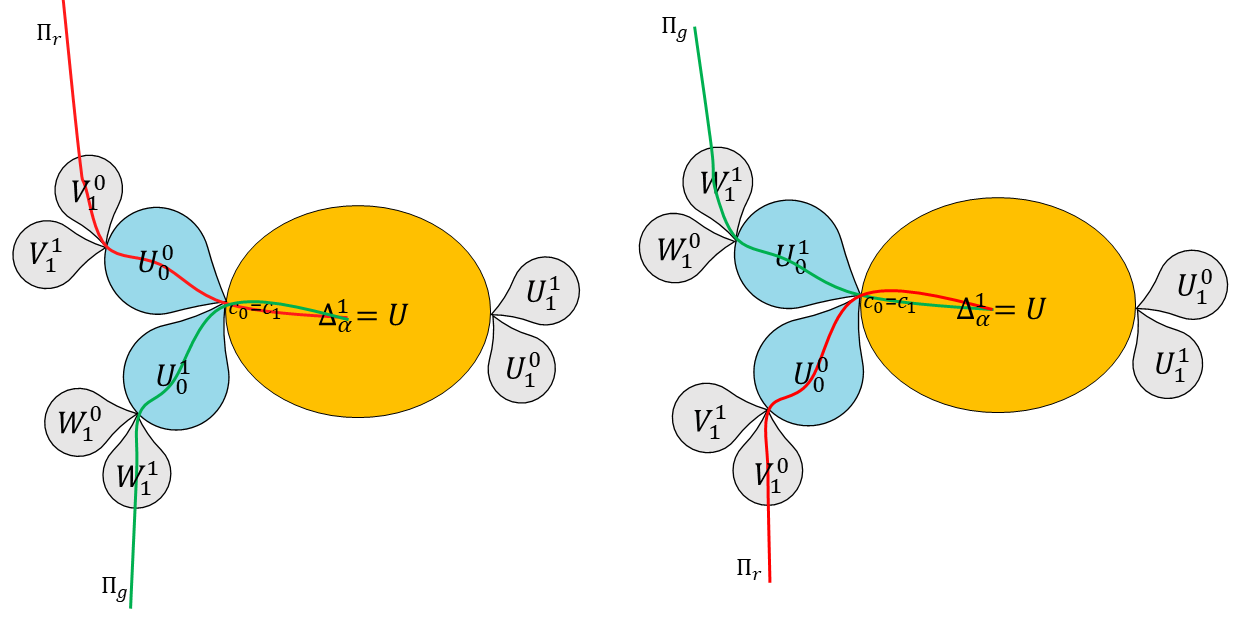}}
  \subfigure[{$f^{ 2}_\alpha(c_0)=c_1$}]{\includegraphics[width=0.9\textwidth]{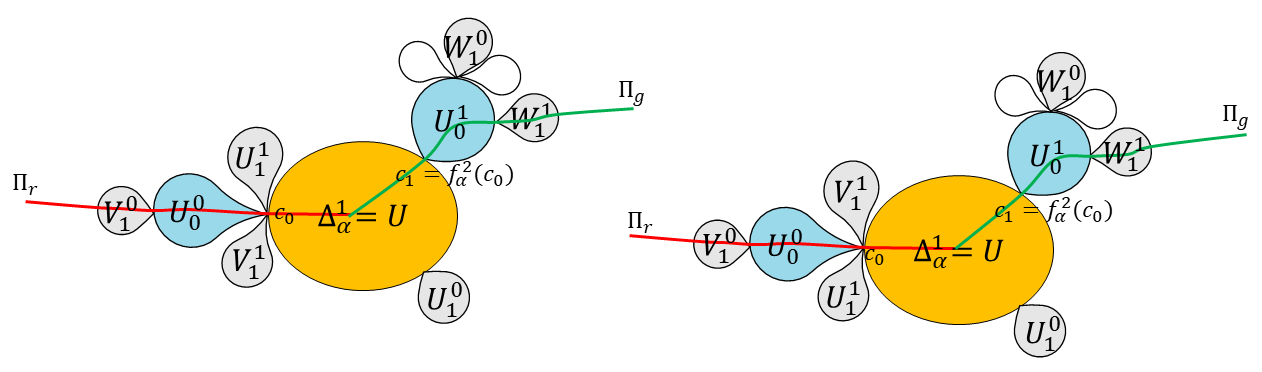}}
  \caption{Labels of the nine components of $f_\alpha^{-4}(\Delta_\alpha^0)$ on $\mathcal{C}_1$ and two convergent Fatou chains on $\mathcal{C}_1$ for Case (i), (ii) and (iii).}
  \label{Patterns of the first and second pullbacks for arc}
\end{figure}

When $c_0=c_1$, the two Fatou components $U^0_0$ and $U^1_0$ attach to the boundary of $U$ at the same point $c_0=c_1$. Further pullbacks of $U$, $U^0_0$ and $U^1_0$ under $f_\alpha^2$ on $\mathcal{C}_1$ show that every new pair of components attach to the same point on the boundary of an existing component. Except this difference, the components on $\mathcal{C}_1$ grow in the same way as in Case (i). Therefore, we can apply the same procedure as in Case (i) to introduce labels to all Fatou components on $\mathcal{C}_1$ in Case (ii). See Case (ii) on Figure \ref{Patterns of the first and second pullbacks for arc} for the labels of the Fatou components on $\mathcal{C}_1$ up to the second pullbacks of $U$ under $f^2_\alpha$.

If $f_\alpha^{2m}(c_0)=c_1$ for some $m\geq 1$, then $m$ is the unique moment for $c_0$ to land on $c_1$ under the iteration of $f_\alpha ^2$. Thus, $f_\alpha^{2j}(c_0)\neq c_1$ for any $0\le j\le m-1$ and hence the labels of the first $m$ pullbacks of $U$ under $f^2_\alpha$ on $\mathcal{C}_1$ are as the same as those components in Case (i). Among the new components on $\mathcal{C}_1$ generated by the next pullback by $f_\alpha^2$, two of them are attached to $U$ at $c_0$, which we may consider one of them as $U_m^1$ and the other as $V_m^1$. Then the labeling for the further pullbacks of $U$ under $f^2_\alpha$ continues to follow the same procedure for Case (i). See Case (iii) on Figure \ref{Patterns of the first and second pullbacks for arc} for the labels of the Fatou components up to the second pullbacks of $U$ under $f^2_\alpha$ in Case (iii) when $m=1$.

We pay attention to two Fatou chains on $\mathcal{C}_1$ starting from $U$ and $U^0_0$ or from $U$ and $U^1_0$.
\begin{lem}\label{convergent Fatou chain for arc} Let $\alpha\in \Gamma_\theta^1$.
    There are two different convergent Fatou chains on the crucial cluster $\mathcal{C}_1$, which are defined by (\ref{two chains of C_1 for arc case}) and denoted by $\Pi_r$ and $\Pi_g$ respectively, such that one of them, depending on $\alpha$, converges to the separating repelling fixed point $P_3$ and the other converges to a periodic point of period $2$ (see the red and green chains in Case (i) on Figure \ref{Patterns of the first and second pullbacks for arc}). Furthermore, when $c_0=c_1$, there are exactly two different relative combinatorial patterns between the chain converging to $P_3$ and the three domains $U$, $U^0_0$ and $U^1_0$, which are illustrated as the two red chains or the two green chains in (a) and (b) on Figure \ref{fatou_chain_for_arc}.
\end{lem}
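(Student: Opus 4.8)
The plan is to follow the scheme used in the proof of Lemma~\ref{convergent Fatou chain}, simplified by the fact that for $\alpha\in\Gamma_\theta^1$ there is a single crucial cluster $\mathcal{C}_1$ and hence only two chains to follow. I would first record, from the labelling of $\mathcal{C}_1$ in Case~(i) (the case $f_\alpha^{\circ (2m)}(c_0)\neq c_1$ for all $m\ge 0$), the two sequences
\[\Pi_r=\{U,\,U_0^0,\,V_1^0,\,V_{11}^{00},\,V_{111}^{000},\ldots\},\qquad \Pi_g=\{U,\,U_0^1,\,W_1^1,\,W_{11}^{11},\,W_{111}^{111},\ldots\}\]
(this is formula (\ref{two chains of C_1 for arc case})), and check that each is a Fatou chain directly from the recursive definition of the labels: consecutive components are attached at a single point; the transition rule $f_\alpha^2(V_{1,s_2,\ldots}^{0,t_2,\ldots})=V_{s_2,\ldots}^{t_2,\ldots}$ together with its $W$-analogue shows that $f_\alpha^2$ sends each member of $\Pi_r$ (resp. $\Pi_g$) to the preceding one, so the union of the closures is connected and forward invariant under $f_\alpha^2$; and the tree structure of the labelling rules out loops.

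By Corollary~\ref{local connectivity}, $J(f_\alpha)$ is connected and locally connected, so the diameters of the components along $\Pi_r$ and along $\Pi_g$ shrink to $0$ and each chain converges to a single point, which — being the limit of a sequence forward invariant under $f_\alpha^2$ — is a fixed point of $f_\alpha^2$; as in the corresponding step of the proof of Lemma~\ref{convergent Fatou chain}, using that for $\alpha\in\Gamma_\theta^1$ all three fixed points of $f_\alpha$ are repelling and the Siegel $2$-cycle $\{0,1\}$ is the only indifferent cycle (Lemma~\ref{existence of separating repelling fixed point}), these limits are repelling periodic points. They are distinct: otherwise the union of the closures of the components of $\Pi_r$ and $\Pi_g$ would be a loop of Fatou components enclosing $\Delta_\alpha^1$, which the filled-in Julia set cannot contain.

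To see that one chain lands at the separating fixed point $P_3$ and the other at a genuine period-$2$ point, I would use the partition of the filled-in Julia set by the grand orbit of $R_{\frac{1}{8}}$ (Figure~\ref{fatouset2}), which realizes the combinatorics of the grand orbit of $R_{\frac{1}{8}}$ under $z\mapsto z^3$. Reading the chains off this model: $\Pi_r$ and $\Pi_g$ leave $U=\Delta_\alpha^1$ through the two distinct components $U_0^0$ (attached at $c_0$) and $U_0^1$ (attached at $c_1$), and exactly one of them is combinatorially forced toward the corner of the fundamental sector at $P_3$, the common landing point of $R_{\frac{1}{8}}$ and $R_{\frac{3}{8}}$; which one this is depends on the cyclic position of $c_0$ and $c_1$ on $\partial\Delta_\alpha^1$, hence on $\alpha$. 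The limit of the other chain, being a repelling fixed point of $f_\alpha^2$ that is the accumulation point of a chain of eventually Siegel-disk components anchored at $\Delta_\alpha^1$, is neither of the non-separating fixed points $P_1,P_2$ nor a Siegel centre, so it is a point of some period-$2$ cycle; by Lemma~\ref{pattens for landing rays} the landing points of $R_{\frac{1}{4}},R_{\frac{3}{4}}$ and of $R_{\frac{5}{8}},R_{\frac{7}{8}}$ already give two distinct repelling $2$-cycles, which with $\{0,1\}$ exhaust the three $2$-cycles of $f_\alpha$, so tracking the images $f_\alpha(\Pi_r),f_\alpha(\Pi_g)$ identifies the cycle hit and rules out that both chains avoid $P_3$.

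For the degenerate case $c_0=c_1$, the components $U_0^0$ and $U_0^1$ are attached to $\partial U$ at the single point $c_0=c_1$, and I would obtain the chain converging to $P_3$ as a limit, as $c_1\to c_0$ along $\partial\Delta_\alpha^1$, of the Case~(i) chain; exactly as in Case~(ii) of Lemma~\ref{convergent Fatou chain} (Figures~\ref{Patterns of the first pullbacks when c-1 is mapped to c_2} and \ref{labels of the components up to the second pullbacks in Case (ii)}), the clockwise and the counterclockwise approaches yield two configurations of $\overline{\Pi\cup U\cup U_0^0\cup U_0^1}$ that are not homotopic rel $U$ and $P_3$, and there are no others; these are the two patterns in Figure~\ref{fatou_chain_for_arc}(a),(b). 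I expect the main obstacle to be the last part of the third step — making ``combinatorially forced toward $P_3$'' precise, i.e. genuinely excluding $P_1,P_2$ as limits and pinning down the relevant period-$2$ cycle from the $z\mapsto z^3$ model together with Lemma~\ref{pattens for landing rays}.
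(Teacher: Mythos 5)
Your proposal follows the same route as the paper: identify $\Pi_r,\Pi_g$ from the labelling of $\mathcal{C}_1$, invoke Corollary~\ref{local connectivity} (locally connected Julia set) to get convergence of each chain to a fixed point of $f_\alpha^{\circ 2}$, exclude a common limit by the no-loops property of the filled-in Julia set, use the count of $2$-cycles and the containment $\mathcal{C}_1\subset\C\setminus\bigl(\,^{\frac{1}{8}}_{\frac{3}{8}}\Omega\cup\,^{\frac{11}{24}}_{\frac{17}{24}}\Omega\cup\,^{\frac{19}{24}}_{\frac{1}{24}}\Omega\bigr)$ to force one limit to be $P_3$ and the other to be a genuine period-$2$ point, and treat the degenerate case $c_0=c_1$ as a clockwise/counterclockwise limit yielding exactly two non-homotopic configurations rel $U$ and $P_3$, as in Figure~\ref{fatou_chain_for_arc}. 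The step you flag as a potential obstacle (precisely excluding $P_1,P_2$ and pinning down which period-$2$ cycle is hit) is in the paper handled by exactly the containment and $2$-cycle count you describe rather than by tracking $f_\alpha(\Pi_r),f_\alpha(\Pi_g)$, but this is a presentational rather than substantive difference.
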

\begin{proof}
Let $\alpha\in \Gamma_\theta^1$. By considering how $f_\alpha^2$ maps the Fatou components on $\mathcal{C}_1$, we can see that 
\begin{equation}\label{two chains of C_1 for arc case}
\begin{split}
        &\Pi_r=\{U,U_0^0,V_1^0,V_{11}^{00},V_{111}^{000},V_{1111}^{0000},\cdots\} \text{ and }\\
        &\Pi_g=\{U,U_0^1,W_1^1,W_{11}^{11},W_{111}^{111},W_{1111}^{1111},\cdots\}
\end{split}
\end{equation}
are two Fatou chains on $\mathcal{C}_1$, which are colored by red and green respectively on Figure \ref{Patterns of the first and second pullbacks for arc} in three cases. 

By Corollary \ref{local connectivity}, the Julia set of $f_\alpha$ is locally connected. Then $\Pi_r$ and $\Pi_g$ converge to some fixed points of $f_\alpha^2$. We also know that $$\mathcal{C}_1\subset \C\setminus (^{\frac{1}{8}}_{\frac{3}{8}}\Omega \cup ^{\frac{11}{24}}_{\frac{17}{24}}\Omega\cup ^{\frac{19}{24}}_{\frac{1}{24}}\Omega).$$ 
The landing points of $R_{\frac{1}{4}}$ and $R_{\frac{3}{4}}$ form a $2$-cycle of $f_\alpha$ and the landing points of $R_{\frac{5}{8}}$ and $R_{\frac{7}{8}}$ form another $2$-cycle of $f_\alpha $. Since $0$ and $1$ is the third $2$-cycle of $f_\alpha $, it follows that the limiting point of one of the two Fatou chains has to be $P_3$. 
Meanwhile, $\Pi_r$ and $\Pi_g$ do not have a common limit point since the filled-in Julia set of $f^{2}_\alpha$ has no loops of Fatou components. Thus, the landing point of $R_{\frac{3}{4}}$ is the limiting point of another Fatou chain, which is a periodic point of period $2$ under $f_\alpha$. 

\begin{figure}[!htpb]
 \setlength{\unitlength}{1mm}
  \centering
  \includegraphics[width=0.9\textwidth]{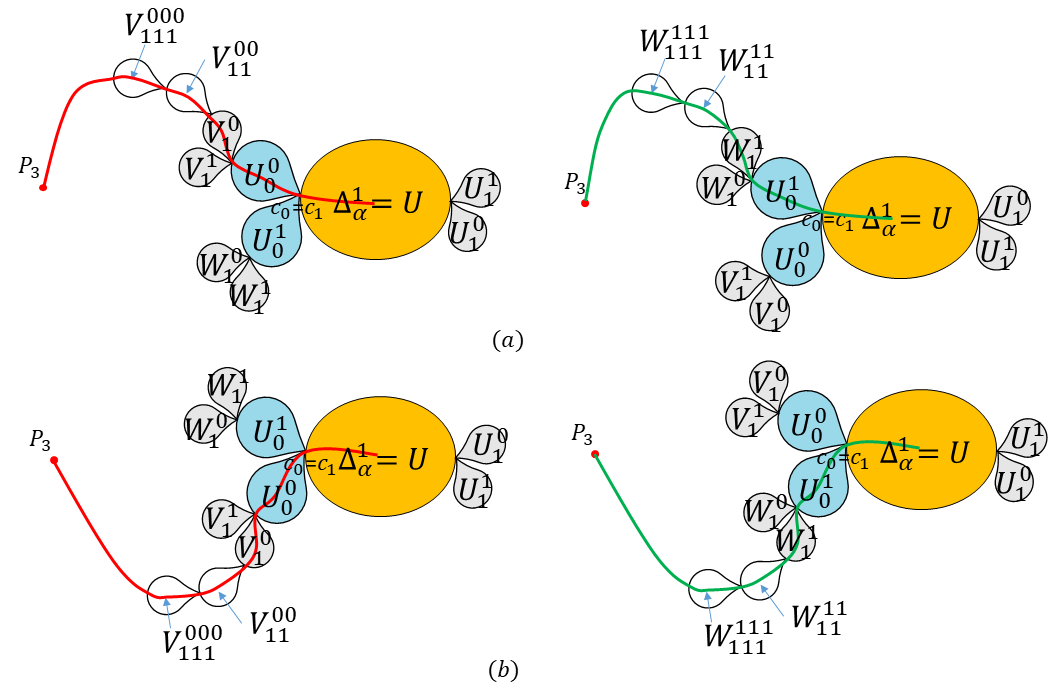}
  \caption{The relative combinatorical patterns between the red chain and $U$, $U^0_0$ and $U^1_0$ in (a) and (b) are different; in (a) or (b) the relative combinatorical pattern between the red chain and $U$, $U^0_0$ and $U^1_0$ is as the same as the one between the green chain and $U$, $U^0_0$ and $U^1_0$ by switching $U_0^0$ and $U^1_0$ and correspondingly updating the labels of other components on one of the chains.}
  \label{fatou_chain_for_arc}
\end{figure}

Let us assume that the red Fatou chain $\Pi_r$ is the one landing at $P_3$. When $c_0$ approaches $c_1$ along the boundary of $U=\Delta_\alpha^1$ clockwise or counterclockwise, the limiting patterns of the relative relationships between $\Pi_r$ and the three domains $U$, $U^0_0$ and $U^1_0$ are different, which are shown as (a) and (b) on Figure \ref{fatou_chain_for_arc}. These two limiting patterns become the two different relative combinatorial patterns between $\Pi_r$ and $U$, $U^0_0$ and $U^1_0$ for the case when $c_0=c_1$.
They are viewed as different in the sense that they are not homotopic to each other by a continuous family of orientation-preserving homeomorphisms rel $P_3$ and $U$. 

Similarly, if the green Fatou chain $\Pi_g$ is the one landing at $P_3$, then there are also two different relative combinatorial patterns between $\Pi_g$ and the three domains $U$, $U^0_0$ and $U^1_0$.
\end{proof}
\begin{rmk} Let $\alpha\in \Gamma_\theta^1$ and assume that $f_\alpha^{2m}(c_0)=c_1$ for some $m\ge 1$. Denote by $\Pi_r$ the Fatou chain landing at the separate repelling fixed point $P_3$ of $f_\alpha$. When $f_\alpha^{2m}(c_0)$ approaches $c_1$ along the boundary of $U=\Delta_\alpha^1$ clockwise or counterclockwise, we can observe the following properties:

(1) The limiting patterns of the labels of the components on $\Pi_r$ and $\Pi_g$ remain unchanged. 

(2) Although the limiting patterns of the labels of all components on $\mathcal{C}_1$ are different, the dynamics of $f_\alpha^2$ on $\mathcal{C}_1$ are the same; that is, the limiting patterns are equivalent in the sense that they are the same if we switch the labels of the components labeled by $U^1_m$ and $V^1_m$ and correspondingly update the labels of the components on $\mathcal{C}_1$ through the labeling algorithm. See Case (iii) on Figure \ref{Patterns of the first and second pullbacks for arc} for an illustration for the case when $m=1$.

In summary, we emphasize that the limiting combinatorial patterns of the components on $\mathcal{C}_1$ are the same as $f_\alpha^{2m}(c_0)$ approaches $c_1$ from either side of $c_1$ if $m\ge 1$, which are different in the case when $m=0$.
\end{rmk}

By Proposition \ref{Symmetry of Gamma},  $\alpha\in\Gamma_\theta^0$ if and only if
$\lambda/\alpha\in\Gamma_\theta^1$. In fact, Lemma \ref{lem:sym} states that $f_{\lambda/\alpha}$ is conjugated to $f_\alpha $ by the map $\tau(z)=-z+1$. Under such a relationship, the combinatorial pattern of the Fatou components of $f_\alpha$ for $\alpha\in\Gamma_{\theta}^0$ is uniquely determined by the one of the Fatou components of $f_{\lambda/\alpha}$, where $\lambda/\alpha\in \Gamma_{\theta}^1$, and of course the description of the Fatou components of $f_\alpha$ on its crucial cluster $\mathcal{C}_0$ follows from the one of the Fatou components of $f_{\lambda/\alpha}$ on its crucial cluster $\mathcal{C}_1$.

Till now, for each $\alpha $ on $\Gamma_\theta$, $\Gamma_\theta^1$ or $\Gamma_\theta^0$,
we have introduced labels to the eventually Siegel-disk Fatou components through three steps: (1) use the partition of the complex plane by the grand orbit of the external rays in $B_\infty$ on a $2$-cycle to distinguish the addresses of different clusters of the Fatou components; (2) use an inductive procedure to give addresses/labels to the Fatou components on a crucial cluster; (3) use the pullbacks of the crucial cluster under $f_\alpha $ to give the addresses/labels of the Fatou components on other clusters.

\section{Proof of the Main Theorem}\label{proofmain}
In this last section, we prove the Main Theorem of this paper. Part (a) of the Main Theorem has been proved in Proposition \ref{continuity}. By the symmetry between $\Gamma_\theta^0$ and $\Gamma_\theta^1$ given in Proposition \ref{Symmetry of Gamma}, we only need to prove either Part (c) or (d) of the Main Theorem. Thus, we prove Part (b) and (d) of the Main Theorem in this section.
In the first subsection, we give a slightly generalized statement of a result of Zhang in \cite{zhang2022}, which enable us to know that the Julia set of $f_\alpha$ has $0$ Lebesgue measure; in the second subsection, we show that there is a quasiconformal extension to a multiply connected domain when the boundaries are quasicircles and the prescribed maps among the components of the boundaries are quasisymmetric; in the third subsection, we associate $f_{\alpha}$ with a quantity called the conformal angle and prove that the conformal angle and the combinatorial pattern of the eventually Siegel components of $f_\alpha$ uniquely determine the parameter $\alpha$; in the fourth subsection, we prove that $\Gamma_\theta$ is a Jordan curve, and in the last subsection, we show that $\Gamma_\theta^1$ is a Jordan arc.

\subsection{Lebesgue measure of $J(f_\alpha)$} In this subsection, we address whether or not the Lebesgue measure of the Julia set $J(f_\alpha)$ of $f_\alpha$ is zero. In \cite{zhang2022}, Zhang proved the following result.
\begin{thm}[Lemma 6.1, \cite{zhang2022}]\label{Zero Lebesgure measure - Zhang} If a rational map $g$ has a fixed Sigel disk $\Delta $ with a rotation number of bounded type and the forward orbit of every critical point of $g$ either intersects the closure of $\Delta$, or is eventually periodic, or belongs to the basin of some attracting cycle, then the Julia set $J(g)$ of $g$ has Lebesgue measure equal to $0$.
\end{thm}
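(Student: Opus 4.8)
The plan is to run the by-now-standard conical-point / invariant-line-field technology for rational maps, with the bounded-type hypothesis supplying the one genuinely delicate piece of geometry. \textbf{Reduction.} First I would split $J(g)$ into a manifestly null part and a ``good'' part. Since $\partial\Delta$ is a $K$-quasicircle (Theorem \ref{zhang11}) it has zero area, and because $g$ is holomorphic each $g^{-n}(\partial\Delta)$ is null; hence the grand orbit $E:=\bigcup_{n\ge 0}g^{-n}(\partial\Delta)$ is null, and likewise the (countable) grand orbit of the finite set of periodic and pre-periodic points at which the eventually periodic critical orbits come to rest. It therefore suffices to show that $G:=J(g)\setminus E$ — the set of points of $J(g)$ whose forward orbit never meets $\overline{\Delta}$ — has zero Lebesgue measure; note that $G$ is forward invariant.

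\textbf{Conical estimate on $G$.} The next step, and the heart of the matter, is to prove that Lebesgue-almost every point of $G$ is a conical (radial) point of $J(g)$, i.e.\ that suitable iterates of $g$ restitute a definite-size round disk around it with distortion bounded independently of the scale. The hypothesis on critical orbits is exactly what makes this work: the closure of the postcritical set meets $J(g)$ only in $\partial\Delta$ together with finitely many (pre)periodic points, so along the orbit of a point of $G$ one can pull back univalently with Koebe control whenever the orbit sits in a compact part of $J(g)\setminus\overline{\Delta}$, and near the finitely many exceptional (pre)periodic points the classical estimates for (super)attracting, repelling and parabolic germs keep distortion bounded. The one scenario not covered by soft expansion is an orbit that accumulates on $\partial\Delta$, and this is precisely where bounded type enters: by the a priori bounds for bounded-type Siegel disks (Zhang; cf.\ McMullen's self-similarity analysis in the quadratic case) a collar of definite modulus around $\partial\Delta$ is quasiconformally a round collar and the return map to it has bounded distortion in the linearizing coordinate, so an orbit can be carried through the collar while preserving bounded-distortion pullbacks of a fixed-scale disk. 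Splicing the two regimes yields, for a.e.\ $z\in G$, univalent pullbacks of a fixed round disk to arbitrarily small scales near $z$ with uniformly bounded distortion.

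\textbf{Conclusion.} Then I would argue by contradiction. If $\area(G)>0$, the conical estimate together with $J(g)\neq\EC$ forces, by McMullen's conical-point dichotomy, a measurable $g$-invariant line field carried on a positive-measure subset of $J(g)$ (equivalently, one runs a direct nested-annuli argument showing $\dim_H J(g)<2$, which also gives $\area(J(g))=0$). But by the theory of invariant line fields (see Ma\~n\'e and McMullen) the essential support of such a field must recur to a critical point of $g$; the only recurrent critical points of $g$ lie on $\partial\Delta$, since their orbits are dense in the quasicircle $\partial\Delta$, while every other critical point is either pre-periodic in $J(g)$ or lies in the Fatou set, hence is non-recurrent in $J(g)$. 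So the line field would be essentially supported near $\partial\Delta$, and after straightening the collar it would be an irrational-rotation-invariant line field on a round annulus accumulating on the core circle; the rigidity / self-similarity of bounded-type Siegel boundaries rules this out with positive measure. This contradiction gives $\area(G)=0$, whence $\area(J(g))=\area(E)+\area(G)=0$.

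\textbf{On the generalization and the main obstacle.} Relative to Lemma 6.1 of \cite{zhang2022}, the only new input in the statement above is that critical orbits are also allowed to be eventually periodic or to fall into an attracting basin (and, for the application to $f_\alpha$, that the single fixed Siegel disk may be replaced by a bounded-type cycle of Siegel disks, which reduces to the stated form by passing to an iterate). The argument is written to absorb these directly: in each extra case the affected part of the postcritical set is a finite set of (pre)periodic points or a compact subset of the Fatou set, neither of which obstructs the conical estimate nor contributes a recurrent critical point. The step I expect to be the main obstacle — indeed the only non-formal one — is the conical estimate of the second paragraph, specifically controlling orbits that accumulate on $\partial\Delta$; this is not a soft argument and rests on the quasiconformal collar geometry and a priori bounds for bounded-type Siegel-disk boundaries, which we import from the cited work of Zhang and, in the quadratic prototype, of McMullen.
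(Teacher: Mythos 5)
The paper does not prove this statement: it is Theorem~\ref{Zero Lebesgure measure - Zhang}, quoted verbatim as Lemma~6.1 of \cite{zhang2022} and used as a black box (the paper only asserts, without detail, that the proof extends to the slightly relaxed Theorem~\ref{Zero Lebesgure measure under a weaker condition}). So there is no in-paper proof to compare against; what I can do is assess your sketch on its own terms.

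Your overall plan (reduce to the forward-invariant set $G$ of Julia points never hitting $\overline{\Delta}$, show a.e.\ point of $G$ is conical, conclude) is the right family of argument. But the conclusion paragraph is not correct as written, and it is also more complicated than needed. Once you know that $J(g)\neq\EC$ (automatic here, since $g$ has a Siegel disk) and that a.e.\ point of $G$ is a conical point of $J(g)$, McMullen's density estimate for conical points says directly that no conical point of $J(g)$ can be a Lebesgue density point of $J(g)$; by the Lebesgue density theorem this forces $\area(G)=0$, with no mention of line fields. Your alternative route through an invariant line field invokes a statement --- that the essential support of an invariant line field ``must recur to a critical point'' --- that is not a theorem I recognize; the genuine line-field theorem (positive-measure $J\neq\EC$ carries an invariant line field) is proved \emph{using} the conical/density machinery, not the other way around, and ruling out line fields supported near a bounded-type Siegel boundary is itself a nontrivial rigidity statement, not a consequence of soft recurrence considerations. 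So the last step should simply be the density dichotomy, and the line-field detour should be deleted.

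On the key estimate: you correctly identify orbits that accumulate on $\partial\Delta$ as the crux, and you correctly point to the bounded-type a priori bounds as the needed input. But the specific mechanism you describe --- ``the return map to [a collar] has bounded distortion in the linearizing coordinate'' --- does not parse, because the linearizing coordinate of the Siegel disk lives only on $\Delta$, not on a two-sided collar of $\partial\Delta$; there is no linearization on the Julia-set side. The genuine argument in this tradition uses the Douady--Ghys/Blaschke-product surgery model (where $\partial\Delta$ becomes the unit circle for a critical circle map and Herman--\'Swi\k{a}tek real a priori bounds control the geometry), or the deep-point self-similarity analysis of McMullen near a bounded-type Siegel boundary, and then transports the estimate back through a quasiconformal conjugacy. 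In addition, for critical orbits landing on a \emph{parabolic} cycle (which the hypothesis permits under ``eventually periodic''), distortion of pullbacks near the parabolic point is not uniformly bounded; you need the standard parabolic escape/Fatou-coordinate argument there rather than the soft statement that ``classical estimates \dots\ keep distortion bounded.'' As a plan the sketch is salvageable, but the collar step needs to be rewritten in surgery/renormalization language (or simply cited from Zhang/McMullen), and the conclusion should be replaced by the direct conical-density argument.
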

The proof of Theorem \ref{Zero Lebesgure measure - Zhang} presented in \cite{zhang2022} can be modified to obtain the same result under a slightly relaxed condition, which allows the map to have more than one fixed Sigel disk. Since the modification is straightforward, we will sate the generalized result without recapitulating the details of the proof. 
\begin{thm}\label{Zero Lebesgure measure under a weaker condition} Suppose that each fixed Siegel disk of a rational map $g$ has rotation number of bounded type and the forward orbit of every critical point of $g$ either intersects the closure of some fixed Siegel disk, or is eventually periodic, or belongs to the basin of some attracting cycle, then the Julia set $J(g)$ of $g$ has Lebesgue measure equal to $0$.
\end{thm}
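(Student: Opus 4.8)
The plan is to transfer, with only notational changes, the proof of Theorem~\ref{Zero Lebesgure measure - Zhang} given in \cite{zhang2022}: one replaces the single fixed Siegel disk $\Delta$ used there by the finite collection $\Delta_1,\dots,\Delta_k$ of all fixed Siegel disks of $g$ (there are finitely many, since a rational map has only finitely many non-repelling cycles), and then checks that the three structural inputs on which that proof rests continue to hold in this generality. We would organize the verification as follows.

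First, the geometry near the Siegel disks. By Theorem~\ref{zhang11} each $\partial\Delta_j$ is a $K_j$-quasicircle, hence a $K$-quasicircle with $K=\max_j K_j$, so every $\overline{\Delta_j}$ is a closed Jordan domain on which $g$ is topologically conjugate to the irrational rotation $R_{\theta_j}$. Moreover the closures $\overline{\Delta_1},\dots,\overline{\Delta_k}$ are pairwise disjoint: if $z_0\in\partial\Delta_i\cap\partial\Delta_j$ with $i\neq j$, then the closure of the $g$-orbit of $z_0$ equals $\partial\Delta_i$ and also $\partial\Delta_j$, forcing $\partial\Delta_i=\partial\Delta_j=:\gamma$; then $J(g)=\gamma$ and both $\Delta_i$ and $\Delta_j$ would be completely invariant simply connected Fatou components on which $g$ acts as a degree-one rotation, impossible for $\deg g\geq 2$ — the same mechanism as in the proof of Corollary~\ref{disjoint}. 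We may therefore fix pairwise disjoint neighborhoods $N_j\supset\overline{\Delta_j}$, and since there are only finitely many of them, the local analysis of \cite{zhang2022} near each $\partial\Delta_j$ can be run with constants uniform in $j$.

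Second, the postcritical set. Because $g(\overline{\Delta_j})=\overline{\Delta_j}$, a critical orbit that ever meets $\overline{\Delta_j}$ has all but finitely many of its points in $\overline{\Delta_j}$; an eventually periodic orbit has finite closure; and an orbit landing in an attracting basin has, as closure, a convergent sequence together with the attracting cycle. Summing over the finitely many critical points of $g$,
\[
P(g)=\overline{\textstyle\bigcup_{n\geq1}g^{\circ n}(\Crit(g))}\ \subset\ \Big(\bigcup_{j=1}^{k}\overline{\Delta_j}\Big)\cup C
\]
with $C$ countable, so $P(g)\cap J(g)\subset(\bigcup_j\partial\Delta_j)\cup C$ has zero Lebesgue measure (each $\partial\Delta_j$ being a quasicircle), and hence so does $E:=\bigcup_{n\geq0}g^{-n}(P(g)\cap J(g))$.

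Third, the density estimate. For $z\in J(g)\setminus E$ the whole forward orbit of $z$ avoids $P(g)$, so univalent inverse branches of $g^{\circ n}$ exist on definite-size simply connected regions disjoint from $P(g)$ and the Koebe distortion theorem applies along the orbit exactly as in \cite{zhang2022}; one would then split into the case where some subsequence $g^{\circ n_k}(z)$ stays a definite distance from $P(g)$ (pull back a round disk by the corresponding univalent branch to obtain a ball at $z$ carrying a definite proportion of Fatou points) and the case where $g^{\circ n}(z)$ accumulates on $P(g)\cap J(g)\subset(\bigcup_j\partial\Delta_j)\cup C$ (use the uniform $K$-quasicircle geometry near whichever $\partial\Delta_j$ is approached, together with the standard treatment of the countable part $C$), in each case producing a definite density gap of $J(g)$ at $z$; the Lebesgue density theorem then yields that $J(g)$ has zero Lebesgue measure. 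The only point that is genuinely new relative to \cite{zhang2022} is the pairwise disjointness of the $\overline{\Delta_j}$ and the attendant uniformity of the constants over the finitely many fixed Siegel disks; once this is recorded there is no real obstacle, which is why we merely indicate the modification rather than reproduce the proof.
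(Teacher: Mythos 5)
Your proposal takes essentially the same approach as the paper: the paper also asserts that Zhang's proof in \cite{zhang2022} carries over with only routine modifications to the multiple-Siegel-disk setting, and explicitly declines to recapitulate the details. You have simply made those modifications concrete — finiteness of the set of fixed Siegel disks, pairwise disjointness of their closures (by the same mechanism as Corollary~\ref{disjoint}), uniformity of the quasicircle constant, and the resulting structure of the postcritical set — which is consistent with, and somewhat more detailed than, what the paper offers.
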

An immediate corollary follows.
\begin{cor}\label{Leb measure of f-alpha} If $\alpha\in \Gamma_\theta\cup  \Gamma_\theta^0\cup \Gamma_\theta^1$, then the Lebesgue measure of $J(f_\alpha)$ is zero. 
\end{cor}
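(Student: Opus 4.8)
The plan is to derive this corollary directly from Theorem~\ref{Zero Lebesgure measure under a weaker condition}, applied not to $f_\alpha$ but to its second iterate $g:=f_\alpha^{\circ 2}$, for which $\Delta_\alpha^0$ and $\Delta_\alpha^1$ are \emph{fixed} Siegel disks; the conclusion then passes back to $f_\alpha$ since $J(g)=J(f_\alpha)$. So I need to verify the two hypotheses of that theorem for $g$: that every fixed Siegel disk of $g$ has bounded-type rotation number, and that the forward orbit of every critical point of $g$ either meets the closure of a fixed Siegel disk, or is eventually periodic, or lies in the basin of an attracting cycle.

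The Siegel-disk hypothesis is immediate from the structure results already established. By Corollary~\ref{local connectivity} together with the description of $F(f_\alpha)$ in the first lemma of Section~\ref{Combinatorial pattern of eventually Siegel disk comp}, every Fatou component of $f_\alpha$ (equivalently of $g$) is either $B(\infty)$ or an eventual $f_\alpha$-preimage of $\Delta_\alpha^0\cup\Delta_\alpha^1$; hence every Siegel disk of $g$ is carried by some iterate of $f_\alpha$ onto $\Delta_\alpha^0$ or $\Delta_\alpha^1$ and so has rotation number $\theta$, which is of bounded type by hypothesis. In particular the only fixed Siegel disks of $g$ are $\Delta_\alpha^0$ and $\Delta_\alpha^1$, every other preimage component being strictly preperiodic.

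For the critical-orbit condition, note that $\Crit(f_\alpha)=\{\infty,c_0,c_1\}$ (with $\infty$ of multiplicity $2$ and $f_\alpha^{-1}(\infty)=\{\infty\}$), so $\Crit(g)=\{\infty,c_0,c_1\}\cup f_\alpha^{-1}(c_0)\cup f_\alpha^{-1}(c_1)$. The point $\infty$ is a superattracting fixed point of $f_\alpha$, hence of $g$, so its forward orbit lies in the basin of an attracting cycle. Since $\alpha\in\Gamma_\theta\cup\Gamma_\theta^0\cup\Gamma_\theta^1$, both finite critical points lie on $\partial\Delta_\alpha^0\cup\partial\Delta_\alpha^1\subset\overline{\Delta_\alpha^0}\cup\overline{\Delta_\alpha^1}$, so their forward $g$-orbits already meet the closure of a fixed Siegel disk. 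Finally, if $w\in f_\alpha^{-1}(c_j)$, then $g(w)=f_\alpha(c_j)$; and since $f_\alpha$ maps $\overline{\Delta_\alpha^0}$ onto $\overline{\Delta_\alpha^1}$ and $\overline{\Delta_\alpha^1}$ onto $\overline{\Delta_\alpha^0}$ while no boundary point can be sent into the open disk (that would place a Julia point in the Fatou set), we get $f_\alpha(\partial\Delta_\alpha^0)=\partial\Delta_\alpha^1$ and $f_\alpha(\partial\Delta_\alpha^1)=\partial\Delta_\alpha^0$, so $f_\alpha(c_j)\in\partial\Delta_\alpha^0\cup\partial\Delta_\alpha^1$ and the forward $g$-orbit of $w$ meets the closure of a fixed Siegel disk. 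Thus every critical point of $g$ satisfies one of the three alternatives of Theorem~\ref{Zero Lebesgure measure under a weaker condition}.

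Applying that theorem gives that $J(g)=J(f_\alpha^{\circ 2})$ has Lebesgue measure zero, and since $J(f_\alpha^{\circ 2})=J(f_\alpha)$, the corollary follows. I do not expect a genuine obstacle here: the whole argument is bookkeeping, and its only mildly non-trivial ingredients are that $\Delta_\alpha^0$ and $\Delta_\alpha^1$ exhaust the fixed Siegel disks of $f_\alpha^{\circ 2}$ and that $f_\alpha$ interchanges their boundaries, both of which come straight from Corollary~\ref{disjoint}, Corollary~\ref{local connectivity}, and the opening lemma of Section~\ref{Combinatorial pattern of eventually Siegel disk comp}.
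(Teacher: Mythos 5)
Your argument is correct and is exactly what the paper intends when it says the corollary is ``immediate'' from Theorem~\ref{Zero Lebesgure measure under a weaker condition}: apply the theorem to $g=f_\alpha^{\circ 2}$, for which $\Delta_\alpha^0$ and $\Delta_\alpha^1$ are fixed bounded-type Siegel disks, check the critical-orbit alternatives using $\Crit(g)=\{\infty,c_0,c_1\}\cup f_\alpha^{-1}(\{c_0,c_1\})$ and the fact that $f_\alpha$ interchanges $\partial\Delta_\alpha^0$ and $\partial\Delta_\alpha^1$, and conclude via $J(f_\alpha^{\circ 2})=J(f_\alpha)$. Your verification of the hypotheses is careful and complete, and your use of the paper's earlier structure results (the decomposition of the Fatou set and Corollary~\ref{local connectivity}) matches the logical dependencies the authors set up.
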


\subsection{Quasiconformal extensions to multiply connected domains}\label{QC ext to multiply conn domain} In this subsection, we develop an existence result on a quasiconformal extension to a multiply connected domain when the boundaries are quasicircles and the prescribed maps on the boundaries are quasisymmetric. 

By a {\em quasicircle} we mean a Jordan curve $\gamma $ on the complex plane $\mathbb{C}$ satisfying 
$$\diam \gamma (z_1, z_2)\le C|z_1-z_2| \text{ for any two points }z_1, \;z_2\in \gamma ,$$
where $C$ is a positive constant and $\gamma (z_1, z_2)$ stands for the arc on $\gamma $ between $z_1$ and $z_2$ and of a smaller diameter. 
A Jordan domain bounded by a quasicircle is called a {\em quasidisk}.
Theorems 5.17 and 5.25 in \cite{Pom13} imply that a Jordan curve $\gamma $ on $\mathbb{C}$ is a quasicircle if and only if there exists a quasiconformal map $\phi$ of the Riemann sphere $\EC$ fixing $\infty $ such that $\phi(\T)=\gamma $, where $\T$ is the unit circle on the complex plane centered at the origin. Using this necessary and sufficient condition and Theorem 8.1 in Chapter II of \cite{LV73}, one can see that quasicircles are quasiconformal invariants in the following sense. 
\begin{lem}\label{quasicircle is a quasiconformal invariant}
Let $U$ and $V$ be two open domains in $\mathbb{C}$ ($U$ is connected but not necessarily simply connected) and let $\psi:U\rightarrow V$ be quasiconformal. If $\gamma$ is a quasicircle contained in $U$, then $\psi(\gamma)$ is a quasicircle. 
\end{lem}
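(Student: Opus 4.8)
The plan is to transport the problem to the round circle via the characterization quoted just above, apply the quasiconformal map in a neighbourhood of the curve, and then re-extend to a global quasiconformal self-map of $\EC$. First, since $\psi$ is a homeomorphism of $U$ onto $V$ and $\gamma\subset U$ is a Jordan curve, $\psi(\gamma)$ is a Jordan curve contained in $V\subseteq\mathbb{C}$; being the continuous image of a compact set it is bounded, so $\infty$ lies in the unbounded complementary component of $\EC\setminus\psi(\gamma)$. By the characterization (Theorems 5.17 and 5.25 in \cite{Pom13}) there is a quasiconformal homeomorphism $\phi\colon\EC\to\EC$ with $\phi(\infty)=\infty$ and $\phi(\T)=\gamma$. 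Put $W=\phi^{-1}(U)$, an open neighbourhood of $\T$ in $\EC$; then $g:=\psi\circ\phi$ is a quasiconformal homeomorphism of $W$ onto $\psi(U)$ with $g(\T)=\psi(\gamma)$.

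The key step is to produce a quasiconformal homeomorphism $G\colon\EC\to\EC$ with $G(\infty)=\infty$ whose restriction to $\T$ has image a quasicircle from which $\psi(\gamma)$ differs by a conformal change of coordinates; then the characterization, applied in the reverse direction, will finish the proof. I would do this by a Beltrami‑coefficient surgery. Let $\mu_g$ and $\mu_\phi$ denote the complex dilatations of $g$ (on $W$) and of $\phi$ (on $\EC$), and define $\nu:=\mu_g$ on $W$ and $\nu:=\mu_\phi$ on $\EC\setminus W$. This $\nu$ is measurable, and since the maximal dilatation of a composition satisfies $K(\psi\circ\phi)\le K(\psi)K(\phi)$ (this, or the corresponding extension statement, being what Theorem 8.1 in Chapter II of \cite{LV73} is invoked for), one has $\|\nu\|_\infty<1$. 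Solving the Beltrami equation for $\nu$ gives a quasiconformal $G\colon\EC\to\EC$ with complex dilatation $\nu$, normalised so that $G(\infty)=\infty$.

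On $W$ the maps $G$ and $g$ have the same complex dilatation, so $\Theta:=G\circ g^{-1}$ is $1$‑quasiconformal, hence conformal, on $g(W)=\psi(U)$; consequently $G(\T)=\Theta(g(\T))=\Theta(\psi(\gamma))$. Because $G$ is a global quasiconformal self‑map of $\EC$ fixing $\infty$ and $\T$ is the round unit circle, the characterization shows $G(\T)=\Theta(\psi(\gamma))$ is a quasicircle. Finally, $\Theta^{-1}$ is conformal on the open set $\Theta(\psi(U))$, which contains the compact quasicircle $\Theta(\psi(\gamma))$; a holomorphic injection with non‑vanishing derivative is bi‑Lipschitz on any compact subset, and the three‑point (bounded‑turning) condition characterising quasicircles is invariant under bi‑Lipschitz maps, so $\psi(\gamma)=\Theta^{-1}\bigl(\Theta(\psi(\gamma))\bigr)$ is again a quasicircle. (If one prefers, steps two and three can be shortcut: since $g$ is quasiconformal on a neighbourhood of the compact set $\T$, the restriction $g|_{\T}$ is a quasisymmetric embedding, and a Jordan curve admitting a quasisymmetric parametrisation by $\T$ is a quasicircle; but the Beltrami version keeps everything inside the two cited results.)

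The main obstacle is precisely the middle step: $g$ is only quasiconformal on a neighbourhood of $\T$, not on all of $\EC$, so it must be upgraded to a genuine quasiconformal self‑map of the sphere fixing $\infty$, and one must live with the fact that this extension need not literally agree with $g$ near $\T$ — only up to a conformal reparametrisation — which is why the elementary conformal (bi‑Lipschitz) invariance of quasicircles is needed at the very end. The remaining points are routine bookkeeping: that $\psi(\gamma)$ is a bounded Jordan curve, that $\phi(W)=U$ so that $g(W)=\psi(U)$, and that the glued dilatation $\nu$ stays bounded away from $1$.
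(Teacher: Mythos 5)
Your proposal is correct, but it takes a genuinely different route from the paper. The paper proves this in three lines by invoking Theorem 8.1 in Chapter II of \cite{LV73} (the paper's Theorem \ref{LV}) in the way it is actually meant to be used: as an \emph{extension} theorem. Taking $\Omega = \gamma$ (a compact subset of the domain $G=U$), that theorem directly supplies a global quasiconformal homeomorphism $\tilde\psi$ of $\mathbb{C}$ fixing $\infty$ that \emph{agrees} with $\psi$ on $\gamma$; composing $\tilde\psi$ with a quasiconformal self-map $\phi$ of $\EC$ sending $\T$ to $\gamma$ immediately exhibits $\psi(\gamma) = (\tilde\psi\circ\phi)(\T)$ as a quasicircle, with no further patching required. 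You instead perform a Beltrami-coefficient surgery (gluing $\mu_{\psi\circ\phi}$ near $\T$ to $\mu_\phi$ off it) and solve the resulting Beltrami equation, which produces a global map $G$ that agrees with $\psi\circ\phi$ near $\T$ only \emph{up to a conformal change of coordinates}; this forces you to add a final step showing that the conformal reparametrization $\Theta^{-1}$, being bi-Lipschitz on compacta, preserves the bounded-turning property. Both routes are valid. Yours is more self-contained (it only needs the measurable Riemann mapping theorem rather than the qc extension theorem) at the price of the extra conformal-rigidity argument; the paper's version is shorter precisely because the extension theorem hands you a map that literally coincides with $\psi$ on $\gamma$. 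One small misattribution worth flagging: you describe Theorem 8.1 of \cite{LV73} as (or as equivalent to) the submultiplicativity bound $K(\psi\circ\phi)\le K(\psi)K(\phi)$ — it is not; it is the statement that a quasiconformal map on an open set can be extended, across a compact subset, to a global quasiconformal self-map of $\mathbb{C}$ fixing $\infty$. The submultiplicativity you need to bound $\|\nu\|_\infty$ is elementary and does not require that theorem; and your parenthetical shortcut (that $g|_{\T}$ is a quasisymmetric embedding, hence $g(\T)=\psi(\gamma)$ is a quasicircle) is in fact closer in spirit and length to what the paper does.
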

\begin{thm}{\cite[Chapter II, Theorem 8.1]{LV73}}\label{LV}
Let $G$ be a connected open set on the complex plane and $\Omega$ a compact subset of $G$, and let $\phi: G\to G'$ be a $K$-quasiconformal mapping. Then there exists a quasiconformal mapping of $\mathbb{C}$ (fixing $\infty $) which coincides with $\phi $ on $\Omega$ and whose maximal dilatation is bounded by a number depending only on $K$, $G$ and $\Omega$.
\end{thm}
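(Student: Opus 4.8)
The plan is to reduce to the case where $\Omega$ is the closure of a smoothly bounded, relatively compact subdomain of $G$, and then to build the global extension by extending the boundary correspondence of $\phi$ across each complementary component of that subdomain and gluing. So I would first fix, once and for all, a domain $D$ with $\Omega\Subset D\Subset G$ whose boundary $\partial D=\gamma_0\cup\gamma_1\cup\cdots\cup\gamma_n$ is a finite union of pairwise disjoint analytic Jordan curves, $\gamma_0$ being the outer one and $\gamma_1,\dots,\gamma_n$ bounding the holes of $D$; such a $D$ exists, and the geometric data attached to it — the quasicircle constants of the $\gamma_j$ and the moduli of collar annuli around the $\gamma_j$ inside $G$ — depends only on $G$ and $\Omega$. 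Since $\Omega\subset\overline D$, it is enough to extend $\phi|_{\overline D}$, so I may assume $\Omega=\overline D$.

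Next I would collect the boundary data. On the fixed compact set $\overline D\subset G$ the $K$-quasiconformal map $\phi$ is $\eta$-quasisymmetric for some $\eta=\eta(K,G,\Omega)$, by the standard quasisymmetry of quasiconformal maps on compact subsets of their domain; hence each restriction $\phi|_{\gamma_j}$ is quasisymmetric with a controlled constant, and, since each $\gamma_j$ lies in the interior of the domain of $\phi$, each image curve $\phi(\gamma_j)$ is a quasicircle with constant depending only on $K$ and $\gamma_j$. Writing $\EC\setminus\overline D=E_0\sqcup E_1\sqcup\cdots\sqcup E_n$ with $\infty\in E_0$ and $\partial E_j=\gamma_j$ after reindexing, and $\EC\setminus\overline{\phi(D)}=E_0'\sqcup E_1'\sqcup\cdots\sqcup E_n'$ with $\infty\in E_0'$ and $\partial E_j'=\phi(\gamma_j)$, the two decompositions match up with the same indexing because $\phi$ is a homeomorphism on $G\supset\overline D$ and therefore preserves the adjacency pattern of $D$ with its boundary curves; moreover each $E_j$ and each $E_j'$ is a quasidisk in $\EC$.

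The core step is to extend, for each $j$, the quasisymmetric map $\phi|_{\gamma_j}\colon\partial E_j\to\partial E_j'$ to a $K_j$-quasiconformal homeomorphism $\psi_j\colon E_j\to E_j'$ with $K_j$ depending only on $K$, $G$ and $\Omega$. I would uniformize $E_j$ and $E_j'$ by $\D$ (sending $\infty\mapsto 0$ when $j=0$); then $\phi|_{\gamma_j}$ is transported to a quasisymmetric self-homeomorphism of $\T$ whose constant is controlled, because the boundary values of a Riemann map of a quasidisk are quasisymmetric with a constant depending only on the quasicircle constant. Extending this circle homeomorphism to $\D$ by the Beurling--Ahlfors extension (which fixes the origin) and conjugating back yields the desired $\psi_j$, with $\psi_0(\infty)=\infty$. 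Finally I would define $\Phi$ on $\EC$ by $\Phi=\phi$ on $\overline D$ and $\Phi=\psi_j$ on $E_j$ for each $j$: this is a homeomorphism of $\EC$ that fixes $\infty$ and agrees with $\phi$ on $\Omega=\overline D$, and it is $K'$-quasiconformal with $K'=\max\{K,K_0,\dots,K_n\}$ because it is quasiconformal off the finite union of analytic Jordan curves $\bigcup_j\gamma_j$ and such curves are removable for quasiconformality. Since $K'$ was assembled entirely from quantities depending only on $K$, $G$ and $\Omega$, this is the asserted dilatation bound.

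The hard part is this core step: producing the componentwise extensions with dilatation controlled purely by $K$ and the fixed geometry of $D$ — which is where the quasisymmetry of Riemann maps of quasidisks and the Beurling--Ahlfors (or an equivalent controlled) extension are needed — together with checking that the pieces glue to a globally quasiconformal map, i.e. the removability of the gluing curves. The reduction to a nice $D$ and the bookkeeping of the quasisymmetry and quasicircle constants, on the other hand, are routine given the standard distortion theory of quasiconformal maps; one could instead first straighten the Beltrami coefficient of $\phi|_D$ by the measurable Riemann mapping theorem, but that does not avoid the multiply-connected extension.
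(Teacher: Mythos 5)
The paper cites this from Lehto and Virtanen without proof, so there is no internal argument to compare against; but there is a genuine gap in your proposal, and in fact the statement as written is not quite true. The gap is your unexamined assumption that $\infty\in E_0'$. You are forced to send $E_j$ to the complementary component $E_j'$ of $\overline{\phi(D)}$ bounded by $\phi(\gamma_j)$, since the boundary values are prescribed; but nothing guarantees that $\phi(\gamma_0)$ bounds the \emph{unbounded} component just because $\gamma_0$ does, and when it does not, your glued map carries $\infty$ to a finite point. Take $G=\{1/2<|z|<2\}$, $\phi(z)=1/z$ (conformal, so $K=1$), $\Omega=\{3/4\le|z|\le 3/2\}$: then $\phi(\{|z|=3/2\})=\{|z|=2/3\}$ is the \emph{inner} boundary of $\phi(\Omega)$, so whatever annular $D$ you pick, $E_0'$ is a bounded disk not containing $\infty$. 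Worse, no homeomorphism $\Phi$ of $\mathbb{C}$ at all can agree with $1/z$ on this $\Omega$: such a $\Phi$ is proper, so $\Phi(\{|z|<3/2\})$ would have to be the bounded Jordan domain enclosed by $\Phi(\{|z|=3/2\})=\{|z|=2/3\}$, yet $\Phi(\{3/4\le|z|<3/2\})=\{2/3<|z|\le 4/3\}$ lies outside that disk. So the parenthetical ``fixing $\infty$'' cannot hold for general multiply connected $G$.

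What is salvageable: your construction (smooth $D$ with $\Omega\Subset D\Subset G$, controlled quasisymmetry of $\phi$ on $\partial D$, Beurling--Ahlfors or Douady--Earle extension across each complementary quasidisk, gluing, and removability of the analytic boundary curves) does correctly produce a quasiconformal self-map of the sphere $\EC$ agreeing with $\phi$ on $\overline D\supset\Omega$ and with dilatation bounded in terms of $K$, $G$, $\Omega$; that weaker conclusion is all the paper actually uses when it invokes this theorem. Alternatively, if one adds the hypothesis that $G$ is simply connected, your argument closes as written: the Jordan domain inside $\gamma_0$ then already lies in $G$, so you may take $D$ itself simply connected, $\phi(D)$ is a bounded Jordan domain, and $E_0'=\EC\setminus\overline{\phi(D)}$ automatically contains $\infty$. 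You should flag explicitly that one of these two adjustments is required for the ``fixing $\infty$'' clause.
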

\begin{proof}[Proof of Lemma \ref{quasicircle is a quasiconformal invariant}] By Theorem \ref{LV}, there exists a quasiconformal mapping $\tilde{\psi}$ of $\mathbb{C}$ (fixing $\infty$) such that $\tilde{\psi}|_{\gamma}=\psi|_{\gamma}$. Since $\gamma $ is a quasicircle, there exists a quasiconformal map $\phi $ of $\mathbb{C}$ (fixing $\infty $) such that $\gamma =\phi (\T)$. Clearly, $\tilde{\psi}\circ \phi$ is a quasiconformal mapping of $\mathbb{C}$ (fixing $\infty$) and $\psi (\gamma)=(\tilde{\psi}\circ \phi)(\T)$. Therefore, $\psi(\gamma)$ is also a quasicircle.
\end{proof}

Let $\gamma$ be a Jordan curve on $\mathbb{C}$, $G$ the Jordan domain on $\mathbb{C}$ bounded by $\gamma$, and $\phi$ a Riemann mapping from $\D$ onto $G$.
Proposition 5.10 and Theorem 5.11 in \cite{Pom13} imply that $\gamma $ is a quasicircle if and only if $\tilde{\phi}|_{\T}$ is a quasisymmetric map between $\T$ and $\gamma $, where $\tilde{\phi}$ is 
the extension of $\phi $ to the closure of $\D$. We call $\phi$ a {\em conformal parametrization} of $\gamma$. 

Given two quasicircles $\gamma_1$ and $\gamma_2$, let $\phi_1$ and $\phi_2$ be their conformal parametrizations. Then an orientation preserving homeomorphism $h:\gamma_1\to \gamma_2$ is said to be {\em quasisymmetric} if $ h\circ \hat{\mathcal{\phi}}_1: \mathbb{S}^1\to \gamma_2$ is quasisymmetric, or equivalently if $ \hat{\mathcal{\phi}}^{-1}_2\circ h\circ \hat{\mathcal{\phi}}_1: \mathbb{S}^1\to  \mathbb{S}^1$ is quasisymmetric. 

\begin{prop}\cite[Corollary 2.13]{BNX14}\label{quasisymmetric}
    For $j=1$, $2$, suppose that $A_j$ is an open annulus bounded by two quasicircles $\gamma_j^{i}$ and $\gamma_j^{o}$. Let $\phi^i:\gamma_1^{i}\to \gamma_2^{i}$ and $\phi^o:\gamma_1^{o}\to \gamma_2^{o}$ be quasisymmetric maps between the inner and outer boundaries respectively. Then there exists a quasiconformal map $\phi:\overline{A_1} \to \overline{A_2}$ extending the boundary maps $\phi^i$ and $\phi^o$.
\end{prop}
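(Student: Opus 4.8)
The assertion can be proved directly as follows. The plan is to reduce it, in two steps, to the problem of extending a pair of quasisymmetric self-maps of the two boundary circles of a single round annulus to a quasiconformal self-map of that annulus, and then to solve this last problem by gluing two Beurling--Ahlfors extensions along an analytic circle.

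\emph{Step 1: reduction to a self-map problem.} Since $A_1$ and $A_2$ are annuli bounded by quasicircles, each $A_j$ carries a conformal uniformization onto a round annulus $\{1<|z|<\rho_j\}$; by Carath\'eodory's theorem it extends to a homeomorphism of the closures, and by Proposition~5.10 and Theorem~5.11 of \cite{Pom13} its restriction to each boundary component is quasisymmetric. Composing the uniformization of $A_1$, the radial stretch $re^{\ii t}\mapsto r^{\,a}e^{\ii t}$ with $a=\log\rho_2/\log\rho_1$ (a real-analytic quasiconformal map of $\{1\le|z|\le\rho_1\}$ onto $\{1\le|z|\le\rho_2\}$ that is the identity on each boundary circle), and the inverse of the uniformization of $A_2$, I obtain a quasiconformal homeomorphism $\psi:\overline{A_1}\to\overline{A_2}$ whose restrictions to the two boundary quasicircles are quasisymmetric. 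Then $\psi^{-1}\circ\phi^{i}:\gamma_1^{i}\to\gamma_1^{i}$ and $\psi^{-1}\circ\phi^{o}:\gamma_1^{o}\to\gamma_1^{o}$ are quasisymmetric self-maps of the two boundary components of $A_1$, and it suffices to find a quasiconformal self-map $\Phi$ of $\overline{A_1}$ restricting to this pair, since then $\psi\circ\Phi$ is the desired map. Conjugating once more by a conformal uniformization $G:\overline{A_1}\to\overline{\A_\rho}$, $\A_\rho=\{1<|z|<\rho\}$, which is again quasisymmetric on each boundary, I reduce the problem to the following: given quasisymmetric self-maps $h_1$ of $\T$ and $h_\rho$ of $\rho\T$, build a quasiconformal self-map of $\overline{\A_\rho}$ restricting to $h_1$ and $h_\rho$.

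\emph{Step 2: the collar construction.} I would work separately in the inner collar $C^{i}=\{1\le|z|\le\sqrt\rho\}$ and the outer collar $C^{o}=\{\sqrt\rho\le|z|\le\rho\}$. In logarithmic coordinates $C^{i}$ becomes a compact cylinder $[0,\tfrac12\log\rho]\times(\R/\Z)$, and $h_1$ becomes a quasisymmetric homeomorphism of $\R/\Z$ that lifts to a quasisymmetric homeomorphism $\tilde h_1$ of $\R$ commuting with integer translations. Its Beurling--Ahlfors extension (see, e.g., \cite{Leh87}) is a quasiconformal self-map of a half-strip, with dilatation controlled by the quasisymmetry constant, which equals $\tilde h_1$ on the base line and commutes with integer translations; interpolating affinely, in the coordinate transverse to the base line, between this extension (on the innermost sub-strip) and the identity (on an outer sub-strip) produces a quasiconformal self-map $H^{i}$ of $C^{i}$ equal to $h_1$ on $\T$ and to the identity on $\{|z|=\sqrt\rho\}$. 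In the same way one builds a quasiconformal self-map $H^{o}$ of $C^{o}$ equal to $h_\rho$ on $\rho\T$ and to the identity on $\{|z|=\sqrt\rho\}$. Since $H^{i}$ and $H^{o}$ both equal the identity on the real-analytic circle $C^{i}\cap C^{o}=\{|z|=\sqrt\rho\}$, they glue to a homeomorphism of $\overline{\A_\rho}$, and this homeomorphism is quasiconformal on $\overline{\A_\rho}$ because an analytic arc is quasiconformally removable: a homeomorphism that is $K$-quasiconformal off such an arc is $K$-quasiconformal. Pushing this self-map back through $G$ and then through $\psi$ gives the required quasiconformal $\phi:\overline{A_1}\to\overline{A_2}$.

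I expect the delicate point to be Step 2 --- namely, checking that the Beurling--Ahlfors extension followed by the interpolation to the identity can be performed with dilatation bounded by a constant depending only on the quasisymmetry constants (so that the two glued pieces admit a common dilatation bound) and that everything descends correctly from the logarithmic coordinate --- together with the bookkeeping that each reduction in Step 1 preserves the hypothesis (quasisymmetry on the boundary, via \cite{Pom13}) and the conclusion (quasiconformality). The moduli of $A_1$ and $A_2$ need not agree, but this is harmless once one allows the non-conformal radial stretch above, since the proposition only requires a quasiconformal extension.
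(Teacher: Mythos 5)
The paper does not prove this proposition; it cites it directly from \cite[Corollary~2.13]{BNX14}, so there is no in-paper proof to compare against. Your Step~1 (uniformize both annuli, match moduli by a radial stretch, and reduce to extending a pair of quasisymmetric self-maps of the two boundary circles of a single round annulus) is sound, granting the standard quasisymmetry of the boundary restriction of the uniformization of a quasicircle-bounded annulus.

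Step~2 has a genuine gap. The Beurling--Ahlfors extension of the periodic lift $\tilde h_1$ is a self-map of the closed half-plane (and, after descending, of the semi-infinite cylinder), not of the finite collar $C^{i}$ in logarithmic coordinates, so some surgery is needed to cut it down, and the surgery you propose---``interpolating affinely, in the transverse coordinate, between this extension and the identity''---is not justified. A convex combination $(1-\lambda(y))\,\mathrm{id}+\lambda(y)\,F$ of the identity with a quasiconformal homeomorphism $F$ need not be injective, let alone quasiconformal; and near the base line $F$ is nowhere close to the identity, so no small-perturbation argument is available there. To repair the step you would need to (i) normalize $\tilde h_1$ so that $\tilde h_1(x)-x$ has zero mean over a period, and absorb the removed mean afterwards by interpolating a rigid rotation on a further collar; (ii) verify from the Beurling--Ahlfors formula for a bounded-displacement periodic map that $F(x,y)\to(x,y)$ and $DF(x,y)\to I$ uniformly as $y\to\infty$, so that the affine interpolation, carried out only on a sub-strip high enough that $F$ is $C^1$-close to the identity, is indeed a small perturbation of the identity and hence a quasiconformal homeomorphism; and (iii) precompose with an auxiliary vertical stretch of the cylinder when $\log\rho$ is small, to make room for (ii)---harmless because the proposition asks only for existence, not for a dilatation bound, whereas your closing remark aims at a bound in terms of the quasisymmetry constants, which this route would not deliver anyway. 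Once these ingredients are supplied, your gluing across the analytic middle circle via quasiconformal removability is fine.
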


We extend this result to a connected region of connectivity number $n$ for any $n\ge 2$.
\begin{prop}\label{quasi-extension}
    Suppose that $X$ and $Y$ are two connected regions on $\C$ with connectivity number $n\ge 2$, and each of them is bounded by $n$ quasicircles. If $g:\partial X \to \partial Y$ is quasisymmetric, then $g$ has a quasiconformal extension 
    $\tilde{g}: X\to Y$.
\end{prop}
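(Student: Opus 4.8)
The plan is to argue by induction on the connectivity number $n$. Proposition \ref{quasisymmetric} is precisely the base case $n=2$, and it will also be used at each inductive step to produce quasiconformal extensions of $g$ on thin collar annuli around the boundary components. Besides Lemma \ref{quasicircle is a quasiconformal invariant}, the only other ingredients are three standard facts: a quasiarc is removable for quasiconformal homeomorphisms, so two quasiconformal maps on complementary regions that agree along a common quasiarc in their boundaries glue into a quasiconformal map across it (see \cite{LV73}); a homeomorphism of a quasicircle onto a quasicircle that is quasisymmetric on each of finitely many covering subarcs and, near each junction point, coincides with the restriction of a quasiconformal mapping, is quasisymmetric; and a Jordan curve that is locally a quasiarc, except at finitely many points where it is a union of two analytic arcs meeting at an interior angle in $(0,2\pi)$, is a quasicircle.

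For the inductive step, let $n\ge 3$ and label the complementary components $C_0,\dots,C_{n-1}$ of $X$ and $C_0',\dots,C_{n-1}'$ of $Y$ so that $g$ carries each $E_i:=\partial C_i$ onto $E_i':=\partial C_i'$. First, applying Proposition \ref{quasisymmetric} to thin collar annuli inside $X$ around each $E_i$ and the corresponding collars inside $Y$ around each $E_i'$ --- with the auxiliary outer boundaries of the collars, and the quasisymmetric maps between them, chosen freely --- I extend $g$ to a quasiconformal map $\Phi$ on a neighbourhood $N$ of $\partial X$ in $\overline X$, with $\Phi(N)\subset\overline Y$. Next, working in conformal charts that uniformize the collars of $E_0$ and $E_1$ onto round annuli (so that $E_0$ and $E_1$ appear as round circles there, the uniformizations extending quasisymmetrically to the boundary because $E_0,E_1$ are quasicircles), I attach a thin ``bridge'' from $E_0$ to $E_1$: a quasidisk $T\subset N$ whose closure meets $\overline X$ exactly in a small subarc of $E_0$, a small subarc of $E_1$, and two arcs in $X$ joining their endpoints, attached transversally and disjoint from $E_2,\dots,E_{n-1}$. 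Put $T':=\Phi(T)\subset Y$, a quasidisk by Lemma \ref{quasicircle is a quasiconformal invariant}. Removing the bridge fuses $C_0$ and $C_1$ into a single complementary component, so $X^-:=X\setminus\overline T$ and $Y^-:=Y\setminus\overline{T'}$ are connected regions of connectivity $n-1$, each bounded by $n-1$ quasicircles: the $E_i$ with $i\ge 2$ are unchanged, and the fused boundary curve is, by the third fact above, a quasicircle since it is assembled from quasiarcs meeting at the attachment points as analytic arcs at angles in $(0,2\pi)$.

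Now define $g^-:\partial X^-\to\partial Y^-$ to equal $g$ on $E_2,\dots,E_{n-1}$ and on the retained subarcs of $E_0$ and $E_1$, and to equal $\Phi$ on the two free sides of $\partial T$. Then $g^-$ is quasisymmetric on each of the quasiarcs making up the fused boundary curve and, near each of the four attachment points, coincides with the restriction of the quasiconformal map $\Phi$; by the second fact above, $g^-$ is quasisymmetric on every boundary quasicircle of $X^-$, so the inductive hypothesis yields a quasiconformal extension $\tilde g^-:X^-\to Y^-$. On the bridge set $\tilde g_T:=\Phi|_T:T\to T'$. Since $\tilde g^-$ and $\tilde g_T$ both restrict to $\Phi$ on the two free sides of $\partial T$, which are quasiarcs, the first fact glues them into a quasiconformal homeomorphism $\tilde g:X\to Y$; by construction $\tilde g|_{\partial X}=g$, which completes the induction.

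The two places where real work is required are precisely the two geometric facts invoked above. Showing the fused boundary curve of $X^-$ is a quasicircle is why the bridge is attached in charts where $E_0$ and $E_1$ are circular, so that the quasiarc pieces meet at angles bounded away from $0$ and $2\pi$, a configuration for which the three-point condition is immediate. Showing $g^-$ is quasisymmetric \emph{across} the attachment points --- which does not follow from quasisymmetry of the two pieces separately --- is why $g^-$ is prescribed there as a restriction of the collar extension $\Phi$, so that locally it is the trace of a single quasiconformal germ. Everything else (the collar extensions, the bridge map, the quasiarc gluing, and the verification that all orientations match and that the two partial extensions agree along the bridge) is routine bookkeeping.
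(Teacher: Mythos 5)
Your proof is correct, but it takes a genuinely different route from the paper's. Both arguments induct on $n$ with Proposition~\ref{quasisymmetric} as the base case, but the reduction step differs. The paper \emph{fills in one hole}: it sets $\tilde X = X\cup U_X$ for a bounded complementary component $U_X$, invokes the inductive hypothesis for $\tilde X$ (ignoring the boundary map on $\partial U_X$ entirely) to get $\hat g:\tilde X\to\tilde Y$, then chooses a quasidisk $\Omega\subset\tilde X$ containing both $U_X$ and $\hat g^{-1}(U_Y)$, and repairs $\hat g$ on the annulus $\Omega\setminus U_X$ using Proposition~\ref{quasisymmetric} with boundary data $\hat g|_{\partial\Omega}$ and $g|_{\partial U_X}$. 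You instead \emph{merge two holes with a bridge}: after building a collar extension $\Phi$, you attach a quasidisk bridge $T$ joining $E_0$ to $E_1$, producing regions $X^-$ and $Y^-$ of connectivity $n-1$ to which you apply the inductive hypothesis, and then glue the resulting extension to $\Phi|_T$ across the quasiarc sides of the bridge. The paper's filling-and-repair scheme is leaner: it sidesteps exactly the two delicate points you flag (verifying that the fused boundary curve is a quasicircle, and verifying that the glued boundary map is quasisymmetric across the attachment corners), because no new boundary curve or piecewise-defined boundary map is ever formed; the only auxiliary geometry is a single quasidisk, and the only gluing is across $\partial\Omega$, where both pieces are manifestly restrictions of the same global map. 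Your bridge construction requires the additional lemmas about quasicircles assembled from quasiarcs meeting at bounded angles and about quasisymmetry of maps pieced together from restrictions of a common quasiconformal germ, plus care about the ``no full twist'' freedom in the collar extensions. Both approaches are sound, but the paper's is noticeably more economical; yours trades that economy for a more hands-on control of where the extension lives, which could be useful in settings where one needs the extension localized.
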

\begin{proof}
Lemma \ref{quasisymmetric} has shown the case for $n=2$. We prove this statement by an induction. Suppose it is true for $n=k$, where $k\ge 2$. We show the statement for $n=k+1$.
Denote by $U_X$ one of the bounded components of $\C\setminus X$ and let $U_Y$ be the bounded component of $\C\setminus Y$ such that $g: \partial U_X\to \partial U_Y$.
Let $ \tilde{X}= X\cup U_X$. Then $\tilde{X}$ is a connected region on $\C$ of connectivity number $k$, and $g|_{\partial X\setminus \partial U_X}$ is the restriction of $g$ on the boundaries of $\tilde{X}$, which is quasisymmetric. Therefore, it has a quasiconformal extension $\hat{g}$ defined on $\tilde{X}$.

Since $U_X\subset \tilde{X}$ and $\hat{g}^{-1}(U_Y)$ is also contained in $ \tilde{X}$, we may take a quasidisk $\Omega\subset\tilde{X} $ such that $U_X\cup \hat{g}^{-1}(U_Y)\subset \Omega$. Then $\Omega\setminus U_X$ is an annulus with quasicircle boundaries, and $\hat{g}|_{\partial \Omega}$ and $g|_{\partial U_X}$ are quasisymmetric boundary maps by Lemma \ref{quasicircle is a quasiconformal invariant}. Therefore, $\hat{g}|_{\partial\Omega} \cup g|_{\partial U_X}$ has a quasiconformal extension $\tilde{g}$ to $\Omega\setminus U_X$. Now we define
    \begin{equation*}
    \tilde{g}(z)=
    \begin{cases}
        \tilde{g} \quad\quad\quad \textrm{if}\ z\in \Omega\setminus U_X;\\
        \hat{g} \quad\quad\quad \textrm{if}\ z\in X\setminus \Omega.
    \end{cases}
\end{equation*}
Then $\tilde{g}: X\to Y$ is a quasiconformal extension of $g$ to $X$.
\end{proof}

Assume that $A_1$ and $A_2$ are two annuli with Jordan curve boundaries and $g$ is a homeomorphism between $\overline{A_1}$ and $\overline{A_2}$. Through precomposition and postcomposition by conformal maps, we may assume that $A_1$ and $A_2$ are round annuli; that is, we may assume that $A_1=\{z: r_1^i<|z|<r_1^o\}$ and $A_2=\{z: r_2^i<|z|<r_2^o\}$, where $0<r_1^i<r_1^o$ and $0<r_2^i<r_2^o$. Let $T_1$ be the torus as the quotient space of
 $\overline{A_1}$ by identifying $r_1^ie^{i\theta}$ with $r_1^oe^{i\theta}$ for any $0\le \theta<2\pi$ and $T_2$ be the torus as the quotient space of $\overline{A_2}$ by identifying $g(r_1^ie^{i\theta})$ with $g(r_1^oe^{i\theta})$ for any $0\le \theta<2\pi$.
 Furthermore, let the fundamental group $\pi_1(T_1)$ be generated by $a_1$ and $b_1$, where $a_1$
 is the closed curve by identifying the endpoints of the line segment between $r_1^i$ and $r_1^o$ and $b_1$ is the circle of radius $r_1^o$, and let the fundamental group $\pi_1(T_2)$ be generated by $a_2$ and $b_2$, where $a_2$
 is the closed curve by identifying the endpoints of the line segment between $g(r_1^i)$ and $g(r_1^o)$ and $b_2$ is the circle of radius $r_2^o$. We say that $g$ has {\em no full twist} on $A_1$ if $g$ induces an isomorphism between $\pi_1(T_1)$ and $\pi_1(T_2)$ that maps $a_1$ and $b_1$ to $a_2$ and $b_2$ respectively. On the other hand, $g$ performs some full twists on $A_1$ if and only if the induced isomorphism maps $a_1$ to $a_2+kb_2$ for some nonzero integer $k$. In fact, when $g$ performs some full twists on $A_1$, we may unwind the twists through postcomposing $g$ by a self homeomorphism of $\overline{A_2}$ that takes the identity map on the boundaries of $\overline{A_2}$ and induces an isomorphism on $\pi_1(T_2)$ that maps $a_2$ and $b_2$ to $a_2-kb_2$ and $b_2$ respectively.

\begin{lem}\label{homotopic-id}
    Assume that $X$ is a connected region on $\EC$ of connectivity number $3$ and its boundaries are Jordan curves. If $g$ is a quasiconformal mapping from $X$ into $\EC$,
    then $g$ can be modified to a quasiconformal map $\tilde{g}$ on $X$ without changing its values on the boundaries such that $\tilde{g}$ has no full twist on any annulus separating the boundaries of $X$.
\end{lem}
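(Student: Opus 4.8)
The plan is to obtain $\tilde g$ from $g$ by precomposition with a product of Dehn twists, exploiting that $X$ is topologically a pair of pants. First I would record the topological input: since $X\subset\EC$ is a connected region of connectivity $3$ whose three boundary components are Jordan curves, $X$ is homeomorphic to a sphere with three open disks removed, and every essential simple closed curve in $X$ belongs to exactly one of three free isotopy classes --- for each of the three boundary components, the class of a curve separating it from the other two. An annulus $A\subset X$ separates the boundaries of $X$ if and only if its core curve is essential, in which case $A$ is isotopic in $X$ to a thin annular neighborhood of a representative $\delta$ of one of these three classes. Using the construction in the paragraph preceding the lemma, the full twist number $k(g,A)\in\Z$ of $g$ on such an $A$ (built from the tori associated to $\overline A$ and $\overline{g(A)}$ with the gluings induced by $g$) depends only on the isotopy class of $A$; hence it suffices to produce $\tilde g$, equal to $g$ on $\partial X$, with $k(\tilde g,A_i)=0$ for one annulus $A_i$ in each of the three classes.

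Second I would fix the geometry. Choose representatives $\delta_1,\delta_2,\delta_3$ of the three classes that are pairwise disjoint --- this is possible in a pair of pants, for instance by taking $\delta_1,\delta_2$ to be small loops encircling two of the boundary components and $\delta_3$ a loop encircling the third that stays in the complement of those small loops --- and choose pairwise disjoint closed annular neighborhoods $\overline{A_1},\overline{A_2},\overline{A_3}$ of them with smooth boundary, compactly contained in the interior of $X$. For each $i$ fix a conformal identification of $A_i$ with a round annulus $\{1<|w|<R_i\}$ and let $T_i\colon X\to X$ be the homeomorphism that in these coordinates equals $w\mapsto w\,e^{2\pi\ii\rho_i(|w|)}$ on $A_i$, where $\rho_i\colon[1,R_i]\to[0,1]$ is smooth with $\rho_i(1)=0$ and $\rho_i(R_i)=1$, and is the identity on $X\setminus A_i$. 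Then $T_i$ is a bi-Lipschitz, hence quasiconformal, self-homeomorphism of $X$ realizing a Dehn twist about $\delta_i$; it is the identity on $\partial X$, and the $T_i$ commute because their supports are disjoint.

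Third I would carry out the correction. Compute the three integers $k_i:=k(g,A_i)$ and choose, for each $i$, the integer $m_i$ for which precomposing by $T_i^{m_i}$ shifts the twist number on $A_i$ from $k_i$ to $0$ --- such $m_i$ exists because one Dehn twist about $\delta_i$ changes this twist number by $\pm1$. Put $\tilde g:=g\circ T_1^{m_1}\circ T_2^{m_2}\circ T_3^{m_3}$ (the order is immaterial). Then $\tilde g$ is quasiconformal, being a composition of quasiconformal maps, and $\tilde g=g$ on $\partial X$ since every $T_i$ fixes $\partial X$. To see that $\tilde g$ has no full twist on $A_i$: for $j\neq i$ the map $T_j^{m_j}$ is the identity on a neighborhood of $\overline{A_i}$, so $\tilde g|_{\overline{A_i}}=g\circ T_i^{m_i}|_{\overline{A_i}}$; since $T_i^{m_i}$ fixes $\partial A_i$ pointwise, the target torus attached to $\overline{g(A_i)}$ and its gluing are unchanged, while on the source torus $T_i^{m_i}$ sends the radial generator to the radial generator plus $\pm m_i$ times the core, so $k(\tilde g,A_i)=k_i\pm m_i=0$ by the choice of $m_i$. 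By the isotopy invariance of the twist number recorded in the first step, $\tilde g$ then has no full twist on any annulus separating the boundaries of $X$.

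I expect the main obstacle to lie in two structural facts underlying this scheme, which must be justified with care: first, that the twist number $k(g,A)$ is well defined and invariant under isotopy of $A$ inside $X$, and transforms by $+1$ under precomposition with a positive Dehn twist about the core (so that three representatives really control the whole lemma); and second, that the three boundary-parallel twist classes admit disjoint representatives --- equivalently, that the mapping class group of the pair of pants relative to its boundary is free abelian on these twists --- so that the corrections for $A_1,A_2,A_3$ are independent and do not interfere. Once these are in place, the remaining points (quasiconformality of the $T_i$, preservation of the boundary values, and the arithmetic of twist numbers) are routine.
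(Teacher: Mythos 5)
Your approach is genuinely different from the paper's. The paper argues by local surgery: if $g$ performs full twists on some separating annulus $A$, it replaces $g|_A$ by an untwisted quasiconformal extension of the same boundary values (via Proposition~\ref{quasisymmetric}), argues by a modulus-versus-dilatation estimate that only finitely many pairwise disjoint separating annuli can carry full twists, and unwinds on each of them. You instead precompose $g$ globally by commuting bi-Lipschitz Dehn twists supported on one fixed annular neighborhood per boundary-parallel isotopy class, exploiting the pair-of-pants structure and the fact that the mapping class group rel boundary is free abelian of rank $3$. Your route gives a cleaner global accounting (exactly three integers to cancel, with independent and explicit corrections) and avoids the paper's finiteness argument, which in turn depends on an estimate --- that a full twist on an annulus of small modulus forces large dilatation --- that the paper asserts but does not spell out.

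However, the isotopy invariance you flag as ``needing care'' is not merely delicate but \emph{false} as stated, and this is the same soft spot as in the paper's final ``It follows that\ldots'' sentence. The quantity $k(g,A)$ defined in the paragraph preceding the lemma measures the total winding of $g$ across the whole annulus $A$, and for a fixed quasiconformal $g$ it does change under isotopy of $A$. Concretely, in cylinder coordinates let $g$ act on $A=\{1<|z|<R^2\}$ by the Dehn twist $(x,\theta)\mapsto\bigl(x,\theta+2\pi x/\log R\bigr)$ on $\{1<|z|<R\}$ and by the identity on $\{R<|z|<R^2\}$; then $k(g,A)=1$ while $k\bigl(g,\{R<|z|<R^2\}\bigr)=0$, although the two annuli are isotopic. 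Transplanting this into a collar of one boundary component of $X$ shows that killing $k(\tilde g,A_i)$ for three chosen representatives does not by itself force $k(\tilde g,A)=0$ for \emph{every} separating annulus $A$. What your construction does control, and what actually gets used in Step~2 of Proposition~\ref{rigidity of combinatorics - curve and arc}, is the class of $\tilde g$ in $\mathrm{MCG}(X,\partial X)\cong\Z^3$ relative to a fixed reference extension --- a genuine isotopy invariant --- and for that purpose the Dehn-twist precomposition is exactly the right move. To make the proof airtight one should either reformulate the lemma's conclusion in those terms, or show that after suitable normalization (e.g.\ making $\tilde g$ ``evenly twisted'' across each collar) the annulus-wise condition really does hold; as written, neither your argument nor the paper's establishes the literal ``no full twist on any annulus'' claim.
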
    

\begin{proof} 
Suppose that there is an annulus $A$ on $X$ such that it separates some boundary component of $X$ from the other boundary components and it has some full twists on $A$.
We may assume that the boundaries of $A$ are quasicircles, which we denote by $\gamma_1$ and $\gamma_2$. Since $g$ maps quasicircles to quasicircles, there is a quasiconformal extension $g_A$ of $g|_{\gamma_1\cup \gamma_2}$ to $A$ such that $g_A$ has no full twist on any annulus contained in $A$. It follows that $g|_{X\setminus A}\cup g_A$ has no full twist on $A$. We may call this process an unwinding-full-twist process on $A$.

Since $g$ is quasiconformal on $X$, there exists only finitely many annuli $A_j$, $j=1, 2, \cdots, m$, such that these annuli have pairwise disjoint interiors, each of them separates the boundaries of $X$, and $g$ performs some full twists on each of them. Otherwise, if there are infinitely many such annuli $A_j$, $j=1, 2, \cdots, m, \cdots$, then the modulus of $A_j$ converges to $0$ and hence the maximal dilatation of $g$ on $A_j$ goes to $\infty$ as $j\rightarrow \infty$, which is a contradiction to the quasiconformality of $g$ on $X$.
After applying the above unwinding-full-twist process $A_j$ for $j=1, 2, \cdots, m$, we are able to modify $g$ to a quasiconformal map $\tilde{g}$ on $X$ such that it agrees with $g$ on the boundaries of $X$ and it has no full twist on each $A_j$. It follows that $\tilde{g}$ has no full twist on any annulus separating the boundary components of $X$. 
\end{proof}

\begin{rmk} Note that if a connected region $X$ is of connectivity number $3$ and with Jordan curve boundaries, then any simple closed curve on $X$ is homotopic to a single point or a boundary component curve. Then the full twists of a homeomorphism $f$ from $X$ into $\EC$ can be arranged on annuli with pairwise disjoint interiors. This is an essential reason
for the previous Lemma \ref{homotopic-id} to hold. When the connectivity number of $X$ is bigger than $3$, the above property doesn't hold; that is, $g$ may have some full twists along a simple closed curve $\gamma $ but has no full twist on another a simply closed curve $\gamma'$ intersecting $\gamma $ and not homotopic to $\gamma$, where both $\gamma$ and $\gamma'$ separate the boundary components of $X$. Then an unwinding process along $\gamma $ will produce some full twists along $\gamma'$. This means that Lemma \ref{homotopic-id} is not true in general when the connectivity number of $X$ is bigger than $3$.
\end{rmk}

\subsection{Conformal angle, and rigidity of combinatorial pattern} 
In the previous long Section \ref{Combinatorial pattern of eventually Siegel disk comp}, we have studied all possible combinatorial patterns of the eventually Fatou components of $f_\alpha $ when $\alpha \in \Gamma_\theta\cup \Gamma_\theta^0\cup \Gamma_\theta^1$.
In this subsection, we prove if such a combinatorial pattern is realized by $f_\alpha$ for some $\alpha \in \Gamma_\theta\cup \Gamma_\theta^0\cup \Gamma_\theta^1$, then such a parameter is unique.

We first define the conformal angle between $c_0$ and $f_\alpha(c_1)$ if $\alpha\in \Gamma_\theta$ or between $c_0$ and $c_1$ if $\alpha\in \Gamma_\theta^0\cup \Gamma_\theta^1$. 
For any $\alpha \in \Gamma_\theta\cup \Gamma_\theta^0\cup \Gamma_\theta^1$, there exist two conformal maps
\begin{equation}\label{equ:h-condition}
    h^0_{\alpha}: \Delta^0_{\alpha}\to \D \quad \text{and}\quad h^1_{\alpha}:\Delta^1_{\alpha}\to \D,
\end{equation}
such that
\begin{equation*}
\begin{split}
  h_{\alpha}^0 \circ f^{\circ 2}_{\alpha} \circ (h_{\alpha}^0)^{-1}(\zeta) =\rho_\theta(\zeta),& ~\forall\, \zeta\in \D, \text{ where } h_{\alpha}^0(0)=0,\text{ and } \\
  h_{\alpha}^1 \circ f^{\circ 2}_{\alpha} \circ (h_{\alpha}^1)^{-1}(\zeta) =\rho_\theta(\zeta),&~\forall\, \zeta\in \D, \text{ where }
  h_{\alpha}^1(1)=0.
\end{split}
\end{equation*}
Since $\partial\Delta^0_{\alpha}$
and $\partial\Delta^1_{\alpha}$ are quasicircles, $h_\alpha^0$ and $h_\alpha^1$ can be extended to two homeomorphisms
\begin{equation*}\label{equ:h-0-infty-extend}
H_{\alpha}^0: \overline{\Delta_{\alpha}^0}\to \overline{\D}   \text{\quad and\quad} H_{\alpha}^1: \overline{\Delta_{\alpha}^1} \to  \overline{\D}.
\end{equation*}

Case 1. $\alpha\in \Gamma_\theta$. We normalize $H_{\alpha}^0(c_0)=1$ and $H_{\alpha}^1(f_{\alpha}(c_0))=1$, where $c_0$ is the critical point on $\partial\Delta_\alpha^0$. Then both $H_{\alpha}^0$ and $H_{\alpha}^1$ are unique. It follows that 
\begin{equation*}\label{H}
    H_\alpha^1\circ f_\alpha(z)=H_\alpha^0(z)
\end{equation*} 
for all $z\in\partial\Delta^0_{\alpha}$. Let
\begin{equation*}\label{conformal angle curve}
A(\alpha):=\arg\,H_{\alpha}^0\circ f_\alpha(c_1) \text{\quad and \quad} \widetilde{A}(\alpha):=\arg\,H_{\alpha}^1(c_1),
\end{equation*}
which are measured in the counterclockwise direction. We respectively call them the conformal angle between $c_0$ and $f_\alpha(c_1)$ on $\overline{\Delta_\alpha^0}$ as viewed from $0$ and the conformal angle between $f_\alpha(c_0)$ and $c_1$ on $\overline{\Delta_\alpha^1}$ as viewed from $1$. Since
$H_\alpha^1\circ f_\alpha(z)=H_\alpha^0(z)$, we obtain
\begin{equation*}\label{equ:A-alpha}
\begin{split}
A(\alpha)
=&~\arg\,H_{\alpha}^0\circ f_\alpha(c_1) \\
=&~\arg\,H_{\alpha}^1\circ f_\alpha^{\circ 2}(c_1)=\widetilde{A}(\alpha)+2\pi\theta.
\end{split}
\end{equation*}

Case 2. $\alpha\in \Gamma_\theta^0$. Let $c_0$ be one of the two critical points on $\partial\Delta_\alpha^0$. We continue to normalize $H_{\alpha}^0(c_0)=1$ and $H_{\alpha}^1(f_{\alpha}(c_0))=1$. Then $H_\alpha^1\circ f_\alpha=H_\alpha^0$ on $\partial\Delta^0_{\alpha}$. Let
\begin{equation*}\label{conformal angle arc-0}
A(\alpha):=\arg\,H_{\alpha}^0 (c_1).
\end{equation*}

Case 3. $\alpha\in \Gamma_\theta^1$. Let $c_0$ be one of the two critical points on $\partial\Delta_\alpha^1$. We normalize $H_{\alpha}^1(c_0)=1$ and $H_{\alpha}^0(f_{\alpha}(c_0))=1$. Then $H_\alpha^0\circ f_\alpha=H_\alpha^1$ on $\partial\Delta^1_{\alpha}$. Let
\begin{equation}\label{conformal angle arc-1}
A(\alpha):=\arg\,H_{\alpha}^1 (c_1).
\end{equation}

For any $\alpha\in \Gamma_\theta\cup\Gamma_\theta^0\cup\Gamma_\theta^1$, let $K(f_\alpha)$ be the filled-in Julia set of $f_\alpha$. Then there is a conformal map $\Psi_\alpha: \EC\setminus K(f_{\alpha})\to \EC\setminus \overline{\D}$ such that $\Psi_\alpha(\infty)=\infty$ and
\begin{equation*}\label{bottcher}
    [\Psi_\alpha(z)]^3=\Psi_\alpha\circ f_{\alpha}(z)
\end{equation*}
for any $z\in\EC\setminus K(f_{\alpha})$, which is called a $B\ddot{o}ttcher$ coordinate map of $f_\alpha $ at $\infty $. Define $B_{\alpha}^{r}= (\Psi_\alpha)^{-1}(\overline{D_r}^c)$, where $D_r$ is the disk centered at $0$ and of radius $r>1$. 

To distinguish the objects corresponding to two different parameters $\alpha$ and $\alpha'$, we incorporate $\alpha$ and $\alpha'$ into the notation of these objects. For example, we use $c_{k,\alpha}$ and $c_{k,\alpha'}$ to denote the critical points of $f_{\alpha}$ and $f_{\alpha'}$ respectively, where $k=0,1$.

\begin{defi}\label{def:same combinatorics}
    Given $\alpha, \alpha'\in\Gamma_\theta$ (resp. $\Gamma_\theta^0$ or $\Gamma_\theta^1$), we say that the eventually Siegel-disk components of $f_{\alpha}$ and $f_{\alpha'}$ have {\em the same combinatorial pattern} if $A(\alpha)=A(\alpha')$ and there is a homeomorhphism of $\EC$ that fixes $0$, $1$ and $\infty$ and preserves the (equivalent) labels of the eventually Siegel-disk components.
\end{defi}
Note that when $\alpha, \alpha'\in\Gamma_\theta^0$ (resp. $\Gamma_\theta^1$), two critical points lie on the boundary of the same Siegel disk. Then $A(\alpha)$ or $2\pi-A(\alpha)$ may be taken as the measure of the conformal angle depending on the labels of the two critical points. On the other hand, if $c_{0, \alpha}$ and $c_{0, \alpha'}$ are expressed as the values of one complex analytic function of one critical point on a domain containing $\alpha$ and $\alpha'$, and $c_{1, \alpha}$ and $c_{1, \alpha'}$ are expressed as the values of the complex analytic function of another critical point on the same domain, then the requirements for the eventually Siegel-disk components of $f_{\alpha}$ and $f_{\alpha'}$ to have the same combinatorial pattern are the same ones given in the definition. 

\begin{prop}\label{rigidity of combinatorics - curve and arc}
Assume that $\alpha, \alpha'\in\Gamma_\theta$ (resp. $\Gamma_\theta^0$ or $\Gamma_\theta^1$). If the eventually Siegel-disk components of $f_{\alpha}$ have the same combinatorial pattern as the eventually Siegel-disk components of $f_{\alpha'}$, then $\alpha=\alpha'$.
\end{prop}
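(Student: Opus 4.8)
The plan is to build a quasiconformal homeomorphism $\phi:\EC\to\EC$ that fixes $0$, $1$, $\infty$, conjugates $f_\alpha$ to $f_{\alpha'}$, and is conformal on the Fatou set $F(f_\alpha)$; since $J(f_\alpha)$ has zero Lebesgue measure by Corollary \ref{Leb measure of f-alpha}, the Beltrami coefficient of $\phi$ vanishes almost everywhere, so $\phi$ is a M\"obius transformation, hence $\phi=\id$ by the normalization, and therefore $f_\alpha=f_{\alpha'}$, i.e. $\alpha=\alpha'$. I carry out the construction for $\alpha\in\Gamma_\theta$; the cases $\alpha\in\Gamma_\theta^0$ and $\alpha\in\Gamma_\theta^1$ are the same up to bookkeeping (a single crucial cluster, and the external rays landing at $P_3$ have angles $\tfrac58,\tfrac78$ resp. $\tfrac18,\tfrac38$ by Lemma \ref{pattens for landing rays}), and by Proposition \ref{Symmetry of Gamma} it suffices to treat one of them.

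First I fix the two conformal ``seeds''. On the basin $B(\infty)$ I set $\phi=\Psi_{\alpha'}^{-1}\circ\Psi_\alpha$, the conjugacy between the two B\"ottcher coordinates; because $f_\alpha$ and $f_{\alpha'}$ have the same portrait of external rays landing at the separating repelling fixed point $P_3$ (Lemma \ref{pattens for landing rays}), this conjugacy respects the grand orbit of the rays that separate the crucial clusters. On $\overline{\Delta_\alpha^j}$ I set $\phi=(H_{\alpha'}^j)^{-1}\circ H_\alpha^j$, which is conformal in the interior of $\Delta_\alpha^j$, sends centers to centers, and conjugates $f_\alpha^{\circ 2}$ to $f_{\alpha'}^{\circ 2}$ on the Siegel disks. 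Because $A(\alpha)=A(\alpha')$ and the $H_\alpha^j$ are normalized at the critical points, $\phi$ sends $c_{0,\alpha}\mapsto c_{0,\alpha'}$, $c_{1,\alpha}\mapsto c_{1,\alpha'}$ and $f_\alpha(c_{1,\alpha})\mapsto f_{\alpha'}(c_{1,\alpha'})$. Pulling these back through the branches of $f_\alpha^{\circ n}$, which are univalent onto a Siegel disk since every critical point of $f_\alpha^{\circ n}$ lies on the boundary of an eventually Siegel-disk component, I obtain $\phi$ conformally on $\overline{B_\alpha^r}$ and on finitely many preimages of $\overline{\Delta_\alpha^0}\cup\overline{\Delta_\alpha^1}$, matching at each stage the component of the $\alpha$-dynamics with the component of the $\alpha'$-dynamics carrying the same label; here the hypothesis that the combinatorial patterns agree (Definition \ref{def:same combinatorics}) guarantees these finite-stage data are identical for $\alpha$ and $\alpha'$.

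The substantive step is to extend $\phi$ across the finitely many complementary regions and then spread it by the dynamics. After removing the closed pieces above, the complement is a finite union of finitely connected domains bounded by quasicircles --- the Siegel-disk boundaries and all their preimages are $K$-quasicircles with $K$ independent of $\alpha$ by Corollary \ref{disjoint}, so the prescribed boundary identifications are quasisymmetric. Proposition \ref{quasi-extension} supplies a quasiconformal extension realizing these boundary maps, and Lemma \ref{homotopic-id} allows me to choose it with no full twist on the annuli nested around the preimages of the Siegel disks, so that it lies in the isotopy class of the label-preserving homeomorphism of Definition \ref{def:same combinatorics}; this is where the description of the crucial clusters and of the Fatou chains landing at $P_3$ (Lemmas \ref{convergent Fatou chain} and \ref{convergent Fatou chain for arc}), together with the local connectivity of $J(f_\alpha)$ (Corollary \ref{local connectivity}), is used to certify that the gluing is globally coherent. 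Finally, the functional equation $\phi\circ f_\alpha=f_{\alpha'}\circ\phi$ extends $\phi$ over all iterated preimages, producing a quasiconformal map of $\EC$ whose maximal dilatation equals that of the interpolation; since every Fatou component lies in $B(\infty)$ or is eventually mapped univalently onto a Siegel disk, $\phi$ is conformal on $F(f_\alpha)$, and hence, as $\area(J(f_\alpha))=0$, it is conformal on $\EC$. The normalizations force $\phi=\id$ and thus $\alpha=\alpha'$.

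The main obstacle is precisely the gluing and isotopy step of the third paragraph: one must verify that the conformal conjugacies on adjacent Fatou components, pulled back through the dynamics, remain continuous across their common boundary arcs and across the Julia set, and that the interpolation on the intermediate regions introduces no twisting along the annuli surrounding the nested preimages of the Siegel disks. This is exactly what the combinatorial apparatus developed in Section \ref{Combinatorial pattern of eventually Siegel disk comp} (the labels on $\mathcal{C}_0$ and $\mathcal{C}_1$, the convergent Fatou chains and their realized patterns), Lemma \ref{homotopic-id}, Proposition \ref{quasi-extension}, and the local connectivity of $J(f_\alpha)$ are designed to control; once this is in place, the conclusion follows from the standard fact that a quasiconformal conjugacy that is conformal off a set of measure zero is M\"obius.
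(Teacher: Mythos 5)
Your proposal follows the same strategy as the paper's proof: seed conformal conjugacies on $B(\infty)$ and the Siegel disks via the B\"ottcher and linearizing coordinates normalized at the critical points, extend quasiconformally across the complementary multiply-connected regions using Proposition~\ref{quasi-extension} and Lemma~\ref{homotopic-id} (no full twists), propagate by pulling back through the dynamics, and conclude from $\area(J(f_\alpha))=0$ (Corollary~\ref{Leb measure of f-alpha}) that the resulting quasiconformal conjugacy is M\"obius and hence the identity. The paper spells out your ``spread by the functional equation'' step more carefully---an iterated Thurston-style pullback via the measurable Riemann mapping theorem, constructing lifts $\phi_{k+1}$ with $\mu_{\phi_{k+1}}=f_\alpha^*\mu_{\phi_k}$ so that $\|\mu_{\phi_{k+1}}\|_\infty=\|\mu_{\phi_0}\|_\infty$, followed by a normal-family argument to extract the limiting conjugacy $\Phi$---but this is filling in detail you leave implicit rather than a different route.
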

\begin{proof} We divide the proof of this proposition into three steps. 

\medskip
{\bf Step 1.} There exists a quasiconformal mapping $\phi_0:\EC\to \EC$ such that $\phi_0:\Delta_{\alpha}^0\cup\Delta_{\alpha}^1\cup B^{r}_{\alpha}\rightarrow \Delta_{\alpha'}^0\cup\Delta_{\alpha'}^1\cup B^{r}_{\alpha'}$ is a conformal conjugacy between $f_{\alpha}|_{\Delta_{\alpha}^0\cup\Delta_{\alpha}^1\cup B^{r}_{\alpha}}$ and $f_{\alpha'}|_{\Delta_{\alpha'}^0\cup\Delta_{\alpha'}^1\cup B^{r}_{\alpha'}}$, and $\phi_0$ is homotopic to the identity map on $\EC$ rel three points $0$, $1$ and $\infty$. 

Using the above definitions of $H_{\alpha}^0$, $H_{\alpha}^1$ and $\Psi_\alpha$, we obtain the corresponding maps $H_{\alpha'}^0$, $H_{\alpha'}^1$ and $\Psi_{\alpha'}$ for $\alpha'$. Define 
\begin{equation*}
    \phi(z)=
    \begin{cases}
        (H_{\alpha'}^0 )^{-1}\circ H_{\alpha}^0(z) \quad\quad\quad \textrm{if}\ z\in \overline{\Delta^0_{\alpha}},\\
        (H_{\alpha'}^1)^{-1}\circ H_{\alpha}^1(z) \quad\quad\quad \textrm{if}\ z\in \overline{\Delta_{\alpha}^1},\\
        (\Psi_{\alpha'})^{-1}\circ \Psi_\alpha(z) \quad\quad\quad\ \textrm{if}\ z\in \overline{B_{\alpha}^{r}}.
    \end{cases}
\end{equation*}
Clearly, $\phi: \overline{\Delta_{\alpha}^0}\cup \overline{\Delta_{\alpha}^1} \cup \overline{B_{\alpha}^{r}}\rightarrow \overline{\Delta_{\alpha'}^0}\cup \overline{\Delta_{\alpha'}^1} \cup \overline{B_{\alpha'}^{r}}$ is 
a homeomorphism and it is conformal between their interiors. Because $A(\alpha)=A(\alpha')$, $\phi$ satisfies
\begin{enumerate}
    \item $\phi(c_{0,\alpha})= c_{0,\alpha'}$, $\phi(c_{1,\alpha})= c_{1,\alpha'}$,\\ $\phi(f_{\alpha}(c_{0,\alpha}))= f_{\alpha'}(c_{0,\alpha'})$, and $\phi(f_{\alpha}(c_{1,\alpha}))= f_{\alpha'}(c_{1,\alpha'})$;
    \item $\phi\circ f_{\alpha}(z)= f_{\alpha'}\circ \phi(z)$ for any $z\in \overline{\Delta_{\alpha}^0}\cup \overline{\Delta_{\alpha}^1} \cup \overline{B_{\alpha}^{r}}$.
\end{enumerate}
Since $\overline{\Delta_{\alpha}^0}$, $\overline{\Delta_{\alpha}^1}$ and $\overline{B_{\alpha}^{r}}$ are quasidisks and mutually disjoint, by Proposition \ref{quasi-extension} and Lemma \ref{homotopic-id}, $\phi$ has a quasiconformal extension $\phi_0: \EC\to \EC$ such that $\phi_0$ has no full twist on any annulus contained in $\EC\setminus \overline{\Delta_{\alpha}^0} \cup \overline{\Delta_{\alpha}^{1}}\cup \overline{B^{r}_{\alpha}}$ and separating its boundary components. 

\medskip
{\bf Step 2.} There exists $\phi_1:\EC\to \EC$ such that 
\begin{itemize}
  \item $f_{\alpha'}\circ\phi_1=\phi_0\circ f_{\alpha}$;
  \item $\phi_1=\phi_0$ on $\overline{\Delta_{\alpha}^0} \cup \overline{\Delta_{\alpha}^{1}}\cup \overline{B^{r}_{\alpha}}$;
  \item $\phi_1$ has no full twist on any annulus contained in $\EC\setminus f_\alpha^{-1}(\overline{\Delta_{\alpha}^0} \cup \overline{\Delta_{\alpha}^{1}}\cup \overline{B^{r}_{\alpha}})$ and separating its boundary components.
  \item $\phi_1$ is homotopic to $\phi_0$ rel $\partial{\Delta_{\alpha}^0} \cup \partial{\Delta_{\alpha}^{1}}\cup \{\infty\}$;
  \item $\phi_1$ is holomorphic in every component of $f_{\alpha}^{-1}(\Delta_{\alpha}^0\cup\Delta_{\alpha}^1\cup B^{r}_{\alpha})$;
  \item $||\mu_{\phi_1}||_{\infty}=||\mu_{\phi_0}||_{\infty}$.
\end{itemize}
We say that $f_{\alpha}$ and $f_{\alpha'}$ are Thurston equivalent if there exist  $\phi_0$ and $\phi_1$ satisfying the above conditions (a), (b) and (c). We also call $\phi_1$ a lift of $\phi_0$.

Denote by $\mu_0$ the Beltrami coefficient of $\phi_0$, and let  
\begin{equation}\label{Beltrami coefficient}
    \mu_1(z)=f_{\alpha}^{*}\mu_0(z)=\mu_0(f_{\alpha}(z))\cdot \frac{\overline{f'_{\alpha}(z)}}{f'_{\alpha}(z)},
\end{equation}
which is the pull back of $\mu_0$ by $f_{\alpha}$. Then $||\mu_1||_{\infty}\le||\mu_0||_{\infty}$. By the measurable Riemannian mapping theorem, there is a unique quasiconformal map $\phi_1$ fixing $0$, $1$ and $\infty $ such that  $\mu_1$ is the Beltrami coefficient of $\phi _1$. 

Then $\tilde{f}=\phi_0\circ f_{\alpha}\circ \phi^{-1}_1$ is a rational map; that is, there is a rational map $\tilde{f}$ such that the following diagram commutes.
\begin{equation}\label{diagram commute}
    \begin{tikzcd}
  \EC  \ar[r,"\phi_1"] \ar[d,"f_{\alpha}"] & \EC  \ar[d,"\tilde{f}(=f_{\alpha'})"]\\
    \EC  \ar[r,"\phi_0"]     & \EC
    \end{tikzcd}
    \end{equation}
Furthermore, using the above condition (b), we obtain 
\begin{equation*}
\begin{split}
      \tilde{f}(\phi_1(\Delta_{\alpha}^0))=\phi_0(f_{\alpha}(\Delta_{\alpha}^0))
      =(\phi_0\circ f_{\alpha}\circ \phi_0^{-1}(\phi_0(\Delta_{\alpha}^0)))=f_{\alpha'}(\phi_0(\Delta_{\alpha}^0)).
\end{split}
\end{equation*}
Since $0\in \phi_{1}(\Delta_{\alpha}^0)\cap\phi_{0}(\Delta_{\alpha}^0)$, it follows that $\phi_{1}(\Delta_{\alpha}^0)\cap\phi_{0}(\Delta_{\alpha}^0)$ contains a neighborhood of $0$ on which $\tilde{f}=f_{\alpha'}$. Thus,  two rational maps $\tilde{f}$ and $f_{\alpha'}$ are the same one. 

Using the following two commutative diagrams
\begin{equation*}
    \begin{tikzcd}
  \Delta_{\alpha}^0  \ar[r,"\phi_1"] \ar[d,"f_{\alpha}"] & \phi_1(\Delta_{\alpha}^0)  \ar[d,"f_{\alpha'}"]\\
    \Delta_{\alpha}^1  \ar[r,"\phi_0"]     & \Delta_{\alpha'}^1 
    \end{tikzcd} \;\;\;\;\text{ and }\;\;\;\;
    \begin{tikzcd}
    \Delta_{\alpha}^0  \ar[r,"\phi_0"] \ar[d,"f_{\alpha}"] &   \Delta_{\alpha'}^0  \ar[d,"f_{\alpha'}"]\\
    \Delta_{\alpha}^1  \ar[r,"\phi_0"]     & \Delta_{\alpha'}^1,
    \end{tikzcd}
    \end{equation*}
we obtain $\phi_1|_{\Delta_{\alpha}^0}=f_{\alpha'}^{-1}|_{\Delta_{\alpha'}^1}\circ\phi_0\circ f_{\alpha}|_{\Delta_{\alpha}^0}=\phi_0|_{\Delta_{\alpha}^0}$. Similarly, we obtain $\phi_1|_{\Delta_{\alpha}^1}=\phi_0|_{\Delta_{\alpha}^1}$ and $\phi_1|_{B_{\alpha}^{r}}=\phi_0|_{B_{\alpha}^{r}}$.
Meanwhile, $\phi_1$ is holomorphic on $f_{\alpha}^{-1}(\Delta_{\alpha}^0\cup \Delta_{\alpha}^1\cup B_{\alpha}^{r} )$, and $\phi_1$ maps $f_{\alpha}^{-1}(\Delta_{\alpha}^0\cup \Delta_{\alpha}^1\cup B_{\alpha}^{r} )$ onto $f_{\alpha'}^{-1}(\Delta_{\alpha'}^0\cup \Delta_{\alpha'}^1\cup B_{\alpha'}^{r})$ and preserves the labels/positions of the preimages of the Siegel disks on the $2$-cycle. This means that for the existence of the map $\phi_1$, the preimages of the Siegel disks on the $2$-cycle under $f_{\alpha}$ must have the same combinatorial pattern as the preimages of the Siegel disks on the $2$-cycle under $f_{\alpha'}$.

Since the Beltrami coefficient $\mu_1$ is a pullback of $\mu_0$ in the sense of (\ref{Beltrami coefficient}), the complex dilatation of $\phi_1$ on the eventually Siegel-disk components of $f_{\alpha}$ are the pullback of the complex dilatation of $\phi_0$ on those components. Using the facts that (i) $\phi_0$ has no full twist on any annulus contained in $\EC\setminus \overline{\Delta_{\alpha}^0} \cup \overline{\Delta_{\alpha}^{1}}\cup \overline{B^{r}_{\alpha}}$ and separating its boundary components, 
(ii) the closures of the eventually Siegel-disk Fatou components of $f_\alpha$ don't form loops on $\C$, (iii) the Lebesgue measure of $J(f_{\alpha})$ is zero, and (iv) $\phi_1$ is a lift of $\phi_0$ in the sense that they satisfy the diagram (\ref{diagram commute}), we know that $\phi_1$ has no full twist on any annulus contained in $\EC\setminus f_\alpha^{-1}(\overline{\Delta_{\alpha}^0} \cup \overline{\Delta_{\alpha}^{1}}\cup \overline{B^{r}_{\alpha}})$ and separating its boundary components. Since 
 $\phi_1$ and $\phi_0$ agree on $\overline{\Delta_{\alpha}^0}\cup \overline{\Delta_{\alpha}^1} \cup \overline{B_{\alpha}^{r}}$ and both of them have no full twist on any annulus separating the boundary components of $\EC\setminus \overline{\Delta_{\alpha}^0}\cup \overline{\Delta_{\alpha}^1} \cup \overline{B_{\alpha}^{r}}$, we conclude that $\phi_1$ is homotopic to $\phi_0$ rel $\overline{\Delta_{\alpha}^0}\cup \overline{\Delta_{\alpha}^1} \cup \overline{B_{\alpha}^{r}}$.

\medskip
 {\bf Step 3.} Because the combinatorical patterns of the eventually Siegel-disk Fatou components of $f_{\alpha}$ and $f_{\alpha'}$ are same or equivalent, we can obtain a sequence $\{\phi _k\}_{k=0}^{\infty }$ of quasiconformal homeomorphisms of $\EC$ inductively as follows. 

Assume that for every $k\ge 1$, there is a quasiconformal homeomorphism $\phi_k :\EC \to \EC$ such that
\begin{itemize}
  \item $\phi_k$ is a lift of $\phi_{k-1}$ in the sense that $f_{\alpha'}\circ\phi_k=\phi_{k-1}\circ f_{\alpha}$;
  \item $\phi_k=\phi_{k-1}$ on $f_{\alpha}^{-(k-1)}(\overline{\Delta_{\alpha}^0} \cup \overline{\Delta_{\alpha}^{1}}\cup \overline{B^{r}_{\alpha}})$;
  \item $\phi_k$ has no full twist on any annulus contained in $\EC\setminus f_\alpha^{-k}(\overline{\Delta_{\alpha}^0} \cup \overline{\Delta_{\alpha}^{1}}\cup \overline{B^{r}_{\alpha}})$ and separating its boundary components.
  \item $\phi_k$ is homotopic to $\phi_{k-1}$ rel $\partial{\Delta_{\alpha}^0} \cup \partial{\Delta_{\alpha}^{1}\cup \{\infty\}}$;
  \item $\phi_k$ is holomorphic on $f_{\alpha}^{-k}(\Delta_{\alpha}^0\cup\Delta_{\alpha}^1\cup B^{r}_{\alpha})$, and maps every component of $f_{\alpha}^{-k}(\Delta_{\alpha}^0\cup\Delta_{\alpha}^1)$ onto the one of $f_{\alpha'}^{-k}(\Delta_{\alpha'}^0\cup\Delta_{\alpha'}^1)$ with the same/equivalent address;
  \item $\| \mu_{\phi_k}\|_{\infty}=\| \mu_{\phi_{k-1}}\|_{\infty}$.
\end{itemize}
Now we let $f_{\alpha}^{*}\mu_{\phi_k}$
be the pull back of $\mu_n$ under $f_{\alpha}$ and let $\phi_{n+1}$ be the quasiconformal homeomorphism of $\EC$ fixing three points $0$, $1$ and $\infty $ with its Beltrami coefficient equal to $f_{\alpha}^{*}\mu_{\phi_k}$ (i.e., $\mu_{\phi_{k+1}}=f_{\alpha}^{*}\mu_{\phi_k}$). By a similar process used in Step 2, we know that $\phi_{k+1}$ satisfies
\begin{itemize}
  \item $\phi_{k+1}$ is a lift of $\phi_{k}$ in the sense that $f_{\alpha'}\circ\phi_{k+1}=\phi_{k}\circ f_{\alpha}$;
  \item $\phi_{k+1}=\phi_{k}$ on $f_{\alpha}^{-k}(\overline{\Delta_{\alpha}^0} \cup \overline{\Delta_{\alpha}^{1}}\cup \overline{B^{r}_{\alpha}})$;
  \item $\phi_{k+1}$ has no full twist on any annulus contained in $\EC\setminus f_\alpha^{-(k+1)}(\overline{\Delta_{\alpha}^0} \cup \overline{\Delta_{\alpha}^{1}}\cup \overline{B^{r}_{\alpha}})$ and separating its boundary components.
  \item $\phi_{k+1}$ is homotopic to $\phi_{k}$ rel $\partial{\Delta_{\alpha}^0} \cup \partial{\Delta_{\alpha}^{1}\cup \{\infty\}}$;
  \item $\phi_{k+1}$ is holomorphic on $f_{\alpha}^{-(k+1)}(\Delta_{\alpha}^0\cup\Delta_{\alpha}^1\cup B^{r}_{\alpha})$, and maps every component of $f_{\alpha}^{-(k+1)}(\Delta_{\alpha}^0\cup\Delta_{\alpha}^1)$ onto the one of $f_{\alpha'}^{-(k+1)}(\Delta_{\alpha'}^0\cup\Delta_{\alpha'}^1)$ with the same/equivalent address;
  \item $\| \mu_{\phi_{k+1}}\|_{\infty}=\| \mu_{\phi_{k}}\|_{\infty}$.
\end{itemize}
Thus, we obtain a sequence of quasiconformal homeomorphisms $\{\phi_k:k\in\N\}$ of $\EC$ fixing three points $0$, $1$ and $\infty$ with their Beltrami coefficients satisfying
\begin{equation*}
\| \mu_{\phi_k}\|_{\infty}=\| \mu_{\phi_0}\|_{\infty}<1.
\end{equation*}
It follows that $\{\phi_k:k\in\N\}$ is a normal family.

Let $\Phi$ be the limiting map of a convergent subsequence of $\{\phi_k\}_{k\geq 0}$.
Then $\Phi$ is a conformal conjugacy on between $f_{\alpha}$ and $f_{\alpha'}$ on their Fatou sets. Since the Fatou sets are dense on $\EC$, it follows that $\Phi$ is also a quasiconformal conjugacy between $f_{\alpha}$ and $f_{\alpha'}$ on $\EC$.

By Corollary \ref{Leb measure of f-alpha}, the Julia set of $f_{\alpha}$ has zero Lebesgue measure. So $\Phi:\EC\to\EC$ is a conformal homeomorphism of $\EC$ fixing $0$, $1$ and $\infty$, which has to be the identity map. Thus, $f_{\alpha}=f_{\alpha'}$. Therefore, $\alpha=\alpha'$.
\end{proof}

The work of Section \ref{Combinatorial pattern of eventually Siegel disk comp} actually shows that given two parameters $\alpha, \alpha'\in\Gamma_\theta$ (resp. $\Gamma_\theta^0$ or $\Gamma_\theta^1$), if $A(\alpha)=A(\alpha')$ and the two Fatou chains under $f_\alpha$ converging to the separate repelling fixed point $P_{3, \alpha}$ have the same or equivalent combinatorial patterns as the two Fatou chains under $f_{\alpha'}$ converging to the separate repelling fixed point $P_{3, \alpha'}$, then the eventually Siegel-disk components of $f_\alpha$ and $f_{\alpha'}$ have the same combinatorial pattern. Therefore, we obtain the following corollary. 
\begin{cor}\label{rigidity of conformal angle and convergent Fatou chains}
(1) Given $\alpha, \alpha'\in\Gamma_\theta$, 
if $A(\alpha)=A(\alpha')\neq 0$ nor $\theta $ and the two Fatou chains $\Pi(\alpha)$ and $\Pi(\alpha')$ converging to the separating repelling fixed points respectively under $f_\alpha$ and $f_{\alpha'}$ have the same or equivalent combinatorial patterns, then $\alpha =\alpha'$; if $A(\alpha)=A(\alpha')=\theta$ and the relative combinatorial pattern between $\Pi^{d_0}(\alpha)$ and $U$, $U_0^0$, $U_0^1$ and $U_0^2$ is as the same as the relative combinatorial pattern between  $\Pi^{d_0}(\alpha')$ and $U$, $U_0^0$, $U_0^1$ and $U_0^2$, then $\alpha=\alpha'$; if $A(\alpha)=A(\alpha')=0$ and the relative combinatorial pattern between $\tilde{\Pi}^{d_0}(\alpha)$ and $\tilde{U}$, $\tilde{U}_0^0$, $\tilde{U}_0^1$ and $\tilde{U}_0^2$ is as the same as the relative combinatorial pattern between  $\tilde{\Pi}^{d_0}(\alpha')$ and $\tilde{U}$, $\tilde{U}_0^0$, $\tilde{U}_0^1$ and $\tilde{U}_0^2$, then $\alpha=\alpha'$. 

(2) Given $\alpha, \alpha'\in\Gamma_\theta^1$ (resp. $\Gamma_\theta^0$), if $A(\alpha)=A(\alpha')\neq 0$ and the two Fatou chains $\Pi(\alpha)$ and $\Pi(\alpha')$ converging to the separate repelling fixed points respectively under $f_\alpha$ and $f_{\alpha'}$ have the same combinatorial pattern or have the same combinatorial pattern after switching the markings of the two critical points of $f_{\alpha'}$, then $\alpha =\alpha'$; if $A(\alpha)=A(\alpha')=0$ and  the relative combinatorial pattern between $\Pi(\alpha)$ and $U$, $U_0^0$ and $U_0^1$ is as the same as the relative combinatorial pattern between $\Pi(\alpha')$ and $U$, $U_0^0$ and $U_0^1$, then $\alpha=\alpha'$. 
\end{cor}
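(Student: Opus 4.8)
The plan is to derive this corollary from Proposition \ref{rigidity of combinatorics - curve and arc} by checking that, under each of the hypotheses listed, the eventually Siegel-disk components of $f_\alpha$ and $f_{\alpha'}$ actually have the same combinatorial pattern in the sense of Definition \ref{def:same combinatorics}. Once this is established, Proposition \ref{rigidity of combinatorics - curve and arc} yields $\alpha=\alpha'$ immediately, so the entire argument reduces to a combinatorial reconstruction claim: the conformal angle together with the combinatorial pattern of the Fatou chain(s) converging to $P_3$ should determine the combinatorial pattern of all eventually Siegel-disk components.

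First, for $\alpha,\alpha'\in\Gamma_\theta$: by Lemma \ref{pattens for landing rays} the two rays landing at $P_3$ are always $R_{\frac{1}{4}}$ and $R_{\frac{3}{4}}$, so the partition of $\C$ by the grand orbit of $R_{\frac{1}{4}}$ under $f_\alpha$ has the same combinatorics as the one under $f_{\alpha'}$, and the addresses of all clusters coincide; it thus suffices to match the relative pattern of the Fatou components on the crucial clusters $\mathcal{C}_0$ and $\mathcal{C}_1$, and since $f_\alpha$ carries the labeled pattern of $\mathcal{C}_0$ onto that of $\mathcal{C}_1$ it is enough to treat $\mathcal{C}_0$. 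The labeling algorithm of Subsection \ref{Labels of E Siegel comps - curve} reconstructs the whole labeled pattern on $\mathcal{C}_0$ inductively from (a) the relative pattern of the four domains $U,U_0^0,U_0^1,U_0^2$, (b) the conformal angle $A(\alpha)$, and (c) the rotation number $\theta$; the only ambiguity in this algorithm is caused by the coincidences $f_\alpha^{\circ(2m+1)}(c_0)=c_1$ (Cases (ii) and (iii)). That residual ambiguity is precisely what is resolved by specifying which of the pairs $(\Pi_r,\tilde{\Pi}_r)$, $(\Pi_g,\tilde{\Pi}_p)$, $(\Pi_p,\tilde{\Pi}_g)$ converges to $P_3$ and, in the degenerate cases, the relative pattern of $\Pi^{d_0}$ (resp.\ $\tilde{\Pi}^{d_0}$) with respect to $U,U_0^0,U_0^1,U_0^2$ (resp.\ $\tilde{U},\tilde{U}_0^0,\tilde{U}_0^1,\tilde{U}_0^2$): by Properties 1--6 of Lemma \ref{convergent Fatou chain}, that data determines, and is determined by, the labeled pattern on $\mathcal{C}_0\cup\mathcal{C}_1$ up to the equivalences listed there. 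Hence in each of the three subcases $A(\alpha)=A(\alpha')\notin\{0,\theta\}$, $A(\alpha)=A(\alpha')=\theta$, and $A(\alpha)=A(\alpha')=0$, the hypothesis forces the eventually Siegel-disk components of $f_\alpha$ and $f_{\alpha'}$ to have the same combinatorial pattern, and Proposition \ref{rigidity of combinatorics - curve and arc} finishes part (1).

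Part (2) proceeds identically using Subsection \ref{Labels of E Siegel comps - arc}: for $\Gamma_\theta^1$ the rays landing at $P_3$ are $R_{\frac{1}{8}}$ and $R_{\frac{3}{8}}$, so the cluster addresses are again fixed and there is a single crucial cluster $\mathcal{C}_1$; the labeling algorithm rebuilds the pattern on $\mathcal{C}_1$ from the relative pattern of $U,U_0^0,U_0^1$, the conformal angle $A(\alpha)$ between $c_0$ and $c_1$, and $\theta$, with the only ambiguity (now the coincidences $f_\alpha^{\circ 2m}(c_0)=c_1$) resolved by which of $\Pi_r,\Pi_g$ lands at $P_3$ and its relative pattern with respect to $U,U_0^0,U_0^1$, per Lemma \ref{convergent Fatou chain for arc} and the remark following it. The clause about switching the markings of the two critical points and the degenerate case $A(\alpha)=A(\alpha')=0$ are covered by the equivalence of the two limiting patterns exhibited there, and the $\Gamma_\theta^0$ statement follows from the $\Gamma_\theta^1$ one via the symmetry $\alpha\mapsto\lambda/\alpha$ (Lemma \ref{lem:sym}, Proposition \ref{Symmetry of Gamma}).

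The main obstacle is the combinatorial bookkeeping in the reconstruction step: one must verify that the inductive labeling algorithm introduces no branching beyond the coincidences $f_\alpha^{\circ(2m+1)}(c_0)=c_1$ (resp.\ $f_\alpha^{\circ 2m}(c_0)=c_1$), and that the relative pattern of the chain converging to $P_3$ with respect to the first few Siegel-disk preimages really does distinguish the otherwise indistinguishable degenerate patterns. This is exactly the content of Properties 1--6 of Lemma \ref{convergent Fatou chain} and the parallel statements for $\Gamma_\theta^1$, so the proof should invoke those carefully rather than reprove them; no further analytic input is needed beyond Proposition \ref{rigidity of combinatorics - curve and arc}.
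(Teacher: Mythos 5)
Your proposal is correct and takes essentially the same route as the paper: the paper states the corollary after a single paragraph asserting that the work of Section~\ref{Combinatorial pattern of eventually Siegel disk comp} shows the conformal angle together with the (possibly degenerate) Fatou chains converging to $P_3$ determine the full combinatorial pattern of the eventually Siegel-disk components, and then applies Proposition~\ref{rigidity of combinatorics - curve and arc}. You have simply spelled out in more detail what the paper leaves implicit — why the cluster addresses coincide, how the labeling algorithm reconstructs the pattern on $\mathcal{C}_0$ (resp.\ $\mathcal{C}_1$), and how Properties 1--7 of Lemma~\ref{convergent Fatou chain} and Lemma~\ref{convergent Fatou chain for arc} resolve the remaining ambiguity at the coincidence parameters.
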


\medskip
\subsection{Proof of $\Gamma_\theta$ to be a Jordan curve}\label{jordancurve}
In the subsection, we show that $\Gamma_\theta$ is a Jordan curve. Let us first prove three lemmas. 
\begin{lem}\label{c-G}
    The map $\alpha\mapsto e^{iA(\alpha)}$ is continuous on $\Gamma_\theta$.
\end{lem}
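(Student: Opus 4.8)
The plan is to show continuity of $\alpha \mapsto e^{iA(\alpha)}$ by tracking each of the ingredients in the definition of the conformal angle and invoking the continuity of the Siegel disks established in Proposition \ref{continuity}. Recall from Case 1 of Section \ref{Combinatorial pattern of eventually Siegel disk comp} that for $\alpha \in \Gamma_\theta$ one has $e^{iA(\alpha)} = H_\alpha^0 \circ f_\alpha(c_1)/|H_\alpha^0\circ f_\alpha(c_1)|$ (indeed $H_\alpha^0\circ f_\alpha(c_1)$ lies on $\T$, so $e^{iA(\alpha)} = H_\alpha^0\circ f_\alpha(c_1)$), where $H_\alpha^0\colon \overline{\Delta_\alpha^0}\to\overline{\D}$ is the homeomorphic extension of the linearizing map normalized by $h_\alpha^0(0)=0$, $(h_\alpha^0)'(0)>0$, and $H_\alpha^0(c_0)=1$. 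So it suffices to prove that, along any sequence $\alpha_n\to\alpha_0$ in $\Gamma_\theta$, we have $H_{\alpha_n}^0\circ f_{\alpha_n}(c_1(\alpha_n)) \to H_{\alpha_0}^0\circ f_{\alpha_0}(c_1(\alpha_0))$.

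First I would recall from the proof of Proposition \ref{continuity} that, after passing to a subsequence, the quasiconformal extensions $\tilde\eta_{\alpha_n}\colon\EC\to\EC$ of the inverse linearizing maps converge uniformly to $\tilde\eta_{\alpha_0}$; in particular $\eta_{\alpha_n} = (h_{\alpha_n}^0)^{-1}$ converges uniformly on $\overline{\D}$ to $\eta_{\alpha_0}$, and hence $H_{\alpha_n}^0$, being the inverse boundary homeomorphism, converges uniformly on $\overline{\Delta_{\alpha_0}^0}$ to $H_{\alpha_0}^0$ in the appropriate sense (uniform convergence of $\eta_{\alpha_n}$ on $\overline\D$ together with the uniform quasisymmetry bound from Theorem \ref{zhang11} gives equicontinuity of the $H_{\alpha_n}^0$, so their limit is $H_{\alpha_0}^0$). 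The subtlety here is that the normalization $H_\alpha^0(c_0)=1$ must be respected: since $c_0(\alpha)$ depends continuously on $\alpha$ (as a branch of \eqref{critical pts} near $\alpha_0$, staying on $\partial\Delta_\alpha^0$) and $c_0(\alpha)\in\partial\Delta_\alpha^0$ with the boundaries converging in the Hausdorff metric, the rotational normalization is continuous too; one pins this down by writing $H_{\alpha}^0 = \rho_\alpha \circ \hat H_\alpha^0$ where $\hat H_\alpha^0$ is the extension normalized only at $0$ (which varies continuously by Proposition \ref{continuity}) and $\rho_\alpha$ is the rotation of $\overline\D$ sending $\hat H_\alpha^0(c_0(\alpha))$ to $1$, and noting $\rho_\alpha$ varies continuously because $\hat H_\alpha^0(c_0(\alpha))$ does.

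Then, since $f_\alpha$ and $c_1(\alpha)$ depend continuously (indeed analytically, on a suitable branch) on $\alpha$, the point $w_n := f_{\alpha_n}(c_1(\alpha_n))$ converges to $w_0 := f_{\alpha_0}(c_1(\alpha_0))$, which lies on $\partial\Delta_{\alpha_0}^0$ because $\alpha_0\in\Gamma_\theta$ forces $f_{\alpha_0}(c_1)\in f_{\alpha_0}(\partial\Delta_{\alpha_0}^1)=\partial\Delta_{\alpha_0}^0$. Combining uniform convergence $H_{\alpha_n}^0\to H_{\alpha_0}^0$ on a fixed neighborhood of $\overline{\Delta_{\alpha_0}^0}$ with $w_n\to w_0$ yields $H_{\alpha_n}^0(w_n)\to H_{\alpha_0}^0(w_0)$, i.e. $e^{iA(\alpha_n)}\to e^{iA(\alpha_0)}$. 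Since every subsequence has a sub-subsequence along which this holds with the same limit, the full sequence converges, giving continuity on all of $\Gamma_\theta$.

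I expect the main obstacle to be the bookkeeping around the rotational normalization $H_\alpha^0(c_0)=1$: the bare linearizing map is only canonical up to rotation, so one must argue that the choice of $c_0(\alpha)$ as a continuous (analytic) branch on the relevant domain — using the maximal domains of analyticity of the critical points discussed in Section \ref{critical points} — together with Hausdorff-continuity of $\partial\Delta_\alpha^0$ pins the rotation down continuously; once that is in hand, the rest is a routine application of Proposition \ref{continuity} and the uniform quasisymmetry bound of Theorem \ref{zhang11}.
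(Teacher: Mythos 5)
Your proposal is correct and takes essentially the same route as the paper. The paper's proof also switches to the normalization $h_\alpha^0(0)=0$, $(h_\alpha^0)'(0)>0$ from the proof of Proposition~\ref{continuity}, uses the continuous dependence of $c_0(\alpha)$ and $c_1(\alpha)$ from \eqref{critical pts}, and then observes that $A(\alpha)$ equals the angle between $(\tilde{\eta}_\alpha)^{-1}(c_0(\alpha))$ and $(\tilde{\eta}_\alpha)^{-1}(f_\alpha(c_1(\alpha)))$ on $\T$, which is automatically invariant under the rotation separating $h_\alpha^0$ from $H_\alpha^0$ — whereas you make the rotation $\rho_\alpha$ explicit and argue it varies continuously, which is a slightly more verbose but equivalent way of making the same point.
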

\begin{proof} To prove this lemma, it is more convenient to apply the normalization of $h_{\alpha}^0:\Delta_{\alpha}^0\to \D$ used in the proof of Proposition \ref{continuity}; that is, $h_{\alpha}^0$ is normalized by $h_{\alpha}^0(0)=0$ 
and $(h_{\alpha}^0)'(0)>0$, which is different from the normalization given in \eqref{equ:h-condition}. By Proposition \ref{continuity}, the inverse map $\eta_\alpha:\D\to \Delta_{\alpha}^0$ of $h_{\alpha}^0$ has a quasiconformal extension $\tilde{\eta}_\alpha:\EC\to \EC$, which depends continuously on $\alpha\in\Gamma_\theta$. By the expressions of the critical points in (\ref{critical pts}), $c_0(\alpha)$ and $c_1(\alpha)$ change continuously as $\alpha$ varies on $\Gamma_\theta$. Thus, the angle between $(\tilde{\eta}_\alpha)^{-1}(c_0(\alpha))$ and $(\tilde{\eta}_\alpha)^{-1}(f_{\alpha}(c_1(\alpha)))$ on $\overline{\D}$ as viewed from the origin  depends continuously on $\alpha\in\Gamma_\theta$. Clearly, this angle is equal to $A(\alpha)$ since the map $H_{\alpha}^0$ used to define $A(\alpha)$ in Section \ref{conformal angle curve} differs from $h_{\alpha}^0$ up to post-composition by a rigid rotation for each fixed $\alpha $. Hence $A(\alpha)$ depends continuously on $\theta$. Therefore, $\alpha\mapsto e^{iA(\alpha)}$ is continuous on $\Gamma_\theta$.
\end{proof}

\begin{lem}\label{injection}
The map $\alpha\mapsto e^{iA(\alpha)}$ is locally injective on $\Gamma_\theta$.
\end{lem}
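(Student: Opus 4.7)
\medskip
Fix $\alpha_0\in \Gamma_\theta$. The plan is to produce a neighborhood $U$ of $\alpha_0$ in $\Gamma_\theta$ on which $\alpha\mapsto e^{\ii A(\alpha)}$ is injective, by combining the combinatorial rigidity of Corollary \ref{rigidity of conformal angle and convergent Fatou chains} with the local constancy of the combinatorial pattern of the Fatou chains converging to the separating repelling fixed point $P_3(\alpha)$. Since $A$ is continuous by Lemma \ref{c-G}, after shrinking $U$ we may replace $e^{\ii A(\alpha)}=e^{\ii A(\alpha')}$ by the honest equality $A(\alpha)=A(\alpha')$ of real numbers.

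The first step is to establish that there is a neighborhood $U_0$ of $\alpha_0$ in $\Gamma_\theta$ such that for every $\alpha\in U_0$ the combinatorial pattern of the pair of Fatou chains of $f_\alpha$ converging to $P_3(\alpha)$ (one on $\mathcal{C}_0$, its $f_\alpha$-image on $\mathcal{C}_1$) coincides with, or is equivalent to, that of $f_{\alpha_0}$. This pattern is a discrete invariant controlled by continuous data: the Hausdorff-continuous dependence of $\partial\Delta_\alpha^0\cup \partial\Delta_\alpha^1$ on $\alpha$ established in Proposition \ref{continuity}, the holomorphic dependence of the repelling fixed point $P_3(\alpha)$ together with the two external rays landing at it, and the analytic dependence of the two finite critical points given by \eqref{critical pts}. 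The pattern can change only at the isolated bifurcation parameters where an iterate of one critical point lands on the other; at each such parameter, Property 6 of Lemma \ref{convergent Fatou chain} (together with its symmetric counterpart on $\mathcal{C}_1$) guarantees that the color of the chain at $P_3$ is preserved in a neighborhood, so the combinatorial pattern remains equivalent locally.

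The second step invokes the rigidity corollary on a shrunken neighborhood $U\subset U_0$. Suppose $\alpha,\alpha'\in U$ satisfy $A(\alpha)=A(\alpha')$. If $A(\alpha_0)\notin\{0,\theta\}$, then by continuity $A(\alpha),A(\alpha')\notin\{0,\theta\}$ on $U$; the chains of $f_\alpha$ and $f_{\alpha'}$ have the same or equivalent combinatorial pattern by Step 1, and the first case of Corollary \ref{rigidity of conformal angle and convergent Fatou chains}(1) yields $\alpha=\alpha'$. If $A(\alpha_0)=\theta$, then $\alpha_0$ lies in the finite set $\{\alpha_1,\alpha_2,\alpha_3\}$ of roots of the degree-$8$ algebraic equation derived in Subsection \ref{critical points}; choosing $U$ so small that it meets this set only at $\alpha_0$ forces any $\alpha\in U$ with $A(\alpha)=\theta$ to equal $\alpha_0$, and the previous case handles the remaining situations. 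The case $A(\alpha_0)=0$ is symmetric, using $\{\tilde\alpha_1,\tilde\alpha_2,\tilde\alpha_3\}$.

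The main obstacle lies in Step 1: the rigorous verification that the combinatorial pattern of the Fatou chain landing at $P_3$ depends locally constantly on $\alpha\in\Gamma_\theta$, including at the bifurcation parameters themselves. This reduces to careful combinatorial bookkeeping, using the holomorphic motion of repelling periodic points together with the Hausdorff continuity of the Siegel-disk boundaries, of how the seven properties catalogued after Lemma \ref{convergent Fatou chain} constrain the realized patterns as $\alpha$ moves; the finiteness of the bifurcation set and the discreteness of the combinatorial invariant complete the argument.
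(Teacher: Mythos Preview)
Your overall strategy matches the paper's: use local constancy of the combinatorial color of the Fatou chain landing at $P_3$ together with the rigidity of Corollary~\ref{rigidity of conformal angle and convergent Fatou chains}. Case~1 (when $A(\alpha_0)\notin\{0,\theta\}$) is essentially correct and is exactly what the paper does via Lemma~\ref{local invariance of color for the Fatou chain to P-3}.

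The gap is in your treatment of $A(\alpha_0)\in\{0,\theta\}$. You invoke Property~6 of Lemma~\ref{convergent Fatou chain} to claim that ``the color of the chain at $P_3$ is preserved in a neighborhood'' at every bifurcation parameter. But Property~6 is stated only for $m>0$; it does \emph{not} cover the case $m=0$, i.e.\ $f_\alpha(c_0)=c_1$ (equivalently $A(\alpha)=\theta$) or its symmetric counterpart $A(\alpha)=0$. In fact the paper's whole analysis around Figure~\ref{degenerate_Fatou_chain} and Remark~\ref{rmk:global loop} shows that the color \emph{does} change at these $m=0$ parameters: crossing such an $\alpha_0$ switches the chain from red to purple (or green), etc. Consequently your Step~1 is false at precisely the parameters where you need it most, and ``the previous case handles the remaining situations'' in Step~2 is not justified: two points $\alpha',\alpha''\in U\setminus\{\alpha_0\}$ may lie on opposite sides of $\alpha_0$ and carry genuinely different chain colors, so rigidity alone does not force $\alpha'=\alpha''$.

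The paper's actual argument at $A(\alpha_0)=\theta$ is different and supplies the missing idea: it accepts that the pattern can change across $\alpha_0$, but observes (via Property~1 and the two limiting processes in Figure~\ref{degenerate_Fatou_chain}) that the two sides correspond to $A(\alpha)$ approaching $\theta$ from above versus from below. Hence if $\alpha',\alpha''\in U$ carry different chain patterns, then $A(\alpha')$ and $A(\alpha'')$ lie on opposite sides of $\theta$ and cannot be equal. Together with the finiteness of the set $\{A=\theta\}$ (at most four isolated parameters from Subsection~\ref{critical points}) and rigidity on each side, this gives injectivity on $U$. You should replace your appeal to Property~6 at $m=0$ with this ``opposite-side'' argument.
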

Let us do some preparation before proving this lemma. Recall first that we have shown in Section \ref{critical points} that there are exact $8$ parameters $\alpha$ on the parameter plane such that one critical point is mapped to another by $f_\alpha$. Computer verification indicates that six of them lie on $\Gamma_\theta$, which we have denoted by $\alpha_k$ and $\tilde\alpha_k$, where $\tilde\alpha_k=\lambda/\alpha_k$ and $k=1,2,3$. Let us denote the other two by $\alpha_6$ and $\tilde\alpha_6$ with $\tilde\alpha_6=\lambda/\alpha_6$. Secondly, note that when $\alpha\in\Gamma_\theta$, $A(\alpha )=0$ (resp. $A(\alpha)=\theta$) if and only if $f_{\alpha}(c_1)=c_0$ (resp. $f_{\alpha}(c_0)=c_1$). Thirdly, we prove the following result on the Fatou chains converging to the separating repelling fixed point $P_3$. 

\begin{lem}\label{local invariance of color for the Fatou chain to P-3}
Let $\alpha\in \Gamma_\theta$. If $A(\alpha )\neq 0$ and $A(\alpha)\neq \theta$, then there exists a neighborhood $U$ of $\alpha$ on $\Gamma_\theta$ such that for all $\alpha'\in U$, the Fatou chains of $f_{\alpha'}$ converging to $P_{3, \alpha'}$ have the same colors (i.e., of the same combinatorial patterns) as the Fatou chains of $f_{\alpha}$ converging to $P_{3, \alpha}$ respectively.
\end{lem}
\begin{proof} At first, let $U$ be a small neighborhood of $\alpha$ such that $\alpha_k, \tilde\alpha_k\notin U$ for $k=1,2,3,6$. Since $P_{3, \alpha}$ is a repelling fixed point of $f_\alpha$, there is a neighborhood $W_{\alpha}$ of $P_{3, \alpha}$ such that $f_{\alpha}: f_{\alpha}^{-1}(W_{\alpha})\rightarrow W_{\alpha}$ is conjugated to a linear map $L_\alpha: z\rightarrow f_{\alpha}'(P_{3, \alpha})z$. When $\alpha'\in \Gamma_\theta$ is sufficiently close $\alpha$, the domain $W_{\alpha'}$ of the local conjugacy of $f_{\alpha'}$ at $P_{3, \alpha'}$ is sufficiently close to $W_{\alpha}$ in the Hausdorff metric, and there is a topological conjugacy $\phi: W_{\alpha}\rightarrow W_{\alpha'}$ between $f_{\alpha}: f_{\alpha}^{-1}(W_{\alpha})\rightarrow W_{\alpha}$ and $f_{\alpha'}: f_{\alpha'}^{-1}(W_{\alpha'})\rightarrow W_{\alpha'}$. Therefore, $W_{\alpha'}\cap W_{\alpha}$ contains a common neighborhood $W$ of $P_{3, \alpha}$ and $P_{3, \alpha'}$.

By Lemma \ref{convergent Fatou chain}, for any $\alpha\in \Gamma_{\theta}$, there are 
exactly one pair of Fatou chains, $\Pi_{\alpha}$ and $\tilde\Pi_{\alpha}$, on the crucial clusters $\mathcal{C}_{0, \alpha}$ and $\mathcal{C}_{1, \alpha}$, respectively, converging to the separate repelling fixed point $P_{3, \alpha}$. After a finitely many times of pullbacks under $f_\alpha$ along $\Pi_{\alpha}$, the further pullbacks of the head portion $f^{-2}_{\alpha}(\Delta_\alpha^0)\cap \Pi_{\alpha}$ is contained in $W$; that is, there exists $N>0$ such that for any $n\ge N$, $f^{-2n}_\alpha(f^{-2}_{\alpha}(\Delta_\alpha^0)\cap \Pi_{\alpha})\cap \Pi_{\alpha}\subset W$. Now when $\alpha'\in U\cap \Gamma_\theta$ is sufficiently close to $\alpha$, $f^{-2n}_{\alpha'}(f^{-2}_{\alpha'}(\Delta_{\alpha'}^0)\cap \Pi_{\alpha'})\cap \Pi_{\alpha'}\subset W$ for any $n\ge N$. Therefore,
$$\cup_{n\ge N}[f^{-2n}_\alpha(f^{-2}_{\alpha}(\Delta_\alpha^0)\cap \Pi_{\alpha})\cap \Pi_{\alpha}]\subset W$$ and 
$$\cup_{n\ge N}[f^{-2n}_{\alpha'}(f^{-2}_{\alpha'}(\Delta_{\alpha'}^0)\cap \Pi_{\alpha'})\cap \Pi_{\alpha'}]\subset W.$$
Therefore, there is a topological conjugacy between $\cup_{n\ge N}[f^{-2n}_\alpha(f^{-2}_{\alpha}(\Delta_\alpha^0)\cap \Pi_{\alpha})\cap \Pi_{\alpha}] $ and $\cup_{n\ge N}[f^{-2n}_{\alpha'}(f^{-2}_{\alpha'}(\Delta_{\alpha'}^0)\cap \Pi_{\alpha'})\cap \Pi_{\alpha'}]$. This conjugacy on the tails of $\Pi_{\alpha}$ and $\Pi_{\alpha'}$ can be promoted to a conjugacy on the Fatou chains $\Pi_{\alpha}$ and $\Pi_{\alpha'}$. Therefore, $\Pi_{\alpha}$ and $\Pi_{\alpha'}$ must be of the same color (the same combinatorial pattern). Similarly, we know that if $\alpha'$ is sufficiently close to $\alpha $, $\tilde{\Pi}_{\alpha}$ and $\tilde{\Pi}_{\alpha'}$ must be of the same color.
\end{proof}

\begin{proof}[Proof of Lemma \ref{injection}] We divide the proof into three cases.

Case 1: $\alpha \in \Gamma_\theta$ and $A(\alpha)\neq 0, \theta$. Then there exists a neighborhood $V$ of $\alpha $ on $\Gamma_\theta$ on which the map $\alpha\mapsto e^{iA(\alpha)}$ is injective. 

By Lemma \ref{local invariance of color for the Fatou chain to P-3}, there exists a neighborhood $V$ of $\alpha $ on $\Gamma_\theta$ such that for each $\alpha'\in V$,
the Fatou chains of $f_{\alpha'}$ converging to $P_{3, \alpha'}$ are of the same combinatorial patterns as the Fatou chains of $f_{\alpha}$ converging to $P_{3, \alpha}$ respectively. Then for any $\alpha', \alpha''\in V$, the eventually Siegel-disk components of $f_{\alpha''}$ has the same combinatorial pattern as the eventually Siegel-disk components of $f_{\alpha'}$. By Proposition \ref{rigidity of combinatorics - curve and arc}, $A(\alpha')=A(\alpha'')$ implies $\alpha'=\alpha''$. Thus, the map $\alpha\mapsto e^{iA(\alpha)}$ is injective on $V$.

Case 2: $\alpha \in \Gamma_\theta$ and $A(\alpha )=\theta$. In this case,  $c_0$ is mapped to $c_1$. We know from Section \ref{critical points} that there are at most four such parameters on $\Gamma_\theta$, which are isolated points on $\C$. Therefore, for each $\alpha \in \Gamma_\theta$ with $A(\alpha )=\theta$, there is a neighborhood $V$ of $\alpha $ on $\Gamma_\theta$ on which there is only one parameter $\alpha$ such that $A(\alpha )=\theta$ and on which for any $\alpha \in V$,  $A(\alpha )$ is close to $\theta$ (by Lemma \ref{continuity}). Now given $\alpha', \alpha''\in V$ with $A(\alpha')$ and $A(\alpha'')$ not equal to $\theta$. If the Fatou chains of $f_{\alpha'}$ converging to $P_{3, \alpha'}$ are of the same types as the Fatou chains of $f_{\alpha''}$ converging to $P_{3, \alpha''}$ respectively, then Proposition \ref{rigidity of combinatorics - curve and arc} and  $A(\alpha')=A(\alpha'')$ implies $\alpha'=\alpha''$. It remains to show that 
if the Fatou chains of $f_{\alpha'}$ converging to $P_{3, \alpha'}$ present different combinatorial patterns from the Fatou chains of $f_{\alpha''}$ converging to $P_{3, \alpha''}$ respectively, then $A(\alpha')\neq A(\alpha'')$. 

Property 1 in the proof of Lemma \ref{convergent Fatou chain} states that there are three different patterns of the Fatou chains converging to the repelling fixed point $P_3$ when $c_0$ is mapped to $c_1$ and each of them has two equivalent representations, which are  considered as the limiting situations of two regular Fatou chains converging to $P_3$ by moving $c_0'$ and $c_0''$ towards $c_0$ clockwise on the boundaries of $U$ and $U_0^0$ on one regular Fatou chain and counterclockwise on the boundaries of $U$ and $U_0^0$ on the other regular Fatou chain (see Figure \ref{degenerate_Fatou_chain}). Pulling into the consideration of the conformal angle $A(\alpha)$, we can see that $A(\alpha)$ approaches $\theta $ from one side in one limiting process and approaches $\theta$ from the other side in the other limiting process. This means that $A(\alpha')\neq A(\alpha'')$. 

In summary, we have shown that the map $\alpha\mapsto e^{iA(\alpha)}$ is injective on $V$.

Case 3: $\alpha \in \Gamma_\theta$ and $A(\alpha)=0$. Through a process similar to the one used to handle Case 2, we obtain a neighborhood $V$ of $\alpha$ on $\Gamma_\theta$ on which the map $\alpha\mapsto e^{iA(\alpha)}$ is injective.
\end{proof}

Now we prove that $\Gamma_\theta$ is a Jordan curve.
\begin{proof}[Proof of Part (b) of Main Theorem] The proof is divided into the following three steps.

\medskip
{\bf Step 1.} We show that $\Gamma_\theta$ contains a connected component $\Gamma_\theta'$ separating $0$ from $\infty$.
 
Denote by  
\begin{equation*}
\tilde{c}_0=\frac{(\frac{\lambda}{\alpha}+2\alpha+3)- \sqrt{(\frac{\lambda}{\alpha}+\alpha+3)^2-\lambda}}{3(\frac{\lambda}{\alpha}+\alpha+2)}
\end{equation*}
and
\begin{equation*}
\tilde{c}_1=\frac{(\frac{\lambda}{\alpha}+2\alpha+3)+ \sqrt{(\frac{\lambda}{\alpha}+\alpha+3)^2-\lambda}}{3(\frac{\lambda}{\alpha}+\alpha+2)}.
\end{equation*}

{\bf Sublemma 1:}  If $|\alpha |$ is sufficiently small, then $\tilde{c}_0\in \partial{\Delta_\alpha^0}$ and $\tilde{c}_1\notin  \partial{\Delta_\alpha^0}\cup \partial{\Delta_\alpha^1}$.  

Clearly, when $\alpha \to 0$, 
\begin{equation*}
    \tilde{c}_0\to\frac{\frac{\lambda}{\alpha}-\frac{\lambda}{\alpha}}{3\frac{\lambda}{\alpha}}=0 \quad \text{and} \quad\tilde{c}_1\to\frac{\frac{\lambda}{\alpha}+\frac{\lambda}{\alpha}}{3\frac{\lambda}{\alpha}}=\frac{2}{3}.
\end{equation*}\\
Furthermore, $\tilde{c}_0=O(\epsilon^2)$ when $|\alpha|=\epsilon$ is sufficiently small. 

We first show that $\tilde{c}_0\in \partial{\Delta_\alpha^0}$ when $|\alpha |$ is sufficiently small. Suppose that $\tilde{c}_0\notin \partial{\Delta_\alpha^0}$. Then $\tilde{c}_0$ is outside $\overline{\Delta_\alpha^0}$. 
By Corollary \ref{disjoint} and the expression (\ref{equ:Leh}), $\Delta_\alpha^0$ is contained in the disk $\{z:|z|<c(k_0)|\tilde{c}_0|=O(\epsilon^2)\}.$
Since $$|f(z)|=|
(\frac{\lambda}{\epsilon}+\epsilon+2)z^3-(\frac{\lambda}{\epsilon}+2\epsilon+3)z^2+\epsilon z+1|<1+O(\epsilon^3)$$ for $|z|<O(\epsilon^2)$,
it follows that $\Delta_\alpha^1=f(\Delta_\alpha^0)$ is contained in a small disk $\{z:|z-1|<O(\epsilon^3)\}$. On the other hand, $\tilde{c}_1$ is very close to $\frac{2}{3}$ and $\tilde{c}_0$ is very close to $0$. Therefore, $\tilde{c}_1\notin  \partial{\Delta_\alpha^0}\cup \partial{\Delta_\alpha^1}$ and $\tilde{c}_0\notin \partial{\Delta_\alpha^1}$.
Therefore, both  $\tilde{c}_1$ and $\tilde{c}_0$ don't belong to 
$\partial{\Delta_\alpha^0}\cup \partial{\Delta_\alpha^1}$. 
This is a contradiction to Theorem \ref{zhang11}. 
Thus, $\tilde{c}_0\in \partial{\Delta_\alpha^0}$ when $|\alpha |$ is sufficiently small. 

It remains to show even if $\tilde{c}_0\in \partial{\Delta_\alpha^0}$, $\tilde{c}_1\notin  \partial{\Delta_\alpha^0}\cup \partial{\Delta_\alpha^1}$. 

From $\tilde{c}_0=O(\epsilon^2)$, $\tilde{c}_0\in \partial{\Delta_\alpha^0}$, Corollary \ref{disjoint} and the expression (\ref{equ:Leh}), we obtain 
 $\Delta_\alpha^0$ is contained in the disk $$\{z:|z|<c'(k_0)|\tilde{c}_0|=O(\epsilon^2)\}.$$  It follow that 
 $\Delta_\alpha^1=f(\Delta_\alpha^0)$ is contained in the disk $\{z:|z-1|<O(\epsilon^3)\}$. Thus, $\tilde{c}_1\notin  \partial{\Delta_\alpha^0}\cup \partial{\Delta_\alpha^1}$.

\medskip
{\bf Sublemma 2:} If $|\alpha|$ is sufficiently large, then $\tilde{c}_1\in \partial{\Delta_\alpha^1}$ and $\tilde{c}_0\notin  \partial{\Delta_\alpha^0}\cup \partial{\Delta_\alpha^1}$.

Lemma \ref{lem:sym} states that $f_{\tilde{\alpha}}$ is conjugate to $f_\alpha$ by the map $\tau(z)=1-z$, where $\tilde{\alpha}=\lambda/\alpha$. Since $\tau $ switch the centers of the Siegel disks on the $2$-cycle and maps a critical point of $f_\alpha$ on $\partial{\Delta_\alpha^0}$ to a critical point of $f_{\tilde{\alpha}}$ on $\partial{\Delta_{\tilde{\alpha}}^1}$, {\bf Sublemma 2} follows from {\bf Sublemma 1} through this conjugation between $f_{\tilde{\alpha}}$ and $f_\alpha$.

\medskip
From the above {\bf Sublemmas} {\bf 1} and {\bf 2}, $\Gamma_\theta$ is bounded away from $0$ and bounded away from $\infty $. Proposition \ref{continuity} implies that $\Gamma_\theta$ is closed. Thus, $\Gamma_\theta$ is compact.  

As we have pointed out in Section \ref{critical points}, there are different choices for a maximal domain of complex analyticity for $\tilde{c}_0$ and $\tilde{c}_1$. In this proof, we use the maximal domain obtained from $\EC$ by removing $\alpha_0$ and $\tilde{\alpha}_0$, two disjoint arcs $\beta_4$ and $\beta_5$ respectively connecting $\alpha_4$ and $\alpha_5$ to $\infty$, and two disjoint arcs $\tilde{\beta}_4$ and $\tilde{\beta}_5$ respectively connecting $\tilde{\alpha}_4$ and $\tilde{\alpha}_5$ to $0$, where $\alpha_0$, $\tilde{\alpha}_0$, $\alpha_4$, $\alpha_5$, $\tilde{\alpha}_4$ and $\tilde{\alpha}_5$ are given in Section \ref{critical points}. Furthermore, we require that the four arcs $\beta_4$, $\beta_5$, $\tilde{\beta}_4$ and $\tilde{\beta}_5$ are disjoint from $\Gamma_\theta$. 

Let $\alpha_a\in \Sigma_{\theta}\setminus (\beta_4\cup\beta_5\cup \tilde{\beta}_4\cup\tilde{\beta}_5)$ and satisfy the condition of {\bf Sublemma 1} and let $\alpha_b\in \Sigma_{\theta}\setminus (\beta_4\cup\beta_5\cup \tilde{\beta}_4\cup\tilde{\beta}_5)$ and satisfy the condition of {\bf Sublemma 2}. Assume that $\gamma:[0,1]\to \Sigma_{\theta}\setminus (\beta_4\cup\beta_5\cup \tilde{\beta}_4\cup\tilde{\beta}_5)$ is a continuous path in $\Sigma_{\theta}\setminus (\beta_4\cup\beta_5\cup \tilde{\beta}_4\cup\tilde{\beta}_5)$ such that $\gamma(0)=\alpha_a$ and $\gamma(1)=\alpha_b$. Define 
\begin{equation*}
    t_0=\sup\{0<t<1|\ \tilde{c}_0\in \partial{\Delta_{\gamma(t)}^0}\ \text{and}\ \tilde{c}_1\notin \partial{\Delta_{\gamma(t)}^1}\}.
\end{equation*}
Let $\alpha'=\gamma(t_0)$. By Proposition \ref{continuity}, we know that 
\begin{center}
    $\partial\Delta_{\gamma(t)}^0\to\partial\Delta_{\alpha'}^0$ and 
    $\partial\Delta_{\gamma(t)}^1\to\partial\Delta_{\alpha'}^1$
\end{center}
with respect to the Hausdorff metric as $t\to t_0$. By the difinition of $t_0$, there is a sequence $t_n\to t_0^-$ such that $\tilde{c}_0(\gamma(t_n))\in \partial{\Delta_{\gamma(t_n)}^0}$ for every $n\ge1$. Thus, $\tilde{c}_0(\alpha')\in \partial{\Delta_{\alpha'}^0}$. If $\tilde{c}_1(\alpha')\notin \partial{\Delta_{\alpha'}^1}$, by Proposition \ref{continuity}, there exists a small neighborhood $W$ of $\alpha'$ such that $\tilde{c}_1(\alpha)\notin  \partial{\Delta_\alpha^0}\cup \partial{\Delta_\alpha^1}$ for any $\alpha\in W$. This implies $\tilde{c}_0(\alpha)\in \partial{\Delta_\alpha^0}$ and $\tilde{c}_1(\alpha)\notin \partial{\Delta_\alpha^1}$ for all $\alpha\in W$. This contradicts to the definition of $\alpha'$. Therefore, $\tilde{c}_1(\alpha')\in \partial{\Delta_{\alpha'}^1}$ and hence $\alpha'\in \Gamma_{\theta}$.

Using the existence of $\alpha '$ on any continuous curve $\gamma$ in $\Sigma_{\theta}\setminus (\beta_4\cup\beta_5\cup \tilde{\beta}_4\cup\tilde{\beta}_5)$, we can see that $\Gamma_\theta$ separates $0$ from $\infty$ and hence $\Gamma_\theta$ contains a connect component $\Gamma_\theta'$ separating $0$ from $\infty $. 

\medskip
{\bf Step 2.} We show that $\Gamma_\theta'$ is a Jordan curve containing three parameters $\alpha $ such that $f_\alpha(c_{0, \alpha})=c_{1, \alpha}$ and three parameters $\alpha $
such that $f_\alpha(c_{1, \alpha})=c_{0, \alpha}$, which we denote by $\alpha_1$, $\alpha_2$ and $\alpha_3$, and $\tilde{\alpha}_1$, $\tilde{\alpha}_2$ and $\tilde{\alpha}_3$, where $c_{0, \alpha}$ denotes the critical point of $f_\alpha $ on $\partial{\Delta_{\alpha}^0}$ and $c_{1, \alpha}$ denotes the critical point of $f_\alpha $ on $\partial{\Delta_{\alpha}^1}$. Using Lemma \ref{lem:sym}, we let $\tilde{\alpha}_k=\lambda/\alpha_k$, where $k=1, 2, 3$.

By Lemma \ref{c-G} and Lemma \ref{injection}, we know that $\Gamma_\theta'$ is Jordan arc locally. Suppose that $\Gamma_\theta'$ has a self intersection at some point $\alpha$. Let $U$ be a neighborhood of $\alpha $ on $\Gamma_\theta$ on which the map $\alpha \mapsto e^{iA(\alpha)}$ is injective. Using Lemma \ref{c-G}, we can find two different parameters $\alpha'$ and $\alpha''$ on $U$ such that $A(\alpha')=A(\alpha '')$. This is a contradiction to the injectivity of the map on $U$. Thus, $\Gamma_\theta'$ has no self intersection. Therefore, $\Gamma_\theta'$ is a Jordan curve. Then the map $\alpha \mapsto e^{iA(\alpha)}$ maps $\Gamma_\theta'$ onto the unit circle $\T$. It follows that one of $\alpha_1$, $\alpha_2$ and $\alpha_3$ lies on $
\Gamma_\theta'$. Without loss of generality, we may assume that it is the point $\alpha_1$ marked on Figure \ref{three_special_parameters_in_jordan_curve_case}. Part (c) of Remark \ref{rmk:global loop} tells us that when $\alpha\in \Gamma_\theta'$ moves from one side of $\alpha_1$ to the other side of $\alpha_1$ in the counterclockwise direction, the combinatorial patterns of the Fatou chains converging to $P_{3, \alpha}$ under $f_\alpha$ change, with the left Fatou chains colored by red and purple respectively on Figure \ref{three_special_parameters_in_jordan_curve_case}. The color of the left convergent Fatou chain remains to be red for the parameter $\alpha$  on $\Gamma'_\theta$ moving away from $\alpha_1$ in the clockwise direction until it reaches a value such that $f_\alpha(c_{1, \alpha})=c_{0, \alpha}$, which we denote by $\tilde{\alpha}_1$ on Figure \ref{three_special_parameters_in_jordan_curve_case}, and remains to be purple for the parameter $\alpha$ on $\Gamma'_\theta$ moving away from $
\alpha_1$ in the counterclockwise direction until it reaches another value such that $f_\alpha(c_{0, \alpha})=c_{1, \alpha}$, which we denote by $\alpha_3$ on Figure \ref{three_special_parameters_in_jordan_curve_case}. Then the left Fatou chain converging to $P_{3, \alpha}$ under $f_\alpha$ has the color back to red and remain unchanged until $\alpha $ reaches a value (different from $\tilde{\alpha}_1$) such that $f_\alpha(c_{1, \alpha})=c_{0, \alpha}$, which we denote by $\tilde{\alpha}_3$ on Figure \ref{three_special_parameters_in_jordan_curve_case}. Finally, the left Fatou chain converging to $P_{3, \alpha}$ under $f_\alpha$ keeps the same color (green) while $\alpha$ moves along $\Gamma_\theta'$ counterclockwise until $\tilde{\alpha}_1$. In more detail, $A(\alpha)$ changes from $0$ to $\theta$ when $\alpha$ moves along $\Gamma_\theta'$ counterclockwise from $\tilde{\alpha}_1$ to $\alpha_1$; $A(\alpha)$ is viewed as changing from $\theta$ to $\theta+2\pi$ as $\alpha$ goes on moving along  $\Gamma_\theta'$ counterclockwise until $\alpha_3$; $A(\alpha)$ is viewed as changing from $\theta+2\pi$ to $4\pi$ as $\alpha$ moves to $\tilde{\alpha}_3$; $A(\alpha)$ is viewed as changing from $4\pi$ to $6\pi$ as $\alpha$ moves back to $\tilde{\alpha}_1$. Therefore, there is $\alpha\in \Gamma_\theta'$ on the portion of $\Gamma_\theta'$ from $\tilde{\alpha}_3$ to $\tilde{\alpha}_1$ in the counterclockwise direction such that $f_\alpha(c_{0, \alpha})=c_{1, \alpha}$ (i.e., $A(\alpha)=4\pi+\theta$), which we denote by $\alpha_2$ on Figure \ref{three_special_parameters_in_jordan_curve_case}; there is $\alpha\in \Gamma_\theta'$ on the portion of $\Gamma_\theta'$ from $\alpha_1$ to $\alpha_3$ in the counterclockwise direction such that $f_\alpha(c_{1, \alpha})=c_{0, \alpha}$ (i.e., $A(\alpha)=2\pi$),  which we denote by $\tilde{\alpha}_2$ on Figure \ref{three_special_parameters_in_jordan_curve_case}.
In summary, we obtain:

{\bf Conclusion 1.} There are exactly six parameters $\alpha$ on $\Gamma_\theta'$ such that one critical point is mapped to another critical point, which are distributed on $\Gamma_\theta'$ as shown on Figure \ref{three_special_parameters_in_jordan_curve_case}. 
The map $\alpha \mapsto e^{iA(\alpha)/3}$ introduces a global parametrization of $\Gamma_\theta'$. 

Then the work of Section \ref{Labels of eventually Siegel comps - curve} and the above {\bf Conclusion 1} imply:

{\bf Conclusion 2.} $\Gamma_\theta'$ is complete in the sense that if a combinatorial pattern of the eventually Siegel-disk components is realized by a parameter on $\Gamma_\theta$, then it can be realized by a parameter on $\Gamma_\theta'$.

\medskip
{\bf Step 3.} We show that $\Gamma_\theta=\Gamma_\theta'$.

This is a consequence of the completeness of $\Gamma_\theta'$ given in the above {\bf Conclusion 2} and the rigidity result provided by Proposition \ref{rigidity of combinatorics - curve and arc}. Thus, $\Gamma_\theta$ is a Jordan curve. 
\end{proof}

\subsection{Proofs of $\Gamma_\theta^0$ and $\Gamma_\theta^1$ to be Jordan arcs}\label{jordanarc}
Because of the symmetry between $\Gamma_\theta^0$ and $\Gamma_\theta^1$ in the sense that $f_\alpha$ and $f_{\lambda/\alpha}$ are conjugated by the map $z\mapsto 1-z$, it suffices to prove that $\Gamma_\theta^1$ is a Jordan arc. 

Recall in \eqref{conformal angle arc-1}, for each $\alpha \in \Gamma_\theta^1$, 
the conformal angle between $c_0$ and $c_1$ is defined by 
\begin{equation*}
   A(\alpha):=\arg\,H_{\alpha}^1 (c_1),
\end{equation*} 
which is measured in the counterclockwise direction, where $H_{\alpha}^1$ and $H_{\alpha}^0$ are normalized so that $H_{\alpha}^1(c_0)=1$ and $H_{\alpha}^0(f_{\alpha}(c_0))=1$.

Using the same method to prove Lemma \ref{c-G}, we obtain:
\begin{lem}\label{c-Arc}
    The map $\alpha\mapsto A(\alpha)$ is continuous on  $\Gamma_\theta^1$.
\end{lem}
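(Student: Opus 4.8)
The plan is to repeat, with the obvious modifications, the proof of Lemma~\ref{c-G}, now with the Siegel disk $\Delta_\alpha^0$ replaced by $\Delta_\alpha^1$ and the pair ``$c_0$ versus $f_\alpha(c_1)$'' replaced by the pair ``$c_0$ versus $c_1$'', which is available here precisely because on $\Gamma_\theta^1$ both critical points lie on the single boundary $\partial\Delta_\alpha^1$. As in that proof, I would first switch from the normalization of $H_\alpha^1$ used in \eqref{conformal angle arc-1} to the normalization $h_\alpha^1:\Delta_\alpha^1\to\D$ given by $h_\alpha^1(1)=0$ and $(h_\alpha^1)'(1)>0$, so that $h_\alpha^1$ conjugates $f_\alpha^{\circ 2}|_{\Delta_\alpha^1}$ to the rotation $R_\theta$; for each fixed $\alpha\in\Gamma_\theta^1$ the maps $h_\alpha^1$ and $H_\alpha^1$ differ only by a rigid rotation. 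Since $\partial\Delta_\alpha^1$ is a quasicircle by Corollary~\ref{disjoint}, $h_\alpha^1$ extends to a homeomorphism $\overline{\Delta_\alpha^1}\to\overline{\D}$, and $A(\alpha)$ is then exactly the counterclockwise angle, viewed from the origin, between the two boundary points $h_\alpha^1(c_0(\alpha))$ and $h_\alpha^1(c_1(\alpha))$ of $\partial\D$.

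Next, I would invoke Proposition~\ref{continuity}, whose proof applies verbatim to $\Delta_\alpha^1$: setting $\eta_\alpha=(h_\alpha^1)^{-1}:\D\to\Delta_\alpha^1$, the map $\eta_\alpha$ extends to a quasiconformal homeomorphism $\tilde\eta_\alpha:\EC\to\EC$ fixing $0$ and $\infty$, and the normal-family argument in that proof --- which rests on the Lehto-type bounds \eqref{equ:Leh}--\eqref{equ:notsmall} and the uniform quasicircle bound in Theorem~\ref{zhang11} (Corollary~\ref{disjoint}) --- shows that $\tilde\eta_\alpha$, and hence its inverse $\tilde\eta_\alpha^{-1}$ (which restricts to the boundary extension of $h_\alpha^1$), depends continuously on $\alpha$, uniformly on $\EC$ along sequences in $\Gamma_\theta^1$. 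By the formula \eqref{critical pts}, the two finite critical points $c_0(\alpha)$ and $c_1(\alpha)$ vary continuously with $\alpha\in\Gamma_\theta^1$, after restricting if necessary to one of the maximal domains of analyticity of the critical points described in Section~\ref{critical points} so that the two branches are single-valued (equivalently, one works with the unordered pair $\{c_0(\alpha),c_1(\alpha)\}$). Since $\alpha\in\Gamma_\theta^1$ puts both critical points on $\partial\Delta_\alpha^1$, the points $\tilde\eta_\alpha^{-1}(c_0(\alpha))$ and $\tilde\eta_\alpha^{-1}(c_1(\alpha))$ lie on $\partial\D$ and move continuously with $\alpha$; hence the counterclockwise angle between them, which equals $A(\alpha)$, varies continuously. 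This gives the lemma; exactly as in Lemma~\ref{c-G}, continuity is understood in $\R/2\pi\Z$, i.e.\ as the continuity of $\alpha\mapsto e^{iA(\alpha)}$.

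I do not anticipate a real obstacle, since this is deliberately ``the same method as Lemma~\ref{c-G}''. The one place where the earlier results are genuinely needed is the passage from ``$\partial\Delta_\alpha^1$ varies continuously'' (Proposition~\ref{continuity} as stated) to ``the conformal parametrization of $\partial\Delta_\alpha^1$ varies continuously'': this requires $h_\alpha^1$ to extend continuously to $\overline{\Delta_\alpha^1}$ so that $\tilde\eta_\alpha^{-1}(c_j(\alpha))$ is even defined --- guaranteed by the uniform quasicircle bound --- together with the uniqueness of the subsequential limit of the family $\tilde\eta_\alpha$, so that the whole family (not merely a subsequence) converges. A minor bookkeeping point: at the finitely many parameters of $\Gamma_\theta^1$ with $c_0(\alpha)=c_1(\alpha)$ (for example $\alpha_4$ and $\alpha_5$) one has $A(\alpha)=0$, and the argument above shows that $A$ is continuous across these points as well.
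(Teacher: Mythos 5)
Your proposal is correct and is exactly the approach the paper intends: the paper's ``proof'' of Lemma~\ref{c-Arc} is the single sentence ``Using the same method to prove Lemma~\ref{c-G}, we obtain:'', and your write-up is a faithful transcription of that method to $\Delta_\alpha^1$ and the pair $(c_0,c_1)$, including the relevant use of Corollary~\ref{disjoint}, Proposition~\ref{continuity}, and the local single-valuedness of the critical-point branches from \eqref{critical pts}. One trivial slip: with $h_\alpha^1(1)=0$ the extension $\tilde\eta_\alpha$ satisfies $\tilde\eta_\alpha(0)=1$ (not $0$), so the normal-family normalization should be taken at three other points, e.g.\ $1$, $\infty$ and a uniformly bounded third point; this does not affect the argument.
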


In Section \ref{critical points}, we have shown that there are four parameter values $\alpha$ such that $f_\alpha$ has a finite double critical point, which we denote by 
$\alpha_4$, $\alpha_5$, $\tilde{\alpha}_4$, and $\tilde{\alpha}_5$ with $\tilde{\alpha}_4=\lambda/\alpha_4$ and $\tilde{\alpha}_5=\lambda/\alpha_5$. Using the Jordan curve property of $\Gamma_\theta$ and {\bf Sublemma 1} in the proof of this property (there is a critical point on $\partial \Delta_\alpha^0$ when $|\alpha|$ is sufficiently small), we know that for any $\alpha\in \Sigma_\theta$ inside $\Gamma_\theta$, there is at least one critical point on $\partial \Delta_\alpha^0$. Therefore, $\Gamma_\theta^1$ is outside $\Gamma_\theta$. Using Lemma \ref{lem:sym}, we know that two of $\alpha_4$, $\alpha_5$, $\tilde{\alpha}_4$, and $\tilde{\alpha}_5$ are outside $\Gamma_\theta$, which we denote by $\alpha_4$ and $\alpha_5$, and two of them are inside $\Gamma_\theta$, which are $\tilde{\alpha}_4$, and $\tilde{\alpha}_5$. Thus, $\alpha_4, \alpha_5\in \Gamma_\theta^1$ and $\tilde{\alpha}_4, \tilde{\alpha}_5\in \Gamma_\theta^0$.
\begin{lem}\label{injection-arc}
The map $\alpha\mapsto A(\alpha)$ is locally injective on $\Gamma_\theta^1\setminus \{\alpha_4, \alpha_5\}$.
\end{lem}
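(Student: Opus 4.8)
The plan is to follow the proof of Lemma \ref{injection}, which here is actually simpler: on $\Gamma_\theta^1$ there is a single crucial cluster $\mathcal{C}_1$, and on $\Gamma_\theta^1\setminus\{\alpha_4,\alpha_5\}$ the combinatorial pattern of the Fatou chain converging to the separating repelling fixed point never jumps, so one case suffices. Fix $\alpha_0\in\Gamma_\theta^1\setminus\{\alpha_4,\alpha_5\}$; since $\alpha_0$ is not a branch point of the square root in (\ref{critical pts}), the two finite critical points $c_0$ and $c_1$ are distinct and can be chosen as single-valued analytic functions of $\alpha$ near $\alpha_0$, so $A(\alpha)=\arg\,H_\alpha^1(c_1)$ is well defined on a neighborhood. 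I will produce a neighborhood $V$ of $\alpha_0$ on $\Gamma_\theta^1$, with $\alpha_4,\alpha_5\notin V$, on which $\alpha\mapsto A(\alpha)$ is injective.

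First I would prove the analogue of Lemma \ref{local invariance of color for the Fatou chain to P-3} in the present setting: there is a neighborhood $V$ of $\alpha_0$ on $\Gamma_\theta^1$, avoiding $\alpha_4$ and $\alpha_5$, such that for every $\alpha\in V$ the Fatou chain $\Pi_\alpha$ of $f_\alpha$ converging to $P_{3,\alpha}$ (which by Lemma \ref{pattens for landing rays} is the common landing point of $R_{\frac18}$ and $R_{\frac38}$) has the same combinatorial pattern, up to the equivalence of labellings, as $\Pi_{\alpha_0}$. The argument is the same tail-transport argument used in Lemma \ref{local invariance of color for the Fatou chain to P-3}: $P_{3,\alpha_0}$ is repelling, so $f_{\alpha_0}$ is linearizable on a neighborhood $W$ of it, this linearization persists for nearby parameters, finitely many pullbacks along $\Pi_{\alpha_0}$ carry the head $f_{\alpha_0}^{-2}(\Delta_{\alpha_0}^1)\cap\Pi_{\alpha_0}$ into $W$, and continuity of the Fatou chains (Proposition \ref{continuity}, together with the local connectivity in Corollary \ref{local connectivity}) upgrades a conjugacy on the tails to a conjugacy of the whole chains. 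The only places this could fail are parameters at which the combinatorial pattern genuinely changes; on $\Gamma_\theta^1$ these are the parameters with $f_\alpha^{2m}(c_0)=c_1$, and the Remark following Lemma \ref{convergent Fatou chain for arc} shows that for $m\ge1$ the limiting patterns from the two sides are equivalent (differing only by the switch $U_m^1\leftrightarrow V_m^1$ and the induced relabellings), so no jump occurs, while the case $m=0$, i.e. $c_0=c_1$, happens exactly at $\alpha_4$ and $\alpha_5$, which are excluded from $V$.

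Given this, injectivity on $V$ follows at once: on $V$ we have $A(\alpha)\neq0$ for every $\alpha$, since $A(\alpha)=\arg\,H_\alpha^1(c_1)=0$ would force $c_1=c_0$, hence $\alpha\in\{\alpha_4,\alpha_5\}$; so if $\alpha',\alpha''\in V$ satisfy $A(\alpha')=A(\alpha'')$, then $A(\alpha')=A(\alpha'')\neq0$ and the Fatou chains converging to $P_3$ under $f_{\alpha'}$ and $f_{\alpha''}$ have the same combinatorial pattern, whence Corollary \ref{rigidity of conformal angle and convergent Fatou chains}(2) — equivalently, the discussion preceding it together with Proposition \ref{rigidity of combinatorics - curve and arc} — gives $\alpha'=\alpha''$. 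Since $\alpha_0$ was arbitrary, $\alpha\mapsto A(\alpha)$ is locally injective on $\Gamma_\theta^1\setminus\{\alpha_4,\alpha_5\}$. The main obstacle is the first step, the local constancy of the combinatorial pattern of the chain landing at $P_3$; everything there is a direct transcription of Lemma \ref{local invariance of color for the Fatou chain to P-3} together with the no-jump observation in the Remark after Lemma \ref{convergent Fatou chain for arc}, so once those are invoked the remainder is a short deduction. As a sanity check on why $\alpha_4$ and $\alpha_5$ must be removed: near such a point the two finite critical points collide, $A$ behaves like a square root of $\alpha-\alpha_4$, and no rigidity can restore injectivity there.
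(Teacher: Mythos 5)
Your proof follows the same route as the paper: invoke the tail-transport argument of Lemma \ref{local invariance of color for the Fatou chain to P-3} to show the combinatorial pattern of the chain converging to $P_3$ is locally constant away from $\alpha_4,\alpha_5$, then combine equality of conformal angles with the rigidity result (via Corollary \ref{rigidity of conformal angle and convergent Fatou chains}(2), which the paper accesses through Proposition \ref{rigidity of combinatorics - curve and arc}) to force $\alpha'=\alpha''$. The only addition you make is spelling out, via the Remark after Lemma \ref{convergent Fatou chain for arc}, why the $m\ge1$ collision parameters cause no pattern jump — a detail the paper leaves implicit — but the argument is essentially identical.
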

\begin{proof}
For all $\alpha \in \Gamma_\theta^1$, the combinatorial patterns of the eventaully Siegel-disk Fatou components of $f_\alpha$ have been described in Section \ref{Labels of E Siegel comps - arc}.  
Using the same method to prove Lemma \ref{local invariance of color for the Fatou chain to P-3}, we can obtain that if $A(\alpha)\neq 0$, then the color of the Fatou chain converging to $P_{3,\alpha}$ is locally preserved. Therefore, if $A({\alpha})=A({\alpha'})\neq 0$ and $\alpha$ and $\alpha'$ are close to each other enough, then the eventually Siegel-disk components of $f_\alpha$ has the same combinatorial pattern as the eventually Siegel-disk components of $f_{\alpha'}$. By Proposition \ref{rigidity of combinatorics - curve and arc}, we know that $\alpha=\alpha'$. Note that $\alpha_4$ and $\alpha_5$ are the only two elements $\alpha$ of $\Gamma_\theta^1$ such that $A(\alpha)=0$. 
Thus, for any $\alpha \in \Gamma_\theta^1\setminus \{\alpha_4, \alpha_5\}$, there is a neighborhood $U$ of $\alpha$ on $\Gamma_\theta^1\setminus \{\alpha_4, \alpha_5\}$ on which the map $\alpha\mapsto A(\alpha)$ is injective. 
\end{proof}

Now we prove that $\Gamma_\theta^1$ is a Jordan arc. 
\begin{proof}[Proof of Part (d) of Main Theorem] We divide the proof into the following three steps. 

\medskip
{\bf Step 1.} We show that $\alpha_4$ and $\alpha_5$ stay on the same connected component of $\Gamma_\theta^1$.

By the expressions \eqref{critical pts} of the (finite) critical points in terms of $\alpha$, we know that 
$\alpha_4$, $\alpha_5$, $\tilde{\alpha}_4$ and $\tilde{\alpha}_5$ are essential singularities of \eqref{critical pts}. Recall that we denote the roots of $\frac{\lambda}{\alpha}+\alpha+2=0$ by $\alpha_0$ and $\tilde{\alpha}_0$ with $\tilde{\alpha}_0=\lambda/\alpha_0$. Then $\Sigma_\theta=\EC \setminus \{0, \infty, \alpha_0,\tilde{\alpha}_0\}$.  Consider each finite critical point as a complex analytic function of $\alpha$. In this proof, we use the maximal domain of analyticity that is obtained from $\Sigma_\theta$ by removing one Jordan arc $\beta$ outside $\Gamma_\theta$ connecting $\alpha_4$ and $\alpha_5$, and one Jordan arc $\tilde{\beta}$ inside $\Gamma_\theta$ connecting $\tilde{\alpha}_4$ and $\tilde{\alpha}_5$, and we denote this maximal domain by $\Omega$. We also know that when $|\alpha|$ is sufficiently large, there is a critical point on $\partial\Delta_\theta^1$, which we denote by $c_1$. Using the analytic germ of $c_1(\alpha)$ at such a point near $\infty$ and analytic continuation, we obtain the value of $c_1(\alpha)$ for each $\alpha\in \Omega$. Now we focus on using this complex analytic function $c_1(\alpha)$ on $\Omega$.

In order to deduce that $\alpha_4$ and $\alpha_5$ stay on the same connected component of $\Gamma_
\theta^1$, it is sufficient to prove that for any Jordan curve $\gamma$ on $\Sigma_\theta$ and outside $\Gamma_\theta$, if it contains a point $\alpha_\infty$ sufficiently close $\infty $, separates $\alpha_4$ from $\alpha_5$, and intersects $\beta$ only once, then there exists a point $\alpha_*$ on $\gamma$ such that $\alpha_*\in\Gamma_\theta^1$.
\begin{figure}[htbp]
  \setlength{\unitlength}{1mm}
\includegraphics[width=1\textwidth]{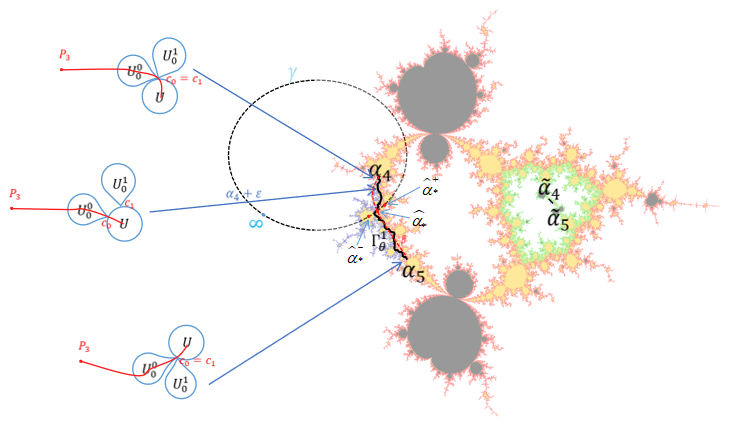}
  \caption{An illustration for the proof of $\hat{\alpha}_*\in \Gamma_\theta^1$ and three different positions of $U=\Delta_{\alpha}^1$, $U_0^0$ and $U_0^1$ for $\alpha_4, \alpha_4+\epsilon$ and $\alpha_5\in \Gamma_\theta^1$ respectively.}
  \label{arc_0^1}
\end{figure} 
Denote by $\hat{\alpha}_*$ the intersection point between $\gamma$ and $\beta$. Assume that there is no point $\alpha$ on $\gamma\setminus \{\hat{\alpha}_*\}$ such that $\alpha\in \Gamma_\theta^1$. We show $\hat{\alpha}_*\in \Gamma_\theta^1$. Denote by $\hat{\alpha}_*^-$ and $\hat{\alpha}_*^+$ two points on $\gamma$ sufficiently close to $\hat{\alpha}_*$ and on different sides of $\beta$ (see Figure \ref{arc_0^1}). Although $c_1(\alpha)$ is a single-valued analytic function globally defined on $\Omega$, the values of $c_1(\hat{\alpha}_*^-)$ and $c_1(\hat{\alpha}_*^+)$ go to different limiting values as $\hat{\alpha}_*^-$ and $\hat{\alpha}_*^+$ approach $\hat{\alpha}_*$ from the different sides of $\beta$, which we denote by $\tilde{c}_1^-$ and $\tilde{c}_1^+$. By Proposition \ref{continuity}, $\tilde{c}_1^-$ and $\tilde{c}_1^+$ lie on the boundary of $\Delta_{\hat{\alpha}_*}^1$. Thus, $\hat{\alpha}_*\in \Gamma_\theta^1$. Therefore, $\gamma$ contains a point $\alpha_*$ on $\Gamma_\theta^1$. It follows from this property that $\alpha_4$ and $\alpha_5$ lie on the same connected component of $\Gamma_\theta^1$, which we denote by $\Gamma_\theta^1(\alpha_4, \alpha_5)$.

\medskip
{\bf Step 2.} We show that $\Gamma_\theta^1(\alpha_4, \alpha_5)$ contains a Jordan arc with $\alpha_4$ and $\alpha_5$ as two endpoints, which we denote by $S(\alpha_4, \alpha_5)$. 

Let $\alpha \in \Gamma_\theta^1(\alpha_4, \alpha_5)\setminus \{\alpha_4, \alpha_5\}$. We denote by $c_{0, \alpha}$ the critical point on the Fatou chain $\Pi_r(\alpha) $ of $f_\alpha$ converging to the separating repelling fixed point $P_{3, \alpha}$. Following the notation introduced in Section \ref{Labels of E Siegel comps - arc}, $\Pi_r(\alpha)$ is colored by red. Then we denote by $c_{1, \alpha}$ the other critical point on $\partial\Delta_\alpha^1$. Applying Lemma \ref{c-Arc} and Lemma \ref{injection-arc}, we know that $\Gamma_\theta^1(\alpha_4, \alpha_5)\setminus \{\alpha_4, \alpha_5\}$ contains a Jordan arc $S$ with $\alpha$ in its interior. Let $\alpha_l$ and $\alpha_r$ be the two endpoints of $S$. Since each of $\alpha_l$ and $\alpha_r$ is not equal to $\alpha_4$ or $\alpha_5$, it follows that $\Gamma_\theta^1(\alpha_4, \alpha_5)\setminus \{\alpha_4, \alpha_5\}$ contains two Jordan arcs $S_l$ and $S_r$ with $\alpha_l$ and $\alpha_r$ in their interiors respectively. Then $S_l\cup S\cup S_r$ is a Jordan arc in $\Gamma_\theta^1(\alpha_4, \alpha_5)\setminus \{\alpha_4, \alpha_5\}$ containing $\alpha$ in its interior and larger than $S$. Repeating this process, we obtain a maximal Jordan arc in $\Gamma_\theta^1(\alpha_4, \alpha_5)\setminus \{\alpha_4, \alpha_5\}$ containing $\alpha$ in its interior, which we denote by $S_{\alpha}$. Let us denote the two endpoints of $S_{\alpha}$ by $\tilde{\alpha}_l$ and $\tilde{\alpha}_r$, and let $\{\alpha_n^l\}$ be a sequence of points on $S_\alpha$ converging to $\tilde{\alpha}_l$ and $\{\alpha_n^r\}$ be a sequence of points on $S_\alpha$ converging to $\tilde{\alpha}_r$. Then $A(\alpha_n^l)$ converges to $0$ and $A(\alpha_n^l)$ converges to $2\pi$ or vise versa as $n\rightarrow \infty$. Thus, $A(\tilde{\alpha}_l)=0$ and $A(\tilde{\alpha}_r)=2\pi$ or vise versa. In the meantime, the relative combinatorial pattern between the convegent Fatou chain $\Pi_r(\tilde{\alpha}_l)$ and three Fatou components $U$, $U_0^0$ and $U_0^1$ is different from the relative combinatorial pattern between $\Pi_r(\tilde{\alpha}_r)$ and $U$, $U_0^0$ and $U_0^1$ (these two relative patterns are illustrated as the two left drawings on Figure \ref{fatou_chain_for_arc}). Therefore, the combinatorial patterns of the eventually Siegel-disk components of $f_{\tilde{\alpha}_l}$ and $f_{\tilde{\alpha}_r}$ are different. Thus, $\tilde{\alpha}_l\neq \tilde{\alpha}_r$. Hence, one of $\tilde{\alpha}_l$ and $\tilde{\alpha}_r$ is equal to $\alpha_4$ and the other is equal to $\alpha_5$. So far, we have shown that $\Gamma_\theta^1(\alpha_4, \alpha_5)$ contains a Jordan arc  connecting $\alpha_4$ and $\alpha_5$, which we denote by $S(\alpha_4, \alpha_5)$.

\medskip
{\bf Step 3.} We prove that $\Gamma_\theta^1=S(\alpha_4, \alpha_5)$.

Let $\alpha'\in \Gamma_\theta^1\setminus \{\alpha_4, \alpha_5\}$. We denote by $c_{0, \alpha'}$ the critical point on the Fatou chain $\Pi_r(\alpha') $ of $f_{\alpha'}$ converging to the separating repelling fixed point $P_{3, \alpha'}$. Following the notation introduced in Section \ref{Labels of E Siegel comps - arc}, $\Pi_r(\alpha')$ is colored by red.
Since $0<A(\alpha')<2\pi$ and $A(\alpha)$ varies continuously between $0$ and $2\pi$ when $\alpha \in S(\alpha_4, \alpha_5)\setminus \{\alpha_4, \alpha_5\}$. Thus, there exists $\alpha''\in S(\alpha_4, \alpha_5)\setminus \{\alpha_4, \alpha_5\}$ such that $A(\alpha'')=A(\alpha')$. We also denote by $c_{0, \alpha''}$ the critical point on the Fatou chain $\Pi_r(\alpha'')$ of $f_{\alpha''}$ converging to the separating repelling fixed point $P_{3, \alpha''}$. Then $\Pi_r(\alpha'')$ is also colored by red. Since $0<A(\alpha'')=A(\alpha')<2\pi$, it follows from (2) of Corollary \ref{rigidity of conformal angle and convergent Fatou chains} that the eventually Siegel-disk components of $f_{\alpha'}$ and $f_{\alpha''}$ have the same combinatorial pattern. By Proposition \ref{rigidity of combinatorics - curve and arc}, we conclude that $\alpha'=\alpha ''$. Thus, $\alpha'\in S(\alpha_4, \alpha_5)\setminus \{\alpha_4, \alpha_5\}$. Therefore, $\Gamma_\theta^1=S(\alpha_4, \alpha_5)$.
\end{proof}

\bibliographystyle{amsplain}

\end{document}